\documentclass[12pt]{amsart}

\usepackage{hyperref}
\usepackage{amsfonts,amssymb,amscd,amsmath,amsthm,enumerate,mathrsfs}
\usepackage{graphicx,type1cm,eso-pic}

% Damit References in table of contents auftaucht
%\usepackage[nottoc]{tocbibind}

\newcommand{\abs}{\vspace{6pt}}

% Funktionen u.ae.

\DeclareMathOperator{\vol}{vol}

\DeclareMathOperator{\grad}{grad}

\DeclareMathOperator{\supp}{supp}
\DeclareMathOperator{\Rec}{Rec}
\DeclareMathOperator{\id}{id}
\DeclareMathOperator{\Int}{Int}

\DeclareMathOperator{\const}{const.}

\DeclareMathOperator{\Iso}{Iso}

\DeclareMathOperator{\Per}{Per}

\DeclareMathOperator{\card}{card}
\DeclareMathOperator{\Hor}{Hor}
\DeclareMathOperator{\Gro}{Gro}
\DeclareMathOperator{\Span}{span}
\DeclareMathOperator{\Dom}{Dom}

\DeclareMathOperator{\Fix}{Fix}
\DeclareMathOperator{\dist}{dist}

\DeclareMathOperator{\la}{\langle}
\DeclareMathOperator{\ra}{\rangle}
\DeclareMathOperator{\h}{\mathit{h}_{top}}
\newcommand{\skp}{{\langle .,. \rangle}}

\DeclareMathOperator{\hc}{hc}
\DeclareMathOperator{\Aff}{Aff}

%Alphabet u.ae.
\newcommand{\R}{\mathbb{R}}
\newcommand{\Z}{\mathbb{Z}}
\newcommand{\Q}{\mathbb{Q}}
\newcommand{\N}{\mathbb{N}}
\newcommand{\C}{\mathbb{C}}

\renewcommand{\H}{\mathcal{H}}

\newcommand{\NN}{\mathcal{N}}

\newcommand{\MM}{\mathfrak{M}}

\renewcommand{\O}{\mathcal{O}}

\newcommand{\F}{\mathcal{F}}
\newcommand{\TT}{\mathcal{T}}

\newcommand{\RR}{\mathcal{R}}

\newcommand{\M}{\mathcal{M}}

\newcommand{\W}{\mathcal{W}}
\newcommand{\A}{{\mathcal{A}}}

\newcommand{\T}{{\mathbb{T}^2}}
\newcommand{\Tn}{{\mathbb{T}^n}}

\newcommand{\s}{{\mathbb{S}^2}}
\newcommand{\se}{{\mathbb{S}^1}}

\newcommand{\e}{\varepsilon}
\newcommand{\vf}{\varphi}

\newcommand{\g}{\gamma}
\newcommand{\Gam}{\Gamma}
\newcommand{\al}{\alpha}
\newcommand{\om}{\omega}
\newcommand{\lam}{\lambda}
\newcommand{\Lam}{\Lambda}
\newcommand{\sig}{\sigma}

%Mathe-Umgebungen

\theoremstyle{plain}
\newtheorem{defn}{Definition}[section]
\newtheorem{lemma}[defn]{Lemma}
\newtheorem{prop}[defn]{Proposition}
\newtheorem{thm}[defn]{Theorem}
\newtheorem{cor}[defn]{Corollary}
\newtheorem{prob}[defn]{Problem}

\theoremstyle{definition}
\newtheorem{remark}[defn]{Remark}

\newcommand{\mane}{Ma\~n\'e}
\newcommand{\Mane}{Ma\~n\'e }

\newcommand{\Poincare}{Poincar\'e }

%Für Table of Contents
\makeatletter
\def\@tocline#1#2#3#4#5#6#7{\relax
  \ifnum #1>\c@tocdepth % then omit
  \else
    \par \addpenalty\@secpenalty\addvspace{#2}%
    \begingroup \hyphenpenalty\@M
    \@ifempty{#4}{%
      \@tempdima\csname r@tocindent\number#1\endcsname\relax
    }{%
      \@tempdima#4\relax
    }%
    \parindent\z@ \leftskip#3\relax \advance\leftskip\@tempdima\relax
    \rightskip\@pnumwidth plus4em \parfillskip-\@pnumwidth
    #5\leavevmode\hskip-\@tempdima
      \ifcase #1
       \or\or \hskip 1em \or \hskip 2em \else \hskip 3em \fi%
      #6\nobreak\relax
    \dotfill\hbox to\@pnumwidth{\@tocpagenum{#7}}\par % <---- \dotfill -> \hfill
    \nobreak
    \endgroup
  \fi}
\makeatother

\begin{document}

\hypersetup{pdftitle = {Minimal geodesics and integrable behavior in geodesic flows}, pdfauthor = {Jan Philipp Schr\"oder}}

\title[Minimal geodesics and integrability]{Minimal geodesics and integrable behavior in geodesic flows}
\author[J. P. Schr\"oder]{Jan Philipp Schr\"oder}
\address{Faculty of Mathematics \\ Ruhr University \\ 44780 Bochum \\ Germany}
\email{\url{jan.schroeder-a57@rub.de} \newline \url{http://www.ruhr-uni-bochum.de/ffm/Lehrstuehle/Lehrstuhl-X/jan.html}}
%\date{\today}
%\keywords{}
%\subjclass[2010]{Primary, Secondary}

\begin{abstract}
 In this survey article we gather classical as well as recent results on minimal geodesics of Riemannian or Finsler metrics, giving special attention to the two-dimensional case. Moreover, we present open problems together with some first ideas as to the solutions.
\end{abstract}

\maketitle

\section{Introduction and structure of this paper}

When Poincar\'e first studied the stability of the solar system in the 19th century, he studied the dynamics of what is nowadays known as a monotone twist map of an annulus. A central step was to find reasonably nice sets in phase space that are left invariant under the dynamical system in question. These sets were remnants of integrable behavior of the dynamical system, the approach leading in particular to the development of KAM theory, as well as Aubry-Mather theory.

It was found that Poincar\'e's monotone twist maps are related to geodesic flows on the 2-torus. In this setting, the geodesic flow of a Riemannian metric of constant curvature on a closed, orientable surface can be seen as an ``integrable'' prototype of a dynamical system. Morse and Hedlund observed that minimal geodesics with respect to any Riemannian metric resemble in many senses the ``integrable'' behavior of the constant curvature case.

Basic results on minimal geodesics date back to the 1920ies, while some were established recently. Studying the structure of the geodesic flow, minimal geodesics have proven to be useful and hence can be the first step towards understanding geodesic flows as a whole. In this survey, we will summarize results on the structure of minimal geodesics, with special attention towards the 2-dimensional case. Central topics include asymptotic notions, such as asympotic directions of minimal geodesics, the stable norm and Mather's average action, the Gromov and horofunction boundaries, weak KAM solutions, chaotic behavior, transitivity and topological entropy. Along the way, we present some indications for further research, some of which might be suitable for PhD projects.

We include proofs in this paper if they are simple and illustrate the ideas involved or because the proofs have not yet appeared in the literature. Please be aware that it is not the aim of the author to give a complete overview on the literature.

%In Sections \ref{section basics} and \ref{section morse lemma} we recall basic definitions and fundamental properties of minimal geodesics and define asympotic directions. Sections \ref{section weak KAM} and \ref{section mather theory} develop a general theory of invariant graphs in relation to minimal geodesics, being known as weak KAM theory and Mather theory. In Sections \ref{section rational} and \ref{section irrational} we study the dynamics of minimal geodesics under the assumption of rational and irrational asymptotic directions. Section \ref{section applications} applies the results found in Sections \ref{section rational} and \ref{section irrational} to the theories developed in Sections \ref{section morse lemma}, \ref{section weak KAM} and \ref{section mather theory}. Questions regarding topological entropy are discussed in Section \ref{section entropy}. In Section \ref{section misc} we discuss Birkhoff's theorem on regions of instability and present two results on surfaces of non-positive curvature. Finally, in Section \ref{section rel dynamical systems} we show, how Finsler geodesic flows are related to monotone twist maps, Tonelli Lagrangians and symplectic geometry.

\tableofcontents

\section{First examples, setting and basic definitions}\label{section basics}

\subsection{An example}\label{section rot torus}

We briefly discuss the dynamics of the geodesic flow of the rotational torus embedded in $\R^3$. In the example of the rotational torus, many phenomena in the study of minimal geodesics already occur.

One can show that the rotational torus is isomorphic to $\T=\R^2/\Z^2$ endowed with a Riemannian metric $g'$ of the form
\[ g'_{(x_1,x_2)}(v,w) = f(x_2) \cdot g(v,w) \]
with some smooth, positive function $f:\R/\Z\to\R$, $g$ being the Euclidean metric on $\R^2$ and the quotient $\T$. The function $f$ has precisely two extrema, one minimum corresponding to the short inner loop and one maximum corresponding to the longer outer loop. Rewriting $g'$ as a Jacobi metric $g'=(e-V)g$, i.e.\ setting
\[ f(x_2) = e-V(x_2), \qquad e := \max f, \quad V(x_2) := e - f(x_2) \geq 0 , \]
we find that the geodesics of $g'$ are reparametrized solutions $c:\R\to\T$ of the Euler-Lagrange equation
\[ (c_1'',c_2'')=\ddot c = - \grad_g V(c) = (0, -V'(c_2)) \]
with total energy $E(c,\dot c)=\frac{1}{2}|\dot c|_g^2 + V(c) = e$.

In this way, one sees that the geodesics $c$ of $g'$ have a linear $c_1$-component and behave like solutions to the pendulum-like equation $x''=-V'(x)$ in the $c_2$-component. In particular, the geodesics $c$ {\em stay within finite distance of a straight line}, when lifted to the universal cover $\R^2$ of $\T$. Drawing a transverse figure of the geodesic flow, one obtains the well-known phase portrait of the simple pendulum: There are smooth, {\em invariant graphs} over the $x$-axis and in the middle there is a fixed point (the {\em short inner geodesic loop}) connected to itself by {\em homoclinic orbits} forming a pair of continuous, invariant graphs, surrounding an elliptic fixed point (the outer long geodesic loop) and orbits oscillating around it.

\subsection{Basic definitions}

We fix some notation. Throughout, we fix a connected, simply connected, smooth manifold $H$ of arbitrary dimension and endow $H$ with a fixed, complete Riemannian metric $g$. ($H$ will usually be a Hadamard manifold, hence the letter $H$.) We write $|v|_g=\sqrt{g(v,v)}$ for the norm of the Riemannian metric $g$. We let $\Gam$ be a discrete group of $g$-isometries acting freely on $H$, which is assumed to define a smooth quotient manifold
\[ M = H/\Gam \]
of the same dimension as $H$. In particular, $H$ is the universal cover of $M$ and $\Gam$ is isomorphic to the fundamental group $\pi_1(M)$ of $M$. The boundary is automatically $\partial M=\emptyset$ by the completeness of the $g$-geodesic flow. The covering map is denoted by
\[ p: H \to M , \]
reserving the letter $\pi$ for the canonical projections of the (co)tangent bundles
\[ \pi : TH, T^*H\to H, \qquad \pi : TM, T^*M\to M . \]
Periodic curves in $M$ in some homotopy class $\tau\in \Gam$ correspond to curves $c:\R\to H$ satisfying
\[ \exists T>0 : \qquad \tau \circ c(t+T) = c(t) . \]
Such curves will be termed $\tau$-periodic.

We shall work with Finsler metrics, thus incorporating monotone twist maps of the annulus, as well as Riemannian metrics and general Tonelli Lagrangians, see Section \ref{section rel dynamical systems}. We recall the definition of a Finsler metric; for an introduction to Finsler geometry see \cite{BCS}.

\begin{defn} \label{def finsler}
 A \emph{Finsler metric} on $H$ is a function $F:TH \to \R$ with the following properties:
 \begin{enumerate}[(i)]
  \item $F$ is $C^\infty$ in $TH-0_H$, $0_H$ being the zero section,
  
  \item $F(v)\geq 0$ for all $v$,
  
  \item $F$ is positively homogeneous of degree one, i.e. $F(a v)=a F(v)$ for $a\geq 0$,
  
  \item (strong convexity) there exists a constant $c_F \geq 1$, so that if $v\in TH-0_H$ and $w\in T_{\pi v}H$, then
  \[ \frac{1}{c_F^2} \cdot |w|_g^2 \leq \frac{d^2}{dt^2}\bigg|_{t=0} \tfrac{1}{2} F^2(v+tw) \leq c_F^2 \cdot |w|_g^2. \]
 \end{enumerate}
 $F$ is called {\em reversible}, if $F(-v)=F(v)$ and {\em Riemannian}, if $F(v)=\sqrt{g'(v,v)}$ is the norm of some Riemannian metric $g'$.
\end{defn}

The reader unfamiliar with Finsler metrics may assume throughout that $F$ is Riemannian.

We fix in this paper the Finsler metric $F$ on $H$, assume that $F$ is invariant under $\Gam$, so that both $F$ and $g$ descend to metrics on the quotient $M$, denoted again by $F,g$. Observe that $F$ is uniformly equivalent to the norm of $g$. Namely, taking $w=v$ in the definition of strong convexity, we have
\[ \frac{1}{c_F} \cdot |v|_g \leq F(v) \leq c_F \cdot |v|_g . \]
The unit tangent bundles $\{F=1\}\subset TM,TH$, respectively, are denoted by $S_FH, S_FM$ and the geodesic flow of $F$ will be denoted by $\phi_F^t$. Given a vector $v\in TH$ or $TM$, the $F$-geodesic with initial condition $\dot c(0)=v$ of constant speed is denoted by $c_v$, so that $\phi_F^tv=\dot c_v(t)$. Similarly, the $g$-geodesic with initial condition $\dot \g(0)=v$ is denoted by $\g_v$. Throughout, $F$-geodesics will carry Latin letters, while $g$-geodesics will carry Greek letters. The Finsler metric $F$ induces a length structure
\[ l_F(c;[a,b]) = \int_a^b F(\dot c) dt \]
on the space of continuous, piecewise $C^1$-curves $c:[a,b]\to H$ or $M$. Using this length, we can define the $F$-distance
\[ d_F(x,y) = \inf\{ l_F(c;[0,1]) : c(0)=x,c(1)=y \} \]
on $H$ and $M$. Note that the metrics $d_F$ and $d_g$ are equivalent by the same constant $c_F$ as $F$ and $g$.

We shall be treating the geodesic flow of $g$ as given and the dynamics of the $F$-geodesics as unknown. In the example of the rotational torus in Subsection \ref{section rot torus}, $(H,g)$ is the Euclidean plane, $\Gam$ is isomorphic to $\Z^2$ and $F=|.|_{g'}=\sqrt f\cdot |.|_g$ is the norm obtained from the embedding as the rotational torus in the Euclidean space $\R^3$. Already in this example, KAM-tori occur.

\begin{defn}\label{def KAM}
 Let $\phi^t:X\to X$ be a $C^\infty$-flow on a $C^\infty$-manifold $X$. A {\em $C^k$-KAM-torus} (of dimension $n$) is a $C^k$-submanifold $\TT\subset X$, such that
 \begin{enumerate}[(i)]
  \item $\TT$ is invariant under $\phi^t$,
  
  \item there exists a $C^k$-diffeomorphism $\TT\to \mathbb{T}^n = \R^n/\Z^n$ conjugating $\phi^t|_\TT$ to a linear flow $\psi^t$ on $\mathbb{T}^n$ of the form
  \[ \psi^t(x) = x+t\rho \mod \Z^n . \]
 \end{enumerate}
 We call the flow $\phi^t$ {\em integrable}, if $X$ is filled up to a set of zero Lebesgue measure by $C^\infty$-KAM-tori.
\end{defn}

The smooth invariant graphs in the case of the rotational torus are all $C^\infty$-KAM-tori. The rotational torus yields an example of an integrable geodesic flow. To be precise, we also make the following definition.

\begin{defn}
 A {\em graph (over $M,H$)} in $TM,T^*M, TH,T^*H$, respectively, is a subset $G$, so that the canonical projection $\pi|_G$ is injective.
\end{defn}

\subsection{Minimal geodesics}\label{section min geod}

Write $\R_-=(-\infty,0]$ and $\R_+=[0,\infty)$.

\begin{defn}
 An arc-length $C^1$-curve $c:\R_-,\R\to H$ with $t=d_F(c(0),c(t))$ for all $t\in \R_-,\R$ is called a {\em ray, minimal geodesic}, respectively. The sets of initial conditions $v\in S_FH$ of rays, minimal geodesics are denoted by $\RR_-,\M\subset S_FH$, respectively. We shall also call geodesics $c:\R_-,\R\to M$ rays or minimal, if they lift to rays or minimal geodesics in the universal cover $H$. The sets of initial conditions are in this case the projections $Dp(\RR_-),Dp(\M)\subset S_FM$.
\end{defn}

We defined rays and minimal geodesics in the universal cover. Equivalently, a geodesic $c:[a,b]\to M$ lifts to a minimal segment in $H$, if for all $a\leq s<t\leq b$ and all curves $c':[0,1]\to M$ with $c'(0)=c(s),c'(1)=c(t)$, which are homotopic in $M$ to $c|_{[s,t]}$ have $l_F(c')\geq l_F(c)$. Hence, rays and minimal geodesics in $M$ are homotopically minimizing.

Everything that we say will of course have an analgon to rays defined on $\R_+$. In order to avoid confusion, rays in this paper will be defined on $\R_-$. Rays defined on $\R_+$ will be termed forward rays, defining a set $\RR_+\subset S_FH$. Note that we often drop mentioning $F$ when speaking of minimal segments. In our applications, $g$ will usually be of non-positive curvature, so any $g$-geodesic is automatically minimal in $H$.

A typical assumption will be that $M$ is orientable. Note that results on rays and minimal geodesics in the orientable case carry over directly to the non-orientable case, as they are defined in the universal cover. If $M$ is non-orientable, one can move to the orientable double cover $\tilde M$ of $M$ and write $\tilde M$ as a quotient $H/\tilde\Gam$ with $\tilde\Gam\subset\Gam$.

Two central questions govern this paper.
\begin{enumerate}[(A)]
 \item\label{central quest A} How does the restricted geodesic flow $\phi_F^t|_\M$ resemble the geodesic flow $\phi_g^t$, i.e.\ being given some structure of $\phi_g^t$, what is the structure of $\phi_F^t|_\M$? Here, the flow $\phi_g^t$ takes the place of a simple, ``integrable'' model of $\phi_F^t$.
 
 \item\label{central quest B} What are the implications for the whole geodesic flow $\phi_F^t$, and how large is the set $\M\subset S_FH$?
\end{enumerate}

A large part of this paper will be concerned with the intersection properties of rays. Let us be precise.

\begin{defn}\label{def transverse intersection}
 Let $c:I\to H, c':I'\to H$ be two curves defined on intervals $I,I'$. Then $c,c'$ have {\em successive intersections}, if
 \[ \exists s<t \text{ in } I , s'<t' \text{ in } I' : \qquad   c(s)=c'(s') \text{ and } c(t)=c(t') . \]
 Let $c,c':\R_-\to H$ be two rays. Then $c'$ {\em intersects} $c$, if $c'(s)=c(t)$ for some $s\leq 0$ and some $t<0$, but $\dot c'(s)\neq \dot c(t)$.
\end{defn}

The necessity for considering {\em successive} intersection stems from the Finsler metric $F$ not being reversible in general.

The following lemma excludes in particular successive intersections of rays and shows that asymptotic rays cannot intersect in the sense of Definition \ref{def transverse intersection}. The idea of the proof is classical, see Theorem 6 in \cite{morse}. Another proof can be found in \cite{paper1}, Lemma 2.20.

\begin{lemma}\label{crossing minimals}
Let $v_n,v,w_n, w\in \RR_-$ with $v_n\to v, w_n\to w$, $\pi w=c_v(a)$ for some $a<0$, but $w\neq \dot c_v(a)$. Then for all $\delta>0$ and sufficiently large $n$
\[ \inf \{ d_g(c_{v_n}(s),c_{w_n}(t)) : s\in (-\infty,a], t\in (-\infty,-\delta] \} > 0. \]
\end{lemma}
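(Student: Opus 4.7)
I would argue by contradiction, using a quantitative form of the classical Morse ``bigon'' argument: two Finsler minimizers sharing (nearly) coincident endpoints admit a strictly shorter alternative obtained by rounding an interior corner, with the length savings quantified by strong convexity of $F$. Suppose the conclusion fails, so that after passing to a subsequence there exist $s_n \le a$ and $t_n \le -\delta$ with $\eta_n := d_g(c_{v_n}(s_n),c_{w_n}(t_n)) \to 0$. Since $c_{v_n}(a) \to c_v(a) = \pi w$ and $\pi w_n \to \pi w$, also $\eta_n' := d_g(c_{v_n}(a),\pi w_n) \to 0$. Fix $\rho > 0$ with $\rho < \min(|a|,\delta)$ small enough that $c_w(-\rho)$ and $c_v(a+\rho)$ lie in a joint normal neighborhood of $Q := c_v(a) = c_w(0)$. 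Using strong convexity of $F$ and the hypothesis $w \neq \dot c_v(a)$ (which forces $F(w+\dot c_v(a)) < F(w)+F(\dot c_v(a)) = 2$), a first-variation/Taylor computation at $Q$ gives a uniform corner shortcut
\[ d_F\bigl(c_w(-\rho),\,c_v(a+\rho)\bigr) \;\le\; 2\rho - \mu \]
for some $\mu = \mu(\rho, w,\dot c_v(a)) > 0$. Continuity of the geodesic flow in initial conditions then yields $d_F(c_{w_n}(-\rho), c_{v_n}(a+\rho)) \le 2\rho - \mu/2$ for all $n$ sufficiently large.

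I then compare lengths along two composite paths. Let $\sigma_n$ run from $c_{v_n}(s_n)$ to $\pi v_n$ by: jumping to $c_{w_n}(t_n)$, following $c_{w_n}$ to $c_{w_n}(-\rho)$, taking the shortcut to $c_{v_n}(a+\rho)$, then following $c_{v_n}$ to $\pi v_n$; its total $F$-length is at most $c_F \eta_n + (|t_n|-\rho) + (2\rho - \mu/2) + (|a|-\rho) = |t_n|+|a|-\mu/2+o(1)$. Let $\sigma_n'$ run from $c_{w_n}(t_n)$ to $\pi w_n$ by: jumping to $c_{v_n}(s_n)$, following $c_{v_n}$ to $c_{v_n}(a)$, jumping to $\pi w_n$; its total $F$-length is at most $c_F \eta_n + (|s_n|-|a|) + c_F \eta_n' = |s_n|-|a|+o(1)$. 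Applying the ray identities $d_F(c_{v_n}(s_n),\pi v_n) = |s_n|$ to $\sigma_n$ and $d_F(c_{w_n}(t_n),\pi w_n) = |t_n|$ to $\sigma_n'$, we obtain
\[ |s_n| \le |t_n|+|a|-\mu/2+o(1), \qquad |t_n| \le |s_n|-|a|+o(1). \]
Combining, $|t_n|+|a|-o(1) \le |s_n| \le |t_n|+|a|-\mu/2+o(1)$, i.e.\ $\mu/2 \le 2\,o(1)$, contradicting $\mu > 0$ for $n$ large.

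\textbf{Main obstacle.} The heart of the argument is the uniform corner-shortcut estimate $d_F(c_w(-\rho),c_v(a+\rho)) \le 2\rho - \mu$, which converts the angular separation between $w$ and $\dot c_v(a)$ into a positive length savings via the strong convexity of $F$ (Definition~\ref{def finsler}). Once it is in place, the rest is a clean bookkeeping of lengths along two composite paths, and the bounded and unbounded regimes of $(s_n,t_n)$ are handled uniformly through the ray identities; no separate compactness argument via deck transformations is required. In the reversible Riemannian case the shortcut estimate is the standard first-order computation $d_F(c_w(-\rho),c_v(a+\rho)) = \rho F(w+\dot c_v(a)) + O(\rho^2)$ with $F(w+\dot c_v(a)) < 2$; the general Finsler case is essentially the same, with the triangle inequality made strictly sub-additive by strong convexity.
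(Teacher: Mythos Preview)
Your proof is correct and is precisely the classical Morse ``bigon'' argument that the paper invokes (it does not give its own proof, only cites Theorem~6 in \cite{morse} and Lemma~2.20 in \cite{paper1}). The two-path bookkeeping you set up is clean: adding the two inequalities $|s_n|\le |t_n|+|a|-\mu/2+o(1)$ and $|t_n|\le |s_n|-|a|+o(1)$ directly gives $\mu/2\le o(1)$, and the corner-shortcut estimate via strict convexity of the Finsler unit ball is exactly the standard mechanism.
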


\subsection{Model geometries}\label{section model geom}

We discuss here some classical instances of the Riemannian manifold $(M,g)$. We will in this paper mainly be concerned with the 2-dimensional case $\dim M=2$. A typical assumption will be that $M$ is moreover orientable and compact, hence closed. Then $M$ is a handle body, that is the connected sum of the 2-sphere $\s$ and $\mathfrak{g}$ 2-tori $\T$. The integer $\mathfrak{g} \geq 0$ is the genus of $M$. There exists a Riemannian metric $g$ on $M$ of constant curvature $K_g\in \{-1,0,1\}$. In the case $\mathfrak{g}=0$, the universal cover $H=\s$ is compact and there are no rays or minimal geodesics at all. Hence, we will assume that $\mathfrak{g}\geq 1$.

\abs

If $\mathfrak{g}=1$, then $M$ is the 2-torus $M=\T=\R^2/\Z^2$ and $K_g=0$. We take $H=\R^2$ and $g$ the Euclidean metric.

The $g$-geodesics in $\R^2$ are straight lines $\g_v(t) = \pi v + t v$. Here we find invariant graphs for the geodesic flow, i.e.\ in coordinates $T\R^2=\R^2\times \R^2$ the sets
\[ \Sigma_\xi:= \{ (x,\xi) : x  \in \R^2 \} \subset S_g\R^2 \]
for $\xi\in \se=\{x\in\R^2:|x|=1\}$ are invariant under the $g$-geodesic flow. Observe that all geodesics $\g_v$ with $v\in \Sigma_\xi$ have the same (asymptotic) direction $\xi\in \se$. 

The group $\Gam=\Z^2$ acts by translation $\tau(x) = x + \tau$ for $\tau\in\Z^2$. The direction of $g$-geodesics is invariant under the action of $\tau\in\Gam$ and the sets $\Sigma_\xi$ descend to $\phi_g^t$-invariant graphs $Dp(\Sigma_\xi)\subset S_g\T$. Moreover, the projections $Dp(\Sigma_\xi)$ are KAM-tori for $\phi_g^t$ in the sense of Definition \ref{def KAM}, so that $\phi_g^t:S_g\T\to S_g\T$ is an integrable geodesic flow. If $\xi=(\xi_1,\xi_2)$ has rational or infinite slope $\xi_2/\xi_1\in \Q\cup \{\infty\}$, then all geodesics in $Dp(\Sigma_\xi)$ are closed under $\phi_g^t$. If, on the other hand $\xi$ has irrational slope $\xi_2/\xi_1\in \R-\Q$, then any geodesic $\g_v(\R)\subset \T$ with $v\in Dp(\Sigma_\xi)$ is dense. Motivated by this duality, we will consider the set
\[ \Pi_\T := \{ \xi \in \se : \xi_2/\xi_1\in \Q\cup \{\infty\} \} \]
to be the directions of periodic $g$-geodesics.

\abs

Let us now assume $\mathfrak{g}\geq 2$. Then $K_g=-1$ and we take for $(H,g)$ the \Poincare disc model of the hyperbolic plane. That is,
\[ H=\{x\in\R^2:|x|<1\} \]
is the open unit disc endowed with the \Poincare metric
\[ g_x(v,w) = 4\cdot(1-|x|^2)^{-2}\la v,w\ra ,\]
writing $\skp,|.|$ for the Euclidean metric in $\R^2\supset H$.

The $g$-geodesics are circle segments in $\R^2$ meeting the boundary $\se$ orthogonally. In particular, $g$-geodesics $\g:\R\to H$ have two endpoints $\g(-\infty), \g(\infty) \in \se$. In $S_gH$ we find $\phi_g^t$-invariant graphs of the form
\[ \Sigma_\xi^- := \{ v\in S_gH : \g_v(-\infty) = \xi \} .\]
Hence, the geodesics from $\Sigma_\xi^-$ initiate in $\xi\in \se$ and spread all over $H$. Geodesics $\g_v,\g_w$ with $v,w\in\Sigma_\xi^-$ are asymptotic in the sense that
\[ \lim_{t\to\infty} d_g(\g_v(\R_-),\g_w(-t)) = 0 . \]
In particular, the sets $\g_v(\R_-),\g_w(\R_-)$ have finite Hausdorff distance. (In the literature, $\g_v,\g_w$ are sometimes called asymptotic, if the Hausdorff distance is finite. We shall reserve the term ``asymptotic'' for the strong sense above.) 

The group $\Gam$ consists of isometries of $(H,g)$. The group $\Iso^+(H,g)$ of orientation-preserving isometries consists of M\"obius transformations $\tau:H\to H$ of the form
\[ \tau(x) = \frac{ax+b}{\bar b x+ \bar a} , \qquad |a|^2 - |b|^2 = 1 \]
(using the complex structure on $\R^2\cong\C \supset H$). Using this formula, each element of $\Iso^+(H,g)$ extends to the boundary $\se$ and each element in $\Iso^+(H,g)-\{\id\}$ has either one or two fixed points in $H\cup \se$. The group $\Gam$ acts freely on $H$, so that we exclude the case of a fixed point in $H$ (such isometries are called elliptic). Then the set $\Gam-\{\id\}$ splits into {\em parabolic} elements having one fixed point in $\se$ and {\em hyperbolic} elements having two distinct fixed points in $\se$. We write 
\[ \Gam-\{\id\} = \Gam_{par} ~ \dot\cup ~ \Gam_{hyp}. \]
The parabolic elements correspond to cusps of infinite length in the quotient $M=H/\Gam$, while a hyperbolic element $\tau$ defines (up to orien\-ta\-tion-preserving reparametrization) a unique arc-length $g$-geodesic $\g_\tau:\R\to H$ by the property
\[ \exists T>0 : \qquad \tau \circ \g_\tau(t+T) = \g_\tau(t) . \]
Hence, $p\circ \g_\tau:\R\to M$ is the unique closed geodesic in the homotopy class $\tau$. The fixed points of $\tau$ are given by the endpoints $\g_\tau(\pm\infty)$ and by our convention, $\g_\tau(-\infty)$ is stable, while $\g_\tau(\infty)$ is unstable. See Figure \ref{fig_mobius} for the action of $\Gam$ corresponding to the topology of $M$. Not assuming $M$ to be compact, but only $(M,g)$ to be an orientable, complete Riemannian surface with curvature $K_g\equiv -1$, the group $\Gam$ can contain parabolic elements. If we use the assumption that $M$ is compact, then $M$ cannot have cusps, i.e.\ $\Gam_{par}=\emptyset$.

\begin{figure}\centering%[!htb]
 \includegraphics[scale=0.5]{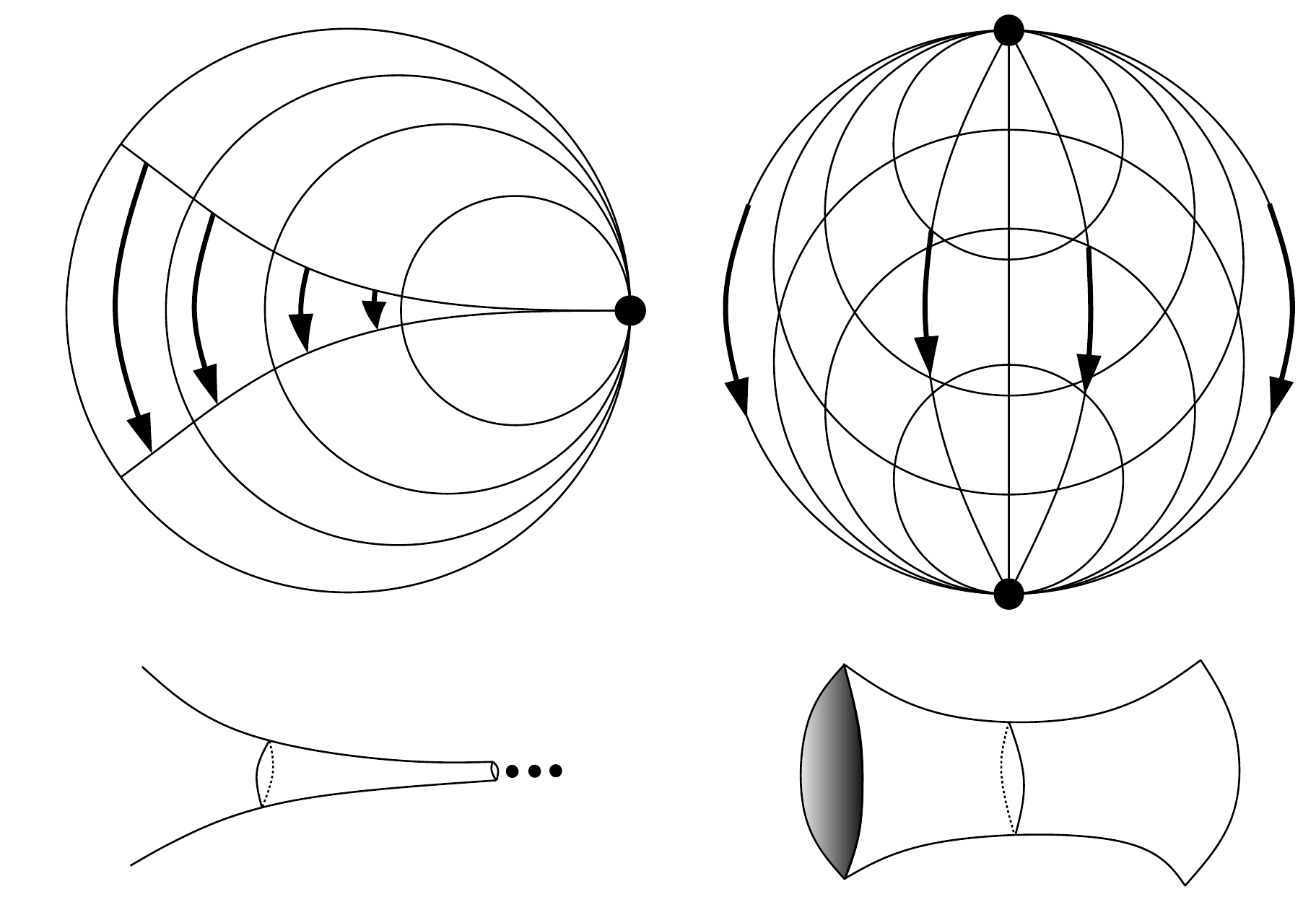}
 \caption{Top: The action of $\tau\in\Gam$ in the \Poincare disc $H$. Bottom: The situation in $M$. Left: The action of a parabolic isometry $\tau\in\Gam$ yields a cusp in $M$. $\tau$ rotates along the horocycles attached at the fixed point in $\se$. Right: The action of a hyperbolic isometry $\tau\in\Gam$ yields a closed geodesic in $M$. $\tau$ translates along the geodesic connecting the fixed points in $\se$ and expands (contracts) the horocycles attached at the unstable (stable) fixed point. \label{fig_mobius}}
\end{figure}

The foliation of $S_gH$ by the sets $\Sigma_\xi^-$ could be seen as integrability in the universal cover. However, the end points $\g(-\infty)$ and hence the graphs $\Sigma_\xi^-\subset S_gH$ are not invariant under the group $\Gam$, as was the case for $\mathfrak{g}=1$. Hence, the sets $\Sigma_\xi^-$ do not project to KAM-tori for $\phi_g^t$ in $M$ the sense of Definition \ref{def KAM}. It is well-known that the geodesic flow $\phi_g^t$ is ergodic with respect to the canonical Liouville measure in $S_gM$, if $(M,g)$ has finite volume (in particular, if $M$ is compact). In particular, almost every orbit $\dot \g:\R\to S_gM$ is dense in $S_gM$ and the geodesic $\g:\R\to M$ has many self-intersections. The set $Dp(\Sigma_\xi^-)\subset S_gM$ is never a graph over $M$.

Again we define a set $\Pi_M$ by
\[ \Pi_M := \{ \xi\in \se : \exists \tau\in \Gam_{hyp} \text{ with } \tau\xi = \xi \}. \]
Hence, $\Pi_M$ corresponds again to the endpoints of geodesics projecting to closed geodesics via $p:H\to M$. For $\xi\in\Pi_M$ the sets $\Sigma_\xi^-$ are invariant precisely under the infinite cyclic subgroup  of $\Gam$ generated by the elements fixing $\xi$ (see the Preissmann theorem in Subsection \ref{section hadamard}).

The following definition is motivated by the case of the 2-torus.

\begin{defn}
 If $M=H/\Gam$ is a closed, orientable surface of genus $\mathfrak{g}\geq 1$, then the points in $\Pi_M \subset \se$ are called {\em rational}. The points in $\se-\Pi_M$ are called {\em irrational}.
\end{defn}

Of course, if the genus of $M$ is $\mathfrak{g}\geq 2$, then the points in $\Pi_M$ will not have rational slope.

\subsection{Hadamard manifolds}\label{section hadamard}

A Hadamard manifold is a connected, simply connected manifold $H$ endowed with a complete Riemannian metric $g$ of non-positive curvature. We briefly recall four theorems in this setting. Firstly, the topology of $H$ is simple.

\begin{thm}[Hadamard-Cartan]
 $H$ is diffeomorphic to $\R^{\dim H}$.
\end{thm}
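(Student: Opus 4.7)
The plan is to exhibit a diffeomorphism $H\cong\R^{\dim H}$ by showing that the exponential map at any base point is a diffeomorphism from $T_pH$ onto $H$.

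First I would fix $p\in H$ and recall that, by the completeness assumption and the theorem of Hopf--Rinow, the exponential map
\[ \exp_p : T_pH \to H \]
is defined on all of $T_pH$, and moreover every two points of $H$ are joined by a $g$-geodesic. The next step is the key use of non-positive curvature: I would show that $\exp_p$ has no critical points, i.e.\ that $p$ has no conjugate points along any geodesic $\gamma_v$ with $v\in T_pH$. For this I would take a Jacobi field $J$ along $\gamma_v$ with $J(0)=0$ and $J'(0)\neq 0$ and compute
\[ \tfrac{d^2}{dt^2} \tfrac{1}{2} |J(t)|_g^2 = |J'(t)|_g^2 - g\bigl(R(J,\dot\gamma_v)\dot\gamma_v, J\bigr) \geq 0 , \]
the inequality coming from $K_g\leq 0$. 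Hence $|J|_g^2$ is a convex function vanishing at $t=0$ with non-zero derivative, so $J(t)\neq 0$ for all $t>0$, which is exactly the statement that $\exp_p$ has injective differential everywhere. Thus $\exp_p$ is a local diffeomorphism.

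Next I would pull back the metric $g$ via $\exp_p$ to obtain a Riemannian metric $\tilde g = \exp_p^* g$ on $T_pH$, turning $\exp_p$ into a local isometry. The straight rays $t\mapsto tv$ from the origin in $T_pH$ are $\tilde g$-geodesics (since $\exp_p$ maps them to the $g$-geodesics $\gamma_v$ and is a local isometry), and they are defined for all $t\in\R$, so $(T_pH,\tilde g)$ is geodesically complete at the origin, which by Hopf--Rinow again implies $\tilde g$-completeness on all of $T_pH$. At this point I would invoke the standard fact that a local isometry from a complete, connected Riemannian manifold to a connected Riemannian manifold is a covering map; applied to $\exp_p:(T_pH,\tilde g)\to (H,g)$, this makes $\exp_p$ a smooth covering. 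Since $H$ is simply connected and $T_pH$ is connected, the covering must be a diffeomorphism, which gives the desired identification $H\cong T_pH\cong\R^{\dim H}$.

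The main obstacle is the covering-map step: the Jacobi-field computation is a short, well-known calculation, but the passage from ``local isometry with complete domain'' to ``covering map'' requires a genuine argument (one lifts curves in $H$ via the local-isometry property and uses completeness of $\tilde g$ to ensure the lifts exist for all time), and it is here that completeness of $g$ and non-positivity of curvature really work together. Once that is in place, the rest of the proof is formal.
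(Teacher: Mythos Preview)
The paper does not prove this theorem; it is a survey, and the Hadamard--Cartan theorem is simply quoted as one of four classical facts about Hadamard manifolds, without argument. Your proof is the standard one and is correct in outline.

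There is one small slip worth fixing: you claim that $|J|_g^2$ is convex and vanishes at $t=0$ ``with non-zero derivative'', but in fact
\[
\frac{d}{dt}\Big|_{t=0}\tfrac{1}{2}|J(t)|_g^2 = g\bigl(J'(0),J(0)\bigr) = 0
\]
since $J(0)=0$. The correct conclusion of the convexity computation is this: $f(t)=|J(t)|_g^2$ is convex, non-negative, and satisfies $f(0)=0$; if $f(t_0)=0$ for some $t_0>0$, then convexity and non-negativity force $f\equiv 0$ on $[0,t_0]$, hence $J\equiv 0$ there and $J'(0)=0$, contradicting the choice of $J$. With this adjustment the absence of conjugate points follows, and the remainder of your argument (pullback metric, completeness of $(T_pH,\tilde g)$, local isometry from a complete manifold is a covering, simple connectedness of $H$) is exactly the classical route.
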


Secondly, geodesics with finite Hausdorff distance are excluded by negative curvature. More precisely, one has the following.

\begin{thm}[Flat Strip Theorem \cite{eberlein-oneill}]
 If $\g_0,\g_1:\R\to H$ are two unequal geodesics, whose images $\g_0(\R)\neq \g_1(\R)$ have finite Hausdorff distance, then $\g_0,\g_1$ bound an embedded strip $S\cong \R\times [0,1]$ in $H$ of positive width, which is totally geodesic and the curvature of $(S,g)$ vanishes.
\end{thm}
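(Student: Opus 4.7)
The plan is the classical strategy: first reduce to the case of two geodesics at constant distance, then extract the flat strip from a Jacobi field analysis of the connecting geodesic variation.

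Step 1 (reduction to constant distance): I would reparametrize $\gam_1$ so that $D(t):=d_g(\gam_0(t),\gam_1(t))$ is bounded on $\R$. Since both curves are unit-speed and their images have finite Hausdorff distance, the closest-point correspondence $s:\R\to\R$ satisfies $|s(t)-s(t')|=|t-t'|+O(1)$ by the triangle inequality; after possibly reversing orientation and translating the parameter of $\gam_1$, $D$ becomes bounded. The classical convexity of distance between two affinely parametrized geodesics in a Hadamard manifold (a standard Jacobi-field comparison under non-positive curvature) makes $D$ convex, and a bounded convex function on $\R$ is constant, so $D\equiv d_0$. The hypothesis $\gam_0(\R)\neq\gam_1(\R)$ prevents $d_0=0$.

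Step 2 (connecting geodesic variation): For each $t$ let $c(t,\cdot):[0,d_0]\to H$ denote the unique unit-speed geodesic from $\gam_0(t)$ to $\gam_1(t)$; by Hadamard-Cartan this is smooth in $(t,s)$. Fixing $t_0$, the variation field $V(s):=\partial_t c(t_0,s)$ is a Jacobi field along $c(t_0,\cdot)$ with $V(0)=\dot\gam_0(t_0)$ and $V(d_0)=\dot\gam_1(t_0)$. A direct computation shows that $\langle V,\dot c\rangle$ is an affine function of $s$, and the first variation of arc length applied to $D'(t_0)=0$ gives $\langle V(0),\dot c(0)\rangle=\langle V(d_0),\dot c(d_0)\rangle=:\mu$, so this affine function is the constant $\mu$. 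Hence $V=\mu\dot c+V^\perp$ with $V^\perp\perp\dot c$ pointwise and $\mu$ constant. A uniqueness-of-geodesic argument excludes $|\mu|=1$: otherwise $\dot\gam_0(t_0)=\pm\dot c(t_0,0)$ would force $\gam_0(\R)=\gam_1(\R)$. In particular $|V^\perp|_g(0)=\sqrt{1-\mu^2}>0$.

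Step 3 (second variation and the flat strip): The crux is $D''\equiv 0$. The second variation of arc length, whose boundary terms vanish because $\gam_0,\gam_1$ are geodesics, reads
\[ 0 = D''(t_0) = \int_0^{d_0}\bigl(|\nabla_s V^\perp|^2 - \langle R(V^\perp,\dot c)\dot c,V^\perp\rangle\bigr)\,ds . \]
In non-positive curvature both integrand terms are $\geq 0$, so both vanish pointwise on $[0,d_0]$: $V^\perp$ is parallel along every geodesic $c(t,\cdot)$, and the sectional curvature of $\Span(\dot c,V^\perp)$ is identically zero. Since $|V^\perp|\equiv\sqrt{1-\mu^2}>0$, the fields $\partial_s c$ and $\partial_t c$ are everywhere linearly independent, so $c$ is an immersion and $S:=c(\R\times[0,d_0])$ is a smooth surface whose tangent plane is the flat, parallel 2-plane just identified. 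Total geodesy of $S$ follows from this parallelism in the ambient Levi-Civita connection along the $s$-geodesics, and embeddedness follows from the uniqueness of geodesics in the Hadamard manifold, which prevents distinct sections $c(t_1,\cdot)$ and $c(t_2,\cdot)$ from meeting.

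The main obstacle I foresee is Step 3: setting up the second variation formula so that the boundary contributions genuinely cancel (which uses both that $\gam_0,\gam_1$ are geodesics and the orthogonal decomposition $V=\mu\dot c+V^\perp$), and then upgrading the pointwise conclusions ``$V^\perp$ parallel'' and ``$R(V^\perp,\dot c)\dot c\equiv 0$'' into the geometric statement that $S$ is an embedded, totally geodesic, flat strip rather than merely an immersed surface with flat tangent planes.
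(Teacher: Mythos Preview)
The paper does not prove the Flat Strip Theorem; it is only quoted with a reference to Eberlein--O'Neill, so there is no in-paper argument to compare against. Your outline is the standard Riemannian proof, and Steps~1 and~2 are essentially correct (in Step~2 note that from $V^\perp$ being a parallel Jacobi field you actually get the vector identity $R(V^\perp,\dot c)\dot c=0$, not just the vanishing of its inner product with $V^\perp$).

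The genuine gap is exactly the one you flag at the end of Step~3: the parallelism of $V^\perp$ along each $s$-geodesic only yields $\nabla_{\dot c}\dot c=\nabla_{\dot c}V=\nabla_V\dot c=0$; it does \emph{not} by itself give $\nabla_V V\in TS$, and in dimension $>2$ the curvature identity you have established does not force $R(\dot c,V^\perp)V^\perp$ to vanish. Likewise, your embeddedness argument is too quick: two distinct geodesic segments $c(t_1,\cdot)$ and $c(t_2,\cdot)$ in a Hadamard manifold may well meet in a single point.

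Both issues are resolved by one extra convexity step. Since $V$ is parallel with $|V(0)|=1$, you have $|\partial_t c|\equiv 1$, so the curve $\sigma_s:t\mapsto c(t,s)$ has unit speed and hence $d\bigl(c(t,s),c(t',s)\bigr)\le |t-t'|$. On the other hand, the function $u\mapsto d\bigl(c(t,u),c(t',u)\bigr)$ is convex (distance between two affinely parametrized geodesics in a Hadamard manifold) with value $|t-t'|$ at $u=0$ and $u=d_0$; combined with the upper bound, it is identically $|t-t'|$. Therefore every $\sigma_s$ is a minimizing geodesic, so $\nabla_V V=0$, which gives total geodesy of $S$. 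The same computation, together with the analogous one in the $s$-direction, shows that $c:\R\times[0,d_0]\to H$ is an isometric embedding of the flat strip (after shearing out the constant $\mu$), from which embeddedness and flatness of $(S,g)$ follow at once.
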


The fundamental group of a negatively curved, closed manifold $(M,g)$ cannot be abelian.

\begin{thm}[Preissmann]
 If the quotient $M=H/\Gam$ is compact and if the sectional curvatures of $g$ are all strictly negative, then any subgroup of $\Gam$ is infinitely cyclic, i.e.\ isomorphic to $\Z$.
\end{thm}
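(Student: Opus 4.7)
I read the statement as the classical Preissmann theorem for non-trivial \emph{abelian} subgroups $A \subseteq \Gamma$ (as worded literally it is false: for $\mathfrak{g} \geq 2$ the group $\Gamma$ itself is non-abelian). The plan is a three-step argument: first produce a unique translation axis for every non-identity $\tau \in \Gamma$, then show that any two commuting non-identity elements of $A$ share an axis, and finally use proper discontinuity to conclude that $A$ embeds as a discrete, hence infinite cyclic, subgroup of $\mathbb{R}$.

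\textbf{Step 1: axes.} For $\tau \in \Gamma \setminus \{\id\}$, compactness of $M$ produces a closed $g$-geodesic representing the free homotopy class of $\tau$ (minimizing length in the class, which is positive because $\Gamma$ acts freely); lifting it to $H$ and conjugating $\tau$ within its class if necessary, one obtains a unit speed $g$-geodesic $\gamma_\tau : \R \to H$ with $\tau \circ \gamma_\tau(t) = \gamma_\tau(t + \ell_\tau)$ for some $\ell_\tau > 0$. This axis is \emph{unique}: two distinct axes of $\tau$ with the same translation length would have finite Hausdorff distance in $H$ (their $\langle \tau\rangle$-orbits are bounded away from one another by a constant), and the Flat Strip Theorem would then force a totally geodesic flat strip between them, contradicting strict negativity of the sectional curvature.

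\textbf{Step 2: commuting elements share an axis.} Let $\sigma, \tau \in A$ be non-trivial. Since $\sigma\tau\sigma^{-1} = \tau$, the geodesic $\sigma \circ \gamma_\tau$ is also an axis of $\tau$ with translation length $\ell_\tau$, so by the uniqueness established in Step 1 we have $\sigma(\gamma_\tau(\R)) = \gamma_\tau(\R)$. Thus $\sigma$ acts on the line $\gamma_\tau(\R) \cong \R$ by an isometry; it cannot reverse orientation, since a direction-reversing isometry of $\gamma_\tau(\R)$ would have a fixed point in $H$, contradicting freeness of the $\Gamma$-action. Hence $\sigma$ acts as a non-trivial translation along $\gamma_\tau(\R)$, which says that $\gamma_\tau$ is itself an axis for $\sigma$; applying the uniqueness of Step 1 now to $\sigma$ yields $\gamma_\sigma(\R) = \gamma_\tau(\R)$. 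All non-trivial elements of $A$ therefore share a single common axis $\gamma$.

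\textbf{Step 3: discreteness forces cyclicity.} The map $\Phi : A \to (\R, +)$ sending each $\sigma \in A$ to its signed translation length along the oriented axis $\gamma$ is a group homomorphism; it is injective because an element with zero translation length would fix $\gamma(\R)$ pointwise and so be $\id$ by freeness. Its image is discrete in $\R$: otherwise a sequence $\sigma_n \in A \setminus \{\id\}$ with $\Phi(\sigma_n) \to 0$ would move a fixed basepoint on $\gamma$ by arbitrarily small positive amounts, contradicting the proper discontinuity of $\Gamma \curvearrowright H$. A discrete subgroup of $\R$ is either trivial or infinite cyclic, so $A$ is infinite cyclic, as claimed. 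The main obstacle I anticipate is Step 2, where strict negative curvature enters essentially through the uniqueness of the axis in Step 1 (via the Flat Strip Theorem): dropping strict negativity already breaks the conclusion, as flat tori exhibit $\Gamma = \Z^n$ with $n \geq 2$.
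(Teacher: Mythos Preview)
The paper does not prove this theorem; it is merely stated in Subsection~\ref{section hadamard} as one of several background facts about Hadamard manifolds, alongside Hadamard--Cartan, the Flat Strip Theorem, and the Hopf--Anosov ergodicity theorem, all without proof. So there is no proof in the paper to compare against.

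Your argument is the standard proof of Preissmann's theorem and is essentially correct. You are also right that the statement as printed is imprecise: it must be read as applying to non-trivial \emph{abelian} subgroups of $\Gamma$, since for a closed surface of genus $\mathfrak{g}\geq 2$ the full group $\Gamma$ is non-abelian and certainly not cyclic. One small wording issue in Step~1: the phrase ``conjugating $\tau$ within its class if necessary'' is slightly off. A closed geodesic in the free homotopy class of $\tau$ lifts to an axis for some conjugate $g\tau g^{-1}$; applying $g^{-1}$ to that lift then gives an axis for $\tau$ itself, so no conjugation of $\tau$ is needed. Alternatively, and more directly, one minimizes the convex displacement function $x\mapsto d_g(x,\tau x)$ on $H$; compactness of $M$ guarantees the infimum is attained, and the minimum set is the axis of $\tau$. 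Everything else---uniqueness of the axis via the Flat Strip Theorem (this is exactly where strict negativity enters), the shared-axis argument for commuting elements, and the discreteness step---is clean.
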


Finally, negatively curved, closed manifolds $(M,g)$ have an ergodic geodesic flow.

\begin{thm}[Hopf, Anosov]
 If the quotient $M=H/\Gam$ is compact and if the sectional curvatures of $g$ are all strictly negative, then the geodesic flow $\phi_g^t:S_gM\to S_gM$ is ergodic with respect to the canonical Liouville measure. In particular, almost every $g$-geodesic is dense in $S_gM$. Moreover, the set $\Per(\phi_g^t)$ of periodic orbits is dense in $S_gM$.
\end{thm}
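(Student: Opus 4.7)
The plan is to verify that $\phi_g^t$ is an \emph{Anosov flow} on $S_gM$ and then to deduce ergodicity via the classical Hopf argument, while density of periodic orbits follows from the Anosov closing lemma. Compactness of $M$ together with strictly negative sectional curvature gives constants $0<a\leq b$ with $-b^2\leq K_g\leq -a^2$. By the Rauch comparison theorem, perpendicular Jacobi fields along any $g$-geodesic grow at least like $\sinh(at)$ in one direction and decay at least like $e^{-at}$ in the other. This produces a continuous, $\phi_g^t$-invariant splitting
\[ T(S_gM) = \R\cdot X_g \oplus E^s \oplus E^u \]
(where $X_g$ is the geodesic vector field) with uniform exponential contraction on $E^s$ under $\phi_g^t$ and on $E^u$ under $\phi_g^{-t}$, which is exactly the Anosov property.

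Second, I would integrate $E^s,E^u$ to the stable and unstable \emph{horospherical foliations} $W^s,W^u$ of $S_gM$. These can be built in the Hadamard universal cover $H$ as tangent bundles of horospheres (limits of large geodesic spheres, i.e.\ level sets of Busemann functions), and then pushed down via $p:H\to M$; the Flat Strip Theorem guarantees that distinct asymptotic geodesics in $H$ are actually strictly asymptotic. Together with the geodesic foliation these give three transverse, absolutely continuous foliations of $S_gM$ whose holonomies are absolutely continuous with respect to Liouville measure — the Anosov–Sinai theorem, and the single most delicate ingredient, since the leaves are only H\"older regular globally.

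Third, I would run Hopf's argument. For any $f\in C(S_gM)$, Birkhoff's theorem produces the forward and backward averages
\[ f^+(v)=\lim_{T\to\infty}\tfrac{1}{T}\int_0^T f(\phi_g^tv)\,dt, \qquad f^-(v)=\lim_{T\to\infty}\tfrac{1}{T}\int_{-T}^0 f(\phi_g^tv)\,dt, \]
which agree Liouville-a.e. Uniform continuity of $f$ on the compact manifold $S_gM$ combined with exponential contraction of $W^s$-leaves under $\phi_g^t$ shows that $f^+$ is constant along $W^s$-leaves; symmetrically $f^-$ is constant along $W^u$-leaves. Because $W^s,W^u$ and the flow direction span $T(S_gM)$ and the foliations are absolutely continuous, a Fubini-type argument on a local product chart forces $f^+\equiv f^-\equiv\mathrm{const}$ a.e. Density of $C(S_gM)$ in $L^2$ then yields ergodicity, and pointwise ergodic theorem applied to indicator functions of a countable basis $\{U_n\}$ of $S_gM$ gives that $\{v:\dot\g_v(\R)\text{ is dense}\}$ has full Liouville measure.

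Finally, for density of $\Per(\phi_g^t)$, I would invoke the Anosov closing lemma: uniform hyperbolicity implies that for every $\varepsilon>0$ there is $\delta>0$ such that any orbit segment $\{\phi_g^tv:0\leq t\leq T\}$ with $d_g(\phi_g^Tv,v)<\delta$ is $\varepsilon$-shadowed by a genuine periodic orbit. Poincar\'e recurrence (consequence of Liouville-invariance and compactness) provides such $\delta$-returns densely in $S_gM$, giving the desired denseness of periodic points. The main obstacle in the whole argument is the absolute continuity of $W^s,W^u$ in step two; everything else reduces to standard comparison geometry and measure theory.
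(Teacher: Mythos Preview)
The paper does not prove this theorem; it is quoted without proof in Subsection~\ref{section hadamard} as one of four classical background results on Hadamard manifolds. Your outline is the standard textbook route (Anosov property from curvature comparison, Hopf argument using absolute continuity of the horospherical foliations, Anosov closing lemma plus Poincar\'e recurrence for periodic orbits) and is essentially correct, with the absolute continuity step rightly flagged as the only genuinely hard ingredient. One small remark: invoking the Flat Strip Theorem to force strict asymptoticity is harmless but slightly roundabout, since in strictly negative curvature there are no flat strips to begin with; direct comparison with the model space of curvature $-a^2$ already gives the exponential convergence you need.
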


One way to generalize Hadamard manifolds is to assume that the Finsler metric $F$ on $H$ does not have conjugate points. We recall the notion of conjugate points along an arc-length $F$-geodesic $c:\R\to H$ (equivalently $c:\R\to M$). For $v\in S_FH$ we write $V_v = \ker D\pi(v) \subset T_vS_FH$ for the vertical line bundle. Then two points $x_0,x_1\in c(\R)$ are {\em conjugate along $c$}, if there exist times $t_0<t_1$ with $x_i=c(t_i)$, such that for the geodesic flow
\[ D\phi_F^{t_1-t_0}(\dot c(t_0)) V_{\dot c(t_0)} = V_{\dot c(t_1)} , \]
cf. Figure \ref{fig-conjugate}. It is well-known that, if the Riemannian metric $g$ has non-positive curvature, then it has no conjugate points. If $(H,F)$ has no conjugate points, the theorem of Hadamard-Cartan still holds. Moreover, all geodesics are minimal,
\[ S_FH = \M . \]
Recall in this connection Question \eqref{central quest B} in Subsection \ref{section min geod}.

\begin{figure}%[!htb]
\centering
\includegraphics[scale=0.5]{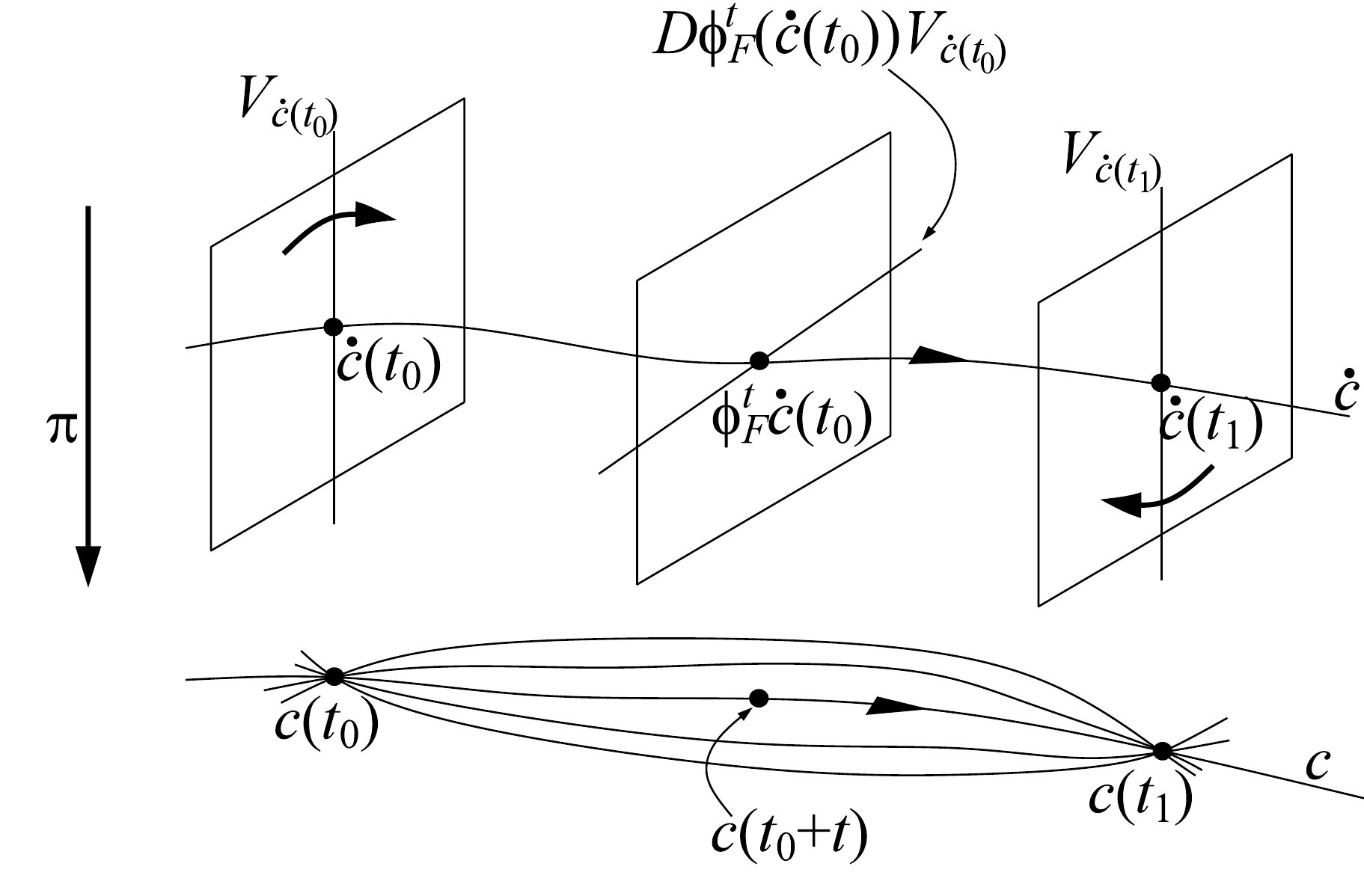}
\caption{An orbit $\dot c$ of the geodesic flow $\phi_F^t$ in the unit tangent bundle, together with the vertical line bundles $V_{\dot c}$ under the action of $D\phi_F^t$. The planes are the transverse sections of $\phi_F^t$ given by the kernel of the Liouville 1-form (i.e.\ the contact structure). Here we see the situation $D\phi_F^{t_1-t_0}(\dot c(t_0))V_{\dot c(t_0)}=V_{\dot c(t_1)}$, i.e.\ two conjugate points along $c$. Projecting to $H$ via $\pi$ yields a geodesic variation of $c$, which intuitively focuses at the conjugate endpoints $c(t_0)$ and $c(t_1)$. The arrows at $V_{\dot c(t_i)}$ indicate how the geodesic flow twists $V_{\dot c(t_i)}$. \label{fig-conjugate}}
\end{figure}

\section{Asymptotic directions for rays}\label{section morse lemma}

The main result in this section, Theorem \ref{morse lemma}, is referred to as the {\em Morse Lemma} in this paper. Note that in the literature there is also a Morse Lemma for smooth functions. The theorem to which we refer as the Morse Lemma has nothing to do with this.

\begin{thm}[Morse Lemma]\label{morse lemma}
 Let $(M,g)$ be either the flat 2-torus $(\T,euc)$ or have sectional curvatures bounded from above by some constant $-a^2<0$. Let $F$ be any Finsler metric on $M$ (recall that $F$ is uniformly equivalent to $g$). Then there exists a constant $C \geq 0$ depending only on $F$ and $g$ with the following property: If $x,y$ are any two points in the universal cover $H$ and if $c:[0,d_F(x,y)]\to H$ is a minimizing $F$-geodesic segment from $x$ to $y$, then for the (unique) $g$-geodesic segment $\g_{x,y}\subset H$ from $x$ to $y$ we have
 \[ \max \{ d_g(\g_{x,y},c(t)) : 0\leq t \leq d_F(x,y) \} \leq C. \]
\end{thm}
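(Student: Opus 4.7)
The plan is to first extract a key length comparison from the uniform equivalence of $F$ and $g$, and then to handle the two geometric cases by rather different arguments. Setting $w=v$ in the strong convexity condition of Definition \ref{def finsler} gives the stated uniform equivalence $c_F^{-1}|v|_g \leq F(v) \leq c_F|v|_g$; for the $F$-arclength parametrized minimizer $c:[0,T]\to H$ with $T=d_F(x,y)$ this yields
\[
l_g(c) \;\leq\; c_F\, l_F(c) \;=\; c_F\, d_F(x,y) \;\leq\; c_F^2\, d_g(x,y),
\]
and reparametrized by $g$-arclength $c$ is thus a $(c_F^2,0)$-quasi-geodesic in $(H,d_g)$: it is $1$-Lipschitz with $d_g(c(s),c(t)) \geq (t-s)/c_F^2$ for all $s\leq t$.

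In the negatively curved case $K_g \leq -a^2$, the space $(H,g)$ is CAT($-a^2$) and hence Gromov-hyperbolic with a hyperbolicity constant depending only on $a$. The classical Morse Lemma for $(k,0)$-quasi-geodesics in Gromov-hyperbolic spaces (e.g.\ Bridson--Haefliger) then directly yields a Hausdorff bound between $c$ and $\g_{x,y}$ depending only on $c_F$ and $a$, so that this case reduces to a standard general result.

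In the flat torus case $(H,g)=\R^2$ is not Gromov-hyperbolic and the quasi-geodesic estimate alone cannot bound the deviation, since logarithmic spirals are quasi-geodesics without staying close to any straight line. Here the plan is to invoke the classical Hedlund--Morse argument, which exploits two ingredients: the $\Gam=\Z^2$-periodicity of $F$, by which every lattice translate $c+\tau$, $\tau\in\Z^2$, is again an $F$-minimal segment, and Lemma \ref{crossing minimals}, which prevents two distinct $F$-minimal lifts from having successive intersections and hence from crossing transversely at two distinct points. Assuming for contradiction that the deviation $h:=\max_t d_g(c(t),\g_{x,y})$ is large compared to $c_F$ times the diameter of a fundamental domain of $\Z^2$, one can choose $\tau\in\Z^2$ roughly perpendicular to $\g_{x,y}$ with $|\tau|_g$ somewhat smaller than $h$, so that the translated chord $\g_{x+\tau,y+\tau}$ lies inside the bounded region enclosed between $c$ and $\g_{x,y}$. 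A topological intermediate-value argument then forces the translate $c+\tau$ to cross $c$ at two distinct points, violating Lemma \ref{crossing minimals}.

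The flat case is the main obstacle: in contrast to the clean Gromov-hyperbolic invocation available under negative curvature, it demands a genuinely combinatorial argument based on periodicity and the non-crossing property of minimals. Ensuring that the chosen translate $c+\tau$ really does produce two transverse crossings, and making the resulting bound on $h$ quantitative in terms of $c_F$ and the lattice geometry, is where the careful case-work lies.
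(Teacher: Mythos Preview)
Your treatment of the negatively curved case is correct and is essentially the paper's argument at a higher level of abstraction: recognizing $F$-minimizers as $(c_F^2,0)$-quasi-geodesics and invoking the Morse Lemma for Gromov-hyperbolic spaces is exactly right. The paper re-proves this special case by hand (its Steps~1--2), using $l_g(c_r)=d_g(x_0,x_1)\cosh(r)$ to force exponential length growth for curves staying at distance $\geq r$ from $\g$; your black-box appeal to Bridson--Haefliger and the paper's direct computation are the same proof.

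The torus case, however, has a genuine gap, not just missing case-work. Your proposed contradiction --- translate $c$ by $\tau$ roughly perpendicular to $\g_{x,y}$ with $|\tau|<h$ and obtain two crossings of $c$ with $c+\tau$ --- fails already in the simplest picture: if $c$ is (qualitatively) a single arch over $\g_{x,y}$, then $c+\tau$ is the same arch shifted upward, so $c+\tau$ lies everywhere above $c$ and they do not cross at all. The underlying problem is topological: a finite segment $c+\tau$ does not separate $\R^2$, so no Jordan-type argument forces $c$ to meet it twice, and the assertion that the translated chord lies inside the region between $c$ and $\g_{x,y}$ already fails at the endpoints $x+\tau,y+\tau$.

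The paper circumvents this by a genuinely different mechanism. It first constructs $\tau$-\emph{periodic} minimal $F$-geodesics (shortest closed geodesics in homotopy classes, shown in Steps~1--3 to lift to global minimals) and proves in Step~4 that these stay within a uniform $r$ of the corresponding Euclidean line. Being full lines, such periodic minimals are honest separators of $\R^2$. In Step~5 the arbitrary minimal segment $c$ is then trapped in a strip bounded by two nearby $\Gam$-translates of a periodic minimal: exiting the strip would force two successive intersections with an \emph{infinite} minimizer, which the non-crossing principle forbids. Your ingredient (Lemma~\ref{crossing minimals}) is the right one, but it has to be applied against global periodic barriers, not against a finite translate of $c$ itself.
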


Hence, $F$-minimizing geodesics shadow $g$-geodesics in a uniform fashion. In this paper we will often work with the assumption that {\em the Morse Lemma holds}. In this case we mean that we are given $H,g,F$, so that the assertion in Theorem \ref{morse lemma} is satisfied.

Theorem \ref{morse lemma} appeared first as Lemma 8 in Morse's work \cite{morse} in the case of a surface of genus at least 2. Hedlund observed in \cite{hedlund} that it holds also for the 2-torus. Zaustinsky \cite{zaustinsky} observed that the metric $F$ does not have to be Riemannian or even reversible. The proof in higher dimensions in the case of manifolds with a negatively curved background metric $g$ was given by Klingenberg in \cite{klingenberg}. By an example of Hedlund \cite{hedlund}, Theorem \ref{morse lemma} is false even for $M=\Tn$ with $n\geq 3$.

We shall sketch the proof of Theorem \ref{morse lemma} in the two situations, which will be very different.

\begin{proof}[Sketch of the proof of Theorem \ref{morse lemma}, hyperbolic case]
 This is the case of an upper bound for the sectional curvatures. The proof of Klingenberg in \cite{klingenberg} is basically the same as in \cite{morse}. First observe that by using Rauch's comparison theorems, one can in essence assume that $(H,g)$ has constant sectional curvatures $-a^2$. Intuitively, if the curvature is below that, all estimates become more extreme. Let us assume $a=1$, the general case being analogous. We follow \cite{morse}.
 
 {\it Step 1}. There exists a constant $r=r(F,g)>0$ with the following property. Suppose $\g \subset H$ is a $g$-geodesic and $c:[t_0,t_1]\to H$ is a $C^1$-curve with
 \[ d_g(\g,c(t_0)) = d_g(\g,c(t_1)) = r \leq d_g(\g, c(t)) \qquad \forall t\in [t_0,t_1] , \]
 and moreover, projecting $c(t_0),c(t_1)$ to points $x_0,x_1\in\g$ via the unique $g$-geodesics orthogonal to $\g$, we assume
 \[ d_g(x_0,x_1) > 1 . \]
 Then $l_F(c;[0,1]) > d_F(c(0),c(1))$, i.e.\ $c$ is not minimal.
 
 \begin{proof}
  Let $c_r:[t_0,t_1]\to D$ be a curve from $c(t_0)$ to $c(t_1)$ in constant $g$-distance $r$ from $\g$. Hyperbolic geometry tells us that
  \[ l_g(c) \geq l_g(c_r) = d_g(x_0,x_1) \cdot \cosh(r) . \]
  The uniform equivalence of $F$ and $g$ in the sense $\frac{1}{c_F}|.|_g\leq F \leq c_F |.|_g$, $d_g(x_0,x_1)> 1$ and the triangle inequality then show
  \begin{align}
   \nonumber d_F(c(t_0),c(t_1)) & \leq c_F\bigg( d_g(c(t_0),x_0) + d_g(x_0,x_1) + d_g(x_1,c(t_1)) \bigg) \\
   \label{eqn morse lemma}& < c_F ( 2r +1) \cdot d_g(x_0,x_1) \\
   \nonumber & \leq \frac{c_F^2( 2r +1)}{\cosh(r)} \cdot l_F(c) .
  \end{align}
  Choosing $r$ so that $\frac{c_F^2( 2r +1)}{\cosh(r)} \leq 1$ proves the claim.
 \end{proof}
 
 {\it Step 2}. The theorem holds with
 \[ C = r + c_F^2 (2r+1) . \]
 
 \begin{proof}
  Let $c:[0,d_F(x,y)]\to D$ be a minimal segment from $x$ to $y$ and let $\g$ be the $g$-geodesic joining $x,y$. Suppose that $d_g(\g,c(t_*)) > r$ for some $t_*$ and let $t_0 \leq t_*$ be maximal with $d_g(\g,c(t_0))=r$ and $t_1\geq t_*$ be minimal with $d_g(\g,c(t_1))=r$. From Step 1 and the minimality of $c$ we find for the orthogonal projections $x_0,x_1$ of $c(t_0), c(t_1)$ to $\g$, that $d_g(x_0,x_1)\leq 1$. Again by minimality of $c$ and \eqref{eqn morse lemma},
  \begin{align*} 
   l_F(c;[t_0,t_1]) = d_F(c(t_0),c(t_1)) & \leq c_F ( 2r +1) .
  \end{align*}
  Hence, we find
  \begin{align*} 
   d_g(\g,c(t_*)) \leq d_g(x_0,c(t_*)) & \leq d_g(x_0,c(t_0)) + l_g(c;[t_0,t_1]) \\
   & \leq r + c_F^2 (2r+1).
  \end{align*}
  The claim follows.
 \end{proof}
\end{proof}

The proof in the torus case involves several steps and we will be more sketchy than in the hyperbolic case. We follow the ideas of Hedlund \cite{hedlund}, see also \cite{paper1} for the general Finsler case.

\begin{proof}[Sketch of the proof of Theorem \ref{morse lemma}, case of the 2-torus]
 \begin{enumerate}[1.]
  \item This is a to\-pological statement. Let $c:\R\to H \cong \R^2$ be a continuous, $\tau$-periodic curve for some $\tau\in\Gam\cong\Z^2$. Assume that $\tau$ is an iterate of some $\tau_0$, i.e.\ $\tau=\tau_0^k$ for some $k\geq 2$. Then $c$ can be decomposed into $k$ $\tau_0$-periodic curves.
  
  \item If $c:\R\to M$ is a shortest closed geodesic in the homotopy class $\tau=\tau_0^k$, then it is the $k$-th iterate of a shortest closed geodesic in the homotopy class $\tau_0$. For this one uses the decomposition from the first step: all curves in the decomposition have to be ``short'' and hence their translates cannot intersect transversely.
  
  \item It follows from Step 2 that shortest closed geodesics in homotopy classes lift to minimal geodesics in the universal cover $H$: Shortest closed geodesics in the class $\tau$ become shortest in $\tau^k$ after iteration, for all $k\geq 1$, such that the geodesics are minimizing on arbitrarily long subsegments.
  
  \item Prove that there exists a constant $r=r(F,g) \geq 0$, such that, given any $\tau\in\Gam$, all $\tau$-periodic minimal geodesics stay at distance at most $r$ from a $\tau$-periodic Euclidean straight line.
  
  \item Let $x,y\in H$ be arbitrary. Then there exist $\tau,\tau'\in\Gam$ and a $\tau$-periodic minimal geodesic $c:\R\to H$, such that $x,y$ lie in the strip bounded by $c(\R)$ and $\tau'\circ c(\R)$. Then any minimizing segment connecting $x$ to $y$ cannot exit the strip between the two $\tau$-periodic minimal geodesics, as this would imply successive intersections of minimizers (Lemma \ref{crossing minimals}). By Step 4, $c(\R),\tau'\circ c(\R)$ lie in $r$-distance from a Euclidean straight line. By controlling the size of $\tau'$, one infers the theorem.
 \end{enumerate}
\end{proof}

\begin{remark}\label{remark shorst closed geod are minimal}
 Observe that Steps 1 to 3 in the last proof also work in the case of a higher genus surface $M$, where $(H,g)$ is the \Poincare disc; observe that here the group generated by $\tau_0$ is isomorphic to $\Z$. This proves that if $M$ is a closed, orientable surface, the shortest closed geodesics in free homotopy classes are minimal geodesics.
\end{remark}

\subsection{The Gromov boundary of Hadamard manifolds}\label{section def gromov bdry}

We recall the definition of the Gromov boundary of a Hadamard manifold $(H,g)$, first introduced in \cite{eberlein-oneill}. Let $(H,g)$ be a Hadamard manifold, let $v,w\in S_gH$ and let $\g_v,\g_w:\R_-\to H$ be the corresponding $g$-rays. We render $v,w$ equivalent if the Hausdorff distance of the sets $\g_v(\R_-),\g_w(\R_-)$ in $(H,d_g)$ is finite. The {\em Gromov boundary} is the set of equivalence classes and will be denoted by
\[ \Gro_-(H,g) = \Gro(H,g) . \]
The equivalence class of a ray $\g:\R_-\to H$, respectively a vector $\dot\g(0)$ will be denoted by $\g(-\infty) \subset S_gH$. It can be shown that for each $x\in H$ and each $\xi\in \Gro(H,g)$ there is a unique $g$-ray $\g$ from $\xi$ to $x$, i.e.\ with $\g(-\infty)=\xi, \g(0)=x$. Hence, $\g(-\infty)\subset S_gH$ is a graph over $H$. We can make the same definitions with forward rays $\g:\R_+\to H$, defining a Gromov boundary $\Gro_+(H,g)$.

We endow $\Gro(H,g)$ with a topology. Given a sequence of points $\{\xi_n\}$ in $\Gro(H,g)$, which is a sequence of subsets of $S_gH$, we write $\xi_n\to \xi$ if for some (hence for all) $x\in H$ the arc-length $g$-rays $\g_n,\g:\R_-\to H$ with $\g_n(-\infty)=\xi_n , \g(-\infty)=\xi$ and $\g_n(0)=\g(0)=x$ satisfy $\dot \g_n(0) \to \dot \g(0)$ in the usual topology of $TH$. In this way, $\Gro(H,g)$ becomes homeomorphic to the unit sphere,
\[ \Gro(H,g)\cong \{ v\in T_xH : |v|_g=1\} \cong \mathbb{S}^{\dim H-1} . \]

The group $\Gam$ acts on $\Gro(H,g)$ by
\[ \tau (\g(-\infty)) := (\tau\circ \g)(-\infty) . \]

If $M$ is a closed, orientable surface of genus $\mathfrak{g}\geq 1$, the Gromov boundary is homeomorphic to $\se$. In Subsection \ref{section model geom} we defined for $\xi\in \se$ the graphs $\Sigma_\xi$ (if $\mathfrak{g}=1$) and $\Sigma_\xi^-$ (if $\mathfrak{g}\geq 2$). It is easy to see that these are precisely the graphs in $S_gH$ occuring as points $\g(-\infty)\in \Gro(H,g)$. We identify
\[ \Gro(H,g) \cong \se \qquad \text{via} \qquad \Sigma_\xi , \Sigma_\xi^- \cong \xi . \]
If $\mathfrak{g}=1$, then the Group $\Gam$ acts on $\Gro(H,g)$ only by the identity. If $\mathfrak{g}\geq 2$ then the Group $\Gam$ acts on $\Gro(H,g)\cong \se$ by the M\"obius transformations discussed in Subsection \ref{section model geom}.

\subsection{Asymptotic directions of rays}\label{section as directions}

Assume that $(H,g)$ is a Hada\-mard manifold and that the Morse Lem\-ma, i.e.\ Theorem \ref{morse lemma} holds. Due to the uniformity if the constant $C=C(F,g)$, we can associate to each $F$-ray $c:\R_-\to H$ a $g$-ray $\g:\R_-\to H$ having distance at most $C$ to $c$ and vice versa. We define the {\em asymptotic direction} of $c$ by
\[ c(-\infty) = \g(-\infty) \in\Gro(H,g) . \]
If $c$ is a forward ray, we define analogously $c(\infty) = \g(\infty)$, if $c(\R_+)$ and $\g(\R_+)$ have finite Hausdorff distance. By taking limits of minimizing $F$-geodesic segments from $\g(-n)$ to $x=\g(0)$ one sees that, for each $x\in H$ and each $\xi=\g(-\infty)\in \Gro(H,g)$, there exists an $F$-ray $c$ with $c(-\infty)=\xi$ and $c(0)=x$. Moreover, by letting $x$ tend to $+\infty$ along $\g$, one finds a minimal geodesic $c:\R\to H$ with $c(\pm\infty)=\g(\pm\infty)$.

For $\xi\in \Gro_\pm(H,g)$ and a $g$-geodesic $\g$, we set
\begin{align*}
 \RR_\pm(\xi ) & := \{ v\in \RR_\pm : c_v(\pm \infty)=\xi \}, \\
 \M(\g) & := \{ v\in \M : c_v(-\infty)=\g(-\infty) \text{ and } c_v(\infty)=\g(\infty) \}.
\end{align*}
All of these sets in $S_FH$ are non-empty, closed and $\phi_F^t$-invariant for $t\in \R_\pm, t\in\R$, respectively. Moreover,
\[ \pi(\RR_-(\xi)) = H \qquad \forall \xi\in \Gro(H,g) . \]
The sets $\RR_\pm,\M$ of rays and minimal geodesics now decompose:
\begin{align*}
 \RR_\pm & = \bigcup \big\{ \RR_\pm(\xi) : \xi\in \Gro_\pm(H,g) \}, \\
 \M &= \bigcup\big\{ \M(\g) : \g \text{ a $g$-geodesic} \big\} .
\end{align*}
Observe also that we have continuity:
\[ v_n\to v \text{ in } \RR_\pm \quad \implies  \quad  c_{v_n}(\pm\infty)\to c_v(\pm\infty) \text{ in } \Gro_\pm(H,g). \]
Note that $\RR_-$ with the decomposition into sets $\RR_-(\xi)$ generalizes the construction of the Gromov boundary. One way of expressing the above facts is that the Gromov boundary of $F$ and $g$ (defined in terms of rays) is independent of the metric, so long as the Morse Lemma holds.

\subsection{The width of asymptotic directions}\label{section width}

Let $(M=H/\Gam,g)$ have curvature bounded from above by a negative constant. The Morse Lemma holds. The following notion has proven to be very useful. It was introduced in \cite{min_rays}.

\begin{defn}\label{def width}
For $\xi\in \Gro(H,g)$ and a $g$-geodesic $\g:\R\to H$ we define the {\em widths of $\xi,\g$}, respectively:
\begin{align*}
w_-(\xi) &:= \sup \big\{ \liminf_{t\to -\infty} d_g(c_v(\R_-),c_w(t)) : v,w\in\RR_-(\xi) \big\} , \\
w_0(\g) &:= \sup \big\{ \inf_{t \in\R} d_g(c_v(\R),c_w(t)) : v,w\in\M(\g) \big\} .
\end{align*}
\end{defn}

The widths have the following properties (see \cite{min_rays}).

\begin{prop}\label{width semi-cont}
 \begin{enumerate}[(i)]
  \item The widths $w_0,w_-$ are bounded by the constant $C=C(F,g)$ given by the Morse Lemma. 
 
  \item Let $\g,\g':\R_-\to H$ be two $g$-rays and let $\{\tau_n\}\subset\Gam$ with
  \[ D\tau_n(\dot \g(t_n)) \to \dot \g'(0) \text{ for some sequence } t_n\to-\infty . \]
  Then $w_-(\g(-\infty)) \leq w_0(\g')$.  
  
  \item If $w_-(\xi)=0$, then no two rays in $\RR_-(\xi)$ intersect.
 \end{enumerate}
\end{prop}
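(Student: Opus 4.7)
The plan is to treat the three assertions in order, leaning on the Morse Lemma (Theorem~\ref{morse lemma}) for (i) and (ii) and on Lemma~\ref{crossing minimals} for (iii). For (i), fix $v,w\in\RR_-(\xi)$. By the Morse Lemma applied to the $F$-minimal segments from $c_v(-n)$ to $c_v(0)$ and letting $n\to\infty$, one extracts a $g$-ray $\g_v:\R_-\to H$ with $\g_v(-\infty)=\xi$ that shadows $c_v$ to within $C$; likewise there is $\g_w$ shadowing $c_w$. Because the curvature of $(M,g)$ is bounded above by $-a^2<0$, the two $g$-rays $\g_v,\g_w$ end at the same Gromov boundary point and are therefore asymptotic, i.e.\ $d_g(\g_v(s),\g_w(s+s_0))\to 0$ as $s\to-\infty$ for a suitable constant $s_0$. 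Concatenating, for $t\to -\infty$ the point $c_w(t)$ lies within $C+o(1)$ of some point of $c_v(\R_-)$, yielding $w_-(\xi)\leq C$; the same argument handling both ends in parallel gives $w_0(\g)\leq C$.

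For (ii) I would produce, from given $v,w\in\RR_-(\xi)$, a pair of minimal geodesics with initial conditions $\tilde v,\tilde w\in\M(\g')$ that witness at least the same width. Using the hypothesis $D\tau_n(\dot\g(t_n))\to\dot\g'(0)$, set $\tilde v_n:=D\tau_n(\dot c_v(t_n+a_n))$ and $\tilde w_n:=D\tau_n(\dot c_w(t_n+b_n))$, where the time shifts $a_n,b_n$ are chosen via the Morse shadowing so that $c_v(t_n+a_n)$ and $c_w(t_n+b_n)$ sit close to $\g(t_n)$. By Arzel\`a--Ascoli applied to $\phi_F^t$, a subsequence of $\tilde v_n,\tilde w_n$ converges to limits $\tilde v,\tilde w\in S_FH$. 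The Morse shadowing together with the convergence of $\tau_n\circ\g$ (suitably reparametrized) to $\g'$ then identifies $\tilde v,\tilde w$ as initial conditions of minimal geodesics asymptotic to both ends of $\g'$, hence in $\M(\g')$. Since each $\tau_n$ is a $g$-isometry, the $\liminf$ of $d_g(c_v(\R_-),c_w(t))$ as $t\to-\infty$ passes, in the limit, to a lower bound for $\inf_{t\in\R} d_g(c_{\tilde v}(\R),c_{\tilde w}(t))$; taking suprema gives $w_-(\xi)\leq w_0(\g')$.

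For (iii), argue by contradiction. Suppose $w_-(\xi)=0$ yet $c_v,c_w\in\RR_-(\xi)$ intersect in the sense of Definition~\ref{def transverse intersection}: $c_w(s_0)=c_v(t_0)$ with $t_0<0$ and $\dot c_w(s_0)\neq\dot c_v(t_0)$. Lemma~\ref{crossing minimals}, applied to the constant sequences $v_n\equiv v$ and $w_n\equiv\dot c_w(s_0)$, supplies $\e_0>0$ such that for every fixed $\delta>0$ one has $d_g(c_v(\sigma),c_w(s_0+\tau))\geq \e_0$ whenever $\sigma\leq t_0$ and $\tau\leq-\delta$. On the other hand, $w_-(\xi)=0$ yields $\tau_k\to-\infty$ with $d_g(c_v(\R_-),c_w(\tau_k))\to 0$; since $c_w(\tau_k)$ exits every compact set in $H$, the realizing parameters $\sigma_k\in\R_-$ on $c_v$ must also tend to $-\infty$. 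For $k$ large, both $\sigma_k\leq t_0$ and $\tau_k-s_0\leq-\delta$, contradicting the uniform lower bound.

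The main obstacle I anticipate is part (ii): the hypothesis $D\tau_n(\dot\g(t_n))\to\dot\g'(0)$ concerns a single reference geodesic, but one must transfer convergence to \emph{every} pair of Morse-shadowed $F$-rays in $\RR_-(\xi)$ simultaneously and preserve the $\liminf$-distance through the limit. This requires uniform control of the Morse shadowing constants, a careful diagonal choice of time shifts $a_n,b_n$, and verification that both endpoints of the limiting minimal geodesics land on $\g'$. Parts (i) and (iii), by contrast, are essentially formal once the Morse Lemma and Lemma~\ref{crossing minimals} are in hand.
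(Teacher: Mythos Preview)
Your approach is correct and aligns with what the paper indicates: item (iii) is exactly the argument the paper sketches (``a direct consequence of Lemma~\ref{crossing minimals}''), and for (i)--(ii) the paper simply cites \cite{min_rays}, where the proof proceeds along the lines you describe (Morse shadowing plus transport by isometries and a limiting argument). Two small remarks: in (i) your chain of triangle inequalities actually yields $2C+o(1)$ rather than $C+o(1)$, since you pass through both $\gamma_w$ and $\gamma_v$; the paper's phrasing ``bounded by the constant $C$'' should be read as ``bounded in terms of $C$''. In (ii) your transfer of the $\liminf$ is justified because for each fixed $s,t\in\R$ one has eventually $t_n+a_n+s\le 0$, so the pre-limit distance dominates $d_g(c_v(\R_-),c_w(t_n+b_n+t))$, whose $\liminf$ over $n$ is at least $L$ since $t_n+b_n+t\to-\infty$; this is the step you flagged and it goes through.
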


The last property is a direct consequence of Lemma \ref{crossing minimals}.

Let us now assume that $M$ is compact. Using the ergodicity of the $g$-geodesic flow, for almost every $\xi\in \Gro(H,g)$ (using the Lebesgue measure in $\mathbb{S}^{\dim M-1}\cong \Gro(H,g)$), the $g$-rays $\g$ with $\g(-\infty)=\xi$ are backwards dense in $S_gM$ under the $g$-geodesic flow. Hence, Proposition \ref{width semi-cont} proves the following.

\begin{cor}
 Let $(M,g)$ be compact and have negative curvature. If $F$ is a Finsler metric on $M$, so that there exists a $g$-geodesic $\g:\R\to H$ with $w_0(\g)=0$, then $w_-(\xi)=0$ for almost all $\xi\in\Gro(H,g)$.
\end{cor}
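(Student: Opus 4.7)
The plan is to combine ergodicity of the $g$-geodesic flow with the semicontinuity of the widths in Proposition \ref{width semi-cont}(ii). Let $\g_0:\R\to H$ be the given $g$-geodesic with $w_0(\g_0)=0$, and set $v_0:=\dot\g_0(0)\in S_gH$, $\bar v_0:=Dp(v_0)\in S_gM$. If, for a given $\xi\in\Gro(H,g)$, one can exhibit a $g$-ray $\g$ with $\g(-\infty)=\xi$ together with sequences $t_n\to -\infty$ and $\tau_n\in\Gam$ such that $D\tau_n(\dot\g(t_n))\to v_0$ in $S_gH$, then Proposition \ref{width semi-cont}(ii), applied with $\g_0|_{\R_-}$ in place of $\g'$, immediately gives $w_-(\xi)\le w_0(\g_0)=0$. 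The whole task is therefore to produce such $\xi$ of full Lebesgue measure in $\Gro(H,g)$.

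First I would invoke the Hopf--Anosov theorem: since $M$ is compact and strictly negatively curved, $\phi_g^t$ is ergodic on $(S_gM,\mu)$ for the Liouville measure $\mu$, and the same holds for $\phi_g^{-t}$. Hence the set $E\subset S_gM$ of vectors whose backward orbit is dense in $S_gM$ has full $\mu$-measure. The Liouville measure disintegrates along the fibers of $\pi:S_gM\to M$ as spherical Lebesgue measure on each fiber $S_gM|_y$ against the Riemannian volume on $M$; so by Fubini there is a point $y\in M$ with $E\cap S_gM|_y$ of full Lebesgue measure in $S_gM|_y$. Fix a lift $x\in H$ of $y$ and consider the visual map $\Phi:\Gro(H,g)\to S_gM|_y$, $\xi\mapsto Dp(\dot\g_\xi(0))$, where $\g_\xi:\R_-\to H$ is the unique $g$-ray with $\g_\xi(0)=x$, $\g_\xi(-\infty)=\xi$. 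Since the sectional curvature is bounded above by a negative constant, $\Phi$ is a diffeomorphism of spheres, so $\Phi^{-1}(E\cap S_gM|_y)$ has full Lebesgue measure in $\Gro(H,g)$.

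For any $\xi$ in this set, extend $\g_\xi$ to all of $\R$ and choose $t_n\to-\infty$ with $Dp(\dot\g_\xi(t_n))\to\bar v_0$, using that $Dp(\dot\g_\xi(0))\in E$. Pick an evenly covered neighborhood $U\ni v_0$ in $S_gH$; for $n$ large, $Dp(\dot\g_\xi(t_n))$ lies in $Dp(U)$, so there is a unique lift $w_n\in U$ with $Dp(w_n)=Dp(\dot\g_\xi(t_n))$, and $w_n\to v_0$. Since $\Gam$ acts freely and transitively on the fibers of $Dp$, there is a unique $\tau_n\in\Gam$ with $D\tau_n(\dot\g_\xi(t_n))=w_n$. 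This verifies the hypothesis of Proposition \ref{width semi-cont}(ii), from which $w_-(\xi)=0$ follows.

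The main obstacle, in my view, is the measure-theoretic bookkeeping in the middle step: one must disintegrate the Liouville measure fiberwise as spherical Lebesgue measure so that Fubini applies, and verify that the visual map $\Phi$ is regular enough to transport full-measure sets. The latter is where strictly negative curvature (rather than mere non-positivity) enters essentially, ensuring smoothness of the identification of the boundary sphere with the fiber $S_gM|_y$. Once these standard facts are in place, the lift of convergence from $S_gM$ to $S_gH$ via deck transformations and the final appeal to Proposition \ref{width semi-cont}(ii) are essentially formal.
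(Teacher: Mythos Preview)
Your proof is correct and follows essentially the same route as the paper: ergodicity of $\phi_g^t$ yields that for Lebesgue-almost every $\xi\in\Gro(H,g)$ the backward $g$-orbit through the corresponding ray is dense in $S_gM$, and then Proposition \ref{width semi-cont}(ii) applied with $\g'=\g_0$ gives $w_-(\xi)\le w_0(\g_0)=0$. The paper compresses the measure-theoretic step into a single sentence, whereas you spell out the disintegration/Fubini argument and the identification via the visual map; these are exactly the ``standard facts'' the paper is relying on implicitly.
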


We will see that $\dim M=2$ implies the existence of $\g$ with $w_0(\g)=0$. In Theorem \ref{thm rays-paper} we will even conclude, that for all irrational $\xi\in \se$ we have $w_-(\xi)=0$. In particular, if $\dim M=2$, then most sets $\RR_-(\xi)$ have a simple structure, as there are no intersecting rays. This motivates the following question.

\begin{prob}\label{problem thin neck}
 Let $(M,g)$ be compact and have negative curvature, $F$ a Finsler metric on $M$. Do there always exist $g$-geodesics $\g:\R\to H$ with $w_0(\g)=0$?
\end{prob}

\section{Dominated functions and weak KAM solutions}\label{section weak KAM}

In Subsection \ref{section rot torus} we saw the example of the rotational torus admitting invariant graphs for the geodesic flow. Moreover, in Subsection \ref{section model geom} we saw invariant graphs of the form $\Sigma_\xi,\Sigma_\xi^-$. In Subsection \ref{section as directions}, we already generalized these sets in the form $\RR_-(\xi)$. In this section, we discuss another way of generalizing the notion of such invariant graphs. We follow some ideas from \cite{fathi}.

We translate to the Hamiltonian setting and work in the universal cover $H$. The dual Finsler metric associated to $F$ is given by
\[ F^*:T^*H\to \R, \qquad F^*(\eta) := \max \{ \eta(v) : \pi v = \pi \eta , ~ F(v)=1 \} . \]
Due to strict convexity of $F$, the maximum in the definition of $F^*(\eta)$ is attained in a unique point $v\in S_FH \cap T_{\pi\eta}H$, which we denote by $\theta_F(\eta)$. Hence, we obtain the {\em Fenchel inequality}
\[ \eta(v) \leq F(v)\cdot F^*(\eta) \]
for all $v\in TH,\eta\in T^*H$ with $\pi v=\pi \eta$ and
\[ \eta(v) = F(v)\cdot F^*(\eta) \iff v= F(v) \cdot \theta_F(\eta) . \]

The invariant graphs found in the above examples are in this setting graphs of 1-forms
\[ \eta:H\to S_F^*H = \{F^*=1\}\subset T^*H \]
Let us assume that $\eta$ is $C^1$ and closed, the latter being automatically the case if $\dim H=2$. This means that the graph of $\eta$ is a Lagrangian $C^1$-submanifold of $T^*H$. As $H$ is simply connected the closed 1-form $\eta$ becomes an exact 1-form. Hence, each invariant graph in the above examples lifted to the universal cover $H$ is of the form
\[ \{ (x,du(x)) : x\in H \} \subset S_F^*H \]
for some $C^2$-function $u:H\to\R$. Hence, the function $u$ is a solution of the {\em Hamilton-Jacobi equation} (sometimes called eikonal equation)
\[ F^*(du(x)) = 1 \qquad \forall x\in H. \]
We see that being a solution of the Hamilton-Jacobi equation depends only of the differential of $u$. In particular, $u$ is a solution if and only if $u+c$ is, for any $c\in\R$.

\begin{defn}
 We identify continuous functions $u,u' \in C^0(H)$, if $u-u'=\const$. The corresponding equivalence relation is denoted by $\sim$ on $C^0(H)$, denoting equivalence classes by $[u]=\{u+c:c\in\R\}$. We endow $C^0(H)$ with the $C_{loc}^0$-topology of locally uniform convergence, i.e.\ we write $u_n\to u$ if
 \[ \forall K\subset H \text{ compact}: \quad \max\{u_n(x)-u(x) : x\in K \} \to 0. \]
 The quotient $C^0(H)/_\sim$ inherits the quotient $C_{loc}^0$-topology.
 
 We apply the analogous definitions to functions $u\in C^0(M)$.
\end{defn}

In general, we will not be able to find smooth solutions to the Hamilton-Jacobi equation. A common approach (see \cite{fathi}) is to seek subsolutions in some sense, replacing equality by an inequality in the Hamilton-Jacobi equation. Let $c:[a,b]\to H$ be a $C^1$-curve and suppose that $F^*(du)\leq 1$ everywhere. Then by the Fenchel inequality
\begin{align*}
 u(c(b))-u(c(a)) & = \int_a^b du(c)[\dot c] ~ dt \leq \int_a^b F(\dot c)\cdot F^*(du(c)) ~ dt \\
 & \leq l_F(c;[a,b]) .
\end{align*}
As $c$ was arbitrary, we find that $F^*(du)\leq 1$ implies
\begin{align}\label{eq u dominated}
 u(y)-u(x) \leq d_F(x,y) \qquad \forall x,y\in H . 
\end{align}
It is not difficult to show that the converse is also true.

\begin{defn}\label{def dominated}
 A {\em dominated function} (with respect to $F$) is an element $[u]\in C^0(H)/_\sim$, whose members satisfy the inequality \eqref{eq u dominated}. The set of all dominated functions $[u]$ is denoted by $\Dom(H,F)$.
\end{defn}

Observe that the constant functions form an element of $\Dom(H,F)$. Using the triangle inequality it is easy to see that for a given point $x\in H$ the distance function $[d_F(x,.)] \in \Dom(H,F)$. As the Finsler metric $F$ is equivalent to the Riemannian metric $g$ on $H$, any dominated function is $g$-Lipschitz with the Lipschitz constant $c_F\geq 1$, in particular continuous and differentiable almost everywhere by Rademacher's theorem. The set $\Dom(H,F)$ is sequentially closed in $C^0(H)/_\sim$ with respect to the $C_{loc}^0$-topology. Using the theorem of Arzela-Ascoli and the uniform Lipschitz continuous, the set $\Dom(H,F)$ is sequentially compact.

The inequality $u(c(b))-u(c(a)) \leq l_F(c;[a,b])$ shows the following.

\begin{prop}\label{prop calibrate implies minimal}
 If $[u]\in \Dom(H,F)$ and if $c:[a,b]\to H$ is an arc-length, continuous, piecewise $C^1$-curve with $u\circ c(b)-u\circ c(a)=b-a$, then $c$ is a minimizing geodesic segment.
\end{prop}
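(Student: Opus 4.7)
The plan is to squeeze the chain of inequalities coming from the domination condition until equality forces $c$ to realize the Finsler distance between its endpoints. First I would observe that the arc-length hypothesis gives $F(\dot c) = 1$ almost everywhere, hence $l_F(c;[a,b]) = \int_a^b F(\dot c)\,dt = b-a$. Next, applying the definition of $[u] \in \Dom(H,F)$ to the pair $(c(a),c(b))$ and combining with the hypothesis $u(c(b)) - u(c(a)) = b-a$ yields
\[
b-a \;=\; u(c(b)) - u(c(a)) \;\leq\; d_F(c(a),c(b)) \;\leq\; l_F(c;[a,b]) \;=\; b-a,
\]
where the second inequality is just the definition of $d_F$. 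Equality throughout forces $d_F(c(a),c(b)) = l_F(c;[a,b])$, so $c$ realizes the $F$-distance between its endpoints.

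To upgrade this endpoint-minimizing statement to minimality on every subsegment, I would repeat the argument on an arbitrary subinterval $[s,t] \subset [a,b]$. The domination inequality applied to $c(s),c(t)$ gives $u(c(t)) - u(c(s)) \leq d_F(c(s),c(t)) \leq t-s$, while summing such inequalities along the partition $a \leq s \leq t \leq b$ and comparing against the overall gain $u(c(b)) - u(c(a)) = b-a$ forces $u(c(t)) - u(c(s)) = t-s$ on every subinterval. Then the same three-term squeeze shows $d_F(c(s),c(t)) = t-s = l_F(c;[s,t])$, so every subsegment is length-minimizing.

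The remaining point is to pass from \emph{length-minimizing arc-length curve} to \emph{geodesic segment}. This is the standard regularity result for minimizers of strongly convex one-dimensional variational problems: a piecewise $C^1$ arc-length minimizer of a Finsler length functional satisfying the strong convexity in Definition \ref{def finsler}(iv) is automatically $C^\infty$ and satisfies the $F$-geodesic equation. I would simply cite this (e.g.\ from \cite{BCS}) rather than reprove it.

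I do not expect any real obstacle: the entire content of the proposition is the Fenchel-type squeeze built into the definition of $\Dom(H,F)$, and the only non-trivial ingredient — regularity of Finsler minimizers — is standard. The mild bookkeeping point is extending minimality from the endpoints to every subsegment, but this is handled cleanly by the additivity argument above.
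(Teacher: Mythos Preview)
Your proof is correct and follows the same squeeze argument the paper uses: the paper simply invokes the inequality $u(c(b))-u(c(a)) \leq l_F(c';[a,b])$ (valid for any competitor $c'$) derived just above the proposition, which together with the arc-length hypothesis $l_F(c;[a,b])=b-a$ forces $c$ to be length-minimizing. Your additional remarks on subsegments and on regularity of Finsler minimizers are sound elaborations that the paper leaves implicit or handles later (the subsegment calibration is noted right after Definition~\ref{def calibration}).
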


\begin{defn}\label{def calibration}
 An arc-length $C^1$-curve $c:[a,b]\to H$ with $u\circ c(b)-u\circ c(a)=b-a$ is said to {\em calibrate $[u]\in \Dom(H,F)$ on $[a,b]$}.
\end{defn}

The group of isometries $\Gam$ acts on $\Dom(H,F)$ via
\[ \tau [u] = [u \circ \tau^{-1}] . \]
Hence, if $c$ is $[u]$-calibrated, then $\tau\circ c$ is $\tau [u]$-calibrated.

Note that a calibrating curve $c:[a,b]\to H$ is also calibrating on all $[s,t]\subset[a,b]$. In the next well-known proposition, we relate the calibration property to the differential of $u$. Recall the map $\theta_F:T^*H\to S_FH$ defined at the beginning of this section.

\begin{prop}\label{lemma weak KAM}
 Let $[u]\in\Dom(H,F)$, $a<b$ and $c:[a,b]\to H$ be an arc-length geodesic segment.
  \begin{enumerate}[(i)]
   \item\label{u diffbar on cal} If $c$ is $[u]$-calibrated, then $u$ is differentiable with $F^* \circ du=1$ in $c(a,b) \subset H$ and $c$ satisfies $\dot c =\theta_F \circ du \circ c$ in $(a,b)$. Moreover, if $u$ is differentiable in $c(t)$ for $t\in \{a,b\}$, then $\dot c(t) =\theta_F \circ du \circ c(t)$.
   
   \item\label{item graphs calibrate} If $u$ is differentiable with $F^* \circ du=1$ in $c(a,b)\subset H$ and if $c$ satisfies $\dot c=\theta_F\circ du \circ c$ in $(a,b)$, then $c$ is $[u]$-calibrated.
  \end{enumerate}
\end{prop}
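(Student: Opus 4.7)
For the easier direction (ii), I would integrate directly. Since $u$ is differentiable on $c(a,b)$ by hypothesis and globally $g$-Lipschitz, the composition $u\circ c$ is absolutely continuous on $[a,b]$, so
\[
  u(c(b))-u(c(a)) \;=\; \int_a^b du(c(t))[\dot c(t)]\,dt.
\]
Substituting $\dot c = \theta_F\circ du\circ c$ and using the defining property $\eta(\theta_F(\eta)) = F^*(\eta)$ of $\theta_F$ together with the hypothesis $F^*\circ du = 1$ on $c(a,b)$, the integrand is identically $1$ on $(a,b)$, so the integral equals $b-a$ and $c$ is $[u]$-calibrated.

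For direction (i), the plan is a ``sandwich by smooth distance functions'' argument. Fix $t_0 \in (a,b)$, put $x_0 := c(t_0)$, and pick $\delta > 0$ small enough that $[t_0-\delta,\,t_0+\delta]\subset (a,b)$ and the segment $c|_{[t_0-\delta,\,t_0+\delta]}$ avoids cut and conjugate points, so that both
\[
 \phi_+(x) := u(c(t_0-\delta)) + d_F(c(t_0-\delta),\,x), \qquad
 \phi_-(x) := u(c(t_0+\delta)) - d_F(x,\,c(t_0+\delta))
\]
are smooth on a neighborhood of $x_0$. The dominated inequality \eqref{eq u dominated} gives $\phi_- \leq u \leq \phi_+$ on $H$. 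Calibration on $[a,b]$ combined with \eqref{eq u dominated} and $F(\dot c)=1$ forces, by telescoping, $u(c(t)) - u(c(s)) = t-s$ (and $d_F(c(s),c(t))=t-s$) for all $a\le s\le t\le b$; in particular $\phi_\pm(x_0) = u(x_0)$, so $\phi_+$ is a smooth upper and $\phi_-$ a smooth lower support for $u$ at $x_0$.

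The key ingredient is the classical Finsler duality between distance and velocity: at a smooth point $z$ of $d_F(y,\cdot)$ one has the eikonal identity $F^*(d(d_F(y,\cdot))(z)) = 1$ and $\theta_F(d(d_F(y,\cdot))(z)) = \dot\sigma(d_F(y,z))$, where $\sigma$ is the unique arc-length $F$-minimizer from $y$ to $z$; the time-reversed counterpart gives $F^*(-d(d_F(\cdot,y))(z)) = 1$ and $\theta_F(-d(d_F(\cdot,y))(z)) = \dot\sigma(0)$ for $\sigma$ the minimizer from $z$ to $y$. Applied at $x_0$, this yields $F^*(d\phi_\pm(x_0)) = 1$ and $\theta_F(d\phi_\pm(x_0)) = \dot c(t_0)$. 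Moreover, the non-negative function $\phi_+-\phi_-$ attains its minimum $0$ at $x_0$, so $d\phi_+(x_0) = d\phi_-(x_0) =: \eta_0$. The squeeze $\phi_- \leq u \leq \phi_+$ with $\phi_\pm(x_0) = u(x_0)$ and common differential $\eta_0$ at $x_0$ then pins $u(x_0 + sv) = u(x_0) + s\eta_0(v) + o(s)$ in any chart, for every tangent vector $v$; hence $u$ is differentiable at $x_0$ with $du(x_0) = \eta_0$, giving $F^*\circ du = 1$ and $\dot c = \theta_F\circ du\circ c$ on $(a,b)$. For the ``moreover'' at $t\in\{a,b\}$, only one of the two support functions is available ($\phi_-$ built from $c(a+\delta)$ at $t=a$, $\phi_+$ built from $c(b-\delta)$ at $t=b$); the hypothesized differentiability of $u$ at $c(t)$, combined with the elementary fact that a smooth one-sided support touching $u$ at a point of differentiability must share the differential there, gives $du(c(t)) = d\phi(c(t))$, and $\theta_F$ of both sides then yields $\dot c(t)$.

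The principal obstacle I anticipate is the Finsler distance-velocity duality used above and, especially, its time-reversed analogue for $d_F(\cdot, y)$: because $F$ is not assumed reversible, the two identities are genuinely different and the signs must be tracked carefully. Both can be extracted from the Hamiltonian (Legendre) formulation of the Finsler geodesic equations — $\theta_F$ is essentially the inverse Legendre transform restricted to $\{F^*=1\}$ — but a clean local statement in a neighborhood of $x_0$ rests on smooth dependence of $F$-minimizers on endpoints away from cut and conjugate points, which is the standard but nontrivial Finsler calculus of variations input. Once that is in hand, the sandwich argument and the boundary case are routine.
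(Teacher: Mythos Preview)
Your proof is correct and follows essentially the same sandwich-by-distance-functions strategy as the paper for part \eqref{u diffbar on cal}, and an identical integration argument for part \eqref{item graphs calibrate}. Two differences in execution are worth noting. First, the paper anchors the sandwich once at the endpoints, taking $\psi_+(x) = d_F(c(a),x) + u(c(a))$ and $\psi_-(x) = -d_F(x,c(b)) + u(c(b))$, rather than localizing at each $t_0$ with $c(t_0\pm\delta)$; this is a cosmetic simplification. Second, and more interestingly, the paper bypasses entirely the Finsler distance--velocity duality you flag as the principal obstacle: once differentiability of $u$ at $c(t)$ is established (either by the sandwich on $(a,b)$, or by hypothesis at the endpoints), the paper computes directly
\[
 du(c(t))[\dot c(t)] = \lim_{s\searrow 0}\frac{u(c(t+s))-u(c(t))}{s} = 1
\]
from calibration, which together with $F^*(du)\leq 1$ (domination) forces $F^*(du(c(t)))=1$ and, by definition of $\theta_F$, $\dot c(t)=\theta_F(du(c(t)))$. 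This directional-derivative trick is shorter and avoids any appeal to the first-variation formula for $d_F(\cdot,y)$ in the non-reversible setting; your route via identifying $d\phi_\pm(x_0)$ explicitly also works, but requires the Finsler calculus input you correctly anticipated.
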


\begin{remark}
 Observe that for the constant functions the are no calibrating curves at all. On the other hand, if $u$ is not differentiable in some point $x$, then there can be several $[u]$-calibrated curves emanating from $x$. If $u$ is differentiable in $x$, then there can be at most one $[u]$-calibrated curve $c$ ending in or emanating from $x$. Moreover, one can show Lipschitz regularity results on $du$, see Sections 4.11 and 4.13 in \cite{fathi}. This shows that the initial condition $v(x)= \theta_F \circ du(x)$ of a calibrated curve through $x$ has Lipschitz regularity in the domain of $du$.
\end{remark}

\begin{proof}
 \eqref{u diffbar on cal}. For any $x\in H$ we find by \eqref{eq u dominated}
 \begin{align*}
 \psi_-(x) := - d_F(x,c(b)) ~ + & ~ u(c(b)) \leq u(x) \\
 & \leq d_F(c(a),x) + u( c(a)) =: \psi_+(x) .
 \end{align*}
 As $c$ calibrates $u$, we find $\psi_- = u =\psi_+$ in $c(a,b)$, while $\psi_\pm$ are both smooth in $c(a,b)$ by minimality of the segment $c$. The differentiability of $u$ in $c(a,b)$ follows.
 
 Suppose now that $u$ is differentiable in $c(a)$. By \eqref{eq u dominated} we have
 \begin{align*}
  \textstyle du(c(a))\dot c(a) & = \lim_{t\searrow 0} \frac{1}{t} \big[u(c(a+t)- u(c(a))\big] \\
  & \leq \lim_{t\searrow 0} \frac{1}{t}  \int_a^{a+t} F(\dot c(s))ds = F(\dot c(a)) ,
 \end{align*}
 which becomes an equality, if $c$ calibrates $[u]$. In the latter case we find
 \begin{align*}
  du(c(a))\dot c(a) & = \max\{ du(c(a))[v] : \pi v = c(a) , ~ F(v)=1 \} \\
  & = F^*(du(c(a))).
 \end{align*}
 Hence, $\dot c(a) = \theta_F\circ du \circ c(a)$ by definition of $\theta_F$.

 \eqref{item graphs calibrate} We find by the Fenchel equality along $c(a,b)$, that
 \[ du(c)[\dot c] = du(c)[\theta_F\circ du(c)] = F(\theta_F\circ du(c)) \cdot F^*(du(c)) = 1 . \]
 The claim follows.
\end{proof}

The elements $[u]\in\Dom(H,F)$ representing invariant graphs in the above example have another property, which we wish to generalize. Let us assume that $F^*(du)\equiv 1$ for some $C^2$-function $u:H\to\R$. This defines an arc-length $C^1$-vector field $V(x) := \theta_F\circ du(x)$ on $H$. Using the solutions of $\dot c=V\circ c$ and Proposition \ref{lemma weak KAM} \eqref{item graphs calibrate} proves the following.

\begin{prop}\label{prop graphs calibrate}
 If $u:H\to \R$ is $C^2$ with $F^*(du)\equiv 1$, then for each point $x\in H$ there exists a $C^1$-curve $c:\R_-\to H$ with $c(0)=x$ calibrating $u$ on $\R_-$.
\end{prop}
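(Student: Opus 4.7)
The plan is to produce $c$ as an integral curve of the vector field
\[ V(x) := \theta_F \circ du(x), \qquad x\in H, \]
restricted to negative time, and then invoke Proposition \ref{lemma weak KAM} \eqref{item graphs calibrate} to get calibration. First I would check that $V$ is a well-defined $C^1$-vector field: since $F^*\circ du \equiv 1$, the covector $du(x)$ never vanishes, so we stay in the smooth locus $T^*H-0_H$, and hence $\theta_F\circ du$ is $C^1$ by the smoothness assumption in Definition \ref{def finsler}\,(i) together with $u\in C^2$. Moreover, $\theta_F$ takes values in $S_FH$ by construction, so $F(V(x))\equiv 1$ and consequently $|V(x)|_g\leq c_F$ by the uniform equivalence of $F$ and $g$.

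Next I would solve the ODE $\dot c = V\circ c$, $c(0)=x$, on a maximal backwards interval $(T,0]$ with $T\in[-\infty,0)$. The key point is to show $T=-\infty$. Since $|V|_g\leq c_F$ uniformly, any solution satisfies $d_g(x,c(t))\leq c_F\,|t|$, so $c((T,0])$ is contained in a closed $g$-ball around $x$ of radius $c_F|T|$. As $g$ is complete, this ball is compact by Hopf--Rinow, so the solution cannot escape every compact set in finite time; standard ODE theory then forces $T=-\infty$. This gives a $C^1$-curve $c:\R_-\to H$ with $c(0)=x$, and since $F(\dot c)=F(V\circ c)\equiv 1$, it is arc-length parametrized.

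Finally, for any $[a,b]\subset\R_-$, $u$ is $C^2$ with $F^*\circ du\equiv 1$ on all of $H$, and $c$ satisfies $\dot c = \theta_F\circ du\circ c$ by construction. Proposition \ref{lemma weak KAM}\,\eqref{item graphs calibrate} then says that $c|_{[a,b]}$ calibrates $[u]$, i.e.\ $u(c(b))-u(c(a))=b-a$. Since this holds for arbitrary $a<b\leq 0$, $c$ calibrates $[u]$ on $\R_-$, proving the proposition.

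The only nontrivial step is the global existence of the backwards trajectory, which is the main obstacle in a formal sense; it hinges precisely on the combination of $g$-completeness of $H$ and the uniform $F$-to-$g$ bound built into Definition \ref{def finsler}\,(iv). Beyond that, the proof is essentially an invocation of the already-established Proposition \ref{lemma weak KAM}\,\eqref{item graphs calibrate}.
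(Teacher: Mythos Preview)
Your proof is correct and follows exactly the approach sketched in the paper: define $V=\theta_F\circ du$, take an integral curve through $x$, and apply Proposition~\ref{lemma weak KAM}\,\eqref{item graphs calibrate}. You have simply filled in the details the paper leaves implicit, notably the global backwards existence via the uniform $g$-bound on $V$ and completeness of $(H,g)$.
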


Of course, the same holds true for $\R$ instead of $\R_-$.

\begin{defn}\label{def weak KAM}
 An element $[u]\in C^0(H)/_\sim$ is a {\em weak KAM solution} with respect to $F$, if
 \begin{enumerate}[(i)]
  \item $[u]\in \Dom(H,F)$
  
  \item\label{cond calibration} for each $x\in H$ there exists a ray $c:\R_-\to H$ with $c(0)=x$ calibrating $[u]$ on $\R_-$.
 \end{enumerate}
 The set of all weak KAM solutions in $(H,F)$ is denoted by $\W(H,F)$.
\end{defn}

\subsection{Directed weak KAM solutions}\label{section directed KAM}

As a byproduct we saw that invariant graphs of closed 1-forms in $H$ consist of rays and minimal geodesics. In Section \ref{section morse lemma} we saw that rays have in some cases a well-defined asymptotic direction. This leads to a special class of weak KAM solutions. In general, a weak KAM solution $u\in \W(H,F)$ admits calibrated rays with many different asymptotic directions.

\begin{defn}\label{def dir weak KAM}
 Assume that $(H,g)$ is a Hadamard manifold and that the Morse Lemma holds. A weak KAM solution $[u]\in\W(H,F)$ is said to be {\em directed}, if all $[u]$-calibrated rays $c:\R_-\to H$ have a common asymptotic direction $\xi = c(-\infty)$. The set of all directed weak KAM solutions will be denoted by $\W_{dir}(H,F)$. The map assigning to $[u]\in \W_{dir}(H,F)$ the asymptotic direction $\xi = c(-\infty)$ of its calibrated rays will be denoted by
 \[ \delta_F: \W_{dir}(H,F) \to \Gro(H,g) . \]
\end{defn}

One might expect that general weak KAM solutions (and hence candidates of invariant graphs for the geodesic flow in $S_FH$) are ``pieced together'' by directed ones. In this sense, the directed weak KAM solutions could be treated as simple building blocks. One main objective in this paper is to study the structure of directed weak KAM solutions. We assume for the rest of the section, that $(H,g)$ is a Hadamard manifold and that the Morse Lemma (Theorem \ref{morse lemma}) holds. In the following, we employ some ideas from Section 3 in \cite{min_rays}, referring to \cite{min_rays} for the proofs (even though we work with horofunctions there, one easily sees that the arguments work for directed weak KAM solutions).

\begin{prop}
 The set $\W_{dir}(H,F)$ is closed in $C^0(H)/_\sim$ and the map $\delta_F$ is continuous and surjective.
\end{prop}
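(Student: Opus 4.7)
The plan is to construct directed weak KAM solutions explicitly from the Gromov boundary (for surjectivity) and to deduce closedness of $\W_{dir}$ together with continuity of $\delta_F$ from a single Morse-Lemma compactness argument. Both parts lean on the same mechanism: the Morse Lemma pins down asymptotic directions of rays via their $g$-shadows.

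For surjectivity, fix $\xi \in \Gro(H,g)$ and choose a $g$-ray $\g:\R_-\to H$ with $\g(-\infty)=\xi$. Form the truncated Busemann candidates
\[ u_T(x) := d_F(x,\g(-T)) - d_F(\g(0),\g(-T)), \qquad T>0 . \]
Each $[u_T]\in\Dom(H,F)$ by the triangle inequality, and the $u_T$ are uniformly $c_F$-Lipschitz in $g$ with $u_T(\g(0))=0$, so Arzela-Ascoli yields a $C^0_{loc}$-subsequential limit $u$; sequential closedness of $\Dom(H,F)$ gives $[u]\in\Dom(H,F)$. At each $x\in H$ choose an $F$-minimizing segment $\sigma_{x,T}$ from $x$ to $\g(-T)$; by the Morse Lemma it stays within distance $C$ of the $g$-segment from $x$ to $\g(-T)$, which converges locally uniformly on $\R_-$ to the $g$-ray from $x$ in direction $\xi$. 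A diagonal subsequence of $\sigma_{x,T}$ therefore converges to a ray $c_x:\R_-\to H$ with $c_x(0)=x$ and $c_x(-\infty)=\xi$, and passing to the limit in $u_T(\sigma_{x,T}(-t))-u_T(x)=-t$ shows $c_x$ calibrates $[u]$. Finally, any $[u]$-calibrated ray $c'$ at $x$ must asymptotically saturate the reverse triangle inequality $d_F(x,\g(-T))=d_F(x,c'(-t))+d_F(c'(-t),\g(-T))-o(1)$, and by the Morse Lemma this forces the $g$-shadow of $c'$ to coincide with $\g$ at infinity, i.e.\ $c'(-\infty)=\xi$. Thus $[u]\in\W_{dir}(H,F)$ with $\delta_F([u])=\xi$.

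For closedness and continuity, take $[u_n]\to[u]$ in $C^0_{loc}$ with $[u_n]\in\W_{dir}$ and $\xi_n:=\delta_F([u_n])$. By compactness of $\Gro(H,g)\cong \mathbb{S}^{\dim H-1}$ extract $\xi_n\to\xi$ along a subsequence. The standard Arzela-Ascoli extraction applied to $[u_n]$-calibrated rays at any $x$ shows $[u]\in\W(H,F)$: the uniform $c_F$-Lipschitz bound and the locally uniform convergence $u_n\to u$ pass the calibration identity to the limit. Moreover, if $c_n$ is $[u_n]$-calibrated at $x$, then by the Morse Lemma $c_n$ stays within distance $C$ of the $g$-ray from $x$ in direction $\xi_n$, and these $g$-rays converge locally uniformly on $\R_-$ to the $g$-ray in direction $\xi$ (continuity of the topology on $\Gro(H,g)$); so any limit $c$ of $c_n$ is a $[u]$-calibrated ray at $x$ with $c(-\infty)=\xi$. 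This produces at least one calibrated ray in direction $\xi$ at every point. Once directedness of $[u]$ is established, continuity of $\delta_F$ at $[u]$ is automatic, since every subsequence of $\{\xi_n\}$ has a convergent sub-subsequence whose limit is again $\delta_F([u])=\xi$, so the full sequence converges.

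The main obstacle is upgrading ``one calibrated ray in direction $\xi$'' to ``\emph{every} calibrated ray in direction $\xi$'', because there is no reason a priori that a given $[u]$-calibrated ray $c'$ at $y$ be a literal limit of $[u_n]$-calibrated rays at $y$. The route I would pursue, paralleling the horofunction argument in \cite{min_rays}, goes as follows. Locally uniform convergence $u_n\to u$ gives $u_n\circ c'(-t)-u_n(y)\to -t$ uniformly on compact $t$-intervals, so for large $n$ the segment $c'|_{[-T,0]}$ is ``$\e_n$-calibrated'' by $[u_n]$ with $\e_n\to 0$. Suppose for contradiction $c'(-\infty)=\eta\neq\xi$. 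The genuine $[u_n]$-calibrated ray $c_n^y$ from $y$ shadows a $g$-ray toward $\xi_n\to\xi$, while $c'$ shadows a $g$-ray toward $\eta$, so their $g$-shadows diverge linearly. A careful application of Lemma \ref{crossing minimals}, combined with the strong convexity of $F$, should force successive near-intersections of $c'$ and $c_n^y$ for large $n$, producing a shortcut that contradicts the $\e_n$-calibration of $c'$ by $[u_n]$ (or equivalently the calibration of $c_n^y$). Making this comparison quantitative — controlling the ``$\e_n$-calibration'' defect so as to genuinely violate a strict inequality coming from Lemma \ref{crossing minimals} — is the technical heart of the argument, and is precisely where the horofunction parallel in \cite{min_rays} is invoked.
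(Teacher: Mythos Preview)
Your overall architecture is sound, and you correctly isolate the crux: showing that \emph{every} $[u]$-calibrated ray (not just the ones obtained as limits) has direction $\xi$. However, the route you propose for this step --- via near-intersections and Lemma~\ref{crossing minimals} --- is both left incomplete and unnecessarily heavy. There is a direct argument using Proposition~\ref{lemma weak KAM}\,\eqref{u diffbar on cal}: if $c':\R_-\to H$ is any $[u]$-calibrated ray, then $u$ is differentiable at $c'(t_0)$ for every $t_0<0$, with $\dot c'(t_0)=\theta_F\circ du(c'(t_0))$. But you have already produced, at the point $x_0:=c'(t_0)$, a $[u]$-calibrated ray $c$ with $c(0)=x_0$ and $c(-\infty)=\xi$; the ``moreover'' clause of that same proposition forces $\dot c(0)=\theta_F\circ du(x_0)=\dot c'(t_0)$, whence $c(s)=c'(t_0+s)$ for all $s\le 0$ by uniqueness of geodesics, and therefore $c'(-\infty)=\xi$. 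This single observation closes the gap both for closedness/continuity and for the directedness claim in your surjectivity construction (where your ``reverse triangle inequality'' reasoning would otherwise require a quasi-geodesic version of the Morse Lemma not stated in the paper).

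A secondary correction: for non-reversible $F$, the functions $x\mapsto d_F(x,\g(-T))$ are \emph{not} dominated in the sense of \eqref{eq u dominated}; you want $u_T(x)=d_F(\g(-T),x)-d_F(\g(-T),\g(0))$, with calibrating segments running \emph{from} $\g(-T)$ \emph{to} $x$, matching the Busemann convention $b_c=\lim_{t\to\infty} d_F(c(-t),\cdot)-t$. With this sign fixed, your surjectivity argument is exactly the horofunction construction of Subsection~\ref{section horofctns}, and the paper (which defers the proof to \cite{min_rays}) is following essentially the same route --- the differentiability shortcut above simply replaces your crossing-lemma detour.
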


In general, the map $\delta_F$ need not be injective. In order to study the injectivity properties, one can show the following.

\begin{prop}\label{prop unique horo if no intersection}
 The set $\RR_-(\xi)$ consists of all rays which are calibrated with respect to some $u\in \delta_F^{-1}(\xi)$. We have $\card \delta_F^{-1}(\xi) = 1$ if and only if there are no intersecting rays in $\RR_-(\xi)$.
\end{prop}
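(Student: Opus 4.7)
The inclusion $\supseteq$ in the first claim is immediate from the definition of $\delta_F$: if $c$ calibrates a representative of $[u]\in\delta_F^{-1}(\xi)$ then by definition of directed weak KAM solution $c(-\infty)=\xi$, so $c\in\RR_-(\xi)$. For the reverse inclusion, given $c\in\RR_-(\xi)$ I construct the Busemann-type function
\[ b_c(y) := \lim_{t\to-\infty}\bigl[d_F(c(t),y) - |t|\bigr]. \]
Existence of the limit follows from monotonicity of $t\mapsto d_F(c(t),y)-|t|$ (triangle inequality applied along $c$) and the lower bound $-d_F(c(0),y)$. The triangle inequality then shows $[b_c]\in\Dom(H,F)$, and a direct computation using $d_F(c(t),c(s))=s-t$ for $t\le s\le 0$ yields $b_c(c(s))=s$, i.e.\ $c$ calibrates $b_c$. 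To upgrade $b_c$ to a weak KAM solution, I produce through any $y\in H$ a calibrated ray as the locally uniform subsequential limit $c_y$ of arc-length minimal $F$-segments $c_n$ from $c(t_n)$ to $y$ (Arzela-Ascoli, using uniform Lipschitz bounds from equivalence of $F$ and $g$); the calibration identity for $c_y$ follows in the limit from $d_F(c(t_n),c_n(s)) + d_F(c_n(s),y) = d_F(c(t_n),y)$. Finally, the Morse Lemma \ref{morse lemma} shows that each $c_n$ is shadowed by the $g$-geodesic from $c(t_n)$ to $y$, and these $g$-geodesics converge to the $g$-ray from $y$ to $\xi$; hence every $b_c$-calibrated ray has asymptotic direction $\xi$, so $[b_c]\in\W_{dir}(H,F)$ with $\delta_F([b_c])=\xi$.

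For the cardinality equivalence, $(\Rightarrow)$ is a direct application of Proposition \ref{lemma weak KAM}\eqref{u diffbar on cal}. Assuming $\delta_F^{-1}(\xi)=\{[u]\}$, the first claim implies that every ray in $\RR_-(\xi)$ calibrates a representative of $u$. Given a hypothetical intersection $c'(s)=c(t)=x$ with $s\le 0$, $t<0$, $\dot c'(s)\ne\dot c(t)$, the condition $t<0$ places $x$ in the open parameter interval of $c$, so Proposition \ref{lemma weak KAM}\eqref{u diffbar on cal} yields differentiability of $u$ at $x$ together with $\theta_F(du(x))=\dot c(t)$. Its interior clause (if $s<0$) or endpoint clause (if $s=0$) then forces $\dot c'(s)=\theta_F(du(x))=\dot c(t)$, contradicting $\dot c'(s)\ne\dot c(t)$.

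For $(\Leftarrow)$, my strategy is to promote the no-intersection hypothesis to a single-valued direction field on $H$. First I rule out distinct initial directions at a common point: if $c_1,c_2\in\RR_-(\xi)$ with $c_i(0)=x$ and $\dot c_1(0)\ne\dot c_2(0)$, then by approximating the $c_i$ with rays of $\RR_-(\xi)$ emanating from points near $x$ (obtained from calibrated rays for nearby Busemann functions $b_{c_i}$) and invoking the semi-continuity of Lemma \ref{crossing minimals}, I produce two rays in $\RR_-(\xi)$ with a genuine transverse intersection at an interior time, contradicting the hypothesis. Hence $V(x):=\dot c_x(0)$ for the unique $c_x\in\RR_-(\xi)$ through $x$ is well-defined on $H$. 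For any $[u_1],[u_2]\in\delta_F^{-1}(\xi)$, Proposition \ref{lemma weak KAM}\eqref{u diffbar on cal} then gives $\theta_F(du_i(x))=V(x)$ wherever $u_i$ is differentiable, and bijectivity of $\theta_F$ (from strong convexity of $F$) yields $du_1=du_2$ almost everywhere. Since $H$ is simply connected and $u_1,u_2$ are Lipschitz, $u_1-u_2$ is constant, i.e.\ $[u_1]=[u_2]$. The main obstacle is the first step of this paragraph: the \emph{lens} configuration, in which $c_1,c_2$ share only the endpoint parameter $s=t=0$, is not directly forbidden by Definition \ref{def transverse intersection}, and reducing it to a genuine intersection covered by that definition is the technical core of the argument.
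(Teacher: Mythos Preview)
Your treatment of the first claim and of the direction $(\Rightarrow)$ is correct and standard. One small gap in the first claim: you verify that the \emph{constructed} rays $c_y$ have direction $\xi$, but directedness of $[b_c]$ requires this for \emph{every} $b_c$-calibrated ray, not just the ones you built. The missing step is short: if $c'$ is any $b_c$-calibrated ray then $d_F(c(s),c'(0))-d_F(c(s),c'(t))\to -t=d_F(c'(t),c'(0))$ as $s\to-\infty$, so $c'(t)$ lies asymptotically on minimal segments from $c(s)$ to $c'(0)$, and the Morse Lemma pins down $c'(-\infty)=\xi$ as before.

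For $(\Leftarrow)$ you have misidentified the technical core. The lens configuration does \emph{not} need to be ruled out, and your approximation sketch will not rule it out: the alternative in your dichotomy --- that the $b_{c_2}$-calibrated ray through $c_1(-\delta)$ coincides with $c_1|_{(-\infty,-\delta]}$ --- is perfectly consistent with the no-intersection hypothesis. What that alternative \emph{does} give you, however, is exactly what is needed. Run the same argument with an arbitrary $[u]\in\delta_F^{-1}(\xi)$ in place of $b_{c_2}$: for each $t<0$ the $[u]$-calibrated ray through $c_1(t)$ either produces a forbidden intersection with $c_1$ or agrees with $c_1|_{(-\infty,t]}$; hence $c_1|_{(-\infty,t]}$ is $[u]$-calibrated for every $t<0$, and by continuity so is $c_1$. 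In other words, no intersecting rays forces every ray in $\RR_-(\xi)$ to be unstable (Definition~\ref{def unstable}) and hence to calibrate every $[u]\in\delta_F^{-1}(\xi)$ --- this is precisely the forward direction of Proposition~\ref{char unstable via weak KAM}.

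With this in hand your final step goes through without any global single-valuedness of $V$. At a common differentiability point $x$ of $u_1,u_2$, take \emph{any} ray $c\in\RR_-(\xi)$ with $c(0)=x$; it calibrates both $u_i$, so the endpoint clause of Proposition~\ref{lemma weak KAM}\eqref{u diffbar on cal} gives $\theta_F(du_1(x))=\dot c(0)=\theta_F(du_2(x))$. Since calibration at the endpoint also forces $F^*(du_i(x))=1$ (from $du_i(x)[\dot c(0)]=1$), and $\theta_F$ restricted to $\{F^*=1\}$ is the Legendre bijection, you get $du_1(x)=du_2(x)$ a.e., whence $u_1-u_2$ is constant on the connected $H$. (Note in passing that your phrase ``bijectivity of $\theta_F$'' is imprecise without the constraint $F^*=1$.)

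The paper does not include its own proof here, referring instead to \cite{min_rays}; but the placement of Proposition~\ref{char unstable via weak KAM} immediately afterwards indicates that universal calibration of unstable rays is the intended mechanism, not elimination of the lens.
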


Recall that for $(M,g)$ of negative curvature we defined the width $w_-(\xi)$ in Subsection \ref{section width}. In particular,
\[ w_-(\xi)=0 \quad \implies \quad \card \delta_F^{-1}(\xi) = 1 . \]

Rays admitting no intersections in $\RR_-(\xi)$ have special importance due to Proposition \ref{prop unique horo if no intersection}. This motivates the following definition.

\begin{defn}\label{def unstable}
 An $F$-ray $c:\R_-\to H$ is {\em (backward) unstable}, if for all rays $c':\R_- \to H$ with $c'(0)\in c(-\infty,0)$ and $c'(-\infty)=c(-\infty)$ we have $c'(\R_-)\subset c(-\infty,0)$, i.e.\ if $c$ does not admit intersecting rays in $\RR_-(c(-\infty))$. The set of unstable rays in $\RR_-(\xi)$ will be denoted by
 \[ \A_-(\xi) \subset \RR_-(\xi) . \]
\end{defn}

Note that results on unstable rays are presented in \cite{klingenberg}, many of which, however, have an incomplete proof there.

Existence results on unstable rays will be given in Theorems \ref{morse periodic} \eqref{morse periodic item 3}, \ref{thm measures unstable}, \ref{thm bangert irrat} \eqref{thm bangert irrat iv} and \ref{thm irrat no intersections}. In general, however, it is not clear whether $\A_-(\xi)\neq \emptyset$. 

It is not difficult to show the following characterization of instability. It resembles a characterization of the Aubry set in Mather theory (see Proposition \ref{weak KAM charact aubry set}), hence the letter $\A$.

\begin{prop}\label{char unstable via weak KAM}
 A ray $c:\R_-\to H$ is unstable if and only if $c$ is calibrated with respect to all directed weak KAM solutions $[u]\in \delta_F^{-1}(c(-\infty))$.
\end{prop}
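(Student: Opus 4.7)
The plan is to prove both implications by using Proposition \ref{prop unique horo if no intersection} to produce calibrated rays in $\RR_-(c(-\infty))$ and Proposition \ref{lemma weak KAM} to match them to $c$ via their initial velocities. Write $\xi=c(-\infty)$ throughout.

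For the forward direction ($\Rightarrow$), I would fix an arbitrary $[u]\in\delta_F^{-1}(\xi)$ and, for each $t<0$, invoke Definition \ref{def weak KAM}\eqref{cond calibration} to obtain a $[u]$-calibrated ray $c_t:\R_-\to H$ with $c_t(0)=c(t)$. Since $[u]$ is directed, $c_t\in\RR_-(\xi)$, and instability of $c$ then forces $c_t(\R_-)\subset c(-\infty,0)$. A minimal ray is an arc-length embedded arc, so the inclusion together with the common starting point $c(t)$ and the common asymptotic endpoint $\xi$ would pin down $c_t(s)=c(t+s)$ for all $s\leq 0$. Calibration of $c_t$ then gives $u(c(t))-u(c(a))=t-a$ for every $a\leq t<0$, and letting $t\nearrow 0$ while invoking continuity of $u$ (it is $c_F$-Lipschitz) would extend the identity to the endpoint, yielding that $c$ calibrates $[u]$ on all of $\R_-$.

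For the backward direction ($\Leftarrow$), I would argue by contradiction. Assume $c$ is calibrated by every $[u]\in\delta_F^{-1}(\xi)$ but is not unstable: there exist $t<0$ and a ray $c'\in\RR_-(\xi)$ with $c'(0)=c(t)$ and $c'(\R_-)\not\subset c(-\infty,0)$. Proposition \ref{prop unique horo if no intersection} furnishes some $[u']\in\delta_F^{-1}(\xi)$ calibrating $c'$, and by hypothesis $c$ also calibrates $[u']$. Applying Proposition \ref{lemma weak KAM}\eqref{u diffbar on cal} to $c$ at the interior point $c(t)$ shows that $u'$ is differentiable there with $\dot c(t)=\theta_F\circ du'(c(t))$; its moreover-clause, applied to $c'$ at the endpoint $c'(0)=c(t)$ where $u'$ is now known differentiable, yields $\dot c'(0)=\theta_F\circ du'(c(t))=\dot c(t)$. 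Uniqueness of $F$-geodesics with prescribed initial velocity would then force $c'(s)=c(t+s)$ for all $s\leq 0$, contradicting $c'(\R_-)\not\subset c(-\infty,0)$.

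The main obstacle will be the parametric identification $c_t=c(\cdot+t)$ in the forward direction, since a priori the inclusion $c_t(\R_-)\subset c(-\infty,0)$ only controls images and does not rule out exotic reparametrizations (for instance, $c_t$ tracing a subarc and reversing). Injectivity of minimal rays, their arc-length parametrization, and the fact that both curves share the asymptotic endpoint $\xi\in\Gro(H,g)$ together with Lemma \ref{crossing minimals} (to exclude successive intersections) should suffice to handle this, but careful care is required. Once the identification is secured, both directions reduce to direct applications of the calibration identity and the regularity statement in Proposition \ref{lemma weak KAM}.
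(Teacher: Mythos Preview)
Your argument is correct, and since the paper omits the proof (it merely remarks that the characterization ``is not difficult to show''), there is nothing to compare against; your approach via Proposition~\ref{prop unique horo if no intersection} and Proposition~\ref{lemma weak KAM}\eqref{u diffbar on cal} is exactly the natural one. The obstacle you flag in the forward direction dissolves quickly: the ray $c$ is an injective, proper, arc-length immersion, hence an embedding of $\R_-$ onto a one-dimensional submanifold; the inclusion $c_t(\R_-)\subset c(-\infty,0)$ therefore forces $c_t(s)=c(\varphi(s))$ for a $C^1$ function $\varphi$ with $|\varphi'|\equiv 1$, and $\varphi'\equiv -1$ is ruled out because it would send $c_t(s)$ toward $c(0)$ and out of $c(-\infty,0)$, so $\varphi(s)=t+s$ as you need.
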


The next proposition says that rays in $\RR_-(\xi)$ become ``almost calibrated'' with respect to any $[u]\in \delta_F^{-1}(\xi)$ near $-\infty$. So intuitively, the rays in $\RR_-(\xi)$ are unstable near $-\infty$.

\begin{prop}\label{int dH = 1}
 Let $[u]\in \delta_F^{-1}(\xi)$ and $v\in \RR_-(\xi)$. Then the function
 \[ t\in \R_- \mapsto  t- u\circ c_v(t) \]
 is bounded and non-decreasing.
\end{prop}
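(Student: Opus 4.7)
The function $\phi(t) := t - u(c_v(t))$ has two properties to establish. For the monotonicity, I use that $c_v$ being arc-length minimizing gives $d_F(c_v(s),c_v(t)) = t-s$ for $s \leq t \leq 0$; the dominated inequality applied to $[u] \in \Dom(H,F)$ then yields $u(c_v(t)) - u(c_v(s)) \leq t-s$, which rearranges to $\phi(s) \leq \phi(t)$. Monotonicity immediately gives the upper bound $\phi(t) \leq \phi(0) = -u(\pi v)$ for all $t \leq 0$.

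For the lower bound I would bring in both the weak KAM hypothesis and the Morse Lemma. Definition \ref{def weak KAM} furnishes a $u$-calibrated $F$-ray $c^* : \R_- \to H$ with $c^*(0) = \pi v$, and $[u] \in \delta_F^{-1}(\xi)$ forces $c^*(-\infty) = \xi = c_v(-\infty)$, with $u(c^*(s)) = u(\pi v) + s$. The Morse Lemma, applied to the increasing minimizing segments $c_v|_{[T,0]}$ and $c^*|_{[T,0]}$ for $T \to -\infty$, places both rays inside a $g$-tube of radius $C = C(F,g)$ around the unique $g$-ray $\gamma_{\pi v, \xi}$. For each $t$ I would select $s_t \leq 0$ such that $c^*(s_t)$ and $c_v(t)$ are within uniformly bounded $g$-distance (e.g. via a common nearest point on $\gamma_{\pi v, \xi}$), and conclude from Lipschitz continuity of $u$ with respect to $d_g$ that $u(c_v(t)) \leq u(\pi v) + s_t + O(1)$, whence $\phi(t) \geq t - s_t - u(\pi v) - O(1)$.

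The main obstacle is the tight parameter comparison $s_t \leq t + O(1)$: a crude $F$--$g$ equivalence argument at the tube only produces $|s_t| \geq |t|/c_F^2 - O(1)$, which in the strictly Finsler case $c_F > 1$ leaves an unbounded gap and is inadequate. To close it, I would reformulate the problem via the Busemann-type limit
\[
\beta(y) := \lim_{s \to -\infty} \bigl[ d_F(c_v(s), y) + s \bigr],
\]
whose existence follows because $s \mapsto d_F(c_v(s), y) + s$ is non-decreasing in $s$ by the triangle inequality and arc-length of $c_v$, and is bounded below by $-d_F(y, \pi v)$. A direct check shows $[\beta] \in \Dom(H,F)$ with $\beta(c_v(t)) = t$, so that $\phi(t) = (\beta - u)(c_v(t))$ and boundedness of $\phi$ reduces to the comparison $u \leq \beta + \mathrm{const}$ along $c_v(\R_-)$. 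This global comparison between a directed weak KAM solution in $\delta_F^{-1}(\xi)$ and the associated Busemann function is the technical heart of the result; one adapts the horofunction argument of Section 3 in \cite{min_rays}, which, as noted in the excerpt, transfers verbatim to the setting of directed weak KAM solutions.
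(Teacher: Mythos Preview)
Your monotonicity argument and upper bound are correct and match the paper's sketch exactly. For the lower bound, your first approach via the calibrated ray $c^*$ with $c^*(0)=\pi v$ is precisely the paper's intended argument (``Morse Lemma plus a calibrated ray at finite distance''), and you abandon it prematurely. The parameter comparison $|t-s_t|=O(1)$ that you flag as ``the main obstacle'' is in fact immediate from the $d_F$-triangle inequality once you exploit the common basepoint: since both $c_v$ and $c^*$ are arc-length $F$-rays from $\pi v$, one has $-t=d_F(c_v(t),\pi v)$ and $-s_t=d_F(c^*(s_t),\pi v)$, hence
\[
 -t \;=\; d_F(c_v(t),\pi v) \;\leq\; d_F(c_v(t),c^*(s_t)) + d_F(c^*(s_t),\pi v) \;\leq\; c_F D + (-s_t),
\]
giving $t-s_t \geq -c_F D$; the symmetric inequality gives $t-s_t \leq c_F D$. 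Your error was to route the comparison through the background $g$-ray, where the $c_F$ factors from the $F$--$g$ equivalence accumulate; working intrinsically in $d_F$ with the shared basepoint $\pi v$ sidesteps this entirely.

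The Busemann detour is therefore unnecessary, and as written it does not close the argument either. You reduce to the inequality $u\leq\beta+\mathrm{const}$ along $c_v(\R_-)$ and defer it to \cite{min_rays}, but this comparison between two elements of $\delta_F^{-1}(\xi)$ along a ray calibrated by one of them is essentially equivalent to the very proposition under proof, so the appeal is circular. All the ingredients for a complete proof are already in your first paragraph once the one-line triangle-inequality step above is inserted.
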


The ``non-decreasing'' part follows from \eqref{eq u dominated}. Namely, for $a\leq b$
\[ u \circ c_v(b) - u \circ c_v(a) \leq b-a . \]
The ``bounded'' part is a consequence of the Morse Lemma and the fact that any $u\in \delta_F^{-1}(\xi)$ has some calibrated ray in $\RR_-(\xi)$ in finite distance of $c_v$. The proposition shows that for $a\leq b \ll -1$ we have
\[ a- u\circ c_v(a) \approx b- u\circ c_v(b) , \]
i.e.\ $c_v$ is ``almost $u$-calibrated''.

Finally, we can find ``boundary elements'' of $\delta_F^{-1}(\xi)$, if $\dim H=2$. Recall that $\Dom(H,F)$ is sequentially compact.

\begin{prop}[boundary of $\delta_F^{-1}(\xi)$] \label{bounding horofunctions}
Let $\dim H=2$. Then for $\xi\in \Gro(H,g) \cong \se$, there exist two unique elements $[u_0],[u_1]\in \delta_F^{-1}(\xi)$ with the following property: for any sequence $\xi_n\to \xi$ in $\Gro(H,g)$ with $\xi_n\neq \xi$ and any sequence $[u_n]\in \delta_F^{-1}(\xi_n)$, any limit point lies in $\{[u_0],[u_1]\}$. More precisely, assuming the counterclockwise orientation of $\Gro(H,g) \cong \se$, we have
\[ \lim_{n\to\infty} [u_n] = \begin{cases} [u_0] & : \text{ if }~ \xi_n > \xi ~ \forall n \\ [u_1] & : \text{ if }~ \xi_n < \xi ~ \forall n \end{cases}. \]
\end{prop}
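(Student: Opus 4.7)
The plan is to exploit the sequential compactness of $\Dom(H,F)$ together with the cyclic-order structure of rays in dimension two. Since $\Dom(H,F)$ is sequentially compact in the $C^0_{loc}$-topology and $\delta_F$ is continuous, every sequence $[u_n]\in \delta_F^{-1}(\xi_n)$ with $\xi_n\to\xi$ admits convergent subsequences, and every limit lies in $\delta_F^{-1}(\xi)$. It therefore suffices to prove that all subsequential limits obtained under the restriction $\xi_n>\xi$ coincide with a single weak KAM solution $[u_0]$, and symmetrically that those obtained under $\xi_n<\xi$ coincide with $[u_1]$.

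Fix a basepoint $x_0\in H$. Since $\dim H=2$, the unit fibre $S_FH\cap\pi^{-1}(x_0)$ is a circle, and Lemma \ref{crossing minimals} implies that two rays based at $x_0$ with distinct asymptotic directions cannot cross. Consequently the map $v\mapsto c_v(-\infty)$ from $\RR_-\cap\pi^{-1}(x_0)$ into $\Gro(H,g)\cong\se$ is monotone with respect to the cyclic orders; hence $\RR_-(\xi)\cap\pi^{-1}(x_0)$ is a closed connected arc with extremal vectors $v^+_\xi$ and $v^-_\xi$, and when $\xi_n\to\xi$ with $\xi_n>\xi$ counterclockwise, the arcs $\RR_-(\xi_n)\cap\pi^{-1}(x_0)$ lie counterclockwise of, and accumulate on, $v^+_\xi$. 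By Definition \ref{def weak KAM}(ii) we may choose a $[u_n]$-calibrated ray $c_n$ with $\dot c_n(0)$ in the $\xi_n$-arc; any convergent subsequence then satisfies $\dot c_n(0)\to v^+_\xi$, and the limit curve is the unique upper-boundary calibrated ray $c^+_{x_0}$ at $x_0$.

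The candidate $[u_0]$ is the Finsler Busemann function
\[ u_0(y) := \lim_{t\to-\infty}\bigl(t + d_F(y,c^+_{x_0}(t))\bigr), \]
whose existence and membership in $\Dom(H,F)$ follow from the uniform Hausdorff bounds provided by the Morse Lemma applied to minimizing segments from $y$ to $c^+_{x_0}(t)$. Running the order argument at an arbitrary basepoint $y$ produces an upper-boundary calibrated ray $c^+_y$ calibrating $u_0$, so $[u_0]\in\W_{dir}(H,F)$ with $\delta_F([u_0])=\xi$. Any subsequential limit $[u_*]$ of $\{[u_n]\}$ admits at every $y$ a calibrated ray with initial direction $v^+_y$, and Proposition \ref{lemma weak KAM}(i) together with the Lipschitz regularity of dominated functions forces this family of rays to determine a unique class modulo constants, hence $[u_*]=[u_0]$. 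The symmetric construction yields $[u_1]$; the two coincide precisely when $v^+_\xi=v^-_\xi$ at every basepoint, which by Proposition \ref{prop unique horo if no intersection} corresponds exactly to the absence of intersecting rays in $\RR_-(\xi)$.

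The main obstacle is the uniqueness step just invoked: promoting agreement of one distinguished calibrated ray through every point to equality of weak KAM solutions. A single directed weak KAM solution may carry additional calibrated rays at points of non-differentiability of $u_*$, so one must verify these extra rays are also present for $[u_0]$. The hard part is a further application of Lemma \ref{crossing minimals} confirming that any $[u_*]$-calibrated ray from $y$ must be sandwiched between upper-boundary rays (which themselves converge to $c^+_y$), together with the Lipschitz regularity results on $du$ cited after Proposition \ref{lemma weak KAM}, to interpolate the pointwise identification of directions into pointwise equality of $u_*$ and $u_0$.
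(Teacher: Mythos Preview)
The paper does not give a proof of this proposition; it states the result and refers to \cite{min_rays} for the arguments. Your approach is therefore being evaluated on its own merits rather than compared to a proof in the text.

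Your overall strategy is sound: sequential compactness of $\Dom(H,F)$ and continuity of $\delta_F$ reduce the problem to showing that all subsequential limits from one side coincide, and the cyclic ordering of ray directions in the two-dimensional fiber is the right mechanism. The claim that the arcs $\RR_-(\xi_n)\cap\pi^{-1}(y)$ collapse onto the extremal vector $v^+_y$ follows correctly from closedness of $\RR_-$, continuity of the asymptotic direction, and the monotone cyclic order. Calibration does pass to $C^0_{loc}$-limits, so the limit ray $c^+_y$ is indeed $[u_*]$-calibrated at every $y$.

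Your self-identified obstacle is genuine, but the resolution is cleaner than the sandwiching argument you sketch. Once you know that $c^+_y$ is $[u_*]$-calibrated at every $y$, apply Proposition~\ref{lemma weak KAM}\eqref{u diffbar on cal} at the full-measure set of differentiability points of $u_*$: there the calibrated direction is unique and equals $\theta_F(du_*(y))$, so $du_*(y)$ is the Legendre dual of $v^+_y$, a quantity depending only on the geometry and not on which subsequential limit you took. Two Lipschitz functions with almost-everywhere equal gradients differ by a constant, and you are done. You do not need to control \emph{all} $[u_*]$-calibrated rays at $y$, only to know that $c^+_y$ is one of them.

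Two minor points. Your explicit Busemann formula has the arguments of $d_F$ in the wrong order relative to the paper's convention for non-reversible $F$ (compare the definition of $b_c$ in Subsection~\ref{section horofctns}). More importantly, the whole Busemann construction is unnecessary: simply take $[u_0]$ to be \emph{any} subsequential limit from the side $\xi_n>\xi$ and run the a.e.-gradient argument to show every other such limit agrees with it.
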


Proposition \ref{bounding horofunctions} can be used to find ``recurrent'' directed weak KAM solutions with respect to the group action by $\Gam$. The recurrent elements in $\delta_F^{-1}(\xi)$ always have the recurrent rays as calibrated curves. For this, one uses Proposition \ref{int dH = 1}.

\begin{cor}\label{cor exist recurrent horofctns}
 Let $\dim H=2$. If $\xi\in \Gro(H,F)$ and if $\tau_n\in \Gam$ is a sequence with $\tau_n\xi \to \xi$, then there exists an element $[u]\in\delta_F^{-1}(\xi)$ with $\tau_n [u] \to [u]$. Moreover, suppose that $v\in \RR_-(\xi)$ is recurrent under the sequence $\tau_n$, i.e.\ there exists a sequence $t_n\to -\infty$ with $D\tau_n(c_v(t_n))[\dot c_v(t_n)]\to v$ (this means recurrence in the quotient $M=H/\Gam$). Then $c$ is calibrated with respect to any $[u]\in\delta_F^{-1}(\xi)$ recurrent under $\{\tau_n\}$ in the above sense.
\end{cor}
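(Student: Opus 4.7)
The plan is to handle the two assertions separately: the first by a compactness argument exploiting Proposition~\ref{bounding horofunctions} together with the $\Gam$-equivariance of $\delta_F$, and the second by an explicit estimate built on Proposition~\ref{int dH = 1} and the two recurrence hypotheses.

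For the first assertion, I would exploit that $\delta_F(\tau_n[u_*]) = \tau_n\xi \to \xi$ for any $[u_*]\in \delta_F^{-1}(\xi)$. After passing to a subsequence of $\tau_n$ and relabelling, one of three alternatives holds: $\tau_n\xi > \xi$ for all $n$, $\tau_n\xi < \xi$ for all $n$, or $\tau_n\xi = \xi$ for all $n$, where the order refers to the counterclockwise orientation of $\Gro(H,g)\cong\se$. In the first alternative, set $[u]:=[u_0]$; then $\tau_n[u_0] \in \delta_F^{-1}(\tau_n\xi)$ with $\tau_n\xi \to \xi$ from above, so Proposition~\ref{bounding horofunctions} yields $\tau_n[u_0]\to[u_0]$. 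The second alternative is symmetric with $[u]:=[u_1]$. In the third alternative, $\tau_n$ fixes $\xi$ and hence permutes $\delta_F^{-1}(\xi)$; I would check $\tau_n[u_0]=[u_0]$ by taking any sequence $\xi_k\to\xi$ from above and any $[v_k]\in\delta_F^{-1}(\xi_k)$, and using that $\tau_n$ acts on $\Gro(H,g)$ by an orientation-preserving homeomorphism fixing $\xi$ (because $M$ is orientable), so $\tau_n^{-1}\xi_k\to\xi$ also from above. Then Proposition~\ref{bounding horofunctions} applied twice gives both $[v_k]\to[u_0]$ and $[v_k] = \tau_n\tau_n^{-1}[v_k]\to\tau_n[u_0]$, forcing $\tau_n[u_0]=[u_0]$.

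For the second assertion, let $[u]$ be recurrent with $\tau_n[u]\to[u]$, pick representatives, and choose constants $c_n\in\R$ so that $u\circ \tau_n^{-1} - c_n \to u$ in $C^0_{loc}(H)$. The recurrence of $v$, combined with continuous dependence of geodesics on initial conditions, shows that for each bounded $s\leq 0$ the curve $s\mapsto \tau_n c_v(t_n+s)$ converges to $c_v$ in $C^1_{loc}$. Combining the two convergences,
\[ u(c_v(t_n+s)) - c_n = \bigl(u\circ\tau_n^{-1} - c_n\bigr)\bigl(\tau_n c_v(t_n+s)\bigr) \longrightarrow u(c_v(s)) . \]
By Proposition~\ref{int dH = 1}, $t \mapsto t - u(c_v(t))$ has a finite limit $L$ as $t\to-\infty$. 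Evaluating at $t = t_n+s$ and inserting the asymptotic above yields $(t_n+s) - c_n - u(c_v(s)) + o(1) \to L$. Fixing one $s_0\in\R_-$ determines $c_n = t_n + s_0 - u(c_v(s_0)) - L + o(1)$; reinserting at any other $s_1\in\R_-$ forces $u(c_v(s_1))-u(c_v(s_0)) = s_1 - s_0$. This is the calibration identity on every bounded subinterval of $\R_-$, and hence on all of $\R_-$.

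The hardest point is the third alternative in the first assertion: without orientation preservation of the $\Gam$-action on $\Gro(H,g)$, one could a priori have $\tau_n[u_0]=[u_1]\neq[u_0]$, and the selection of a recurrent element via the boundary elements would break down. For the second assertion, the subtle step is the bookkeeping of the normalizing constants $c_n$, which necessarily grow like $t_n$; it is only the finite limit $L$ from Proposition~\ref{int dH = 1} that synchronizes the different choices of parameter $s$ and delivers the calibration equality.
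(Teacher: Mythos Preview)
Your argument follows exactly the approach the paper sketches: Proposition~\ref{bounding horofunctions} for the existence part and Proposition~\ref{int dH = 1} for the calibration part. The second assertion is handled cleanly; the computation showing that $t_n-c_n$ converges to a limit independent of $s$, hence forcing $s\mapsto u(c_v(s))-s$ to be constant, is precisely what the paper has in mind.

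One point to watch in the first assertion: you pass to a subsequence to reduce to one of the three alternatives, but the statement as written asks for $\tau_n[u]\to[u]$ along the \emph{full} sequence. If the original sequence genuinely oscillates between approaching $\xi$ from above and from below, neither $[u_0]$ nor $[u_1]$ alone will work (their images under $\tau_n$ would accumulate on both boundary elements). The paper's own sketch does not address this either, and in the application (the proof of Theorem~\ref{thm rays-paper}) one is free to pass to subsequences since the sequence $\{\tau_n\}$ arises from recurrence of a geodesic in the quotient. So this is a wrinkle in the statement rather than a defect in your method, but you should be explicit that the conclusion is obtained after possibly thinning the sequence.
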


Hence, recurrent rays have certain instability properties, at least in dimension two. This resembles a fact in Mather theory, where the non-wandering set of the \Mane set is contained in the Aubry set (Proposition 6.33 in \cite{sorrentino}).

\subsection{Horofunctions}\label{section horofctns}

So far, we have not discussed directly examples of dominated functions, weak KAM solutions, let alone directed weak KAM solutions. Here we discuss the classical examples, given by Busemann functions and more generally by horofunctions. In particular, we treat the compactification of $(H,F)$ due to Gromov \cite{gromov}, see also \cite{ballmann} or \cite{bridson}.

We remarked earlier, that for $x\in H$ we have
\[ i_F(x) := [d_F(x,.)] \in \Dom(H,F) . \]
The calibrated curves of $i_F(x)$ are precisely the minimizing geodesic segments and forward rays emanating from $x$. The level sets of $i_F(x)$ are the spheres $\{ y\in H: d_F(x,y)=r\}$. The definition of $i_F$ above yields an embedding
\[ i_F:H \to C^0(H)/_\sim . \]
The image $i_F(H)$ lies in $\Dom(H,F)$ and hence is sequentially pre-compact with respect to the $C_{loc}^0$-topology of locally uniform convergence. In particular, the map $i_F$ defines a compactification of $H$.

\begin{defn}
 The closure $\overline{i_F(H)}\subset \Dom(H,F)$ is called the {\em horofunction compactification} of $(H,F)$. The boundary $\Hor(H,F):= \overline{i_F(H)} - i_F(H)$ is called the {\em horofunction boundary} of $(H,F)$. The elements of $\Hor(H,F)$ are called {\em horofunctions}.
\end{defn}

One can show that $i_F(H)$ is open in $\overline{i_F(H)}$ and that $[u]\in \Dom(H,F)$ lies in the boundary $\Hor(H,F)$ if and only if it is the limit of a sequence $i_F(x_n)$ with $d_F(x_0,x_n)\to\infty$ for some and hence any $x_0\in H$. Hence, all the calibrated curves of $i_F(x_n)$ become in the limit (backward!) rays and minimal geodesics, emanating from an imaginary limit point of the sequence $\{x_n\}$ at infinity and spreading all over $H$. In particular, $\Hor(H,F) \subset \W(H,F)$. More precisely, one can show the following.

\begin{prop}
 If $(H,g)$ is a Hadamard manifold and if the Morse Lemma holds, then
 \[ \Hor(H,F)\subset\W_{dir}(H,F) . \]
\end{prop}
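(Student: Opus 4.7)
Given $[u]=\lim_n i_F(x_n)\in\Hor(H,F)$ with $d_F(x_0,x_n)\to\infty$, we already have $[u]\in\W(H,F)$ from the previous part of the subsection, so it suffices to show that every $[u]$-calibrated ray has the same asymptotic direction in $\Gro(H,g)$. The plan is to identify this direction with the $\Gro(H,g)$-limit of the defining sequence $\{x_n\}$, and to force the identification through the uniqueness at interior differentiability points contained in Proposition~\ref{lemma weak KAM}(i).

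Fix a $[u]$-calibrated ray $c:\R_-\to H$ with $c(0)=x$, choose an interior point $y:=c(t_0)$ with $t_0<0$, and set $u_n:=d_F(x_n,\cdot)-d_F(x_n,x_0)$, so $u_n\to u$ locally uniformly. Calibration at $y$ gives $d_F(x_n,y)-d_F(x_n,x)\to t_0$, so $d_F(x_n,y)\to\infty$. Denote by $\beta_n$ the $F$-minimizing segment from $x_n$ to $y$ parametrized by $F$-arc length with $\beta_n(0)=y$; along a subsequence Arzela--Ascoli produces a locally uniform limit $c':\R_-\to H$ with $c'(0)=y$. Each $\beta_n$ calibrates $u_n$ on its domain by construction, so in the limit $c'$ is a $[u]$-calibrated ray. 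The Morse Lemma (Theorem~\ref{morse lemma}) places $\beta_n$ within $g$-Hausdorff distance $C$ of the $g$-geodesic from $x_n$ to $y$; refining the subsequence these $g$-geodesics converge to a $g$-ray $\gamma^y$ based at $y$, so $c'$ stays within $g$-distance $C$ of $\gamma^y$ and $c'(-\infty)=\gamma^y(-\infty)\in\Gro(H,g)$.

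To match $c'$ with the shift $\tilde c:s\mapsto c(s+t_0)$ of the original calibrated ray, observe that $\tilde c$ is also a $[u]$-calibrated ray at $y$ and that $u$ is differentiable at $y$ by the first assertion of Proposition~\ref{lemma weak KAM}(i). The second assertion of Proposition~\ref{lemma weak KAM}(i), applied at the endpoint $y$ of both $c'$ and $\tilde c$, yields $\dot c'(0)=\theta_F(du(y))=\dot{\tilde c}(0)$. Since calibrated rays are $F$-minimizing geodesic segments (Proposition~\ref{prop calibrate implies minimal}), and $F$-geodesics are uniquely determined by their initial data, $c'\equiv\tilde c$ on $\R_-$; in particular $c(-\infty)=\tilde c(-\infty)=c'(-\infty)=\gamma^y(-\infty)$.

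Running the argument with any other subsequence of $\{x_n\}$ produces the same $c(-\infty)$, so the entire sequence $x_n$ converges in the Gromov boundary viewed from $y$ to $c(-\infty)$, without subsequential ambiguity. For any second $[u]$-calibrated ray $c^*$ at an interior point $y^*$, the same construction gives $c^*(-\infty)=\lim_n x_n$ viewed from $y^*$, and basepoint-independence of $\Gro(H,g)$ for Hadamard manifolds forces $c(-\infty)=c^*(-\infty)$. Hence all $[u]$-calibrated rays share the asymptotic direction $\lim_n x_n\in\Gro(H,g)$, which proves $[u]\in\W_{dir}(H,F)$. The delicate point is enforcing $c'\equiv\tilde c$ as parametrized $F$-geodesics rather than only as asymptotic classes at infinity: working at the interior point $y$ is precisely what buys us differentiability of $u$ and the common initial velocity $\theta_F(du(y))$ required to invoke ODE uniqueness, everything else being bookkeeping with Arzela--Ascoli and the Morse Lemma.
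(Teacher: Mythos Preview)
Your proof is correct and fleshes out precisely the argument the paper only sketches in the paragraph preceding the proposition: the calibrated curves of $i_F(x_n)$ are the minimizing segments from $x_n$, and in the limit these become rays ``emanating from an imaginary limit point of the sequence $\{x_n\}$ at infinity.'' The paper gives no formal proof, so your write-up is more detailed than what appears there. The one genuinely nontrivial step you handle carefully---and which the paper glosses over---is matching the limit ray $c'$ with the given calibrated ray $c$ at the interior point $y$ via differentiability of $u$ and Proposition~\ref{lemma weak KAM}\eqref{u diffbar on cal}; this is exactly the right mechanism, and your choice to work at an interior point $t_0<0$ rather than the endpoint is what makes it go through.
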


A standard example is again given by the \Poincare disc $(H,g)$. Given a point $\xi$ in the Gromov boundary $\Gro(H,g) \cong \se$, there exists a unique limit $[u_\xi]=\lim i_g(x_n)$ for any sequence $x_n\in H$ converging to $\xi$ in the Euclidean sense in $\R^2$. The level sets of $[u_\xi]$ are the horocycles based at $\xi$, which are the limits of spheres around the points $x_n$. The calibrated curves of $[u_\xi]$ form the ``unstable manifold'' of any geodesic emanating from $\xi$. The uniqueness of $[u_\xi]$ means that the horofunction compactification $\overline{i_g(H)}$ is in this case homeomorphic to the closed unit disc $H\cup \se$ with the Euclidean topology, while $\Hor(H,g) \cong \se \cong \Gro(H,g)$. In the same direction, one shows that in the case of the \Poincare disc
\[ \delta_g : \W_{dir}(H,g) \to \Gro(H,g) \]
is a homeomorphism.

Next, we consider Busemann functions, first introduced in \cite{busemann}. Given a ray $c:\R_-\to H$, one considers the limit
\[ b_c := \lim_{t\to \infty} d_F(c(-t),.)-t . \]
Here, $t=d_F(c(-t),c(0))$ by minimality of $c$. Using the triangle inequality one easily shows that the limit $b_c$ exists and yields an element of $\Dom(H,F)$. In fact,
\[ [b_c]  = \lim_{t\to \infty} i_F(c(-t)) . \]
The minimality of $c$ forces $c(-t)$ to diverge to infinity and $b_c$ to be an element of $\Hor(H,F)$, so that Busemann functions are a special case of horofunctions. Using Busemann functions one infers that any ray is calibrated with respect to some directed weak KAM solution.

\subsection{Injectivity of the asymptotic direction}\label{section hadamard horo}

Let $(H,g)$ be a Hada\-mard manifold. We already remarked the following fact in Subsection \ref{section def gromov bdry}. Namely, for fixed $x\in H$, there exists for each $\xi\in\Gro(H,g)$ precisely one $g$-ray $\g:\R_-\to H$ with $\g(0)=x$ and $\g(-\infty)=\xi$. This shows that there are no intersecting $g$-rays with common endpoints at $-\infty$. By Proposition \ref{prop unique horo if no intersection} the map
\[ \delta_g : \W_{dir}(H,g) \to \Gro(H,g) \]
is a homeomorphism. The same holds for $\delta_g|_{\Hor(H,g)}$.

Conversely, if the Morse Lemma holds and if $\delta_F:\W_{dir}(H,F)\to\Gro(H,g)$ is a homeomorphism, then the set of $F$-rays with a common endpoint at $-\infty$ has a very simple structure, as there are no intersecting rays. Let us pose the following problem.

\begin{prob}\label{prob when delta_F homeo}
 Let $F$ be a Finsler metric on $H$, so that the Morse Lemma holds. When is
 \[ \delta_F : \W_{dir}(H,F) \to \Gro(H,g) \]
 a homeomorphism? Equivalently, when are all $F$-rays unstable?
\end{prob}

Note that it is enough to show that $\delta_F$ is injective. In the case of closed surfaces it is proved in Theorem 12.1 of \cite{morse hedlund}, that $\delta_F$ is a homeomorphism if $F$ has no conjugate points. Here, also $\M=S_FH$, so that the sets $\RR_-(\xi)$ form a continuous foliation of $S_FH$, which could be phrased as $C^0$-integrability of $\phi_F^t:S_FH\to S_FH$. For the case of $M=\T$ and $F$ Riemannian, a metric $F$ on $\T$ without conjugate points is flat \cite{hopf}. We shall later propose a solution to Problem \ref{prob when delta_F homeo} in the case of general Finsler metrics on a closed surface $M$ (see Corollaries \ref{cor horobdry torus} and \ref{cor horobdry higher gen}).

\section{Mather theory}\label{section mather theory}

In this section we move to the quotient manifold $M=H/\Gam$ and we moreover assume that $M$ is compact. The approach in this section is due to Mather \cite{mather} and allows to study minimizers of infinite length in the (compact!) quotient $M$. Already in Section \ref{section weak KAM} we considered the case where we were given a $C^1$ closed 1-form $\eta:M\to S_F^*M$; $\eta$ defines a graph in $S_F^*M$ invariant under the (dual) geodesic flow. Note that in the quotient we cannot expect $\eta$ to be exact, as we did when working in the universal cover $H$. Recall the map $\theta_F : T^*M \to S_FM$ defined in the beginning of Section \ref{section weak KAM}. The following well-known proposition motivates the approach in this section; it rests on the same argument as Proposition \ref{lemma weak KAM} \eqref{item graphs calibrate}.

\begin{prop}\label{prop motivates mather}
 If $\eta:M\to T^*M$ is a closed 1-form with $F^*\circ \eta \equiv 1$, then for any solution $c:\R\to M$ of $\dot c = \theta_F \circ \eta(c)$ and $a\leq b$ we have
 \[ \int_a^b F(\dot c)-\eta(\dot c) ~ dt = \inf \left\{ \int_0^1 F(\dot \g)-\eta(\dot \g) ~ dt \right\} , \]
 where the infimum is taken over all continuous, piecewise $C^1$-curves $\g:[0,1]\to M$ with $\g(0)=c(a), \g(1)=c(b)$. In particular, the solutions of $\dot c=\theta_F \circ \eta(c)$ are $F$-geodesics.
\end{prop}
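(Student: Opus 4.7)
The plan is to reduce the claimed identity to a pointwise comparison via the Fenchel inequality, exactly as in the proof of Proposition \ref{lemma weak KAM} \eqref{item graphs calibrate}. Since $F^{*}\circ\eta\equiv 1$, Fenchel yields, for every $v\in TM$, the bound $\eta(v)\leq F(v)\cdot F^{*}(\eta(\pi v))=F(v)$, with equality precisely when $v$ is a nonnegative multiple of $\theta_F\circ\eta(\pi v)$. Hence $F(\dot\g)-\eta(\dot\g)\geq 0$ pointwise along any $C^1$-curve $\g$.

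Specializing to $c$ with $\dot c=\theta_F\circ\eta(c)$: since $\theta_F$ takes values in $S_F M$ we have $F(\dot c)\equiv 1$, and the Fenchel equality gives $\eta(\dot c)=F(\dot c)\cdot F^{*}(\eta(c))=1$. Thus the integrand vanishes identically along $c$, and the left-hand side of the identity equals $0$. For any competitor $\g:[0,1]\to M$ joining $c(a)$ to $c(b)$, the pointwise nonnegativity gives $\int_0^1(F(\dot\g)-\eta(\dot\g))\,dt\geq 0$. Since $F$ and $\eta$ are both positively $1$-homogeneous in the velocity, $\int(F(\dot\g)-\eta(\dot\g))\,dt$ is invariant under reparametrization of $\g$, so the affine rescaling of $c|_{[a,b]}$ to $[0,1]$ attains the value $0$ and realizes the infimum.

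For the final assertion that $c$ is an $F$-geodesic, closedness of $\eta$ enters: on any simply connected open $U\subset M$ I can write $\eta=du$ for a local primitive $u$, so that $\int\eta(\dot\g)\,dt=u(\g(1))-u(\g(0))$ depends only on the endpoints of $\g\subset U$. Applied to sufficiently short subsegments $c|_{[s,t]}\subset U$ and competing curves $\g:[0,1]\to U$ with $\g(0)=c(s)$, $\g(1)=c(t)$, the identity then reduces to $l_F(c;[s,t])\leq l_F(\g)$, so $c$ is locally length-minimizing, hence an $F$-geodesic. The computation is essentially a one-line application of Fenchel, so there is no real technical obstacle; the only conceptual point to bear in mind is that closedness of $\eta$ is not needed for the variational identity itself, but is indispensable for converting it into the local length-minimization property that characterizes $F$-geodesics.
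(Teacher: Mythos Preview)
Your proof is correct and follows essentially the same approach as the paper: both arguments use the Fenchel inequality to obtain $F(\dot\g)-\eta(\dot\g)\geq 0$ pointwise, the Fenchel equality along $c$ to show the left-hand side vanishes, and local exactness of the closed form $\eta$ to deduce local length-minimization. Your added remarks on reparametrization invariance and on where closedness is actually used are accurate and slightly more explicit than the paper's version.
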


Note that the integral $\int F(\dot c)-\eta(\dot c) ~ dt$ is invariant under orientation preserving reparametrizations of $c$.

\begin{proof}
 We have by definition of $c,\theta_F$ and $F^*\circ\eta\equiv 1$
 \begin{align*}
  \int_a^b F(\dot c)-\eta(\dot c) ~dt = \int_a^b F(\dot c)-F^*(\eta\circ c) ~dt = \int_a^b 1-1 ~dt = 0 .
 \end{align*}
 Let now $\g$ be an arbitrary connection from $c(a)$ to $c(b)$. Then by the Fenchel inequality and $F^*\circ\eta\equiv 1$, 
 \begin{align*}
  \int_a^b F(\dot \g)-\eta(\dot \g) ~dt \geq \int_a^b F(\dot \g)-F(\dot \g)\cdot F^*(\eta\circ \g) ~dt = 0 .
 \end{align*}
 The first claim follows.
 
 To see that the solutions of $\dot c = \theta_F \circ \eta(c)$ are $F$-geodesics, observe that $\eta$ is locally exact by being closed. This means that the integral $\int F-\eta dt$ along $c$ depends locally only on $\int F dt$ and the endpoints of $c$, so that the solutions of $\dot c = \theta_F \circ \eta(c)$ locally minimize the $F$-length.
\end{proof}

Proposition \ref{prop motivates mather} motivates the study of the ``distance''
\[ d_{F-\eta}(x,y)=\inf\{ l_{F-\eta}(c;[0,1]) ~|~ c:[0,1]\to M, ~ c(0)=x, ~ c(1)=y \} \]
on $M$ induced by the ``length''
\[ l_{F-\eta}(c;[a,b]) := \int_a^b F(\dot c)-\eta(\dot c) ~ dt = l_F(c;[a,b])-\int_c\eta \]
for $C^1$-curves $c:[a,b]\to M$. Here, $\eta$ is a smooth, closed 1-form on $M$. Note that if $f:M\to\R$ is a smooth function, then $l_{F-\eta}$ and $l_{F-\eta+df}$ have the same minimizers. Hence, we shall fix a given cohomology class in $H^1(M,\R)$ and take any closed 1-form $\eta$ representing this class. The closed 1-form $\eta$ will be treated as given.

As an example, let $F$ be a Finsler metric on the 2-torus $\T=\R^2/\Z^2$. Closed 1-forms are of the form
\[ \eta = \la v, . \ra + ~ df , \]
where $v\in\R^2$ is a constant vector and $f:\T\to\R$ is a smooth function. Then we have
\[ l_{F-\eta}(c;[a,b]) = l_F(c;[a,b])-\int_a^b \la v, \dot c \ra dt - f\circ c(b) + f\circ c(a) . \]
Fixing the endpoints $c(a),c(b)$, the ``length'' $l_{F-\eta}$ singles out geodesics $c:\R\to\T$ whose lifts $\tilde c:\R\to\R^2$ to the universal cover seem shorter if they travel in the direction $v$, and longer if they travel against the direction $v$. That is, if $c$ minimizers the ``length'' $l_{F-\eta}$, it will travel in the direction of $v$. If $F=|.|$ is the Euclidean metric, then the minimizers will be straight lines of direction $v$. Note that in standard coordinates $v$ is given by the cohomology class $[\eta]\in H^1(\T,\R)\cong\R^2$.

We return to the general case. One can see that the definition of $d_{F-\eta}$ is problematic. Indeed, if $\eta$ is ``too large'', then the distance defined using $l_{F-\eta}$ will be $-\infty$. Hence, we need to define a set of appropriate $\eta$'s. Consider the set $\MM(F)$ of probability measures supported in the unit tangent bundle $S_FM$ invariant under the geodesic flow $\phi_F^t$, endowed with the topology of weak convergence. We define the {\em rotation vector}
\[ \rho:\MM(F)\to H_1(M,\R)=H^1(M,\R)^*, \qquad \rho(\mu)([\eta]) := \int_{TM} \eta ~ d\mu. \]
Note that $\rho(\mu)$ is well-defined due to flow-invariance of $\mu$. Indeed, if $f:M\to\R$ is any smooth function then
\begin{align*}
 \int df~ d\mu & = \int \frac{d}{dt}\bigg|_{t=0}f(c_v(t)) ~ d\mu(v) = \frac{d}{dt}\bigg|_{t=0} \int f\circ \pi\circ \phi_F^t ~ d\mu \\
 & = \frac{d}{dt}\bigg|_{t=0} \int f\circ \pi ~ d\mu = 0 .
\end{align*}
The map $\rho$ is affine and continuous and the set $\MM(F)$ is convex and compact, such that the set
\[ B(F) := \rho(\MM(F))\subset H_1(M,\R) \]
is convex and compact. The convex polar of $B(F)$ is the convex, compact set given by
\[ B^*(F) := \big\{ [\eta]\in H^1(M,\R) : \la [\eta], h \ra \leq 1 ~ \forall h\in B(F) \big\} . \]
One can show that the relative interior of $B(F)$ and hence of $B^*(F)$ is open in $H_1(M,\R), H^1(M,\R)$, respectively.

The observations in the next lemma are crucial in Mather theory. They motivate using the elements in $\partial B^*(F)$ for the ``distance'' $d_{F-\eta}$ to obtain interesting minimizers, where the boundary
\[ \partial B^*(F) := \big\{ [\eta]\in B^*(F) ~|~ \exists h\in B(F) : ~ \la [\eta], h \ra = 1 \big\} . \]

\begin{lemma}\label{lemma critical value}
 Let $\eta$ be a closed 1-form on $M$
 \begin{enumerate}[(i)]
  \item\label{lemma critical value ii} If $[\eta]$ lies in the complement of $B^*(F)$, then $d_{F-\eta} \equiv -\infty$. In particular, there are no $l_{F-\eta}$-minimizers.
  
  \item\label{lemma critical value iv} If $[\eta]$ lies in $B^*(F)$, then $d_{F-\eta}$ is bounded.
  
  \item\label{lemma critical value i} If $[\eta]$ lies in the interior $B^*(F)-\partial B^*(F)$, then for any two points $x,y \in M$ there exists an $F$-geodesic minimizing the length functional $l_{F-\eta}$ on curves from $x$ to $y$ and there exist no $l_{F-\eta}$-minimizers of infinite length.
  
  \item\label{lemma critical value iii} If $[\eta]$ lies in the boundary $\partial B^*(F)$, then any vector $v$ in the support of some $\mu\in\MM(F)$ with $\rho(\mu)([\eta])=1$ defines an $F$-geodesic $c_v:\R\to M$ minimizing $l_{F-\eta}$ between any of its points. In particular, there exist $l_{F-\eta}$-minimizers of infinite length.
 \end{enumerate} 
\end{lemma}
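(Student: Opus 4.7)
The plan is to handle the four items in order, using item (ii) as the central technical step on which (iii) and (iv) build. Throughout, I view the closed $1$-form $\eta$ as a continuous function $S_FM\to\R$ via $v\mapsto \eta_{\pi v}(v)$, so that $\int_c\eta = \int_0^T\eta(\dot c)\,dt$ along an $F$-arclength parametrization and $\rho(\mu)([\eta]) = \int\eta\,d\mu$ for $\mu\in\MM(F)$. The workhorses will be Birkhoff's ergodic theorem and the observation that periodic $\phi_F^t$-orbits give flow-invariant measures via their normalized arclength.

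For (i), pick $\mu\in\MM(F)$ with $\rho(\mu)([\eta])>1$ and pass to an ergodic component (the strict inequality survives on at least one component). Birkhoff's theorem then produces $v\in\supp\mu$ for which $\frac{1}{T}\int_0^T\eta(\dot c_v)\,dt\to\rho(\mu)([\eta])$, and since $F(\dot c_v)\equiv 1$, this gives $l_{F-\eta}(c_v;[0,T]) = T(1-\rho(\mu)([\eta]))+o(T)\to -\infty$. Compactness of $M$ lets me prepend and append short curves from $x$ to $c_v(0)$ and from $c_v(T)$ to $y$, each of uniformly bounded $l_{F-\eta}$-contribution, so $d_{F-\eta}(x,y)=-\infty$ for every pair.

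For (ii), the upper bound is immediate from any single curve connecting $x$ to $y$. For the lower bound I first reduce, via the asymmetric triangle inequality $d_{F-\eta}(x_0,x_0)\leq d_{F-\eta}(x_0,y)+d_{F-\eta}(y,x_0)$ and the trivial upper bound, to controlling $d_{F-\eta}(x_0,x_0)$ at a fixed point $x_0$. Suppose toward contradiction that loops $c_n$ based at $x_0$ have $l_{F-\eta}(c_n)\to -\infty$. Their free homotopy classes are nontrivial, since null-homotopic loops have $\int\eta=0$ and hence $l_{F-\eta}\geq 0$. In each class, the standard variational argument on the compact $M$ produces a shortest closed $F$-geodesic $\tilde c_n$, and closedness of $\eta$ gives $\int_{\tilde c_n}\eta=\int_{c_n}\eta$ while $l_F(\tilde c_n)\leq l_F(c_n)$, so $l_{F-\eta}(\tilde c_n)\leq l_{F-\eta}(c_n)\to -\infty$. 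But $\tilde c_n$ is a periodic orbit of $\phi_F^t$, so its normalized arclength measure $\mu_n$ lies in $\MM(F)$, and a direct computation yields $\rho(\mu_n)([\eta]) = 1 - l_{F-\eta}(\tilde c_n)/T_n$; for $n$ large enough this exceeds $1$, contradicting $[\eta]\in B^*(F)$.

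For (iii), $[\eta]\in\Int B^*(F)$ together with continuity of $\rho$ and compactness of $\MM(F)$ yields a uniform gap $\rho(\mu)([\eta])\leq 1-\varepsilon$; applied to empirical measures of arclength-parametrized curves, this gives $l_{F-\eta}(c;[0,T])\geq \varepsilon T - K$, which precludes infinite-length minimizers and bounds the $F$-length of any minimizing sequence between fixed endpoints. Existence of an $F$-geodesic minimizer then follows by Ascoli--Arzel\`a and lower semicontinuity of $l_{F-\eta}$. For (iv), an ergodic decomposition reduces to $\mu$ ergodic; Birkhoff then gives $l_{F-\eta}(c_v;[0,T])/T\to 0$ for $\mu$-a.e.\ $v\in\supp\mu$. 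If some sub-segment $c_v|_{[a,b]}$ admitted a shortcut $\gamma$ saving $\delta>0$ in $l_{F-\eta}$, ergodicity and the positive measure of a small neighborhood of $\dot c_v(a)$ provide a positive-density set of return times into that neighborhood, and (after shrinking the neighborhood) the shortcut can be spliced into $c_v$ at essentially each such return without overlap. The modified long curve then has $l_{F-\eta}$ on $[0,T]$ bounded above by $l_{F-\eta}(c_v;[0,T])-\varepsilon'T = o(T)-\varepsilon'T \to -\infty$, contradicting the lower bound from (ii). Closedness of the minimality property in $v$ extends the conclusion to all $v\in\supp\mu$. The main obstacle is precisely the coupling in (iv): Birkhoff alone yields only sublinear control on $l_{F-\eta}$ along $c_v$, so the minimality conclusion must be certified by playing off the positive-density shortcut-splicing against the global boundedness from (ii), with the quantitative estimates on the splicing (continuity of endpoints, disjointness of return intervals) being the technical heart.
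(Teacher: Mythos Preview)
Parts (i) and (ii) match the paper almost verbatim. In (iii) there is one slip: the empirical measure of an arbitrary $F$-arclength curve is not $\phi_F^t$-invariant, so its weak-$*$ limits need not lie in $\MM(F)$, and the uniform gap $\rho(\mu)([\eta])\leq 1-\varepsilon$ does not apply to it. The paper repairs this by first reducing the class of competitors to $F$-geodesic segments: lift to $H$, where $\eta$ is exact, and replace the curve by the $F$-minimizing segment with the same endpoints; this cannot increase $l_{F-\eta}$. After that reduction your empirical-measure argument (now along genuine orbits of $\phi_F^t$) goes through, and the rest of (iii) is fine.

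The interesting divergence is in (iv). Your recurrence--splicing argument can be made to work, but the paper's proof is dramatically shorter and sidesteps exactly the disjoint-return bookkeeping you flag as the technical heart. For any fixed $x\in M$ the triangle inequality reads
\[
d_{F-\eta}(x,c_v(b)) - d_{F-\eta}(x,c_v(a)) \;\leq\; l_{F-\eta}(c_v;[a,b]).
\]
Integrating in $v$ against $\mu$, the left-hand side vanishes by $\phi_F^t$-invariance (it is $\int g\circ\pi\circ\phi_F^b\,d\mu - \int g\circ\pi\circ\phi_F^a\,d\mu$ with $g=d_{F-\eta}(x,\cdot)$), while the right-hand side equals $(b-a)\bigl(1-\rho(\mu)([\eta])\bigr)=0$. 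A nonnegative continuous function with zero $\mu$-integral vanishes on $\supp\mu$, so equality holds for every $v\in\supp\mu$, and one more triangle inequality gives $l_{F-\eta}(c_v;[a,b])\leq d_{F-\eta}(c_v(a),c_v(b))$ directly. No ergodic decomposition, no Birkhoff, no splicing. What your approach buys is a concrete mechanism explaining \emph{why} a local shortcut would force $d_{F-\eta}$ to be unbounded; what the paper's buys is a two-line argument that works for arbitrary (not just ergodic) $\mu$ and lands on all of $\supp\mu$ without a separate closure step.
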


\begin{proof}%[Sketch of the proof]
 \eqref{lemma critical value ii}. Let $\mu\in\MM(F)$ with $\rho(\mu)([\eta]) > 1$. By Birk\-hoff's ergodic theorem we find $v\in\supp\mu$ with
 \[ \lim_{T\to\infty} \frac{1}{T} \int_0^T \eta(\phi_F^tv) ~ dt \geq \int \eta ~ d\mu > 1. \]
 Hence, for $T\gg 1$ we find
 \[ l_{F-\eta}(c_v;[0,T])=T-\int_0^T \eta(\phi_F^tv) ~ dt \leq - \e T \]
 for some $\e>0$ and $l_{F-\eta}$ becomes unbounded from below.
 
 \eqref{lemma critical value iv}. An upper bound can be found using that $M$ is compact and $\eta$ is fixed. Assume that $d_{F-\eta}$ is not bounded from below. One can use the upper bound to find a closed $C^1$-curve in $M$ with negative $F-\eta$-length. Replacing this curve by a shortest closed geodesic in the same free homotopy class, the $F$-length decreases, while the $\eta$-integral stays unchanged by $\eta$ being closed. This shows that there is a closed $F$-geodesic $c:[0,T]\to M$ with negative $F-\eta$-length. Consider the measure $\mu\in\MM(F)$ evenly distributed on the orbit $\dot c[0,T]$. Then
 \[ \rho(\mu)([\eta]) = \frac{1}{T}\int_0^T \eta(\dot c) ~ dt =  \frac{1}{T}\left(T-l_{F-\eta}(c;[0,T]) \right) > 1, \]
 a contradiction.
 
 \eqref{lemma critical value i}. Given any pair $x,y\in M$ and an arc-length $C^1$-curve $c:[0,T]\to M$ from $x$ to $y$, choose a lift $\tilde c$ of $c$ to the universal cover $H$ and choose a primitive $u:H\to\R$ of the lift $\tilde \eta$ of $\eta$ to $H$. Replace $\tilde c$ by a minimizing geodesic segment $\tilde c_0$ with the same endpoints. The $F-\eta$-length of the projection $c_0$ of $\tilde c_0$ does not increase, as the integral $\int_c\eta=\int_{\tilde c} du$ depends in the universal cover only on the value $u(\tilde c(T))-u(\tilde c(0))$. Hence, we can restrict to $F$-geodesic segments from $x$ to $y$, when we look for $l_{F-\eta}$-minimizers. Let now $v\in S_FM$. Then for any limit measure $\mu$ of probabilities $\mu_{T_n}$ evenly distributed along the orbit segment $\dot c_v[0,{T_n}]$ as ${T_n}\to\infty$ we find
 \[ \lim_{n\to\infty} \frac{1}{T_n} \int_0^{T_n} \eta(\phi_F^tv) ~ dt = \lim_{n\to\infty} \int \eta ~ d\mu_{T_n} = \int \eta ~ d\mu = \rho(\mu)([\eta]) < 1. \]
 Hence, for $T\gg 1$ we find
 \[ l_{F-\eta}(c_v;[0,T])=T-\int_0^T \eta(\phi_F^tv) ~ dt \geq \e T \]
 for some $\e>0$. This shows that when looking for $l_{F-\eta}$-minimizers, we have to consider only $F$-geodesics of bounded length, which form a compact family. The claim follows.
 
 \eqref{lemma critical value iii}. Let $\mu\in\MM(F)$ with $\rho(\mu)([\eta])=1$, let $v\in \supp\mu$ and let $x\in M$ be arbitrary. The triangle inequality shows for any $a\leq b$, that
 \begin{align}\label{eqn mather semistatic}
  d_{F-\eta}(x,c_v(b)) \leq d_{F-\eta}(x,c_v(a)) + l_{F-\eta}(c_v;[a,b]) .  
 \end{align}
 Rewriting this inequality and integrating with respect to $\mu$, we find by the $\phi_F^t$-invariance of $\mu$ and $\rho(\mu)([\eta])=1$
 \begin{align*}
  0 & = \int d_{F-\eta}(x,.)\circ \pi \circ \phi_F^b ~ d\mu - \int d_{F-\eta}(x,.)\circ \pi \circ \phi_F^a ~ d\mu \\
  & \leq \int_a^b\int (F-\eta)\circ\phi_F^t ~ d\mu ~ dt = (b-a)\cdot \int (F-\eta) ~ d\mu = 0. 
 \end{align*}
 Hence, the integrated triangle inequality is an equality and for all points $v\in\supp\mu$. We obtain from \eqref{eqn mather semistatic} and again the triangle inequality, that
 \[ l_{F-\eta}(c_v;[a,b]) = d_{F-\eta}(x,c_v(b)) - d_{F-\eta}(x,c_v(a)) \leq d_{F-\eta}(c_v(a),c_v(b)) . \]
 The claim follows.
\end{proof}

By taking $x=c_v(b)$ in the proof of Lemma \ref{lemma critical value} \eqref{lemma critical value iii}, one finds the even stronger minimization property
\[ l_{F-\eta}(c_v;[a,b]) = - d_{F-\eta}(c_v(b),c_v(a)) \]
for the points $v$ in the support of $\mu$.

Lemma \ref{lemma critical value} suggests the following definitions.

\begin{defn}\label{def mather set}
 Let $[\eta]\in \partial B^*(F)\subset H^1(M,\R)$.
 \begin{enumerate}[(i)]
  \item An $F$-geodesic segment $c:[a,b]\to M$ with
  \[ l_{F-\eta}(c;[a,b])=d_{F-\eta}(c(a),c(b)) \]
  is called {\em $[\eta]$-semistatic on $[a,b]$}. The set of initial conditions of geodesics which are $[\eta]$-semistatic  on $\R$ is denoted by $\NN_{[\eta]}\subset S_FM$ and called the {\em $[\eta]$-\Mane set}.
  
  \item An $F$-geodesic segment $c:[a,b]\to M$ with
  \[ l_{F-\eta}(c;[a,b])=-d_{F-\eta}(c(b),c(a)) \]
  is called {\em $[\eta]$-static on $[a,b]$}. The set of initial conditions of geode\-sics which are $[\eta]$-static  on $\R$ is denoted by $\A_{[\eta]}\subset S_FM$ and called the {\em $[\eta]$-Aubry set}.
  
  \item A measure $\mu\in \MM(F)$ is called {\em $[\eta]$-minimizing}, if $\rho(\mu)([\eta]) = 1$. The union of the supports of $[\eta]$-minimizing measures is denoted by $\M_{[\eta]}\subset S_FM$ and called the {\em $[\eta]$-Mather set}. 
 \end{enumerate}
\end{defn}

\begin{remark}\label{remark mane inclusions}
 As $\eta$ is exact in the universal cover $H$, one can easily show that
 \[ \M_{[\eta]} \subset \A_{[\eta]} \subset \NN_{[\eta]} \subset Dp(\M) \subset S_FM . \]
 Note that $\M_{[\eta]}$ is always non-empty.
\end{remark}

\begin{remark}
 In Subsection \ref{section min geod} we remarked that the notion of minimal geodesics can also be defined in the quotient $M$, via homotopic minimization. One could define a curve $c:\R\to M$ to be homologically minimizing, if for all $a<b$ and all curves $c':[0,1]\to M$ with $c'(0)=c(a),c'(1)=c(b)$, which are homologous to $c|_{[a,b]}$, $c|_{[a,b]}$ has minimal $F$-length. The union of all homologically minimizing geodesics should correspond to the union of all curves $c$, which are $[\eta]$-semistatic with respect to some $[\eta]\in\partial B^*(F)$. One could also define homological minimization in the abelian cover. See Proposition 2, p.\ 182 in \cite{mather}.
\end{remark}

We saw in Lemma \ref{lemma critical value}, that any orbit in the support of a minimizing measure is semistatic, which can be written as $ \M_{[\eta]} \subset \NN_{[\eta]}$. The converse is also true. In this sense, the minimizing measures are precisely those measures supported on minimizing orbits.

\begin{prop}
 If $\mu$ is a $\phi_F^t$-invariant probability measure supported in $\NN_{[\eta]}$ for $[\eta]\in\partial B^*(F)$, then $\rho(\mu)([\eta])=1$, i.e.\ $\mu$ is an $[\eta]$-minimizing measure.
\end{prop}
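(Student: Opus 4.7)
The plan is to show two inequalities separately. Since $\rho(\mu)\in B(F)$ and $[\eta]\in \partial B^*(F)\subset B^*(F)$, the definition of the polar set gives immediately
\[ \rho(\mu)([\eta]) = \langle [\eta],\rho(\mu)\rangle \leq 1 . \]
So the work lies entirely in proving the reverse inequality $\rho(\mu)([\eta])\geq 1$.

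For this, I would use the boundedness of $d_{F-\eta}$ guaranteed by Lemma \ref{lemma critical value} \eqref{lemma critical value iv}: fix a constant $K$ with $|d_{F-\eta}(x,y)|\leq K$ for all $x,y\in M$. Since $\supp \mu\subset \NN_{[\eta]}$, any $v\in\supp\mu$ is the initial condition of a globally $[\eta]$-semistatic geodesic, so for every $T>0$,
\[ T - \int_0^T \eta(\phi_F^tv) \, dt = l_{F-\eta}(c_v;[0,T]) = d_{F-\eta}(c_v(0),c_v(T)) \leq K . \]
Rearranging gives the pointwise bound
\[ \frac{1}{T} \int_0^T \eta(\phi_F^tv) \, dt \geq 1 - \frac{K}{T} \qquad \forall v\in\supp\mu,~T>0 . \]

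Now I would invoke Birkhoff's ergodic theorem for the $\phi_F^t$-invariant probability measure $\mu$: the ergodic averages $\frac{1}{T}\int_0^T \eta(\phi_F^t v)\,dt$ converge $\mu$-a.e.\ as $T\to\infty$ to a measurable function $\tilde\eta(v)$ with $\int \tilde\eta\,d\mu = \int \eta\,d\mu$. Since $\mu(\supp\mu)=1$ (Borel probability measure on a nice space), the pointwise bound above holds $\mu$-a.e., and letting $T\to\infty$ yields $\tilde\eta\geq 1$ $\mu$-a.e. Integrating,
\[ \rho(\mu)([\eta]) = \int \eta \, d\mu = \int \tilde\eta \, d\mu \geq 1 , \]
which combined with the upper bound gives $\rho(\mu)([\eta])=1$.

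I do not expect a serious obstacle here; the proof is essentially the converse direction of the Birkhoff argument used in Lemma \ref{lemma critical value} \eqref{lemma critical value ii}. The only point that needs a moment of care is the transition from a pointwise bound on $\supp\mu$ to a $\mu$-a.e.\ statement, which is immediate from the definition of the support of a Borel probability measure.
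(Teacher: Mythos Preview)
Your proof is correct and follows essentially the same approach as the paper: both use the semistatic identity $l_{F-\eta}(c_v;[0,T])=d_{F-\eta}(c_v(0),c_v(T))$, the boundedness of $d_{F-\eta}$, and Birkhoff's ergodic theorem to conclude that the time average of $\eta$ along orbits in $\NN_{[\eta]}$ equals $1$. The only cosmetic difference is that the paper reduces first to ergodic $\mu$ and computes the limit directly as $1$, whereas you keep $\mu$ general, establish the pointwise lower bound on $\supp\mu$, and pair it with the polar-set upper bound; either packaging is fine.
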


\begin{proof}
 Let us assume that $\mu$ is ergodic, the general case following from the fact that $\rho$ is affine and arbitrary invariant probabilities are convex combinations of ergodic ones. Choose a $\mu$-typical $v\in\NN_{[\eta]}$, so that
 \begin{align*}
  \rho(\mu)([\eta]) & = \int \eta d\mu = \lim_{T\to\infty} \frac{1}{T}\int_0^T \eta \circ \phi_F^tv dt \\
  & = \lim_{T\to\infty} \frac{1}{T} \left(T-l_{F-\eta}(c_v;[0,T]) \right) \\
  & = 1 - \lim_{T\to\infty} \frac{1}{T} d_{F-\eta}(c_v(0),c_v(T)) \\
  & = 1.
 \end{align*}
 Here we used that $d_{F-\eta}$ is bounded.
\end{proof}

\begin{remark}\label{remark stretch}
 There is a relationship between the above notions and the geodesic stretch, see e.g.\ \cite{knieper strech}. Given $\mu\in \MM(F)$, we can define its {\em geodesic stretch} by
 \[ S(\mu) := \int_{S_F M} \lim_{t\to\infty} \frac{d_g(\widetilde{c_v}(0),\widetilde{c_v}(t))}{t} d\mu(v) , \]
 where $\widetilde{c_v}:\R\to H$ denotes an arbitrary lift of the $F$-geodesic $c_v:\R\to M$. The larger the stretch of an element $\mu\in\MM(F)$, the shorter do the geodesics in the support of $\mu$ become. Hence, one should be able to relate the property ``maximizing the stretch'' to ``supported in $Dp(\M)$''. Hence, we replace maximizing $\rho(.)([\eta])$ by maximizing $S$. The advantage of using the stretch is one finds a larger class of ``minimizing measures''. We shall study ``periodic measures'' in Subsection \ref{section hyperbolic cylinder}. Note in this connection also \cite{boyland} associating to measures $\mu\in\MM(F)$ with positive stretch a rotation measure $\hat\rho(\mu)\in\MM(g)$, provided that the Morse Lemma holds. The dynamics in $\supp\mu$ ``shadow'' the dynamics in $\supp\hat \rho(\mu)$ via Theorem \ref{morse lemma}.
\end{remark}

\subsection{Weak KAM solutions in Mather theory}\label{section weak KAM in mather}

In Section \ref{section weak KAM} we used the distance $d_F$ in $H$ to define dominated functions and weak KAM solutions $u:H\to\R$ (Definitions \ref{def dominated} and \ref{def weak KAM}). We take the analogous definitions using the distance $d_{F-\eta}$ for elements $[u]\in C^0(M)/_\sim$, which yields {\em $\eta$-dominated functions} and {\em $\eta$-weak KAM solutions}. Note that this definition depends on the representative $\eta\in[\eta]$. The calibration condition becomes
\[ l_{F-\eta}(c;[a,b]) = d_{F-\eta}(c(a),c(b)) . \]

An instance of an $\eta$-weak KAM solution is the ``Busemann function'' associated to a $[\eta]$-semistatic ray $c:\R_-\to M$, given by
\[ [b_c] := \lim_{t\to \infty} [d_{F-\eta}(c(-t),.)] . \]
Using such ``Busemann functions'', one can see that every $[\eta]$-semistatic ray $c$ is calibrated with respect to some $\eta$-weak KAM solution $[b_c]$.

We already remarked that $[\eta]$-semistatic curves lift to minimal geode\-sics in the universal cover $H$. Similarly, one can lift $\eta$-weak KAM solutions to weak KAM solutions in $\W(H,F)$. Note that a closed 1-form $\eta$ on $M$ has a lift $\tilde\eta$ in $H$ and $\tilde \eta$ has a primitive $h:H\to\R$ with $dh=\tilde\eta$.

\begin{prop}\label{prop lifting weak KAM}
 If $\eta$ is a closed 1-form on $M$ with $[\eta]\in \partial B^*(F)$, $h:H\to\R$ a primitive of the lift $\tilde \eta$ to $H$ and if $[u]\in C^0(M)/_\sim$ is $\eta$-dominated, then
 \[ [\tilde u] := [h + u \circ p] \in \Dom(H,F)  \]
 and the $[\tilde u]$-calibrated curves are the lifts of the $[u]$-calibrated curves. In particular, if $[u]$ is an $\eta$-weak KAM solution, then 
 \[ [\tilde u] \in \W(H,F) .\]
\end{prop}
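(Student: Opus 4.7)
The plan is to reduce everything to the bookkeeping identity
\[ h(c(b))-h(c(a))=\int_c \tilde\eta = \int_{p\circ c}\eta, \]
valid for any $C^1$ curve $c:[a,b]\to H$, which shows that the primitive $h$ converts the $\eta$-twisted action in $M$ into the untwisted $F$-length in $H$. This single identity drives all three claims, so the proof is essentially bookkeeping.

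First I would verify the domination property. For $x,y\in H$ and any $C^1$ connection $c:[0,1]\to H$ from $x$ to $y$, the identity above gives
\[ \tilde u(y)-\tilde u(x)=\int_{p\circ c}\eta + \bigl(u(p(y))-u(p(x))\bigr). \]
Since $[u]$ is $\eta$-dominated, one has $u(p(y))-u(p(x))\leq l_{F-\eta}(p\circ c)=l_F(c)-\int_{p\circ c}\eta$, so the twist terms cancel and $\tilde u(y)-\tilde u(x)\leq l_F(c)$. Taking the infimum over $c$ yields $[\tilde u]\in\Dom(H,F)$; the class $[\tilde u]$ is well-defined because both $u$ and $h$ are determined only up to additive constants.

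Next I would handle the calibration correspondence. For an arc-length $C^1$-curve $c:[a,b]\to H$ with projection $c'=p\circ c$ (also arc-length with respect to $F$), the same identity rewrites
\[ \tilde u(c(b))-\tilde u(c(a))-(b-a) = \bigl(u(c'(b))-u(c'(a))\bigr) - l_{F-\eta}(c';[a,b]). \]
Hence $c$ calibrates $[\tilde u]$ precisely when $u(c'(b))-u(c'(a))=l_{F-\eta}(c';[a,b])$. The $\eta$-domination chain
\[ u(c'(b))-u(c'(a))\leq d_{F-\eta}(c'(a),c'(b))\leq l_{F-\eta}(c';[a,b]) \]
then collapses to equality, which is exactly the definition of $c'$ being $[u]$-calibrated in the $\eta$-sense of Subsection \ref{section weak KAM in mather}. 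Reading the computation in reverse shows that any lift of a $[u]$-calibrated curve in $M$ is a $[\tilde u]$-calibrated curve in $H$, so the two sets of calibrated curves correspond under $p$.

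For the last assertion, fix $\tilde x\in H$ and choose a $[u]$-calibrated ray $c':\R_-\to M$ with $c'(0)=p(\tilde x)$, which exists by the weak KAM hypothesis. The path-lifting property of the covering $p$ produces a unique lift $\tilde c:\R_-\to H$ with $\tilde c(0)=\tilde x$. By the calibration step, $\tilde c$ calibrates $[\tilde u]$ on every compact subinterval, and Proposition \ref{prop calibrate implies minimal} then forces $\tilde c$ to be a minimizing $F$-geodesic segment on each such subinterval, i.e.\ an $F$-ray. This verifies condition \eqref{cond calibration} of Definition \ref{def weak KAM} and gives $[\tilde u]\in\W(H,F)$. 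No step presents a genuine obstacle; the only care needed is to keep the twisted calibration in $M$ separate from the untwisted calibration in $H$, and to remember that $c'$ and its lift $c$ have the same $F$-length.
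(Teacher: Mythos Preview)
Your proof is correct and follows essentially the same route as the paper: both hinge on the identity $h(c(b))-h(c(a))=\int_{p\circ c}\eta$ to convert between the $(F-\eta)$-action downstairs and the $F$-length upstairs. Your presentation is in fact slightly more complete---you establish both directions of the calibration correspondence via the clean identity $\tilde u(c(b))-\tilde u(c(a))-(b-a)=\bigl(u(c'(b))-u(c'(a))\bigr)-l_{F-\eta}(c')$, whereas the paper only writes out the direction ``$[u]$-calibrated implies lift is $[\tilde u]$-calibrated'', and you spell out the weak KAM conclusion explicitly. The only cosmetic difference is that the paper phrases the domination step via the formula $d_{F-\eta}(px,py)=\inf_{\tau\in\Gam}\bigl(d_F(x,\tau y)-h(\tau y)+h(x)\bigr)\leq d_F(x,y)-h(y)+h(x)$, while you work with an arbitrary connecting curve and take the infimum at the end; these are equivalent.
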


\begin{proof}
 Observe that for $x,y\in H$ and the covering map $p:H\to M$ we have
 \begin{align*}
  d_{F-\eta}(px,py) & = \inf_{\tau\in\Gam} d_{F-\tilde\eta}(x,\tau y) = \inf_{\tau\in\Gam} d_F(x,\tau y) - h(\tau y) + h(x) \\
  & \leq d_F(x, y) - h(y) + h(x) .
 \end{align*}
 The first claim follows. For the latter claim, observe that if $c:[a,b]\to M$ is $[u]$-calibrated and if $\tilde c:[a,b]\to H$ is a lift, then % by the calculation above
 \begin{align*}
  l_F(\tilde c;[a,b]) & = l_F(c;[a,b]) =u\circ c(b)-u\circ c(a) + \int_a^b\eta(\dot c) dt \\
  & = u\circ p\circ \tilde c(b)-u\circ p\circ \tilde c(a) + \int_a^b dh( \dot{\tilde c}) dt \\
  & = u\circ p\circ \tilde c(b)-u\circ p\circ \tilde c(a) + h\circ \tilde c(b)- h\circ\tilde c(a)  .
 \end{align*}
 This proves the second claim.
\end{proof}

Using the analogous version of Proposition \ref{lemma weak KAM} for $d_{F-\eta}$ in $M$, one sees that every $[\eta]$-semistatic ray is simple in $M$, i.e.\ has no self-intersections. This is by far not true for all minimal geodesics. In particular, the family of semistatic geodesics is in general much smaller than the set of minimal geodesics. Furthermore, not every weak KAM solution $[u]\in \W(H,F)$ is the lift of some weak KAM solution $[u]$ from $M$ using some closed 1-form and in general $[\tilde u]$ is not directed, if the Morse Lemma holds. We will show, however, that $[\tilde u]$ is always directed, if $M$ is the 2-torus, see Subsection \ref{section mather torus}.

We defined the Aubry set and the \Mane set using (semi)static curves. There is another characterization using weak KAM solutions. Compare this to Proposition \ref{char unstable via weak KAM}.

\begin{prop}\label{weak KAM charact aubry set}
 An $F$-geodesic is $[\eta]$-static (semistatic), if and only if it is calibrated with respect to all (one) $\eta$-weak KAM solution.
\end{prop}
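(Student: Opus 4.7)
The semistatic case is essentially a bookkeeping exercise. For the ``if'' direction, if $u$ is an $\eta$-weak KAM solution calibrating $c$, then for $s\leq t$ one has $l_{F-\eta}(c;[s,t])=u(c(t))-u(c(s))\leq d_{F-\eta}(c(s),c(t))$ by $\eta$-domination, and the reverse inequality $d_{F-\eta}(c(s),c(t))\leq l_{F-\eta}(c;[s,t])$ is immediate, so $c$ is $[\eta]$-semistatic. For the converse, I would adapt the Busemann construction described just above Proposition~\ref{prop lifting weak KAM}: set $b_c(y):=\lim_{t\to\infty}[d_{F-\eta}(c(-t),y)-d_{F-\eta}(c(-t),c(0))]$. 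The limit exists by monotonicity of $t\mapsto d_{F-\eta}(c(-t),y)-l_{F-\eta}(c;[-t,0])$ (which follows from the triangle inequality along the segment of $c$ from $c(-t_2)$ to $c(-t_1)$ together with semistaticness) combined with the uniform bound on $d_{F-\eta}$ from Lemma~\ref{lemma critical value}\eqref{lemma critical value iv}. That $b_c$ is an $\eta$-weak KAM solution is obtained by taking subsequential limits of $l_{F-\eta}$-minimizing segments ending at $c(-t_n)$ to produce backward calibrated rays at each point. Finally, for $s<t$ and $\tau$ sufficiently large, the semistatic identity gives $d_{F-\eta}(c(-\tau),c(r))=l_{F-\eta}(c;[-\tau,r])$ for $r\in\{s,t\}$, whence $b_c(c(t))-b_c(c(s))=l_{F-\eta}(c;[s,t])$ and $c$ calibrates $b_c$ on all of $\R$.

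For the static case, the ``only if'' direction is again short. First I observe that staticness implies semistaticness: since $d_{F-\eta}(x,x)=0$ for every $x$ (the constant curve gives zero, and the lower bound uses Lemma~\ref{lemma critical value}\eqref{lemma critical value iv} to rule out negative loops), the triangle inequality yields $d_{F-\eta}(c(s),c(t))+d_{F-\eta}(c(t),c(s))\geq 0$; combining this with the static identity $l_{F-\eta}(c;[s,t])=-d_{F-\eta}(c(t),c(s))$ and the trivial $d_{F-\eta}(c(s),c(t))\leq l_{F-\eta}(c;[s,t])$ forces $d_{F-\eta}(c(s),c(t))+d_{F-\eta}(c(t),c(s))=0$ and $d_{F-\eta}(c(s),c(t))=l_{F-\eta}(c;[s,t])$. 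Now for any $\eta$-weak KAM solution $u$, the two domination inequalities $u(c(t))-u(c(s))\leq d_{F-\eta}(c(s),c(t))$ and $u(c(s))-u(c(t))\leq d_{F-\eta}(c(t),c(s))$ sum to an equality, so each is individually an equality, and the first rewrites as $u(c(t))-u(c(s))=l_{F-\eta}(c;[s,t])$, proving that $c$ calibrates $u$.

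The substantive direction is ``calibrated by every weak KAM solution $\Rightarrow$ static.'' The plan is this: given $a<b$, produce an $\eta$-weak KAM solution $u$ saturating the \emph{reverse} domination, namely with $u(c(a))-u(c(b))=d_{F-\eta}(c(b),c(a))$. Calibration of $u$ by $c$ rearranges to $l_{F-\eta}(c;[a,b])=-d_{F-\eta}(c(b),c(a))$, which is staticness on $[a,b]$. The naive candidate $u(y):=d_{F-\eta}(c(b),y)$ already satisfies the endpoint identity, but its required backward calibrated rays are rays of $l_{F-\eta}$-minimizers emanating from $c(b)$, whose existence is not guaranteed when $[\eta]\in\partial B^*(F)$. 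Instead I would consider a family of shifted distance functions along $c$, say $u_T(y):=d_{F-\eta}(c(-T),y)$, and extract a convergent subsequence modulo constants as $T\to\infty$ (the functions are uniformly $g$-Lipschitz with constant $c_F$, and the set of $\eta$-dominated functions is sequentially compact by the analogue of the discussion after Definition~\ref{def dominated}), then verify that the limit is a weak KAM solution and that its evaluation along $c$ realises the desired reverse identity.

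The main obstacle lies in this last verification: establishing that the cluster weak KAM solution satisfies the reverse saturation $u(c(a))-u(c(b))=d_{F-\eta}(c(b),c(a))$ rather than only the usual semistatic identity $u(c(b))-u(c(a))=d_{F-\eta}(c(a),c(b))$. This is the Mather-theoretic avatar of the Peierls barrier condition $h(x,x)=0$ characterising the Aubry set, and is where one expects to use the full strength of the hypothesis that $c$ calibrates \emph{every} weak KAM solution, not merely the backward Busemann function of $c$ itself.
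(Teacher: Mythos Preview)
The paper does not actually prove this proposition: it is stated as a known fact from weak KAM theory, with a pointer to the analogous Proposition~\ref{char unstable via weak KAM} and implicit reliance on the references \cite{fathi}, \cite{sorrentino}. So there is no in-paper argument to compare against; what follows concerns the soundness of your proposal.

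Your treatment of the semistatic equivalence and of ``static $\Rightarrow$ calibrated by all'' is correct. The genuine gap is precisely where you place it, and your proposed repair does not work. Taking $u_T(y)=d_{F-\eta}(c(-T),y)$ and letting $T\to\infty$ simply rebuilds the backward Busemann function $b_c$ you already constructed for the semistatic direction; calibration of $c$ by any subsequential limit reproduces $u(c(b))-u(c(a))=d_{F-\eta}(c(a),c(b))$ and can never yield the reverse saturation $u(c(a))-u(c(b))=d_{F-\eta}(c(b),c(a))$. The basepoint is at the wrong end of $c$.

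The missing idea is that your ``naive candidate'' $d_{F-\eta}(p,\cdot)$ \emph{is} an $\eta$-weak KAM solution once $p$ lies in the projected Aubry set, and that such a $p$ is available in the $\omega$-limit of $c$. Since $c$ is semistatic, any invariant measure on the $\omega$-limit of $\dot c$ is $[\eta]$-minimising, so its support meets $\M_{[\eta]}\subset\A_{[\eta]}$; choose $q\in\pi(\A_{[\eta]})$ with $c(T_n)\to q$. For such $q$ one manufactures backward calibrated rays for $d_{F-\eta}(q,\cdot)$ at any $y$ by prepending arbitrarily long segments of a static curve through $q$ to an $(F{-}\eta)$-minimiser from $q$ to $y$; staticness forces the relevant triangle inequalities to be equalities, so the concatenation remains minimising. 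Now the hypothesis applied to $u=d_{F-\eta}(q,\cdot)$ gives $d_{F-\eta}(q,c(T_n))-d_{F-\eta}(q,c(s))=l_{F-\eta}(c;[s,T_n])=d_{F-\eta}(c(s),c(T_n))$; letting $n\to\infty$ yields $d_{F-\eta}(q,c(s))+d_{F-\eta}(c(s),q)=0$ for every $s$. Two further triangle inequalities (both of which become equalities via this identity) then give $l_{F-\eta}(c;[a,b])=-d_{F-\eta}(c(b),c(a))$, i.e.\ staticness.
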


\subsection{Mather's avarage action}\label{section av action}

Let us recall another way to define the sets $B(F)$ and $B^*(F)$ due to Mather \cite{mather}. Namely, given the Finsler metric $F$, one can define the ``Tonelli'' Lagrangian
\[ L_F:= \frac{1}{2}F^2 . \]
Write $\MM(L_F)$ for the probability measures in $TM$, which are invariant under the Euler-Lagrange flow of $L_F$ (which equals the geodesic flow of $F$) and satisfy $\int L_F d\mu <\infty$. We can define the rotation vector $\rho:\MM(L_F)\to H_1(M,\R)$ as above. {\em Mather's $\al$- and $\beta$-functions} are given by
\begin{align*}
 & \al_F:H^1(M,\R)\to \R, \\
 & \al_F([\eta]) := -\inf \left\{ \int L_F - \eta ~ d\mu : \mu \in \MM(L_F) \right\} 
\end{align*}
and
\begin{align*}
 & \beta_F:H_1(M,\R)\to \R, \\
 & \beta_F(h) := \inf \left\{ \int L_F ~ d\mu : \mu \in \MM(L_F) , \rho(\mu)=h \right\} .
\end{align*}
The functions $\al_F,\beta_F$ are convex and superlinear. $\al_F$ is the Fenchel transform of $\beta_F$ and vice versa. Moreover, using the homogeneity of $L_F$ one finds $\al(s\cdot [\eta])=s^2 \al([\eta])$ and $\beta(sh)=s^2\beta(h)$ for $s\geq 0$. One shows that
\begin{align*}
 B(F) = \{ \beta_F \leq 1/2\} , \qquad  \ B^*(F) = \{ \al_F \leq 1/2\} .
\end{align*}
The Mather set $\M_{[\eta]}$ is given by the union of supports of measures realizing the infimum in the definition of $\al_F([\eta])$. Similarly, one defines a Mather set $\M^h$ as the union of supports of measures realizing the infimum in the definition of $\beta_F(h)$. If $\al_F([\eta])=1/2=\beta_F(h)$, then $\M_{[\eta]}, \M^h\subset S_FM$.

Let us write
\[ \sig_F(h) := \sqrt{2 \beta_F(h)} . \]
A classical way to obtain $\sig_F$ directly is given by
\[ \sig_F(h) = \inf \left\{ \sum |r_i| \cdot l_F(c_i) \right\} , \]
where the infimum is taken over all $r_i\in\R$ and real Lipschitz 1-cycles $c_i$ in $M$ with $\sum r_i [c_i] = h$ \cite{massart-diss}. The function
\[ \sig_F:H_1(M,\R)\to \R \]
is called the {\em stable norm of $F$} (it defines a non-reversible, convex norm on $H_1(M,\R)$). Hence, the set $B(F)$ is given by the 1-ball of the stable norm. If $F$ is Riemannian, then one can use the Riemannian $q$-dimensional volume to define $\sig_F$ on the higher homology groups $H_q(M,\R)$.

Mather's $\al$-function can be recovered using $\eta$-dominated functions, recalling the dual Finsler metric $F^*$ defined in Section \ref{section weak KAM}.

\begin{prop}[\cite{CIPP}]\label{prop char al-fctn via weak KAM}
 \[ \al_F([\eta]) = \inf_{\eta'\in [\eta]} \max_{x\in M} ~ \frac{1}{2}(F^* \circ \eta'(x))^2 . \] 
\end{prop}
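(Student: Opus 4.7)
The proof proceeds by two inequalities; the ``$\leq$'' direction is a pointwise Fenchel--Young estimate, while ``$\geq$'' requires the existence of sufficiently regular critical subsolutions of the Hamilton--Jacobi equation, which is the genuine content of \cite{CIPP}.

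For the inequality ``$\leq$'', fix a smooth closed 1-form $\eta' \in [\eta]$. The Legendre dual of $L_F = \tfrac{1}{2}F^2$ is $L_F^* = \tfrac{1}{2}(F^*)^2$, so the Fenchel inequality $\eta'(v) \leq F(v) \cdot F^*(\eta')$ combined with $2ab \leq a^2 + b^2$ yields pointwise
\[ L_F(v) - \eta'(v) \geq -\tfrac{1}{2}\bigl(F^*(\eta'(\pi v))\bigr)^2 \geq -\max_{x \in M} \tfrac{1}{2}\bigl(F^*(\eta'(x))\bigr)^2 \]
for every $v \in TM$. Since $\eta - \eta'$ is exact and every $\mu \in \MM(L_F)$ is flow-invariant, $\int(\eta - \eta')\, d\mu = 0$ (by the computation of $\rho(\mu)$ in Section \ref{section mather theory}), so
\[ \int L_F - \eta \, d\mu = \int L_F - \eta' \, d\mu \geq -\max_{x \in M} \tfrac{1}{2}\bigl(F^*(\eta'(x))\bigr)^2 . \]
Passing to the infimum over $\mu \in \MM(L_F)$ gives $\al_F([\eta]) \leq \max_x \tfrac{1}{2}(F^* \circ \eta')^2$, and the infimum over $\eta' \in [\eta]$ closes this direction.

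For ``$\geq$'', set $\al := \al_F([\eta])$ and fix any smooth representative $\eta_0 \in [\eta]$. The plan is to produce, for every $\e > 0$, a smooth function $u_\e : M \to \R$ with
\[ \tfrac{1}{2}\bigl(F^*(\eta_0(x) + du_\e(x))\bigr)^2 \leq \al + \e \qquad \forall x \in M. \]
Granting this, $\eta_0 + du_\e \in [\eta]$ realizes the inf-max up to $\e$, and letting $\e \to 0$ yields the desired inequality.

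The construction of $u_\e$ has two steps. First, the weak KAM theorem applied to the Tonelli Lagrangian $L_F - \eta_0 + \al$ (whose critical value vanishes by the definition of $\al$) produces a Lipschitz $\eta_0$-weak KAM solution $u : M \to \R$; one can for instance take an $\eta_0$-Busemann function associated to an $[\eta]$-semistatic ray, as discussed after Proposition \ref{prop lifting weak KAM}. The Lagrangian analogue of Proposition \ref{lemma weak KAM}\eqref{u diffbar on cal} shows that $\tfrac{1}{2}(F^*(\eta_0 + du))^2 \leq \al$ holds at every point of differentiability of $u$, hence almost everywhere by Rademacher's theorem. The second, harder step is the regularization: since $u$ is only Lipschitz, the maximum over $M$ of $F^*(\eta_0 + du)$ is not controlled at the non-differentiable points, and this is the main obstacle. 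It is overcome by Bernard's smoothing theorem (or the Fathi--Siconolfi approximation), which replaces $u$ by a $C^\infty$ subsolution $u_\e$ at the slightly relaxed level $\al + \e$. This smoothing of Lipschitz critical subsolutions by smooth ones is precisely what \cite{CIPP} establishes in the Tonelli setting, and completes the proof.
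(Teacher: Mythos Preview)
The paper itself gives no proof of this proposition; it simply quotes the result with a citation to \cite{CIPP}. Your argument is mathematically correct: the ``$\leq$'' direction via Fenchel--Young is standard and cleanly executed, and for ``$\geq$'' the strategy of producing a Lipschitz critical subsolution and then smoothing it to a $C^\infty$ subsolution at level $\al+\e$ is valid.

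One point of attribution deserves correction. You write that the smoothing of Lipschitz critical subsolutions by smooth ones ``is precisely what \cite{CIPP} establishes''. This is anachronistic: \cite{CIPP} is from 1998, whereas the Fathi--Siconolfi $C^1$ subsolution theorem is from 2004 and Bernard's $C^{1,1}$ (and hence $C^\infty$ at supercritical levels) smoothing from 2007. The original argument in \cite{CIPP} does not pass through weak KAM solutions; it works directly with \Mane's potential and a more elementary mollification, using convexity of the Hamiltonian in the momentum variable together with uniform continuity in $x$ to show that a mollified Lipschitz subsolution remains a subsolution at a slightly higher energy. Your route through Fathi--Siconolfi/Bernard is a legitimate modern alternative and arguably cleaner, but you should not attribute that method to \cite{CIPP}.
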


Here, we can assume that the cohomology class $[\eta]$ consists of smooth 1-forms cohomologous to the smooth 1-form $\eta$.

\section{Rational directions}\label{section rational}

In this section we study rays and minimal geodesics in finite distance of periodic $g$-geodesics. Hence, these rays are expected to admit some form of periodicity.

\subsection{Structure of rational rays in 2 dimensions}\label{section rational dir 2-dim}

We start with the 2-dimensional case, where the classical results are due to Morse \cite{morse} and Hedlund \cite{hedlund}. Let $M$ be a closed, orientable surface of genus $\mathfrak{g}\geq 1$. The Morse Lemma holds for the constant curvature metrics $g$ described in Subsection \ref{section model geom}. Let us fix a non-trivial, prime element $\tau\in \Gam$. By $M$ being closed, $\tau$ represents a free homotopy class containing shortest closed $F$-geodesics. We fix a $\tau$-periodic $g$-geodesic $\g_\tau \subset H$ and set
\[ \xi_\tau:= \g_\tau(-\infty) . \]
In particular, $\xi_\tau \in \Pi_M\subset\se\cong\Gro(H,g)$ and in the case of genus $\mathfrak{g}\geq 2$ (i.e.\ $(H,g)$ is the \Poincare disc), $\xi_\tau$ is the stable fixed point of the hyperbolic transformation $\tau$; if $\mathfrak{g}=1$, then $\xi_\tau$ has rational or infinite slope.

We consider the set of minimal geodesics $\M(\g_\tau)\subset S_FH$ in finite distance of $\g_\tau$ and the set of rays $\RR_-(\xi_\tau)$. We already saw in Remark \ref{remark shorst closed geod are minimal}, that the shortest closed geodesics in the homotopy class $\tau$ lift to minimal geodesics in the universal cover $H$. Hence, the set of $\tau$-periodic minimal geodesics
\[ \M^{per}(\g_\tau) := \{ v\in \M(\g_\tau) : \tau \circ c_v(\R)=c_v(\R) \} \]
is non-empty and consists of lifts of shortest closed geodesics in homotopy classes $\tau^k, k\geq 1$. Moreover, the set $\M^{per}(\g_\tau)$ is closed and $\tau$-invariant and defines a lamination of $H$ by minimal geodesics ``pa\-ral\-lel'' to $\g_\tau$. In particular, we can speak of pairs $c_0,c_1$ of neighboring $\tau$-periodic minimal geodesics (i.e.\ in the strip between $c_0,c_1$ there are no further $\tau$-periodic minimal geodesics). If $M=\T$, the set $\M^{per}(\g_\tau)$ is invariant under the whole group $\Gam\cong\Z^2$. The following theorem summarizes further structure properties of $\M(\g_\tau)$ and $\RR_-(\xi_\tau)$.

\begin{thm}[Morse, Hedlund]\label{morse periodic}
 \begin{enumerate}[(i)]
  \item \label{morse periodic item 2} If $v\in\RR_-(\xi_\tau)$, then $c_v(-t)$ is asymptotic to some $\tau$-periodic minimal geodesic $c_0(\R)$, as $t\to\infty$. The analogous statement holds for $v\in\RR_+(\g_\tau(\infty))$.
  
  \item \label{morse periodic item 3} If $v\in\RR_-(\xi_\tau)$ and $\pi v = \pi w$ for some $w\in \M^{per}(\g_\tau)$, then $v=w$.
  
  \item \label{morse periodic item 2a} For any $v\in \M(\g_\tau)-\M^{per}(\g_\tau)$, the geodesic $c_v$ is heteroclinic between a pair $c_0,c_1$ of neighboring $\tau$-periodic minimal geodesics. Conversely, for any pair of neighboring minimal geodesics $c_0,c_1$ there exist heteroclinic minimal geodesics of both possible asymptotic behaviors.

  \item \label{morse periodic item 4} If $S\subset H$ is a connected component of $H-\pi(\M^{per}(\g_\tau))$ and if $c_0$ is a minimal geodesic from $\M^{per}(\g_\tau)$ forming a component of the boundary $\partial S$, then for any $x\in S$ there exists a ray $c:\R_-\to H$ with $c(0)=x$, which is asymptotic to $c_0(\R)$.
 \end{enumerate}
\end{thm}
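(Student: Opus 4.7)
Throughout my approach rests on three tools: the Morse Lemma (Theorem \ref{morse lemma}), which confines every minimizer in a uniform $C$-tube $U_C(\g_\tau)$; the non-crossing Lemma \ref{crossing minimals}; and the non-emptiness of $\M^{per}(\g_\tau)$ from Remark \ref{remark shorst closed geod are minimal}. For part \eqref{morse periodic item 2}, given $v\in\RR_-(\xi_\tau)$, I would pick times $t_n\to\infty$ and integers $k_n$ so that the points $\tau^{k_n}c_v(-t_n)$ lie in a fixed compact fundamental domain for $\la\tau\ra$ inside $U_C(\g_\tau)$, and extract a subsequential limit $w=\lim D\tau^{k_n}\dot c_v(-t_n)$ by compactness. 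Since $\RR_-$ is closed and $\tau\xi_\tau=\xi_\tau$, one has $w\in\RR_-(\xi_\tau)$; a standard diagonal extension extends $c_w$ to a minimal geodesic on all of $\R$. Choosing the shifts $(t_n,k_n)$ compatible with the $F$-period of the closed geodesic $p\circ\g_\tau$---this is the delicate step---forces $w\in\M^{per}(\g_\tau)$, and then Lemma \ref{crossing minimals} applied to $c_v$ against every integer translate $\tau^jc_w$ forbids crossings and upgrades subsequential convergence to the Hausdorff convergence $c_v(-t)\to c_w(\R)$.

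For part \eqref{morse periodic item 3}, suppose $v\neq w$ with $v\in\RR_-(\xi_\tau)$, $w\in\M^{per}(\g_\tau)$ and $\pi v=\pi w$. By (i), $c_v$ is backwards asymptotic to some $c^*\in\M^{per}(\g_\tau)$. I would compare $c_v$ with its translate $\tau^{-1}c_v$, which is a ray to $\xi_\tau$ based at the distinct point $c_w(-T)\in c_w(\R)$, where $T>0$ is the $\tau$-period of $c_w$; in either case $c^*=c_w$ or $c^*\neq c_w$, this comparison together with the $\tau$-periodicity of $c_w$ produces successive transverse meetings of two rays asymptotic to $\xi_\tau$, contradicting Lemma \ref{crossing minimals}. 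Hence $v=w$.

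For part \eqref{morse periodic item 2a}, apply (i) to both ends of $c_v$ to obtain $c_-,c_+\in\M^{per}(\g_\tau)$ with $c_v$ asymptotic to $c_-(\R)$ at $-\infty$ and to $c_+(\R)$ at $+\infty$. The case $c_-=c_+=:c_0$ is impossible: $c_v$ then lies in a bounded strip about $c_0$, and comparing $c_v$ with $\tau c_v$ via Lemma \ref{crossing minimals} in that strip forces $\tau c_v=c_v$, contradicting $v\notin\M^{per}(\g_\tau)$. If $c_-\neq c_+$ are not neighbors, an intermediate $c'\in\M^{per}(\g_\tau)$ would force $c_v$ to cross $c'$, again prohibited. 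For the converse, given neighbors $c_0,c_1$, I would take minimizing segments $d_n$ from $c_0(-n)$ to $c_1(n)$; by the Morse Lemma and part (ii) they lie in the open strip between $c_0$ and $c_1$, and an Arzel\`a-Ascoli limit produces a minimal geodesic $d_\infty$ whose asymptotic behavior is identified by (i) as heteroclinic from $c_0$ at $-\infty$ to $c_1$ at $+\infty$; swapping the endpoints in the construction produces the opposite heteroclinic.

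For part \eqref{morse periodic item 4}, given $x\in S$, form minimizing segments $c_n:[0,T_n]\to H$ from $x$ to $c_0(-n)$, where $T_n=d_F(x,c_0(-n))$; by the Morse Lemma they shadow the $g$-geodesic from $x$ to $c_0(-n)$, whose backward endpoint tends to $c_0(-\infty)=\xi_\tau$. A subsequential limit $c:\R_-\to H$ is a ray with $c(0)=x$ and $c(-\infty)=\xi_\tau$. Non-crossing confines $c(\R_-)$ to $\overline S$, so by (i), $c$ is asymptotic to some $c^*\in\M^{per}(\g_\tau)\cap\partial S$; since the endpoints $c_n(T_n)=c_0(-n)$ lie on $c_0$ itself, a limit-point argument on the $g$-shadows forces $c^*=c_0$. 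The single substantive obstacle throughout is the one flagged in \eqref{morse periodic item 2}: producing a genuinely $\tau$-periodic limit (not merely some unrelated element of $\M(\g_\tau)$) and upgrading subsequential convergence of translates to Hausdorff convergence of the full tail $c_v(-t)$; given (i), the remaining parts reduce to direct applications of Lemma \ref{crossing minimals}.
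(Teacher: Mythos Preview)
The paper refers to the literature for items \eqref{morse periodic item 2}, \eqref{morse periodic item 2a} and \eqref{morse periodic item 4} and only sketches the implication \eqref{morse periodic item 2}~$\Rightarrow$~\eqref{morse periodic item 3}. Your outlines for \eqref{morse periodic item 2}, \eqref{morse periodic item 2a}, \eqref{morse periodic item 4} are in the right spirit, and you correctly flag the genuine difficulty in \eqref{morse periodic item 2}. The substantive divergence is in \eqref{morse periodic item 3}.

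For \eqref{morse periodic item 3} you try to derive a contradiction from Lemma~\ref{crossing minimals} alone, by comparing $c_v$ with $\tau^{-1}c_v$. This works cleanly only when the backward limit $c^*$ coincides with $c_w=c_0$: then $c_v$ and $c_0$ are asymptotic rays meeting transversely at $c_0(0)$, and (after a harmless shift of the base point of $c_0$ so that the intersection occurs at a parameter $a<0$) Lemma~\ref{crossing minimals} indeed yields the contradiction. In the case $c^*\neq c_0$, however, your argument breaks down. Both $c_v$ and $\tau^{-1}c_v$ are rays emanating transversely from $c_0$ (at $c_0(0)$ and $c_0(-T)$), staying on the same side of $c_0$, and backward asymptotic to the same periodic $c^*$; but nothing forces them to meet. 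They can perfectly well be disjoint ``parallel'' connections from $c_0$ to $c^*$, in which case Lemma~\ref{crossing minimals} has no purchase: that lemma \emph{forbids} intersections of asymptotic rays, it does not \emph{produce} them. So the claimed ``successive transverse meetings'' are not there in general.

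The paper's argument is different in kind and handles both cases uniformly: it is a length-shortening (corner-cutting) computation rather than a topological non-crossing argument. Since $c_v$ and $\tau^{-1}c_v$ are asymptotic to the \emph{same} $\tau$-periodic geodesic $c_1:=c^*$, with the \emph{same} period $|T|$ as $c_0$, one finds $S\ll -1$ with $d_F(\tau^{-1}c_v(S+T),c_v(S))\approx 0$. The transverse meeting $c_v(0)=c_0(0)$ with $v\neq w$ gives the shortcut $d_F(c_v(-\delta),c_0(\delta))\le 2\delta-\e$ for some $\e,\delta>0$. Then the concatenation
\[
\tau^{-1}c_v(S+T)\ \rightsquigarrow\ c_v(S)\ \rightsquigarrow\ c_v(-\delta)\ \rightsquigarrow\ c_0(\delta)\ \rightsquigarrow\ c_0(-T)=\tau^{-1}c_v(0)
\]
has $F$-length at most $-S-T-\e+o(1)$, contradicting the minimality $d_F(\tau^{-1}c_v(S+T),\tau^{-1}c_v(0))=-S-T$. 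This is exactly the mechanism reused later in the proof of Theorem~\ref{thm measures unstable}; the information that makes it work (equal periods, corner at the common base point) is metric, not just order-theoretic, which is why Lemma~\ref{crossing minimals} by itself is not enough.
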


Note that in the case $M=\T$, the set $\RR_-(\xi_\tau)$ is $\Gam$-invariant. In this case, each component $S$ of $H-\pi(\M^{per}(\g_\tau))$ is a strip bounded by a pair $c_0,c_1$ of neighboring $\tau$-periodic minimal geodesics $c_0,c_1$. Assume that $c_1$ lies left (or above) $c_0$ in the usual orientation of $H=\R^2$. For $i=0,1$ let $\RR_-^i(\xi_\tau)$ be the set of rays in $\RR_-(\xi_\tau)-\M^{per}(\g_\tau)$ being asymptotic to $c_i$, in each of the strips $S$. Then
\[ \M^{per}(\g_\tau) \cup \RR_-^i(\xi_\tau) , \qquad i = 0,1 \]
form two $\Gam$-invariant sets, admitting no intersecting rays in $\T$. We can relate these sets to directed weak KAM solutions.

\begin{prop}
 If $M=\T$, then $\M^{per}(\g_\tau) \cup \RR_-^i(\xi_\tau)$ consist precisely of the calibrated rays of the directed weak KAM solutions $[u_i] \in \delta_F^{-1}(\xi_\tau)$ given by Proposition \ref{bounding horofunctions}.
\end{prop}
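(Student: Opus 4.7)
The plan is to combine Proposition \ref{bounding horofunctions} with the geometric description of $\RR_-(\xi_\tau)$ in Theorem \ref{morse periodic}, using approximation by nearby irrational directions on $\T$. The key preliminary observation is that two distinct $[u]$-calibrated rays of a single $[u]\in\W_{dir}(H,F)$ cannot intersect in the sense of Definition \ref{def transverse intersection}: by Proposition \ref{lemma weak KAM}(\ref{u diffbar on cal}), at any interior intersection point $u$ would be differentiable and both velocities would equal $\theta_F\circ du$, a contradiction. Hence for each $i$ the $[u_i]$-calibrated rays form a non-intersecting subfamily of $\RR_-(\xi_\tau)$.

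First I would handle the periodic part via Corollary \ref{cor exist recurrent horofctns} applied to the sequence $\{\tau^n\}\subset\Gam$. Since $\Gam$ acts trivially on $\Gro(H,g)$ in the torus case, one has $\tau^n\xi_\tau=\xi_\tau$. The elements $[u_0],[u_1]$ are themselves $\{\tau^n\}$-recurrent: by the uniqueness statement in Proposition \ref{bounding horofunctions} together with the $\Gam$-equivariance of $\delta_F$, one has $\tau[u_i]=[u_i]$. The corollary then asserts that every $\{\tau^n\}$-recurrent ray in $\RR_-(\xi_\tau)$ is calibrated by both $[u_0]$ and $[u_1]$. All elements of $\M^{per}(\g_\tau)$ are tautologically $\{\tau^n\}$-recurrent, so $\M^{per}(\g_\tau)$ lies in the calibrated set of each $[u_i]$.

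For the heteroclinic part, fix $i\in\{0,1\}$ and choose irrational $\xi_n\in\se$ converging to $\xi_\tau$ from the side dictated by $i$. On $\T$ every irrational direction satisfies $w_-(\xi_n)=0$ (the forthcoming Theorem \ref{thm rays-paper}), so Proposition \ref{prop unique horo if no intersection} gives $\delta_F^{-1}(\xi_n)=\{[u_n]\}$ and identifies $\RR_-(\xi_n)$ with the graph over $H$ of $[u_n]$-calibrated rays. Proposition \ref{bounding horofunctions} yields $[u_n]\to[u_i]$. For any $c_v\in\RR_-^i(\xi_\tau)$ and $x=\pi v$, let $v_n\in\RR_-(\xi_n)$ be the unique vector over $x$ and extract a convergent subsequence $v_n\to v_*$. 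By continuity of asymptotic directions (available since the Morse Lemma, Theorem \ref{morse lemma}, holds on $\T$) the ray $c_{v_*}$ lies in $\RR_-(\xi_\tau)$ and is $[u_i]$-calibrated. Non-intersection of $c_{v_*}$ with $\M^{per}(\g_\tau)$, already $[u_i]$-calibrated, confines $c_{v_*}$ to the strip $S$ of $c_v$, and the side of approach $\xi_n\to\xi_\tau$ together with a planar ordering argument in $S\subset\R^2$ forces $v_*=v$. Conversely, any $[u_i]$-calibrated ray lies in $\RR_-(\xi_\tau)$, cannot cross $\M^{per}(\g_\tau)$, and is therefore either $\tau$-periodic or confined to a strip and heteroclinic by Theorem \ref{morse periodic}(\ref{morse periodic item 2a}); the same approximation determines which boundary of the strip it hugs at $-\infty$. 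This yields the desired equality.

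The main obstacle will be the side-identification in the approximation step: one must show that the graphs $\RR_-(\xi_n)$ with $\xi_n$ approaching $\xi_\tau$ from the $i$-th side actually limit onto rays in $\RR_-^i(\xi_\tau)$ rather than onto the other branch. This should follow from an orientation/monotonicity argument in the universal cover $\R^2$, reading off the side of the lamination $\M^{per}(\g_\tau)$ in which the graph of $[u_n]$ lies from the $C_{loc}^0$-monotone convergence $[u_n]\to[u_i]$ supplied by Proposition \ref{bounding horofunctions}, combined with the graph property of $[u_n]\leftrightarrow\RR_-(\xi_n)$ via Proposition \ref{lemma weak KAM}(\ref{item graphs calibrate}).
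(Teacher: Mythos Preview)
The paper states this proposition without proof, so there is nothing to compare against; I evaluate your argument on its own merits.

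Your overall strategy---showing $\tau[u_i]=[u_i]$ via the uniqueness in Proposition \ref{bounding horofunctions}, invoking Corollary \ref{cor exist recurrent horofctns} to calibrate $\M^{per}(\g_\tau)$, and then approximating by irrational directions to handle the heteroclinic rays---is sound. Two corrections are needed. First, your citation for uniqueness at irrational $\xi_n$ is wrong: Theorem \ref{thm rays-paper} is stated for the hyperbolic case and is explicitly said to fail on $\T$; moreover the width $w_-$ in Definition \ref{def width} is only set up under a negative curvature hypothesis. What you want is Theorem \ref{thm bangert irrat} \eqref{thm bangert irrat iv}, which directly gives non-intersection of rays in $\RR_-(\xi_n)$ for irrational $\xi_n$ on $\T$, and then Proposition \ref{prop unique horo if no intersection} yields $\card\delta_F^{-1}(\xi_n)=1$.

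Second, and more substantively, the converse inclusion is not yet established. You show that $\M^{per}(\g_\tau)\cup\RR_-^i(\xi_\tau)$ lies in the $[u_i]$-calibrated set, but you must also rule out any $[u_i]$-calibrated ray in $\RR_-^{1-i}(\xi_\tau)$. Your non-intersection observation from Proposition \ref{lemma weak KAM} \eqref{u diffbar on cal} only forbids interior crossings; two calibrated rays may still share an endpoint and branch backward. The clean argument is this: once you know $\M^{per}(\g_\tau)\cup\RR_-^i(\xi_\tau)$ is $[u_i]$-calibrated and forms a graph over $H$ (by Theorem \ref{morse periodic} \eqref{morse periodic item 3} and \eqref{morse periodic item 4}), take any putative extra $[u_i]$-calibrated ray $c'\in\RR_-^{1-i}(\xi_\tau)$ in a strip $S$, pick $T\gg 1$ so that $c'(-T)$ is close to the boundary $c_{1-i}$, and consider the $\RR_-^i$ ray through $c'(-T)$; this ray is $[u_i]$-calibrated, crosses the strip toward $c_i$, and must therefore intersect $c'|_{(-\infty,-T)}$ transversally---contradicting the differentiability of $u_i$ at interior calibration points. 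This is exactly the ``side-identification'' you flag as the main obstacle; it deserves to be written out rather than deferred to a monotonicity heuristic.
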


We refer to \cite{morse} and \cite{hedlund} for the proof of Theorem \ref{morse periodic}. See also \cite{paper1}. We shall only sketch here, how item \eqref{morse periodic item 2} of Theorem \ref{morse periodic} implies item \eqref{morse periodic item 3}, as the argument will appear again later in Subsections \ref{section hyperbolic cylinder} and \ref{section rays-paper}.

\begin{proof}[Sketch of the proof of Theorem \ref{morse periodic} \eqref{morse periodic item 3}]
 Let $v,w$ be as described in the statement of the theorem and write $c_w=c_0$. By item \eqref{morse periodic item 2} of Theorem \ref{morse periodic}, both backward rays $c_v(t),\tau^{-1}c_v(t)$ are asymptotic as $t\to -\infty$ to a single $\tau$-periodic minimal geodesic $c_1$. Let $T<0$ such that $\dot{\widehat{\tau c_i}}(0)=\dot c_i(T)$ for $i=0,1$ -- note that we have the same period $|T|$ for both $c_0$ and $c_1$ due to minimality of $c_i$. Due to the periodicity of $c_1$ we find $S\ll -1$, such that $d_F(\tau^{-1}c_v(S+T),c_v(S)) \approx 0$. Assuming $v\neq w$, we find some $\e,\delta>0$, such that
 \[ d_F(c_v(-\delta),c_0(\delta)) \leq 2\delta-\e \]
 by shortening the vertex at $c_0(0)=c_v(0)$. Hence, using the triangle inequality and minimality of $\tau^{-1}c_v$, we obtain a contradiction:
 \begin{align*}
  -T-S & = d_F(\tau^{-1}c_v(S+T),\tau^{-1}c_v(0)) \approx d_F(c_v(S), c_0(-T)) \\
  & \leq d_F(c_v(S),c_v(-\delta)) + d_F(c_v(-\delta),c_0(\delta)) + d_F(c_0(\delta), c_0(-T)) \\
  & \leq -S-\delta  +2\delta-\e - T-\delta \\
  & = -T-S-\e .
 \end{align*}
\end{proof}

Observe that the structure results in Theorem \ref{morse periodic} resemble the dynamics in the saddle connection seen in the phase portrait of the simple pendulum, which was discussed in Subsection \ref{section rot torus}. See also Figure \ref{fig_structure-rational}.

\begin{figure}\centering%[!htb]
 \includegraphics[scale=1]{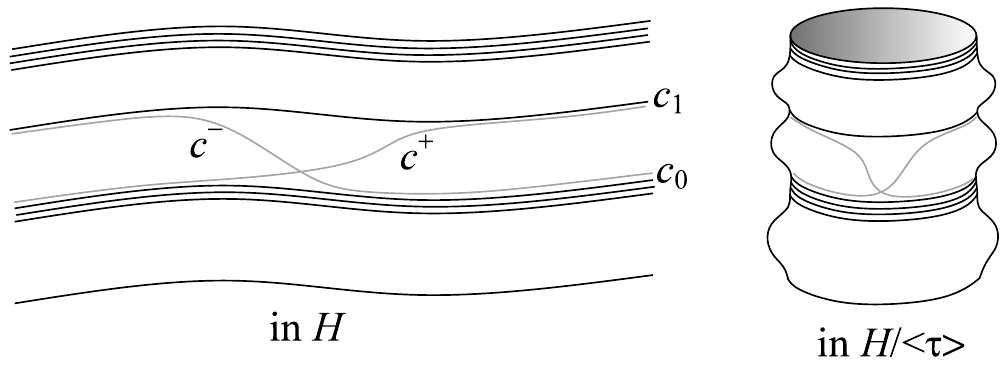}
 \caption{Some geodesics from $\M(\g_\tau)$ for $\tau\in\Gam$. There are some strips foliated by $\tau$-periodic geodesics and in between them there are gaps overstretched by heteroclinics. On the right one can see a geometric situation generating closed and heteroclinic minimals. \label{fig_structure-rational}}
\end{figure}

\subsection{Oscillating local minimizers in 2 dimensions}\label{section multibump}

In the previous subsection we saw that in the two-dimensional case, there can occur minimal geodesics $c_0,c_1$ invariant under a deck transformation $\tau$, bounding a strip with no further $\tau$-periodic minimizers; we called $c_0,c_1$ neighboring. We shall assume that $c_1$ lies left of $c_0$ (or above $c_0$). In between $c_0,c_1$, there exist heteroclinic minimizers of two classes, namely
\begin{align*}
 \M^+(c_0,c_1) & := \{ v\in \M(\g_\tau) : d_g(c_0(\R),c_v(-t)) , d_g(c_1(\R),c_v(t)) \stackrel{t\to\infty}{\to} 0 \} , \\
 \M^-(c_0,c_1) & := \{ v\in \M(\g_\tau) : d_g(c_1(\R),c_v(-t)) , d_g(c_0(\R),c_v(t)) \stackrel{t\to\infty}{\to} 0 \} .
\end{align*}
Both classes consist of asymptotic minimizers and hence, by Lemma \ref{crossing minimals}, there are no intersecting geodesics in each of the classes. Assume for the moment that one of the classes, say $\M^+(c_0,c_1)$ projects onto the open strip between $c_0,c_1$ denoted by $S(c_0,c_1)\subset H$. Then $\M^+(c_0,c_1)$ forms a continuous graph over the strip $S(c_0,c_1)$ and defines a $C^1$ weak KAM solution $[u]$ in $S(c_0,c_1)$ in the sense of Definition \ref{def weak KAM}. This would be a desirable situation. Observe that, if $c_0,c_1$ are hyperbolic closed orbits for the geodesic flow $\phi_F^t$ in $S_FM$, then $\M^\pm(c_0,c_1)$ are parts of the (un)stable manifolds of $c_0,c_1$. It is known that these intersect transversely for generic choices of $F$, prohibiting the classes $\M^\pm(c_0,c_1)$ to form continuous invariant graphs. Hence, the following condition, called the {\em gap condition} is fulfilled for many choices of $F$:
\begin{align}\label{eqn gap-cond}
 \pi(\M^+(c_0,c_1)) \neq S(c_0,c_1) \qquad \& \qquad \pi(\M^-(c_0,c_1)) \neq S(c_0,c_1) .
\end{align}
The following theorem is proved in \cite{paper2}; an earlier version is due to Bolotin and Rabinowitz \cite{bol_rab}. It shows that the gap condition implies the existence of oscillating dynamics in the geodesic flow $\phi_F^t$, which is contrary to integrable behavior. See Figure \ref{fig_multibump}.

\begin{thm}\label{thm multibump}
 Let $M$ be a closed, orientable surface of genus $\mathfrak{g}\geq 1$ and $F$ any Finsler metric on $M=H/\Gam$. Suppose there exists $\tau\in\Gam-\{\id\}$ and in the universal cover $H$ a pair of neighboring $\tau$-periodic minimal geodesics $c_0,c_1:\R\to H$ satisfying the gap condition \eqref{eqn gap-cond}. Then there exists an open disc $D\subset S(c_0,c_1)$, an integer $N\geq 1$ and for each given finite sequence $\kappa = \{k_1<...<k_n\}$ of integers and each pair $i=(i_-,i_+)\in \{0,1\}^2$ a locally minimizing geodesic $c_{\kappa,i} : \R\to S(c_0,c_1)$ with the following properties:
 \begin{itemize}
  \item $c_{\kappa,i}(\pm t)$ enters the ends of $S(c_0,c_1)$ and is asymptotic to $c_{i_\pm}(\R)$ as $t\to \infty$,
  
  \item $c_{\kappa,i}$ is disjoint from the translated discs $\tau^{k_i N} D$ for $i=1,...,n$ and passes two successive discs on opposite sides.
 \end{itemize}
\end{thm}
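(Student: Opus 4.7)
The plan is a constrained variational / renormalized-action argument in the spirit of Bolotin--Rabinowitz. First I would exploit the gap condition to fix the disc $D$ and a quantitative ``gap deficit'' $\delta>0$. Since $\pi(\M^\pm(c_0,c_1))\neq S(c_0,c_1)$, there exist points $x_\pm\in S(c_0,c_1)$ not lying under any heteroclinic of the corresponding type. Using the lamination structure from Theorem~\ref{morse periodic} together with Lemma~\ref{crossing minimals} (no successive intersections among rays), a standard lower semicontinuity argument produces a small open disc $D$, containing an open neighbourhood of such a point, and a constant $\delta>0$ such that any heteroclinic from $\M^+(c_0,c_1)\cup\M^-(c_0,c_1)$ passing through $\overline D$ has $F$-length exceeding the minimum length in its class by at least $\delta$. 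The integer $N\geq 1$ will then be chosen so large that the translates $\tau^{kN}D$, $k\in\Z$, are pairwise disjoint and separated by $g$-distance larger than a constant dictated by the exponential asymptotic estimates (Lemma~\ref{crossing minimals} again) needed for the gluing below.

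Next I would set up the renormalized action. Fix the boundary data $i=(i_-,i_+)\in\{0,1\}^2$ and the combinatorial data $\kappa=\{k_1<\dots<k_n\}$; along with $\kappa$ choose intermediate indices $i_0=i_-, i_1,\dots,i_{n-1},i_n=i_+$ in $\{0,1\}$, which for definiteness we let alternate so that the curve ``passes two successive discs on opposite sides''. For each $j$, pick a heteroclinic $h_j\in\M^+(c_0,c_1)\cup\M^-(c_0,c_1)$ (translated by $\tau^{k_jN}$) realizing the transition from $c_{i_{j-1}}$ to $c_{i_j}$; concatenating the translated heteroclinics $h_j$ and long periodic pieces of $c_0$ or $c_1$ between them yields a ``template'' Lipschitz curve $\overline c_{\kappa,i}:\R\to S(c_0,c_1)$. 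For any competitor $c:\R\to S(c_0,c_1)$ with the same asymptotics and the same side-passing data at each disc $\tau^{k_jN}D$, the renormalized action
\[
J_{\kappa,i}(c):=\lim_{T\to\infty}\bigl(l_F(c;[-T,T])-l_F(\overline c_{\kappa,i};[-T,T])\bigr)
\]
is well-defined (the limit exists by minimality of $c_0,c_1$, periodicity, and exponential approach to them along the heteroclinic ends).

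I would then minimize $J_{\kappa,i}$ over the class of curves satisfying: the prescribed asymptotic conditions $c(\pm t)\to c_{i_\pm}(\R)$, and the topological constraint of passing the disc $\tau^{k_jN}D$ on the side dictated by $i_{j-1}\to i_j$ (for an alternating sequence this is well-defined because $c_0,c_1$ bound $S(c_0,c_1)$). Existence of a minimizer $c_{\kappa,i}$ follows by a direct method: take a minimizing sequence, use uniform Lipschitz bounds from $c_F$-equivalence of $F$ and $g$, extract an Arzela--Ascoli subsequential limit on compact subintervals, and check that the asymptotic conditions persist by a standard tail estimate that uses the heteroclinic exponential convergence of minimal geodesics to $c_0,c_1$ guaranteed by Theorem~\ref{morse periodic}\,\eqref{morse periodic item 2}.

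The main obstacle, and the heart of the argument, is showing that the minimizer $c_{\kappa,i}$ is a genuine $F$-geodesic: one must rule out that the constraint is active, i.e.\ that $c_{\kappa,i}$ touches the boundary of some disc $\tau^{k_jN}D$. If it did, then by cutting and regluing at the contact point one would produce either a competitor in the same class with smaller action (contradicting minimality) or a curve in the class of heteroclinics through $\overline D$, which by the gap deficit costs at least $\delta$ more than the unconstrained heteroclinic minimum. Comparing this with the upper bound $J_{\kappa,i}(c_{\kappa,i})\leq J_{\kappa,i}(\overline c_{\kappa,i})$ (which by choice of $N$ can be made smaller than $\delta$ through a careful shortening argument at the concatenation junctions, using the exponential contraction) yields a contradiction. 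Once the constraint is inactive, $c_{\kappa,i}$ is locally length-minimizing away from the discs, hence an $F$-geodesic; its prescribed combinatorics and asymptotics give the two bulleted properties. The delicate point is keeping the per-bump gluing cost strictly below $\delta$ uniformly in $n$ and $\kappa$, which is precisely where the choice of $N$ large (so that asymptotic approach to $c_0,c_1$ between successive bumps is essentially complete) enters decisively.
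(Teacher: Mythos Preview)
The paper does not give its own proof of this theorem: it simply cites \cite{paper2} (and the earlier Bolotin--Rabinowitz paper \cite{bol_rab}) and moves on. So there is nothing in the present text to compare your argument against line by line. That said, your outline is very much in the spirit of the Bolotin--Rabinowitz constrained-minimization scheme that underlies the cited proofs, and the overall architecture---choose a disc in the gap, set up a renormalized action relative to a concatenated template, minimize in a topologically constrained class, and show the constraint is inactive---is the right one.

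Two points deserve tightening. First, you repeatedly invoke ``exponential asymptotic estimates'' and ``exponential contraction'' of heteroclinics toward $c_0,c_1$. Theorem~\ref{morse periodic}\,\eqref{morse periodic item 2} gives only asymptotic convergence, not an exponential rate; exponential decay would require hyperbolicity of the periodic minimals, which is not assumed here. The gluing cost at each junction can still be made arbitrarily small by choosing $N$ large, but the argument must be phrased in terms of plain asymptotics (for every $\varepsilon>0$ there exists $N$ such that \dots), not a fixed rate. Second, your description of the disc $D$ and the ``gap deficit'' $\delta$ is imprecise: you write that heteroclinics ``passing through $\overline D$'' cost at least $\delta$ more, but if $D$ sits in the gap of $\pi(\M^\pm)$ then no minimal heteroclinic passes through $D$ at all. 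What you actually need is a disc $D$ lying in the gaps of \emph{both} $\pi(\M^+)$ and $\pi(\M^-)$ (this is where the two halves of the gap condition \eqref{eqn gap-cond} are used simultaneously), together with the statement that any curve in the strip with the prescribed asymptotics that is forced to pass $D$ on the ``wrong'' side has action exceeding the unconstrained heteroclinic minimum by a fixed $\delta>0$. Making this precise---and checking that the per-bump cost stays below $\delta$ uniformly in $n$---is exactly the technical core of \cite{paper2}.
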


\begin{figure}\centering%[!htb]
 \includegraphics[scale=0.4]{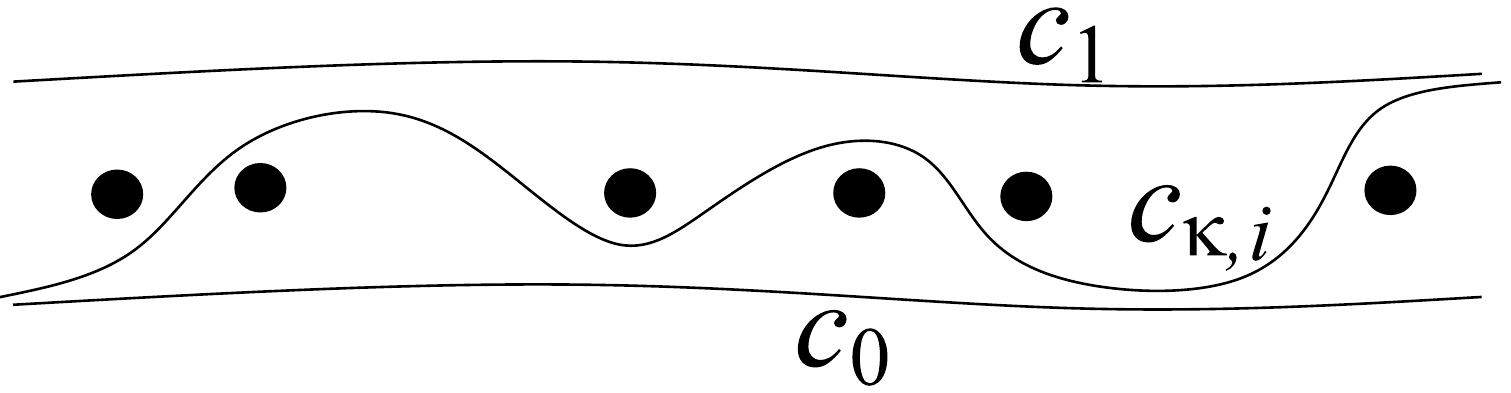}
 \caption{The geodesic $c_{\kappa,i}$ in Theorem \ref{thm multibump}. The black dots are the successive translates of the disc $D$. \label{fig_multibump}}
\end{figure}

We will see (Subsection \ref{section GKOS}) that the construction of minimal geode\-sics with the behavior of the $c_{\kappa,i}$ in Theorem \ref{thm multibump} is impossible. Locally minimizing means that the geodesic $c_{\kappa,i}(\R)$ has an open neighborhood $U \subset S(c_0,c_1)$, such that $c_{\kappa,i}$ is minimizing between any of its points under all curves lying entirely in $U$. In particular, $c_{\kappa,i}$ has no conjugate points.

Applied to $M=\T$, Theorem \ref{thm multibump} shows the following. Either the set $Dp(\M(\g_\tau))$ contains a $\phi_F^t$-invariant $C^0$-graph, producing a $C^1$ weak KAM solution $[u]$ in $(\T,F)$, or the geodesic flow $\phi_F^t$ is chaotic in the sense that it depends sensitively on initial conditions. Indeed, for fixed $i=(i_-,i_+)$ the initial conditions $Dp(\dot c_{\kappa,i}(-t))$ with $t\gg -1$ all lie close to each other in $S_F\T$, but for varying $\kappa$ have distinguishable long-time behavior unter $\phi_F^t$.

\subsection{2-dimensional cylinders}

So far, we have in this section assumed that the surface $M$ is compact. Here we treat the non-compact case where
\[ M = \R/\Z \times \R \]
is the 2-dimensional cylinder. Recall that, by assumption, the metrics $F$ and $g$ on $M$ are equivalent via some constant $c_F\geq 1$, i.e.\
\[ \frac{1}{c_F}|.|_g \leq F \leq c_F |.|_g . \]
If both ends of the cylinder become wide enough with respect to $g$ one can infer that there exist shortest closed geodesics in the prime homotopy class of $M$, lifting to minimal geodesics in the universal cover $H=\R^2$ (see Remark \ref{remark shorst closed geod are minimal}). Such minimal geodesics wind asymptotically around the cylinder in the $\R/\Z$-direction. Using the Morse Lemma, one can moreover study many minimal geodesics in the universal cover, if $g$ has strictly negative curvature. If the ends of the cylinder become thin, one could suspect that almost all geodesics are recurrent.

In this subsection, we shall assume that $g$ is the standard Euclidean metric. Of course, one can find minimal geodesics in $M$ going from one end of $M$ to the other (i.e.\ along the $\R$-direction), but it is not clear whether there are any minimal geodesics winding around the $\R/\Z$-direction.

We can prove the following theorem, writing $e_1=(1,0)\in \R^2$.

\begin{thm}\label{thm exists winding} 
 If the constant $c_F$ in the equivalence of $F$ and $g$ is sufficiently close to $1$, then there exists $\e>0$ depending only on $c_F$ and a minimal geodesic $c:\R\to M$, such that
 \[ \liminf_{T\to \infty} \frac{1}{T}\int_{-T}^0 \la \dot c(t), e_1 \ra dt \geq \e , \qquad \liminf_{T\to\infty} \frac{1}{T}\int_0^T \la \dot c(t), e_1 \ra dt \geq \e. \]
\end{thm}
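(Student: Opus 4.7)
Plan:

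The strategy is to construct the required minimal geodesic as a subsequential limit of minimal $F$-geodesic segments in the universal cover $H = \R^2$ that connect pairs of points lying increasingly far apart along the $e_1$-axis, exploiting the near-Euclidean structure of $F$. For each $n \in \N$, a standard Arzela--Ascoli argument in the space of Lipschitz curves in $H$ produces a minimal $F$-geodesic $c_n : [0, L_n] \to H$, parametrized by $F$-arclength, joining $(-n, 0)$ to $(n, 0)$; by the uniform equivalence of $F$ and $g$ one has $L_n \in [2n/c_F, 2nc_F]$.

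The main geometric input is a chord defect estimate. For any $0 \leq t_1 < t_2 \leq L_n$, write $v_1 := c_n(t_1) - (-n,0)$, $v_2 := c_n(t_2) - c_n(t_1)$, $v_3 := (n, 0) - c_n(t_2)$, so that $v_1 + v_2 + v_3 = (2n, 0)$. The piecewise bound $|v_i|_g \leq c_F \cdot (\text{$F$-length of the $i$-th piece})$ together with $L_n \leq 2nc_F$ and $\sum_i \la v_i, e_1 \ra = 2n$ yields
\[ \sum_{i=1}^3 \bigl(|v_i|_g - \la v_i, e_1\ra\bigr) \;\leq\; 2n(c_F^2 - 1). \]
When $c_F$ is close to $1$ the right-hand side is small relative to $n$, forcing each chord $v_i$ to lie in a thin cone around $e_1$. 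Combined with the mean-value identity $\int_0^{L_n} \la \dot c_n, e_1\ra\, ds = 2n$, this extracts a subset of $[0, L_n]$ of positive density on which $\la \dot c_n(s), e_1\ra \geq 1/(2c_F)$.

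Using the $\Z$-translation invariance of $F$ in the $e_1$-direction, one chooses a centering time $s_n$ in the middle third of $[0, L_n]$ at which $\la \dot c_n(s_n), e_1\ra \geq 1/(2c_F)$, and translates $c_n$ by an integer so that $c_n(s_n)_1 \in [0, 1)$. The defect estimate is used again to argue that, when $c_F$ is sufficiently close to $1$, one can simultaneously arrange $|c_n(s_n)_2| \leq Y_0(c_F)$ uniformly in $n$. By precompactness of $S_F H$ over the compact set $[0, 1] \times [-Y_0, Y_0]$, pass to a subsequence so that $\dot c_n(s_n) \to v^* \in S_F H$, with $\la v^*, e_1\ra \geq 1/(2c_F)$. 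Completeness then gives a geodesic $c^* := c_{v^*} : \R \to H$; because $s_n$ is bounded away from $0$ and $L_n$ while $L_n \to \infty$, the centered shifts of the $c_n$ converge to $c^*$ in $C^1$ on compact subsets of $\R$, so $c^*$ is a minimal $F$-geodesic in $H$ and its projection to $M$ is the candidate minimal geodesic.

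Finally, applying the three-chord inequality to the subsegment of (the shifted) $c_n$ of $F$-length $T$ starting at $s_n$ gives $\la c_n(s_n + T) - c_n(s_n), e_1\ra \geq T/c_F - O(n(c_F^2 - 1))$; for fixed $T$ and $n \to \infty$ with $s_n$ chosen suitably, this transfers in the limit to $\la c^*(T) - c^*(0), e_1\ra \geq \e T$ for some $\e = \e(c_F) > 0$, and the symmetric bound holds on $\R_-$, yielding the two asserted liminf inequalities. The hardest step is the uniform $y$-bound on the centering point in the previous paragraph: a priori, minimizers may drift vertically to exploit possibly smaller values of $F$ in the $e_1$-direction at large $|y|$, and the entire point of the quantitative assumption ``$c_F$ sufficiently close to $1$'' is to suppress such drift, so that a substantial portion of each $c_n$ where the tangent vector has a definite positive $e_1$-component remains available for centering in a bounded strip. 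This step is the technical heart of the argument and determines how close to $1$ the constant $c_F$ must be taken.
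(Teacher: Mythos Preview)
Your chord defect inequality $\sum_i(|v_i|_g - \langle v_i, e_1\rangle) \leq 2n(c_F^2-1)$ is correct, but it only controls the \emph{total} deviation of $c_n$ from the $e_1$-direction, not the deviation on any prescribed subinterval. This produces two genuine gaps. First, your final transfer step fails: the bound $\langle c_n(s_n+T)-c_n(s_n),e_1\rangle \geq T/c_F - O(n(c_F^2-1))$ becomes vacuous as $n\to\infty$ with $T$ fixed, so no information about $c^*(T)-c^*(0)$ survives; the liminf condition needs a lower bound on the $e_1$-displacement of \emph{every} length-$T$ subsegment uniformly in $n$, whereas the entire defect budget $2n(c_F^2-1)$ can in principle sit on a single short subinterval. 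Second, the uniform $y$-bound you flag as the hardest step cannot come from the defect inequality either: applying it to the chord from $(-n,0)$ to $c_n(s_n)$ gives only $|c_n(s_n)_2|=O\bigl(n\sqrt{c_F^2-1}\bigr)$, which diverges. More fundamentally, your key estimates nowhere use the $\Z$-periodicity of $F$ in $e_1$ (you invoke it only to normalize $c_n(s_n)_1\in[0,1)$), so the argument as written would apply equally to any Finsler metric on $\R^2$ uniformly close to Euclidean --- and in that generality minimizers from $(-n,0)$ to $(n,0)$ can drift vertically without bound.

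The paper exploits periodicity in the estimate itself. For disjoint intervals $[t_0,t_0+2]$ and $[t_1,t_1+1]$ it builds a competitor by replacing $c|_{[t_0+2,t_1]}$ with its translate $c|_{[t_0+2,t_1]}-e_1$ (same $F$-energy, by periodicity) and reconnecting by short Euclidean segments; comparing energies yields the implication: if $c_1(t_0+2)-c_1(t_0)\geq 7/4$ at \emph{some} $t_0$, then $c_1(t_1+1)-c_1(t_1)\geq 1/4$ at \emph{every} $t_1$. A separate step, taking endpoints $\pm X$ with $X$ close to the positive $x_1$-axis, produces such a $t_0$. This uniform unit-interval displacement bound passes directly to the limit and gives the liminf; the $y$-bound is then free, since with $X_2>0$ the segment must cross $\{y=0\}$ and one centers there. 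What your approach is missing is precisely this translated-competitor step, which converts global control into pointwise-in-$t$ control.
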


In Section \ref{section twist map} we observe the connection between monotone twist maps of the cylinder $C=\R/\Z\times \R$ and geodesic flows of Finsler metrics on the torus $\T$. For geodesic flows on the cylinder $M$ there is the analogous connection to monotone twist maps of the plane $\R\times \R$. Loss in the periodicity in the second component of $M$ corresponds to losing periodicity in the first component of $C$. In this setting, Kunze and Ortega have established the existence of certain orbits, for certain types of monotone twist maps of the plane, see \cite{kunze} and the references therein. We will prove Theorem \ref{thm exists winding} by arguments similar to the ones leading to Theorem 14 in \cite{kunze}. A particular open question building on Theorem \ref{thm exists winding} and \cite{kunze} would be what assumptions on $F,g$ lead to recurrent geodesics in $(M,F)$.

\begin{proof}[Proof of Theorem \ref{thm exists winding}]
 In the following, we denote by
 \[ e_F(c[a,b]) = \int_a^b F(\dot c(t))^2dt \]
 the $F$-energy of $C^1$ curves $c:[a,b]\to \R^2 = H$.
 
 Choose some $X=(X_1,X_2)\in \R^2$ with $X_i >0$ and let $c=(c_1,c_2):[0,L]\to \R^2$ be an $l_F$-minimal segment with $c(0)=-X, c(L)=X$, such that $F(\dot c)\equiv 1$. It follows that each subsegment of $c$ is minimal with respect to $e_F$ under all curve segments with the same end points and the same connection time.
 
 Let $t_0, t_1\in [0,L]$, such that the intervals $[t_0,t_0+2],[t_1,t_1+1]$ lie in $[0,L]$ and are disjoint. Consider the straight Euclidean segments with constant $|.|$-speed, writing $e_1=(1,0)$
 \begin{align*}
  & \sig_0:[0,1]\to\R^2, && \sig_0(0)=c(t_0), && \sig_0(1)=c(t_0+2)-e_1, \\
  & \sig_1:[0,2]\to\R^2, && \sig_1(0)=c(t_1)-e_1, && \sig_1(2)=c(t_1+1).
 \end{align*}
 We assume e.g. $t_0+2\leq t_1$, the other case being analogous. The new curve
 \[ \hat c:[t_0,t_1+1]\to\R^2, \qquad \hat c = \sig_0 * (c|_{[t_0+2,t_1]}-e_1) * \sig_1 \]
 connects $c(t_0)$ to $c(t_1+1)$ in time $t_1+1-t_0$. By $c$ being minimizing, $F(\dot c)=1$, the uniform equivalence of $F$ and $|.|$ and invariance of $F$ under $x\mapsto x+e_1$ we obtain
 \begin{align*}
  &~ 2 +  e_F(c[t_0+2,t_1]) + 1 \\
  = &~ e_F(c[t_0,t_1+1]) \\
  \leq &~ e_F(\hat c[t_0,t_1+1]) \\
  = &~ e_F( \sig_0[0,1]) +  e_F(c[t_0+2,t_1]) +  e_F( \sig_1[0,2]) \\
  \leq &~ c_F^2e_{|.|}( \sig_0[0,1]) +  e_F(c[t_0+2,t_1]) +  c_F^2e_{|.|}( \sig_1[0,2]).
 \end{align*}
 If $\sig:[a,b]\to\R^2$ is a straight Euclidean segment, then we find
 \[ e_{|.|}(\sig[a,b]) = \frac{|\sig(b)-\sig(a)|^2}{b-a} \]
 and hence by definition of $\sig_0, \sig_1$
 \begin{align*}
  &~ 3 \\
  \leq &~ c_F^2 \left( e_{|.|}( \sig_0[0,1]) + e_{|.|}( \sig_1[0,2]) \right) \\
  = &~ c_F^2 \left( | c(t_0+2) - c(t_0)-e_1|^2 + \frac{1}{2} |c(t_1+1)-c(t_1)+e_1|^2 \right) \\ 
  = &~ c_F^2 \bigg( | c(t_0+2) - c(t_0) |^2 -2[c_1(t_0+2)-c_1(t_0)] + 1 \\
  &\qquad + \frac{1}{2} |c(t_1+1)-c(t_1)|^2+[c_1(t_1+1)-c_1(t_1)]+\frac{1}{2} \bigg) ,
 \end{align*}
 which is equivalent to
 \begin{align*}
  &~ 6/c_F^2-3 + 4 [c_1(t_0+2)-c_1(t_0)] \\
  \leq &~  2 | c(t_0+2) - c(t_0) |^2  +  |c(t_1+1)-c(t_1)|^2\\
  &\qquad +2 [c_1(t_1+1)-c_1(t_1)] ,
 \end{align*}
 Due to uniform equivalence of $F$ and $|.|$, $F(\dot c)=1$ and $l_F$-minimality of $c$ we have for all $t\in [0,L]$ and $T\in [0,L-t]$
 \begin{align}\label{c approx euc-arc length}
   T/c_F & = d_F(c(t),c(t+T))/c_F  \nonumber \\
   & \leq |c(t+T)-c(t)| \\ \nonumber
   & \leq c_F d_F(c(t),c(t+T)) = c_F T .
 \end{align}
 This shows
 \begin{align*}
  &~ 6/c_F^2-3 + 4 [c_1(t_0+2)-c_1(t_0)] \leq 9 c_F^2 +2 [c_1(t_1+1)-c_1(t_1)] ,
 \end{align*}
 and if $c_F\in [1,1.016]$, then one finds
 \begin{align*}
  &~ 6/c_F^2-3 - 9 c_F^2 \geq -6.5.
 \end{align*}
 Hence, if $[c_1(t_0+2)-c_1(t_0)]\geq 7/4$, then
 \begin{align*}
  1/4 & \leq \frac{1}{2}\left(6/c_F^2-3-9 c_F^2 + 4 [c_1(t_0+2)-c_1(t_0)]\right) \\
  & \leq c_1(t_1+1)-c_1(t_1) .
 \end{align*}
 
 Let us see that for good choices of $X$ we can assume the existence of $t_0\in [0,L]$ with $[c_1(t_0+2)-c_1(t_0)]\geq 7/4$. We assume the contrary, i.e.
 \begin{align}\label{less 7/4} 
  [c_1(t+2)-c_1(t)]\leq 7/4 \qquad \forall t\in [0,L-2] .
 \end{align}
 Using \eqref{c approx euc-arc length} and Pythagoras' theorem we then find\footnote{Here we have to work a bit more: \eqref{less 7/4} is an assumption on $c_1(t+2)-c_1(t)$, while we need an estimate for its absolute value...}
 \begin{align*}
  |c_2(t+2)-c_2(t)|^2 & = |c(t+2)-c(t)|^2 - |c_1(t+2)-c_1(t)|^2 \\
  & \geq 4/c_F^2 - (7/4)^2 . 
 \end{align*}
 Again, if $c_F\in [1,1.016]$, then $\e:= 4/c_F^2 - (7/4)^2>0$. Hence, for $c_F$ sufficiently close to $1$ the $c_2$-part of $c$ grows linearly like $\e/2$ in time $1$. If $X$ is chosen to lie close to the $x_1$-axis in $\R^2$, then under the assumption \eqref{less 7/4}, $c$ cannot connect $-X$ to $X$. 
 
 We saw that for $X$ close to the $x_1$-axis we obtain a minimal geodesic segment $c$ with uniformly positive average displacement along the $\R/\Z$-factor of $M$, i.e. at least $1/4$ in time $1$. We find for each such $X$ with $X_2>0$ a point of intersection in $c[0,L]\cap \R\times \{0\}$ and applying deck-transformations of the covering $\R^2\to M$ yields a minimal geodesic with initial point in $[0,1]\times \{0\}$, having positive average displacement along the $\R/\Z$-factor. The theorem follows.
\end{proof}

\begin{remark}
\begin{enumerate}
 \item Several arguments in the preceding proof are a bit crude. Being more careful, one could possibly obtain more information on the constructed minimal geodesic, as well as on the parameter range for $c_F$, which was
 \[ c_F\in [1,1.016]. \]
 
 \item Note that the topology of $M=\R/\Z\times \R$ and $\dim M=2$ were only used in few places. Hence, one might try to generalize Theorem \ref{thm exists winding} to more general manifolds having a Euclidean background structure.
\end{enumerate}
\end{remark}

Let us prove the following lemma, which might render some arguments more precise. It shows that minimal segments cannot wander to far from model geodesics in $\R^n$.

\begin{lemma}
 Let $F$ be any Finsler metric on $\R^n$ uniformly equivalent to the Euclidean norm $|.|$ via a constant $c_F\geq 1$. If $c:[0,1]\to \R^n$ minimizes the $F$-length between its endpoints and if $F(\dot c)=\const$, then $c(1/2)$ lies $R$-close to the midpoint $\frac{1}{2}(c(1)+c(0))$ (with respect to the Euclidean distance), where
 \[ R = |c(1)-c(0)| \cdot \frac{\sqrt{c_F^2-1}}{2} . \]
\end{lemma}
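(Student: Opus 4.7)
The plan is to bound the two Euclidean distances $|c(1/2)-c(0)|$ and $|c(1)-c(1/2)|$ via the minimization hypothesis together with the uniform equivalence of $F$ and $|\cdot|$, and then convert those pointwise bounds into a bound on the deviation of $c(1/2)$ from the Euclidean midpoint using the parallelogram (Apollonius) identity in $\R^n$.

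Write $x_0=c(0)$, $x_1=c(1)$, $p=c(1/2)$, $m=\tfrac12(x_0+x_1)$, and set $D=|x_1-x_0|$ and $L=l_F(c;[0,1])$. First I would compare $c$ with the straight Euclidean segment from $x_0$ to $x_1$, whose $F$-length is at most $c_F D$; by minimality of $c$, $L=d_F(x_0,x_1)\leq c_F D$. Second, constancy of $F(\dot c)$ forces each half of $c$ to be itself $F$-minimizing of $F$-length $L/2$, so $d_F(x_0,p)=d_F(p,x_1)=L/2$. Third, the uniform equivalence of $F$ and $|\cdot|$ translates this back into Euclidean distances: since $|y-x|\leq d_F(x,y)$ (in the normalization $|\cdot|\leq F\leq c_F|\cdot|$; or with an extra factor $c_F$ if one uses the symmetric version),
\[ |p-x_0|\leq L/2, \qquad |p-x_1|\leq L/2 . \]

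Finally, the parallelogram identity $|p-x_0|^2+|p-x_1|^2=2|p-m|^2+\tfrac{D^2}{2}$, valid in any Euclidean space, yields
\[ 2|p-m|^2 \;\leq\; \tfrac{L^2}{2} - \tfrac{D^2}{2} \;\leq\; \tfrac{c_F^2 D^2-D^2}{2}, \]
whence $|p-m|\leq \tfrac{D}{2}\sqrt{c_F^2-1}$, as claimed.

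The main delicacy is the normalization of the constant $c_F$. The key step $|y-x|\leq d_F(x,y)$ exploits the one-sided bound $F\geq |\cdot|$; if one only has the two-sided bound $|\cdot|/c_F\leq F\leq c_F|\cdot|$ coming from strong convexity, the very same three-step argument gives instead $|p-m|\leq \tfrac{D}{2}\sqrt{c_F^4-1}$, which matches the statement after the natural rescaling that brings $F$ above $|\cdot|$. Beyond that, no step is difficult: everything reduces to the sub-segment minimality of $c$ and elementary Euclidean geometry.
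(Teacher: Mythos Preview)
Your proposal is correct and follows essentially the same route as the paper. The paper phrases the comparison in terms of energies $e_F(c)=\int F(\dot c)^2\,dt$ rather than lengths (comparing $e_{|\cdot|}(\sigma_0)+e_{|\cdot|}(\sigma_1)\leq e_{|\cdot|}(c)\leq c_F^2 e_F(c)\leq c_F^2 e_F(\sigma)$ for the straight segments $\sigma,\sigma_0,\sigma_1$), but with constant $F$-speed this is equivalent to your length argument; both then finish with the same parallelogram/Apollonius identity. Your final paragraph about the normalization of $c_F$ is well taken: under the asymmetric bound $|\cdot|\leq F\leq c_F|\cdot|$ one gets exactly $\tfrac{D}{2}\sqrt{c_F^2-1}$, whereas the symmetric bound $\tfrac{1}{c_F}|\cdot|\leq F\leq c_F|\cdot|$ yields $\tfrac{D}{2}\sqrt{c_F^4-1}$; the paper's own chain implicitly needs the former (its step $e_F(\sigma)=|c(1)-c(0)|^2$ is only an inequality in general), so your care here is an improvement rather than a deviation.
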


\begin{proof}
 $c$ minimizes the energy between its endpoints in connection time $1$. If
 \begin{align*}
  & \sig(t)=(1-t)c(0)+tc(1) , && t\in [0,1] , \\
  & \sig_0(t)=(1-2t)c(0)+2tc(1/2) , && t\in [0,1/2] , \\
  & \sig_1(t)=(1-2t)c(1/2)+2tc(1) , && t\in [1/2,1] ,
 \end{align*}
 we obtain
 \begin{align*}
 &~ 2|c(1/2)-c(0)|^2+ 2|c(1)-c(1/2)|^2 = e_{|.|}(\sig_0) + e_{|.|}(\sig_1) \\
 \leq &~ e_{|.|}(c[0,1/2]) + e_{|.|}(c[1/2,1]) \leq c_F^2 e_F(c) \leq c_F^2 e_F(\sig) = c_F^2|c(1)-c(0)|^2
 \end{align*}
 This inequality can be brought via the isometry group of $(\R^n,|.|)$ into a new position, such that w.l.o.g. $c(0)=0,c(1)=\lam e_1$ with $\lam=|c(1)-c(0)|$ and $x := c(1/2)$ in the $(x_1,x_2)$-plane. We then find
 \begin{align*}
 2|x|^2 - 2\lam x_1 + \lam^2 = |x|^2+ |\lam e_1-x|^2 \leq \frac{c_F^2 \lam^2}{2} . 
 \end{align*}
 Observe that
 \[ |x-\textstyle\frac{\lam}{2}e_1|^2 = |x|^2- \lam x_1 +\lam^2/2-\lam^2/2+ \lam^2/4 = \textstyle\frac{1}{2}(|x|^2+ |\lam e_1-x|^2) - \lam^2/4, \]
 hence
 \begin{align*}
 |x-\textstyle\frac{\lam}{2}e_1|^2 \leq \frac{c_F^2 \lam^2}{4} - \lam^2/4 = \lam^2(c_F^2-1)/4 .
 \end{align*}
 The lemma follows.
\end{proof}

\subsection{Rational directions in higher dimensions}\label{section hyperbolic cylinder}

Let $(M,g)$ be a compact manifold of strictly negative curvature and of arbitrary dimension. By $M$ being compact, there is a negative upper bound for the sectional curvatures of $g$ and the Morse Lemma holds. In this subsection we discuss a theorem showing that in the periodic directions, there are always unstable rays (see Definition \ref{def unstable}). In dimension two this was shown in Theorem \ref{morse periodic} \eqref{morse periodic item 3}.

Let $\tau\in\Gam-\{\id\}$. By Preissmann's theorem, the group $\la\tau\ra \subset\Gam$ generated by $\tau$ is isomorphic to $\Z$. We consider the ``cylinder''
\[ C_\tau := H / \la \tau\ra \]
with the covering map denoted by
\[ p_\tau: H\to C_\tau . \]
$C_\tau$ is homeomorphic to $(\R/\Z)\times \R^{\dim M-1}$. Let us denote by 
\[ \g_\tau : \R\to H \]
the unique $\tau$-periodic $g$-geodesic. The set $\M(\g_\tau)$ is invariant under $D\tau$. We shall consider the $\phi_F^t$-invariant measures in the quotient 
\[ \M_\tau := \M(\g_\tau)/\la D\tau \ra = Dp_\tau(\M(\g_\tau)) \subset S_F C_\tau . \]
Recall the definition of the set $\A_-(\xi_\tau)$ of unstable rays in $\RR_-(\xi_\tau)$, where $\xi_\tau=\g_\tau(-\infty)$. We can prove the following fact, which recovers the fact from Mather theory, that the Mather set is always contained in the Aubry set (Remark \ref{remark mane inclusions}).

\begin{thm}\label{thm measures unstable}
 Let $\mu$ be a $\phi_F^t$-invariant measure supported in the quotient $\M_\tau$ and let $\widetilde{\supp \mu} = Dp_\tau^{-1}(\supp\mu)$ be the lifted support of $\mu$. Then 
 \[ \widetilde{\supp \mu} \subset \A_-(\xi_\tau) . \]
 In particular, $\A_-(\xi_\tau) \neq \emptyset$.
\end{thm}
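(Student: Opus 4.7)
The plan is to argue by contradiction in the vein of the sketch of Theorem \ref{morse periodic} \eqref{morse periodic item 3}: the failure of instability at $v$ supplies a genuine vertex of $c_v$ which, by strict convexity of $F$, admits a fixed-length shortcut, and this shortcut is closed into a competitor against a minimal segment of $c_v$ using Poincar\'e recurrence of $\mu$ in place of the asymptotic periodicity available in dimension two.

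First I reduce to recurrent $v$. Since $\mu$ is a finite $\phi_F^t$-invariant measure on $S_FC_\tau$, the Poincar\'e recurrence theorem yields a $\mu$-conull set of $v$ whose projections to $C_\tau$ are backward recurrent; lifted to $H$, such a $v$ admits sequences $k_n\in\Z$ and $t_n\to-\infty$ with $D\tau^{k_n}\dot c_v(t_n)\to v$ in $S_FH$. (If $\{k_n\}$ stays bounded along a subsequence, then $c_v$ is $\tau^k$-periodic for some $k\neq 0$ and Theorem \ref{morse periodic} \eqref{morse periodic item 3} applies directly.) A semicontinuity argument based on Lemma \ref{crossing minimals} shows that $\A_-(\xi_\tau)$ is closed in $\RR_-(\xi_\tau)$, so once the conclusion is established for recurrent $v$ it extends to all of $\widetilde{\supp\mu}$.

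Suppose now, for contradiction, that some recurrent $v$ fails to be unstable. Definition \ref{def unstable} supplies a ray $c':\R_-\to H$ with $c'(0)=c_v(a)$ for some $a<0$, $c'(-\infty)=\xi_\tau$, and $w:=\dot c'(0)\neq\dot c_v(a)$; strict convexity of $F$ then yields $\delta,\eta>0$, depending only on $|w-\dot c_v(a)|_g$, with
\[ d_F\bigl(c_v(a-\delta),\,c'(-\delta)\bigr)\;\leq\;2\delta-\eta. \]
The isometry $\tau^{-k_n}$ transports the vertex to $\tau^{-k_n} c_v(a)$; continuity of the geodesic flow combined with the recurrence gives $\tau^{-k_n}\circ c_v(s)\approx c_v(s+t_n)$ in $C^1$ on bounded $s$-intervals, and since $\tau\xi_\tau=\xi_\tau$ the translated ray $\tau^{-k_n} c'$ still lies in $\RR_-(\xi_\tau)$. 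Concatenating a segment of $c_v$ into the past, an $o(1)$ jump onto $\tau^{-k_n} c_v$, the transported shortcut across the translated vertex, a tail of $\tau^{-k_n} c'$ further into the past, and a final reconnection back to $c_v$, one assembles a competitor which beats the corresponding $c_v$-arc by $\eta-o(1)$, contradicting $v\in\M(\g_\tau)$.

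The main obstacle is precisely the final reconnection at the far-past end of $\tau^{-k_n} c'$. In the planar proof of Theorem \ref{morse periodic} \eqref{morse periodic item 3}, item \eqref{morse periodic item 2} of the same theorem furnishes a $\tau$-periodic minimal to which $c_v$ is $o(1)$-asymptotic, which makes the reconnection cost vanish; that planar shadow is unavailable in higher dimensions. The substitute must exploit the standing hypothesis that the curvature of $g$ is strictly negative, under which pairs of $g$-geodesics with common backward endpoint $\xi_\tau$ converge to one another exponentially fast in the backward direction; the Morse Lemma together with a second application of the recurrence is needed to lift this into an honest $o(1)$ meeting of $\tau^{-k_n} c'$ with $c_v$. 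An equivalent weak-KAM reformulation combines Corollary \ref{cor exist recurrent horofctns} with Proposition \ref{char unstable via weak KAM}: calibration of $c_v$ by every $\{\tau^{k_n}\}$-recurrent element of $\delta_F^{-1}(\xi_\tau)$ must be upgraded to calibration by \emph{every} element of $\delta_F^{-1}(\xi_\tau)$, which is the characterization of $v\in\A_-(\xi_\tau)$.
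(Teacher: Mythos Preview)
Your proposal correctly isolates the crux --- the far-past reconnection of $\tau^{-k_n}c'$ onto $c_v$ --- but it does not resolve it, and the two patches you offer fail in the generality of the theorem. Negative curvature of $g$ makes $g$-rays with common endpoint $\xi_\tau$ converge exponentially, but the Morse Lemma transfers this to $F$-rays only up to the additive constant $C$, not to $o(1)$; there is no reason $\tau^{-k_n}c'$ and $c_v$ ever come within $o(1)$ of each other. The weak-KAM route via Corollary \ref{cor exist recurrent horofctns} is unavailable: that corollary (through Proposition \ref{bounding horofunctions}) is stated for $\dim H=2$, whereas the present theorem is in arbitrary dimension. Recurrence of $v$ alone, even applied twice, does not synchronise anything with the tail of the branching ray.

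The paper supplies two ideas you are missing. First, it sets up Mather theory on the cylinder $C_\tau$: since $H^1(C_\tau,\R)$ is one-dimensional there is a unique class $[\eta]$ with $\rho(\mu)([\eta])=1$, and both the geodesics in $\supp\mu$ and every geodesic in the $\al$-limit of the branching ray $c_w$ are $[\eta]$-semistatic. Second, instead of attempting to close onto $c_v$ directly, it picks an invariant measure $\nu$ in the $\al$-limit of $c_w$ and applies Poincar\'e recurrence to the \emph{product} $(\mu\otimes\nu,\phi_F^t\times\phi_F^t)$, obtaining points $(v_k,w')$ that recur \emph{simultaneously} along the same times $T_{n,k}$. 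The semistatic property, read through a primitive $h$ of $\tilde\eta$ satisfying $h\circ\tau-h=\kappa\neq 0$, then forces the $\tau$-shift integers $m_n^{v_k}$ and $m_n^{w'}$ to coincide. It is this coincidence --- invisible in a scheme using recurrence of $v$ only --- that makes the translated picture line up and yields a competitor beating $\tau^{-m_n}c_w$ on a long segment, contradicting minimality of $c_w$.
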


We give the proof of Theorem \ref{thm measures unstable}. It recovers the argument in the proof of Theorem \ref{morse periodic} \eqref{morse periodic item 3} above, which will appear again in Subsection \ref{section rays-paper}. By the techniques in the proof of Theorem \ref{thm measures unstable} (see also Subsection \ref{section rays-paper}), one might be able to answer the following.

\begin{prob}\label{prob recurrent rays are unstable}
 Let $\Rec(\phi_F^t|_{Dp(\M)})\subset S_FM$ be the set of (forward and backward) recurrent minimal geodesics and let $\A \subset S_FH$ the set of (forward and backward) unstable minimal geodesics. Is it true that
\[ Dp^{-1}(\Rec(\phi_F^t|_{Dp(\M)})) \subset \A ? \]
\end{prob}

\begin{proof}[Proof of Theorem \ref{thm measures unstable}]
 Note that
 \[ H_1(C_\tau,\R) \cong \R, \qquad H^1(C_\tau,\R) \cong \R^*. \]
 As in Section \ref{section mather theory}, we can define minimizing measures in $S_FC_\tau$, which will be supported in $\M_\tau$. This singles out a unique cohomology class $[\eta]\in H^1(C_\tau,\R)$ (by $\dim H^1(C_\tau,\R) = 1$) with $\rho(\mu)([\eta])=1$ for all measures supported in $\M_\tau$. Moreover, if $\nu$ is any $\phi_F^t$-invariant measure in $S_FC_\tau$, then $\rho(\nu)([\eta])\leq 1$. (Note that $C_\tau$ is non-compact, but the Morse Lemma can be used to overcome difficulties.) We use the closed 1-form $\eta$ for the definition of the ``distance'' $d_{F-\eta}$ in $C_\tau$. Using $d_{F-\eta}$, one finds that the orbits $c_v$ with $v\in \widetilde{\supp \mu}$ are $[\eta]$-semistatic, see Lemma \ref{lemma critical value} \eqref{lemma critical value iii}.
 
 Assume now that $v\in \widetilde{\supp \mu}$ and that $w\in \RR_-(\xi_\tau)$ with $\pi v=\pi w$. We have to show $v=w$ and assume the contrary. We then find $\e,\delta>0$, such that
 \begin{align}\label{delta-kink}
  d_F(c_w(-\delta), c_v(\delta)) \leq 2\delta-\e .
 \end{align}
 The first step is to find a suitable recurrent geodesic in the $\al$-limit set of $c_w$. For this we work in the quotient $C_\tau$. Let $\nu$ be a $\phi_F^t$-invariant probability measure supported in the limit set $\al(\phi_F^t,w)$, which lies in $\M_\tau$ (by the Morse Lemma, the geodesic $p\circ c_w$ stays in a compact part of $C_\tau$). We consider the product probability space $(\M_\tau\times\M_\tau,\mu\otimes\nu)$ with the flow $\phi_F^t\times\phi_F^t$, which preserves the product measure $\mu\otimes\nu$. By the \Poincare recurrence theorem, $\mu\otimes\nu$-almost every $(v',w')$ is recurrent. Since we are considering the product measure, we find a sequence $v_k\to v$ in $\supp\mu$ and for each $k$ a $\nu$-full measure set $A_k\subset\supp\nu$, such that the points $(v_k,w')$ with $w'\in A_k$ are backwards $\phi_F^t\times\phi_F^t$-recurrent. We choose any point $w'\in \bigcap_{k\in\N}A_k$ (which has full $\nu$-measure and hence is non-empty). Fixing $k$, this means that we find a sequence $T_{n,k}\to -\infty$ for $n\to\infty$ so that in $S_FC_\tau$
 \begin{align}\label{v,w recurrent}
  \phi_F^{T_{n,k}}v_k \to v_k, \qquad \phi_F^{T_{n,k}}w' \to w'  , \qquad n\to\infty .
 \end{align}
 Since $w'\in\supp\nu \subset \al(\phi_F^t,w)$, we find moreover by fixing $n,k$ a sequence of times $S_{m,n,k} \to -\infty$ for $m\to\infty$ with
 \begin{align}\label{c,c_w close}
  & \dist_g( c_w[S_{m,n,k}+T_{n,k},S_{m,n,k}] , c_{w'}[T_{n,k},0] ) \to 0 , \qquad m \to \infty ,
 \end{align}
 $\dist_g$ denoting the $g$-Hausdorff distance in $C_\tau$.

 We lift the situation to $H$. Let $h:H\to\R$ be a primitive of the lift $\tilde\eta$ of $\eta$ to $H$. Note that, since $\eta$ is $\tau$-invariant, we have $d(h\circ\tau-h) = 0$ and hence $h\circ\tau-h=\kappa$ for some non-zero constant $\kappa\in\R$ ($\kappa\neq 0$, since otherwise $\eta$ would be exact in $C_\tau$, against $\rho(\mu)([\eta])=1$). It follows that
 \begin{align}\label{eqn m kappa}
  h\circ\tau^m-h = \textstyle\sum_{i=1}^m h\circ\tau^i-h \circ \tau^{i-1} = m \cdot \kappa.
 \end{align}
 Next, we lift $c_{v_k}$ and $c_{w'}$ to $H$ (denoting the lifts by the same letters) in such a way that $c_{v_k}(0) \to c_v(0)$ as $k\to\infty$. By \eqref{v,w recurrent}, we find sequences $m_n^{v_k},m_n^{w'}\in \Z$ with
 \begin{align}\label{v,w recurrent lift}
  d_g(\tau^{m_n^{v_k}} c_{v_k}(0) ,  c_{v_k}(T_{n,k})) \stackrel{n\to\infty}{\to} 0, \quad d_g( \tau^{m_n^{w'}} c_{w'}(0) , c_{w'}(T_{n,k})) \stackrel{n\to\infty}{\to} 0 .
 \end{align}
 We claim that $m_n :=m_n^{w'} = m_n^{v_k}$ for sufficiently large $n$ (here we use that $c_{v_k},c_{w'}$ are {\em simultaneously} recurrent). For this observe that by \eqref{eqn m kappa} and $c_{v_k}$ being $[\eta]$-semistatic
 \begin{align*}
  &~ \left| -T_{n,k} + \kappa m_n^{v_k} - d_{F-\eta}(c_{v_k}(T_{n,k}),c_{v_k}(0)) \right| \\
  = &~ \left| -T_{n,k} + \big[ h(\tau^{m_n^{v_k}} c_{v_k}(0)) - h( c_{v_k}(0)) \big] - l_{F-\eta}( c_{v_k}[T_{n,k},0]) \right| \\
  = &~ \bigg| -T_{n,k} + \big[ h(\tau^{m_n^{v_k}} c_{v_k}(0)) - h( c_{v_k}(0)) \big] + T_{n,k} \\
  & \qquad \qquad \qquad \qquad \qquad \qquad \qquad \qquad  + \big[ h( c_{v_k}(0)) - h( c_{v_k}(T_{n,k})) \big] \bigg| \\
  = &~ \left| h(\tau^{m_n^{v_k}} c_{v_k}(0)) - h( c_{v_k}(T_{n,k})) \right| .
 \end{align*}
 By \eqref{v,w recurrent lift}, the last line tends to zero as $n\to\infty$. By \eqref{v,w recurrent} and the continuity of $d_{F-\eta}$, also the $d_{F-\eta}$-term in the first line tends to zero for $n\to\infty$. The analogous calculation for $w'$ (which also minimizes $d_{F-\eta}$ by lying in the support of $\nu$ in $\M_\tau$) shows
 \[ |\kappa | \cdot \left| m_n^{v_k} - m_n^{w'}\right| = \left| \kappa m_n^{v_k} - \kappa m_n^{w'}  \right|\to \left| T_{n,k} - T_{n,k} \right| = 0, \quad n\to\infty. \]
 Since $m_n^{v_k} , m_n^{w'}\in\Z$ and $\kappa\neq 0$, the claim follows.

\begin{figure}\centering%[htb!]
 \includegraphics[scale=1]{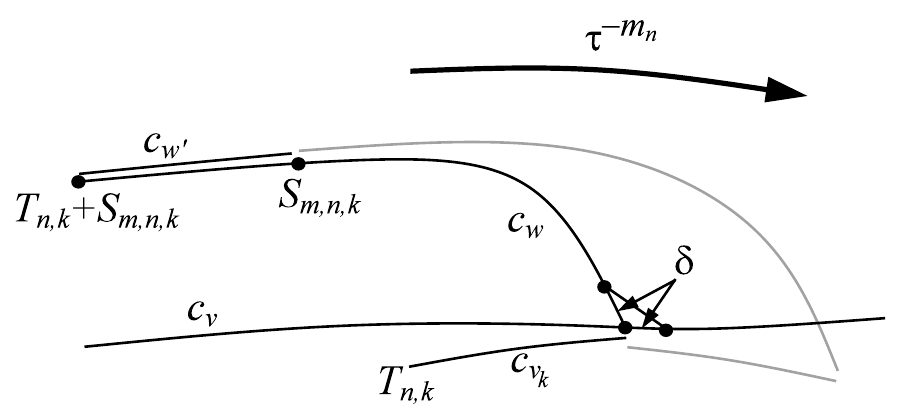}
 \caption{The argument in the proof of Theorem \ref{thm measures unstable}. Images of curves under $\tau^{-m_n}$ are depicted in gray. \label{fig morse-arg}}
\end{figure}

 By \eqref{delta-kink}, we obtain for sufficiently large $k,n$, that
 \begin{align}\label{eqn kink Phi}
  &~ d_F(c_w(-\delta), \tau^{-m_n} c_{v_k}(T_{n,k}+\delta)) \leq 2\delta-\e/2 .
 \end{align}
 We now fix the lift $c_{w'}$, so that using \eqref{c,c_w close} and \eqref{v,w recurrent lift} we have
 \begin{align}\label{eqn neu}
  d_F(\tau^{-m_n}c_w(S_{m,n,k}+T_{n,k}) , c_w(S_{m,n,k}) ) \leq \e/8 .
 \end{align}
 (Note that the choice of the lift of $c_{w'}$ depends on $m,n,k$, which we suppress in the notation.) Using the minimality of $c_w$ in $H$, the triangle inequality, $\pi v = \pi w$, $v_k\to v$, \eqref{eqn kink Phi} and \eqref{eqn neu} we find for large $m,n,k$ (cf.\ Figure \ref{fig morse-arg})
 \begin{align*}
  &~ -(S_{m,n,k}+T_{n,k}) \\
  = &~ d_F(\tau^{-m_n}c_w(S_{m,n,k}+T_{n,k}),\tau^{-m_n}c_w(0)) \\  
  \leq &~ d_F(\tau^{-m_n}c_w(S_{m,n,k}+T_{n,k}),c_w(S_{m,n,k})) + d_F(c_w(S_{m,n,k}),c_w(-\delta)) \\
  + &~ d_F(c_w(-\delta),\tau^{-m_n} c_{v_k}(T_{n,k}+\delta)) + d_F(\tau^{-m_n} c_{v_k}(T_{n,k}+\delta),\tau^{-m_n}c_{v_k}(0)) \\
  + &~ d_F(\tau^{-m_n}c_{v_k}(0),\tau^{-m_n}c_v(0)) \\
  \leq &~ \e/8 -(\delta + S_{m,n,k}) + (2\delta-\e/2) - (T_{n,k}+\delta) + \e/8 \\
  = &~ -(S_{m,n,k}+T_{n,k}) -\e/4 .
 \end{align*}
 This is a contradiction.
\end{proof}

\subsection{Multiplicity of shortest closed geodesics}\label{section generic}

In Remark \ref{remark shorst closed geod are minimal} we observed that each free homotopy class of a closed surface $M$ entails minimal geodesics. Such minimal geodesics may foliate strips in $M$, i.e.\ there may be infinitely many shortest closed geodesics in a single homotopy class. But one does not expend this to be the case for ``typical'' Finsler metrics $F$.

We let here $M$ be a closed manifold of arbitrary dimension. Let us first see what we mean by ``typical''. In \cite{mane}, R.\ \Mane perturbed a given Lagrangian $L:TM\to\R$ by a potential function $U:M\to\R$, namely considering the new Lagrangian
\[ L' = L +U \circ \pi, \]
where $\pi:TM\to M$ is the canonical projection. Properties of $L'$ for generic choices of the potential $U$ have been studied by \Mane in \cite{mane} and e.g.\ by P.\ Bernard and G.\ Contreras in \cite{bercont}. In the geometric setting we use a different kind of perturbation, which is related to the before mentioned one via Maupertuis' principle. Namely, if $L=\frac{1}{2}|.|_g^2$ is the kinetic energy associated to the Riemannian metric $g$, then $L' =L + U \circ \pi$ defines a mechanical Lagrangian and the solutions of the Euler-Lagrange equations associated to $L'$ of energy $\kappa$ are reparametrized geodesics in the so-called Jacobi metric $g' = (\kappa+U)\cdot g$. Hence, perturbing $L$ by adding a potential corresponds to perturbing the metric $g$ by a conformal function. We write
\[ E := \{\lam \in C^\infty(M,\R) : \lam(x)>0 ~ \forall x \in M\} \]
and endow $E$ with the $C^\infty$-topology. By a {\em residual subset} of $E$ we mean a countable intersection of open and dense subsets of $E$. As $E$ is an open subset of a Fr\'echet space, residual sets are dense in $E$.

\begin{defn}\label{def generic}
 A property of Finsler metrics is said to be {\em conformally generic} if for any given Finsler metric $F_0$ on $M$ there exists a residual subset $\O\subset E$, so that the property holds for all Finsler metrics $F$ of the form
 \[ F(v) = f(\pi v) \cdot F_0(v) \]
 with $f\in \O$. 
\end{defn}

By abuse of language, we will also call the Finsler metrics $fF_0, f\in\O$ conformally generic. Observe that if $F_0$ is e.g.\ Riemannian, then so are the perturbed Finsler metrics $F=f\cdot F_0$.

In \cite{generic-paper}, the author proved the following theorem.

\begin{thm}\label{thm generic loops}
 The property to admit in each free homotopy class, which is not of order 2, precisely one shortest closed geodesic is conformally generic for Finsler metrics on closed manifolds.
\end{thm}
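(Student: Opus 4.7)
The plan is to apply the Baire category theorem in the Fr\'echet space $E$ of positive smooth conformal factors with the $C^\infty$-topology. Since $M$ is closed, $\pi_1(M)$ is finitely generated, and so the set of free homotopy classes (i.e.\ conjugacy classes in $\pi_1(M)$) is countable. Enumerate the classes of order not $2$ as $\tau_1,\tau_2,\ldots$, and for each $i$ let
\[ \mathcal{O}_i = \{ f \in E : f F_0 \text{ admits precisely one shortest closed geodesic in } \tau_i \} . \]
It suffices to exhibit, for each $i$, a dense $G_\delta$ subset of $E$ contained in $\mathcal{O}_i$; the countable intersection of these is the required residual set.

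Fix $\tau=\tau_i$ and denote by $S_f$ the compact set of shortest closed $fF_0$-geodesics in class $\tau$, viewed as unparameterized curves in $C^0(\se,M)$. An Arzel\`a--Ascoli argument combined with continuous dependence of the infimum length $L_f(\tau)$ on $f$ yields upper semi-continuity of $f\mapsto S_f$: any $c_n\in S_{f_n}$ with $f_n\to f$ in $C^\infty$ subconverges to an element of $S_f$. In particular, for each $n\in\N$ the set
\[ \mathcal{V}_{\tau,n} = \{ f\in E : \diam_{C^0} S_f < 1/n \} \]
is open in $E$, and $\bigcap_n \mathcal{V}_{\tau,n}\subset \mathcal{O}_i$. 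To show density of $\mathcal{V}_{\tau,n}$ at a given $f\in E$, pick a reference $c_0\in S_f$ together with a small tubular neighborhood $N_r(c_0)\subset M$; for $r$ small enough, any closed geodesic in class $\tau$ whose image lies in $N_r(c_0)$ is $C^0$-close to $c_0$ within $1/n$, because $N_r(c_0)$ deformation retracts onto $c_0(\se)$ and the order-not-$2$ hypothesis forces the only $\pi_1(N_r(c_0))$-class mapping to $\tau\in\pi_1(M)$ to be a single forward traversal. Choose a non-negative bump $\psi\in C^\infty(M)$ that vanishes on $N_{r/2}(c_0)$ and is strictly positive on $M\setminus N_r(c_0)$, and set $f_\varepsilon = f+\varepsilon\psi\in E$. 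Since $\psi\equiv 0$ near $c_0$, we have $l_{f_\varepsilon F_0}(c_0)=L_f(\tau)$, while $l_{f_\varepsilon F_0}\geq l_{fF_0}$ elsewhere; hence $L_{f_\varepsilon}(\tau)=L_f(\tau)$, and any $c\in S_{f_\varepsilon}$ must satisfy both $c\in S_f$ and $\int_c \psi \cdot F_0(\dot c)\,dt = 0$. The second condition forces the image of $c$ to lie in $\{\psi=0\}\subset N_r(c_0)$, so $c$ is $C^0$-close to $c_0$ within $1/n$. Thus $f_\varepsilon\in \mathcal{V}_{\tau,n}$ for every $\varepsilon>0$, proving density.

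The main obstacle I expect is the tubular-neighborhood step identifying geodesics with image inside $N_r(c_0)$. One must argue that, for $r$ sufficiently small, such a closed geodesic in class $\tau$ is actually $C^0$-close to $c_0$, and this is precisely where the restriction to $\tau$ of order $\neq 2$ is used: if $\tau^2 = \id$, then the inclusion $\pi_1(N_r(c_0))\cong\Z\hookrightarrow\pi_1(M)$ sends both the generator $+1$ and its inverse $-1$ to the same class $\tau = \tau^{-1}$, so the tube $N_r(c_0)$ may well carry a second distinct shortest closed geodesic (a ``reverse traversal'' of $c_0$ in the reversible case), which no bump function supported away from $c_0$ can remove. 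For classes of all other orders, the forward and reverse generators lie in distinct conjugacy classes $\tau$ and $\tau^{-1}$, and the perturbation isolates a unique representative. Once this uniqueness-in-the-tube step is established, the rest of the argument is a routine Baire-category/bump-function construction.
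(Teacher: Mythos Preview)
Your approach differs from the paper's. The paper applies the abstract Ma\~n\'e-type Theorem~\ref{thm mane generic}: with $X$ the space of unparametrized curves in class $\tau$ and $\varphi(c,f)=l_{fF_0}(c)$, the derivative $D_V\varphi(c,f)[v]=\int v\circ c\cdot F_0(\dot c)\,dt$ pairs $v$ against a measure on $M$ supported on $c(\se)$, and the non-degeneracy hypothesis reduces to showing that two distinct minimizing geodesics induce distinct such measures. That criterion needs no tubular-neighbourhood geometry and is insensitive to whether $c_0$ is embedded; this is what the abstract route buys. Your explicit bump-function construction is more elementary and produces a concrete perturbation, but the geometry it relies on is harder to control.

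The gap in your argument is at the tubular-neighbourhood step, and it is more serious than the $\tau$ versus $\tau^{-1}$ issue you flag. A shortest closed geodesic $c_0$ need not have embedded image: on a closed surface of genus $\geq 2$, most free homotopy classes have no simple representative, so their shortest geodesic has transverse self-intersections and $N_r(c_0)$ is a neighbourhood of a graph whose $\pi_1$ is free of rank $\geq 2$; your identification $\pi_1(N_r(c_0))\cong\Z$ then fails outright. Even when $c_0$ is embedded, the implication ``$c$ represents the generator of the tube $\Rightarrow$ $c$ is $C^0$-close to $c_0$'' is not purely topological: a degree-one self-map of $\se$ can be far from a rotation, and you need the length constraint $l_{fF_0}(c)=L_f(\tau)$ together with a quantitative lemma (degree-one circle maps of near-minimal length are $C^0$-close to rotations) to finish. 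Both gaps can be repaired---for instance by lifting to the universal cover, where shortest closed geodesics become embedded minimal lines---but carrying this out essentially reconstructs the measure-separation condition that the paper's abstract theorem packages.
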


If $M$ is a closed, orientable surface, then any homotopy class is of infinite order. Applying Theorem \ref{thm generic loops}, this means in the case of genus $\mathfrak{g}=1$, that $\M^{per}(\g_\tau)\subset S_FH$ consists generically precisely of the $\Gam\cong \Z^2$-translates of a single $\tau$-periodic geodesic, for each $\tau\in \Gam$. In particular, there will be neighboring periodic minimals and heteroclinic connections between them. In the higher genus case $\mathfrak{g}\geq 2$, then generically $\M^{per}(\g_\tau)\subset S_FH$ consists of a single $F$-geodesic. All rays in $\RR_-(\xi_\tau)$ are asymptotic to this minimal geodesic.

Theorem \ref{thm generic loops} rests on an abstract result, which was in a slightly different form proved by \Mane in \cite{mane}. We state Theorem 2.1 from \cite{generic-paper}, which can be applied to prove Theorem \ref{thm generic loops}.

\begin{thm}\label{thm mane generic}
 Let $V$ be a first-countable, Hausdorff topological $\R$-vector space and $E\subset V$ an open subset. Moreover, let $X$ be a metri\-zable topological space. We consider a function
 \[ \varphi : X \times E \to \R \]
 with the following properties:
\begin{itemize}
 \item (regularity) $\vf$ is continuous and $\vf(x,.)$ is differentiable for fixed $x$. We write $D_V\vf(x,f)[v]$ for the derivative and assume moreover, that this is continuous in all components $x,f,v$.
 
 \item (compactness) For fixed $f\in E$, the function $\varphi(.,f)$ attains its infimum in a non-empty set $M(f)\subset X$ and if $\{f_n\}\subset E$ is a convergent sequence with limit in $E$, then the union $\bigcup M(f_n)$ is contained in a compact subset of $X$.
 
 \item (non-degeneracy) For each $f\in E$, the derivative of $\vf$ separates the set of minimizers $M(f)$ in the sense that
 \[ \forall x\neq y \text{ in } X ~ \exists v\in V: \quad D_V\vf(x,f)[v] \neq D_V\vf(y,f)[v] . \]
\end{itemize}
 Then there exists a residual subset $\O \subset E$, so that
\[ f\in \O \quad\implies\quad \card M(f)=1. \]
\end{thm}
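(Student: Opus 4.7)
Put a compatible metric $d$ on the metrizable space $X$ and define $\vf_*(f) := \min_{x \in X} \vf(x,f)$, well defined by the compactness hypothesis. Two soft properties follow directly from continuity of $\vf$ and the precompactness of $\bigcup M(f_n)$ along convergent sequences $f_n \to f$: $\vf_*$ is continuous on $E$, and the set-valued map $f \mapsto M(f)$ is upper semi-continuous in the sense that every open $W \supset M(f_0)$ contains $M(f)$ for all $f$ in a suitable neighborhood of $f_0$. For each $n \in \N$ I then set
\[ G_n := \{ f \in E : \diam_d M(f) < 1/n \}. \]
Upper semi-continuity of $M$ makes $G_n$ open. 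If each $G_n$ is also dense, then $\O := \bigcap_n G_n$ is residual in $E$ (the paper's applications furnish the required Baire property of $V$, e.g.\ through complete metrizability), and for any $f \in \O$ one has $\diam_d M(f)=0$, so $\card M(f)=1$. The theorem thus reduces to proving that each $G_n$ is dense.

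The density step is the heart of the argument and is where non-degeneracy is used. Given $f_0 \in E$ and a neighborhood $\U$ of $f_0$, I cover the compact set $M(f_0)$ by finitely many open $d$-balls $U_1, \ldots, U_N$ of radius $< 1/(2n)$. If only one of these meets $M(f_0)$ then $f_0 \in G_n$ already; otherwise pick $x \in U_i \cap M(f_0)$ and $y \in U_j \cap M(f_0)$ with $i \neq j$, and invoke non-degeneracy to produce $v \in V$ with $D_V\vf(x,f_0)[v] < D_V\vf(y,f_0)[v]$. The continuous function $h(z) := D_V\vf(z,f_0)[v]$ on the compact set $M(f_0)$ attains its minimum on a proper closed subset $M_0 \subsetneq M(f_0)$ not containing $y$. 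Using the minimality inequality $\vf(z_t, f_0+tv) \leq \vf(x', f_0+tv)$ for $z_t \in M(f_0+tv)$ and $x' \in M(f_0)$, together with a first-order Taylor expansion in $t$---legitimate by continuity of $D_V\vf$ in all three slots---every cluster point of $M(f_0+tv)$ as $t \to 0^+$ must lie in $M_0$. Combined with upper semi-continuity, this yields $M(f_0+tv) \cap U_j = \emptyset$ for all sufficiently small $t>0$.

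Iterating this peel-off construction at most $N-1$ times, choosing $t_k$ small enough at each step so that the cumulative perturbation $f_k := f_0 + \sum_{i \leq k} t_i v_i$ remains in $\U$ while $M(f_k)$ meets strictly fewer of the $U_j$ than $M(f_{k-1})$, I arrive at $f \in \U$ whose $M(f)$ meets at most one $U_j$, hence $\diam_d M(f) < 1/n$. The main technical obstacle is precisely this induction: each $v_k$ must simultaneously eliminate one of the remaining cover-balls without resurrecting a previously eliminated one. This is handled by the observation that the set of $v \in V$ separating a prescribed finite collection of distinct points of $X$ through $D_V\vf(\cdot, f_0)$ is the complement of finitely many proper linear subspaces of $V$, and hence nonempty, since no $\R$-vector space is a finite union of proper subspaces. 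The Baire category theorem applied to $\bigcap_n G_n$ in $E$ then delivers the residual set $\O$.
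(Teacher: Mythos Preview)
The survey does not itself prove this theorem; it is stated with the proof deferred to \cite{generic-paper}, so there is no in-paper argument to compare against. Your overall scheme is the right one: set $G_n=\{f\in E:\diam_d M(f)<1/n\}$, obtain openness from upper semi-continuity of $f\mapsto M(f)$, and reduce everything to density of each $G_n$.

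The density argument, however, has a real gap. From ``every cluster point of $M(f_0+tv)$ as $t\to0^+$ lies in $M_0=\arg\min_{M(f_0)}h$'' together with ``$y\notin M_0$'' you conclude ``$M(f_0+tv)\cap U_j=\emptyset$ for small $t$''. That inference would require $M_0\cap\overline{U_j}=\emptyset$, but you have only excluded the single point $y$; other points of $M(f_0)\cap U_j$ may very well lie in $M_0$. Hence one perturbation step need not peel off an entire ball $U_j$, and your induction on the number of balls met by $M(f_k)$ need not decrease that count---so the process need not terminate in $N-1$ steps, or at all. The closing remark (choose $v$ separating a finite collection of representative points simultaneously) does not repair this: even if $h$ takes pairwise distinct values at the representatives $y_1,\dots,y_N$, the set $M_0$ can still meet several of the $U_j$ at non-representative points of $M(f_0)$. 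One way around the iteration is to argue by contradiction with a \emph{fixed} pair of disjoint closed sets $A,B$: if $\{f:M(f)\text{ meets both }A\text{ and }B\}$ had nonempty interior at some $f_1$, then for \emph{every} direction $v$ the argmin of $h_v$ on $M(f_1)$ would meet both $M(f_1)\cap A$ and $M(f_1)\cap B$; this forces the weak-$*$ closed convex hulls in $V^*$ of the images of these two sets under $z\mapsto D_V\vf(z,f_1)$ to coincide, and an extreme-point argument (Krein--Milman together with Milman's converse) then contradicts the injectivity of that map supplied by non-degeneracy.
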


In order to prove Theorem \ref{thm generic loops} one makes the following definitions:
\begin{itemize}
 \item $V = C^\infty(M,\R)$ with the $C^\infty$-topology and $E$ the subset of positive functions.
 
 \item $X$ is the space of smooth curves $c:\R/\Z\to M$ in a fixed free homotopy class modulo parametrization, endowed with the $C^1$-topology.
 
 \item $\vf$ is given by the length, namely $\vf(c,f) = l_{f\cdot F_0}(c;[0,1])$, where $F_0$ is a fixed Finsler metric.
\end{itemize}

The approach via Theorem \ref{thm mane generic} could have more applications. In a similar procedure, one should be able to show that mass minimizing currents in homology classes $h\in H_k(M,\Z)$ become unique. Moreover, fixing a boundary integral current, solutions to the corresponding Plateau problem in $M$ should become unique. 

Let us also mention that it could be possible using Theorem \ref{thm mane generic} to show that the minimizing measures in Theorem \ref{thm measures unstable} are unique for conformally generic $F$. Here the term ``minimizing'' is understood in the sense of Section \ref{section mather theory}. See also the arguments in \cite{mane}. This would generalize the fact that generically, the set $\M(\g_\tau)$ consists of a single minimal geodesic, if $\dim M=2$. Possibly, the dynamics in the support of such a measure are special in some sense, as the support is uniquely ergodic. This could be a step towards solving Problem \ref{problem thin neck}. Possibly this also implies that for generic $F$, the directed weak KAM solution in $\delta_F^{-1}(\xi)$ is unique for $\xi$ a fixed point of the group $\Gam$.

\section{Irrational directions}\label{section irrational}

Let $M$ be a closed, orientable surface. The Morse Lemma holds for a constant curvature metric $g$ and we defined the sets $\Pi_M\subset \se$ of rational directions and $\se-\Pi_M$ of irrational directions in Subsection \ref{section model geom}. In Subsection \ref{section rational dir 2-dim}, we described the structure of $\RR_-(\xi)$ for rational $\xi$. Here, we treat irrational $\xi$. The dynamics depend on the genus $\mathfrak{g}$ being equal to 1 or $\geq 2$.

\subsection{The genus one case}\label{section irrational torus}

Let $\mathfrak{g}=1$, i.e.\ $M=\T$. The following theorem was proved by Bangert \cite{bangert} and independently by Bialy and Polterovich \cite{bialy-polt}, assuming that $F$ is reversible. For general Finsler metrics $F$, see \cite{paper1}. Given $\xi\in \se$, let $\g_\xi:\R\to H$ be a $g$-geodesic of the form $\g_\xi(t)=x+t\xi$, i.e.\ with asymptotic directions $\g(\pm\infty)=\xi$. Recall that the sets $\RR_-(\xi)$ and $\M(\g_\xi)\subset\RR_-(\xi)$ are $\Gam$-invariant. We write
\[ \M^{rec}(\g_\xi) := \{ v\in \M(\g_\xi) ~|~ \dot{\widehat{p\circ c_v}} : \R\to S_F\T \text{ is bi-recurrent} \} . \]

\begin{thm}\label{thm bangert irrat}
 Let $M=\T$ and $\xi\in \se-\Pi_M$ be irrational. Then the following hold:
 \begin{enumerate}[(i)]
  \item The set $\M^{rec}(\g_\xi) \subset \M(\g_\xi)$ is closed and minimal for $\phi_F^t$, i.e.\ it contains no non-trivial, closed, $\phi_F^t$-invariant subsets.
  
  \item The projection $\pi(\M^{rec}(\g_\xi))\subset \R^2$ is either nowhere dense in $\R^2$ or equal to $\R^2$.
  
  \item Any ray in $\RR_-(\xi)$ is asymptotic to some minimal geodesic in $\M^{rec}(\g_\xi)$. The same is true for $\RR_+(-\xi)$.
  
  \item\label{thm bangert irrat iv} No two rays in $\RR_-(\xi)$ intersect.
 \end{enumerate}
\end{thm}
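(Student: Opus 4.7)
The engine of the whole theorem is the non-crossing statement (iv); I would prove it first, and then deduce (i), (ii), (iii) as consequences of (iv) combined with the Morse Lemma and the irrationality of $\xi$. For (iv), suppose for contradiction that there exist $v, w \in \RR_-(\xi)$ with $c_w(s_0) = c_v(t_0)$ but $\dot c_w(s_0) \neq \dot c_v(t_0)$. Shortening the vertex at the intersection yields $\e, \delta > 0$ with
\[ d_F(c_w(s_0 - \delta), c_v(t_0 + \delta)) \leq 2\delta - \e. \]
In the rational case, Theorem \ref{morse periodic}\eqref{morse periodic item 3} propagates this gap to a contradiction by telescoping along a single deck transformation fixing $\xi_\tau$. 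For irrational $\xi$ no such transformation is available, but the template of the proof of Theorem \ref{thm measures unstable} applies: pass to $\phi_F^t$-invariant ergodic probability measures $\mu, \nu$ supported in the $\alpha$-limit sets of $Dp(\dot c_v), Dp(\dot c_w)$ in $S_F \T$, which by the Morse Lemma are supported in $Dp(\M(\g_\xi))$. Poincar\'e recurrence applied to $\mu \otimes \nu$ on $S_F\T \times S_F\T$ yields simultaneously recurrent typical points $v^\star, w^\star$ together with $\tau_n \in \Gam \cong \Z^2$ and times $T_n \to -\infty$ such that
\[ D\tau_n \, \dot c_{v^\star}(T_n) \to \dot c_{v^\star}(0), \qquad D\tau_n \, \dot c_{w^\star}(T_n) \to \dot c_{w^\star}(0). \]
A limit argument transfers the corner-gap estimate from $(v,w)$ to $(v^\star, w^\star)$, and one then runs the triangle-inequality telescope from the proof of Theorem \ref{thm measures unstable} along $\tau_n^{-1} c_{v^\star}$ to contradict the minimality of $c_{v^\star}$ in $H$.

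For (iii), let $v \in \RR_-(\xi)$. Since $M = \T$ is compact, $Dp(\dot c_v(t))$ has non-empty $\alpha$-limit set in $S_F \T$; pick an ergodic $\phi_F^t$-invariant probability supported there, a generic point $\bar w$ of which is bi-recurrent. Lifting to $H$ via a sequence of deck translates of $c_v(t_n)$ produces $w \in \M(\g_\xi)$ with $Dp(w) = \bar w \in \M^{rec}(\g_\xi)$. Non-crossing from (iv) applied to $c_v$ and each $\Z^2$-translate of $c_w$ sharing its asymptotic direction at $-\infty$, together with the Morse-Lemma shadowing, forces $c_v$ to approach some translate of $c_w(\R)$ in the strong asymptotic sense as $t \to -\infty$. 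The forward case $\RR_+(-\xi)$ is symmetric.

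For (i) and (ii), non-crossing (iv) together with the density of the $\Z^2$-orbit in the transverse direction $\xi^\perp$ (coming from irrationality of $\xi$) is the setting of Aubry--Mather theory for irrational rotation, which yields a \emph{unique} compact $\phi_F^t$-minimal subset of $Dp(\M(\g_\xi))$; every bi-recurrent orbit is contained in it, so $\M^{rec}(\g_\xi)$ coincides with this unique minimal set and is hence closed and topologically minimal, proving (i). For (ii), $K := \pi(\M^{rec}(\g_\xi)) \subset \R^2$ is closed and $\Z^2$-invariant, and by (iv) it is laminated by simple curves shadowing lines of direction $\xi$. If $K$ has non-empty interior, an open ball inside $K$ together with topological minimality from (i) and $\Z^2$-invariance forces $K = \R^2$; otherwise $K$ is closed with empty interior, hence nowhere dense.

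The main obstacle is carrying the corner-gap estimate $2\delta - \e$ through the double limit used for (iv): one $\alpha$-limit to extract a \emph{simultaneously} recurrent pair (via the product-measure trick, as in Theorem \ref{thm measures unstable}) and one Poincar\'e recurrence limit for the telescoping. Once (iv) is in hand, (i)--(iii) reduce to standard order arguments on laminations shadowing lines of irrational slope.
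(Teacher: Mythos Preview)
Your argument for (iv) has a real gap at the sentence ``A limit argument transfers the corner-gap estimate from $(v,w)$ to $(v^\star,w^\star)$.'' Passing to $\alpha$-limit orbits does \emph{not} preserve transverse intersection: the points $v^\star,w^\star$ you extract lie in the $\alpha$-limit sets of $Dp(\dot c_v),Dp(\dot c_w)$, which are typically far from the original crossing point, and there is no reason $c_{v^\star}$ and $c_{w^\star}$ should intersect at all. Compare with the proof of Theorem~\ref{thm measures unstable}: there the vector $v$ \emph{itself} lies in $\widetilde{\supp\mu}$, so one can take $v_k\to v$ with each $v_k$ recurrent, and the corner-gap at $\pi v=\pi w$ is used directly via the recurrence of $v_k$. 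In your setup $v$ is an arbitrary ray, not recurrent, and you have lost the link between the recurrent orbit and the crossing point. A second, smaller issue: the step $m_n^{v_k}=m_n^{w'}$ in Theorem~\ref{thm measures unstable} used that $H^1(C_\tau,\R)\cong\R$ is one-dimensional; on $\T$ you have $\Gam\cong\Z^2$, and the semistatic identity gives you only one linear relation on $\tau_n\in\Z^2$, so you must also invoke the Morse Lemma (or the irrationality of $\xi$) to pin down the second coordinate.

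The route taken in the references (Bangert, Bialy--Polterovich, and \cite{paper1}) is structurally different and avoids both problems. One first proves non-crossing directly for \emph{full} minimal geodesics in $\M(\g_\xi)$ by a purely topological argument: if two such geodesics crossed, Lemma~\ref{crossing minimals} forces a single crossing, and then a suitably chosen $\Z^2$-translate (available because the transverse positions are dense by irrationality of $\xi$) produces successive intersections, contradicting Lemma~\ref{crossing minimals}. This gives an ordered lamination of the strip on which $\Z^2$ acts as an irrational circle rotation; Denjoy-type arguments then yield (i) and (ii). Only \emph{after} this structure is in place does one prove (iii) and then (iv) for rays, using that every ray must approach the unique minimal set. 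Your proposed order---(iv) first via the measure-theoretic template, then (i)--(iii)---would be a genuinely different and interesting route if the gap above could be closed, but as written the transfer of the corner inequality to the limit pair is the missing idea.
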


Item \eqref{thm bangert irrat iv} means that all rays in $\RR_-(\xi)$ are unstable. Observe that due to $\Gam$-invariance of the set $\RR_-(\xi)$, the projected rays in $Dp(\RR_-(\xi))$ do not intersect in $\T$.

\subsection{The higher genus case}\label{section rays-paper}

In this subsection, we let $(M=H/\Gam,g)$ be a connected, orientable, complete Riemannian surface with constant curvature $-1$. The universal cover $(H,g)$ is the \Poincare disc. Recall that two rays $\g,\g':\R_-\to H$ with $\g(-\infty)=\g'(-\infty)$ are asymptotic with respect to $g$ in the strong sense that $d_g(\g(-t),\g'(\R_-))\to 0$ as $t\to\infty$. We shall need the following property of points $\xi$ in the Gromov boundary $\Gro(H,g)\cong \se$.

\begin{defn}
 A point $\xi\in \se$ is called {\em $\al$-compact} (with respect to $\Gam$ acting on $H\cup \se$), if for one and hence for all $g$-rays $\g:\R_-\to H$ the $\al$-limit set of the projected ray $\dot{\widehat{p\circ\g}}:\R_-\to S_gM$ contains a non-empty, compact subset left invariant by the $g$-geodesic flow.
\end{defn}

Note that if $M$ is compact, then every point $\xi\in \se$ is $\al$-compact.

Let $F$ be a Finsler metric on $M$ uniformly equivalent to $g$. The Morse Lemma holds. The main result in this subsection is the following, proved by the author in \cite{min_rays}.

\begin{thm}\label{thm rays-paper}
 If $\xi\in \se$ is $\al$-compact and if $\xi$ is not fixed by a hyperbolic element of $\Gam$, then for all $v,w\in \RR_-(\xi)$ we have
 \[ \liminf_{t\to\infty} d_g(c_v(\R_-),c_w(-t)) = 0 . \]
 Hence, $w_-(\xi)=0$ (see Definition \ref{def width}).
\end{thm}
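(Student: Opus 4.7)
The plan is to argue by contradiction: assume $w_-(\xi) > 0$ and produce a length saving on some minimal $F$-geodesic, contradicting minimality. So suppose $v, w \in \RR_-(\xi)$ satisfy $\liminf_{t \to \infty} d_g(c_v(\R_-), c_w(-t)) \geq 3\delta > 0$. By $\al$-compactness of $\xi$, the $\al$-limit of the projected $g$-ray $p \circ \g_w$ (with $\g_w$ a $g$-ray asymptotic to $\xi$) contains a non-empty compact $\phi_g^t$-invariant subset; inside it I pick a bi-recurrent vector $V_g$. Lifting yields $\tau_n \in \Gam$ and $t_n \to -\infty$ with $D\tau_n \dot\g_w(t_n) \to V_g$, and the associated $g$-geodesic $\g'$ is bi-recurrent modulo $\Gam$. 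By Proposition \ref{width semi-cont}(ii), $w_-(\xi) \leq w_0(\g')$, so it suffices to prove $w_0(\g') = 0$.

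Next I would prove $w_0(\g') = 0$ by a corner-cutting argument patterned on the proof of Theorem \ref{thm measures unstable}. Suppose for contradiction that $v', w' \in \M(\g')$ remained $\delta'$ apart in $g$-Hausdorff distance. Bi-recurrence of $\g'$ supplies a second family $\sig_m \in \Gam$ with $D\sig_m \dot\g'(s_m) \to \dot\g'(0)$ for some $s_m \to +\infty$. I would splice: run $c_{w'}$ from far in the past up to a time where $\sig_m^{-1} c_{v'}$ comes close, jump over by a bounded-distance arc, and continue along $\sig_m^{-1} c_{v'}$. The hypothesis that $\xi$ is not fixed by a hyperbolic element enters here: it ensures that $\g'$ cannot be forced onto a single periodic axis, so the $\sig_m$ are not merely powers of a single hyperbolic deck transformation, and the spliced rays then exhibit genuine angular misalignment at the splice point rather than just translation along an axis. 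By Lemma \ref{crossing minimals} and the $\delta'$-gap, the triangle inequality yields a saving of some $\e > 0$ independent of $m$, contradicting minimality of $c_{w'}$ between the endpoints of the spliced segment.

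The main obstacle is the corner-cutting estimate itself: one must convert the $g$-Hausdorff gap between $c_{v'}$ and $c_{w'}$ into a definite $F$-length saving at the splice, uniformly as $m \to \infty$. This is the shortening-at-a-vertex estimate familiar from the proofs of Theorem \ref{morse periodic}(iii) and Theorem \ref{thm measures unstable}, but here it must be executed on a bi-recurrent (rather than strictly periodic) geodesic, so one loses the exact period-matching that simplified those earlier arguments. The Morse Lemma only supplies coarse shadowing between $F$- and $g$-geodesics, hence the $g$-gap must be promoted to an $F$-length saving via the strong convexity of $F$ together with negative curvature of $g$; these together guarantee a definite opening angle at any would-be corner of a minimal $F$-segment and thus a definite shortening upon straightening. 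Carrying out this estimate while tracking the error terms coming from the two families $\tau_n, \sig_m$ of approximate recurrences is the technical heart of the proof.
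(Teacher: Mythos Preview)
Your overall strategy---assume $w_-(\xi)>0$, pass via Proposition \ref{width semi-cont}(ii) to a bi-recurrent $g$-geodesic $\g'$ with $w_0(\g')>0$, then run a corner-cutting contradiction---matches the paper's. But the heart of your argument has a genuine gap, and it is precisely the point you flag as ``the main obstacle'' without resolving.

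The corner-cutting lemma (as in Theorems \ref{morse periodic}\eqref{morse periodic item 3} and \ref{thm measures unstable}) requires two curves that \emph{meet at a point with different tangent directions}. Your candidates $c_{v'},c_{w'}\in\M(\g')$ are both shadowing $\g'$; they are parallel, not transverse. Translating one by $\sig_m^{-1}$ does not help: since $D\sig_m\dot\g'(s_m)\to\dot\g'(0)$, the translated curve $\sig_m^{-1}c_{v'}$ is again nearly parallel to $c_{w'}$, so the splice has corner angle tending to zero and there is no definite saving. A $g$-Hausdorff gap between parallel curves is not an angle and cannot be ``promoted to an $F$-length saving'' by convexity alone. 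You also misplace the role of the hypothesis that $\xi$ is not fixed by a hyperbolic element: it is \emph{not} used to control the algebra of the $\sig_m$; recall the theorem is genuinely false in the periodic case (the thickened-annulus example), where the $\sig_m$ are powers of one element but the obstruction is geometric, not algebraic.

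What the paper does instead is manufacture the missing transversality. Using that $\g'$ lies in the $\al$-limit of $\g$ (and here the non-fixed hypothesis is essential: it guarantees the approximating translates $\tau_n\g$ have endpoints genuinely different from $\g'(\pm\infty)$), one constructs a \emph{forward ray} $c$ with $c(\infty)=c_0(\infty)$ that initiates \emph{transversely} from a point of $c_0(\R)$. This $c$ is not in $\M(\g')$; it is a new object. The second missing ingredient is the synchronization: one must show that the $\om$-limit of $c$ contains a minimal geodesic recurrent under the \emph{same} sequence $\{\tau_n\}$ and the \emph{same} times as $c_0$. In Theorem \ref{thm measures unstable} this came from Poincar\'e recurrence in a product measure; here there is no invariant measure available, and the paper replaces it by a recurrent directed weak KAM solution from Corollary \ref{cor exist recurrent horofctns} together with Proposition \ref{int dH = 1}. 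Only once both pieces---the transverse ray and the synchronized recurrence---are in place does the triangle-inequality shortening go through as in the earlier proofs.
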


In essence, Theorem \ref{thm rays-paper} recovers a fact known from the case where $F$ is Riemannian and of negative curvature: if two geodesics stay at finite distance, then by the flat strip theorem, they have to be asymptotic in the strong sense. For general Finsler metrics $F$ with no curvature assumptions, this is not at all clear. Note that Theorem \ref{thm rays-paper} is in general false for $\xi\in \se$ being fixed under a hyperbolic element of $\Gam$: imagine a surface of genus two with two thin waists forming shortest closed geodesics, bounding a thickened annulus; here the closed geodesics are minimal (Subsection \ref{section rational dir 2-dim}) and at uniformly positive distance. Theorem \ref{thm rays-paper} shows, however, that such such a thickened annulus is the only situation, in which two rays can stay in distance bounded from above and below by positive constants. Observe also that Theorem \ref{thm rays-paper} is false for $M=\T$, i.e.\ it depends on the hyperbolicity of the background metric $g$.

Combining Theorem \ref{thm rays-paper} with Lemma \ref{crossing minimals}, we obtain Theorem \ref{thm bangert irrat} \eqref{thm bangert irrat iv} for higher genus surfaces.

\begin{thm}\label{thm irrat no intersections}
 Let $M$ be a closed, orientable surface of genus at least two. Then, in the universal cover, no two (backward) rays with the same irrational asymptotic direction can intersect.
\end{thm}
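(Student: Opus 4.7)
The plan is to deduce Theorem \ref{thm irrat no intersections} by directly combining Theorem \ref{thm rays-paper} with Lemma \ref{crossing minimals}; the substantial work is already done in Theorem \ref{thm rays-paper}, and the present assertion is the short consequence flagged in the paragraph preceding it.

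First I would verify the hypotheses of Theorem \ref{thm rays-paper} for an arbitrary irrational $\xi\in\se\cong\Gro(H,g)$. Since $M$ is a closed, orientable surface of genus $\mathfrak{g}\geq 2$, we may take the background metric $g$ to be of constant curvature $-1$. Compactness of $S_gM$ makes the $\al$-limit set of any projected $g$-ray non-empty, compact and $\phi_g^t$-invariant, so every point of $\se$ is $\al$-compact. Moreover, by the definition in Subsection \ref{section model geom}, $\xi$ irrational means precisely that $\xi$ is not fixed by any hyperbolic element of $\Gam$. Theorem \ref{thm rays-paper} therefore applies and gives $w_-(\xi)=0$.

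Next I would argue by contradiction. Suppose $v,w\in\RR_-(\xi)$ and $c_w$ intersects $c_v$ in the sense of Definition \ref{def transverse intersection}. After a parameter shift in $w$ we may assume $\pi w=c_v(a)$ for some $a<0$ while $w\neq\dot c_v(a)$. Applying Lemma \ref{crossing minimals} to the constant sequences $v_n\equiv v$, $w_n\equiv w$, there exist $\delta>0$ and $\e>0$ such that
\[ d_g(c_v(s'),c_w(t'))\geq \e \qquad \forall\, s'\leq a,\ t'\leq -\delta. \]
In particular $d_g(c_v((-\infty,a]),c_w(t))\geq \e$ for every $t\leq -\delta$. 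The complementary arc $c_v([a,0])$ is compact, whereas $c_w(t)$ leaves every compact subset of $H$ as $t\to-\infty$, since its only accumulation point in the ideal boundary $\Gro(H,g)$ is $\xi$. Hence $d_g(c_v([a,0]),c_w(t))\to\infty$, and combining the two estimates yields $\liminf_{t\to-\infty} d_g(c_v(\R_-),c_w(t))\geq \e>0$, contradicting $w_-(\xi)=0$. The only non-routine input is the width estimate of Theorem \ref{thm rays-paper}; Lemma \ref{crossing minimals} supplies the routine step of turning ``finite $g$-distance'' into ``no transverse intersections in finite time''.
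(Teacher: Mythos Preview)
Your proof is correct and follows exactly the route indicated in the paper: the theorem is deduced by combining Theorem~\ref{thm rays-paper} (giving $w_-(\xi)=0$ for irrational $\xi$, using $\al$-compactness from compactness of $M$) with Lemma~\ref{crossing minimals}. Your contradiction argument simply unpacks Proposition~\ref{width semi-cont}~(iii), which the paper already records as the bridge between $w_-(\xi)=0$ and the absence of intersecting rays.
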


We give a brief overview on the proof of Theorem \ref{thm rays-paper}, referring to \cite{min_rays} for the details. The underlying idea is similar to the proof of Theorem \ref{thm measures unstable}.

\begin{proof}[Idea of the proof of Theorem \ref{thm rays-paper}]
 For $\xi\in \Gro(H,g)$ being $\al$-com\-pact, choose a subset $G\subset S_gM$ of the $\al$-limit set of a $g$-ray $\g$ with $\g(-\infty)=\xi$, which is compact, $\phi_g^t$-invariant and minimal (contains no non-trivial, closed, invariant subsets). Assume that the width $w_-(\xi)>0$ and let $\g'$ be a $g$-geodesic lifted from $G$. As $G$ is minimal, $\g'$ is recurrent under a sequence $\{\tau_n\}\subset\Gam$ and by Proposition \ref{width semi-cont} $w_0(\g')>0$. The first step is to find two minimal geodesics $c_0,c_1$ in $\M(\g')$, which are forward recurrent under the given sequence $\{\tau_n\}$ and which stay at a uniformly positive distance (using $w_0(\g')>0$). The approximation of $\g'$ by $\g$ under the group $\Gam$ can then be used to obtain a forward ray $c$ initiating transversely from $c_0(\R)$, say, with $c(\infty)=c_0(\infty)$. Then one proves that in the $\om$-limit set of $c$, there is a minimal geodesic in $\M(\g')$, which is recurrent under the same sequence $\{\tau_n\}$ as $c_0$, and which has the same recurrence times $T_n \to \infty$, as $c_0$. For this latter fact, one uses a recurrent, directed weak KAM solution given by Corollary \ref{cor exist recurrent horofctns} (in the proof of Theorem \ref{thm measures unstable}, we used simultaneous recurrence and the fact that the occurring orbits were semistatic in $C_\tau$). Now one proceeds as in the proof of Theorem \ref{thm measures unstable}, replacing $c_v$ by $c_0$ and $c_w$ by $c$, to obtain a contradiction.
\end{proof}

We described in Theorems \ref{morse periodic} and \ref{thm irrat no intersections} the structure of the set of rays $\RR_-\subset S_FH$ and minimal geodesics $\M\subset S_FH$. Many of the results resemble the dynamics in negative curvature. In particular, one can ask whether there are dense geodesics in the set $Dp(\M)\subset S_FM$, of whether the periodic geodesics are dense in $Dp(\M)$. For this, see Section 13 in \cite{morse hedlund} and the theorem in \S 3.5 in \cite{klingenberg}. Also observe that Zaustinsky shows transitivity under an assumption on the uniqueness of rays, cf.\ Theorem 6.4 in \cite{zaustinsky}.

Another problem is the following.

\begin{prob}\label{problem rays asymptotic}
 Can $\liminf$ be replaced by $\lim$ in Theorem \ref{thm rays-paper}?
\end{prob}

Relaxing the question, one can ask whether the distance function $a(t) := d_g(c_v(\R_-),c_w(-t))$ has time-average equal to zero. This would imply that for absolutely continuous functions $f:\RR_-(\xi)\to \R$, the time average is constant. This plays a role in the ergodicity of invariant measures in $Dp(\M)\subset S_FM$. For the latter condition observe that by the techniques in \cite{min_rays}, for any $\e>0$ there exists a length $L(\e)<\infty$, so that the function $a(t)$ cannot be $\geq \e$ on intervals of length $\geq L(\e)$. Considering the sets $\{ n \in \N : a(-n)\geq \e \}$ leads to notions like piecewise syndetic sets and the density of subsets of $\N$ in number theory. One should be able to link these notions to recurrence in the dynamics of $\phi_g^t$, see \cite{furstenberg}.

Problem \ref{problem rays asymptotic} was originally motivated by the construction of measures of maximal entropy in $Dp(\M)$ proposed by Knieper, see \cite{knieper max entropy annals}. Solving the above problems can be used in proving ergodicity and uniqueness of the arising measure. Such measures in the set of minimal geodesics generalize the ergodic Lebesgue measure in the unit tangent bundle in the case of negative curvature. Note also the calculation of the topological entropy in the set $Dp(\M)$ in Subsection \ref{section GKOS}.

\section{Applications to Mather theory, weak KAM solutions and average actions}\label{section applications}

In this section we apply the structure results discussed in Sections \ref{section rational} and \ref{section irrational} to the notions introduced in Sections \ref{section weak KAM} and \ref{section mather theory}, assuming that $M$ is a closed, orientable surface of genus $\mathfrak{g}\geq 1$. (Observe that the structure of the set of minimal geodesics applies also in the non-orientable case, after moving to the orientable double cover -- minimal geodesics are defined in the universal cover --, so one can apply our previous results to the study of the non-orientable case as in \cite{massart-balacheff}.)

\subsection{The genus one case}\label{section mather torus}

Let $\mathfrak{g}=1$, i.e.\ $M=\T$. Recall that $\Pi_\T\subset \se$ was in this case the set of points with rational or infinite slope. We have $c(\infty)=c(-\infty)$ if $c$ is a minimal geodesic, as minimal geodesics shadow straight lines. Given $\xi\in \se$, we shall write $\g_\xi$ for a straight Euclidean line of the form $\g_\xi(t)=x+t\xi$, i.e.\ with $\g_\xi(\pm\infty)=\xi$.

We wish to describe the Mather, Aubry and \Mane sets defined in Section \ref{section mather theory}. The general reference for the proofs is \cite{paper1}. The first observation was made before, namely that for any $\eta$-weak KAM solution $[u]$, the Mather set $\M_{[\eta]}$ consists of $[u]$-calibrated curves. Hence, if $c:\R\to\R^2$ is a geodesic projecting into the Mather set $\M_{[\eta]}\subset S_F\T$ and if $c':\R\to\R^2$ projects to any $[\eta]$-semistatic curve in $\T$, then the two minimal geodesics $c,c'$ cannot intersect in $\R^2$. This leads to the following observation.

\begin{prop}
 All minimal geodesics $c:\R\to\R^2$ projecting to the \Mane set $\NN_{[\eta]}$ have a common asymptotic direction $c(-\infty)$.
\end{prop}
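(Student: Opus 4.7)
My plan is to invoke the non-intersection principle quoted immediately before the proposition---a lift of a Mather orbit and a lift of a semistatic orbit cannot cross in $\R^2$---together with the $\Z^2$-invariance of the lifted Mather set and the Morse Lemma on the flat torus. If a \Mane lift $c$ had an asymptotic direction differing from that of a Mather lift $c_0$, a suitable $\Z^2$-translate of $c_0$ would have to be crossed by $c$, yielding the desired contradiction.

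Since $\M_{[\eta]}\neq\emptyset$, pick $v_0\in Dp^{-1}(\M_{[\eta]})$ and let $c_0:\R\to\R^2$ be the lifted minimal geodesic with $\dot c_0(0)=v_0$. By the Morse Lemma, $c_0$ stays within uniform distance $C$ of a Euclidean line, yielding a well-defined asymptotic direction $\xi_0:=c_0(-\infty)\in\Gro(H,g)\cong\se$. Because $\M_{[\eta]}$ is defined on $\T$, the set $Dp^{-1}(\M_{[\eta]})$ is $\Z^2$-invariant; every translate $\tau c_0$ with $\tau\in\Z^2$ is thus a lifted Mather orbit sharing the asymptotic direction $\xi_0$. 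Moreover, each $\tau c_0$ is a simple properly embedded curve in $\R^2$ (injectivity from minimality: $c_0(s)=c_0(t)$ forces $|s-t|=d_F(c_0(s),c_0(t))=0$), hence separates $\R^2$ into two components by Jordan separation for proper planar curves.

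Suppose for contradiction that $c:\R\to\R^2$ projects to $\NN_{[\eta]}$ and $\xi_1:=c(-\infty)\neq\xi_0$. The Euclidean shadow lines of $c$ and $c_0$ are non-parallel in $\R^2$: antipodal shadow directions are excluded because rotation vectors of $[\eta]$-semistatic orbits all lie on the affine hyperplane $\{h:\la[\eta],h\ra=1\}$, which contains no antipodal pair. Fix a unit normal $\xi_0^\perp$ to $\xi_0$; by Morse shadowing, $t\mapsto c(t)\cdot\xi_0^\perp$ differs from an affine function of nonzero slope by a bounded amount, hence is unbounded in both senses. For any $\tau\in\Z^2$, the translate $\tau c_0$ lies inside a strip $\{p:|p\cdot\xi_0^\perp-a_\tau|\leq C\}$ of width $2C$. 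Choosing times $t_1,t_2$ with $c(t_1)\cdot\xi_0^\perp>a_\tau+C$ and $c(t_2)\cdot\xi_0^\perp<a_\tau-C$, we obtain $c(t_1)$ and $c(t_2)$ in different components of $\R^2\setminus\tau c_0(\R)$, and continuity of $c$ forces a crossing $c(t_\ast)=\tau c_0(s_\ast)$. At this crossing the tangent vectors must differ (else uniqueness of geodesics would give $c=\tau c_0$, incompatible with $\xi_0\neq\xi_1$), so the crossing is genuine; this contradicts the non-intersection principle, since $\tau c_0$ lifts a Mather orbit and $c$ is $[\eta]$-semistatic.

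The principal technical step is producing the crossing in the third paragraph; it combines the Jordan-separation property for simple proper planar curves with the transverse-sweep estimate afforded by the Morse Lemma. The remainder of the argument is a direct consequence of $\Z^2$-invariance of $Dp^{-1}(\M_{[\eta]})$, uniqueness of geodesic initial data, and the quoted non-intersection principle.
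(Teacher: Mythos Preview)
Your argument is correct and is precisely the paper's approach: the paragraph preceding the proposition states the non-intersection principle (lifted Mather and \Mane\ orbits cannot cross in $\R^2$) and says ``this leads to the following observation,'' leaving the geometric step ``different directions $\Rightarrow$ crossing'' implicit, which you have spelled out via Morse shadowing and planar separation. Two cosmetic remarks: the $\Z^2$-translates are superfluous, since $c(t)\cdot\xi_0^\perp$ is already unbounded in both senses and hence $c$ must cross $c_0$ itself; and your antipodal exclusion is better phrased via calibration (the lift $\tilde u(x)=\langle[\eta],x\rangle+O(1)$ gives $\langle[\eta],c(t)-c(0)\rangle\sim t$, hence $\langle[\eta],\xi\rangle>0$ for the direction $\xi$ of any semistatic lift) rather than via ``rotation vectors of $[\eta]$-semistatic orbits,'' since an individual \Mane\ orbit need not carry a well-defined rotation vector.
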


Using the above observations one can relate the asymptotic direction to Mather's $\al$-function.

\begin{thm}\label{thm al C^1 torus}
 Mather's $\al$-function $\al_F:H^1(\T,\R)-\{0\} \to \R$ is $C^1$, hence its convex conjugate $\beta_F:H_1(\T,\R)-\{0\} \to \R$ is strictly convex. The asymptotic direction of a minimal geodesic $c:\R\to\R^2$ projecting into the \Mane set $\NN_{[\eta]}$ for $[\eta]\in \partial B^*(F)=\{\al_F=1/2\}$ is given by
 \[ c(-\infty) = \frac{\nabla \al([\eta])}{|\nabla \al([\eta])|} \in \se. \]
\end{thm}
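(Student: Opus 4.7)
The plan is to establish $C^1$ regularity of $\al_F$ by showing that its subdifferential $\partial\al_F([\eta])$ is a singleton at every $[\eta]\in H^1(\T,\R)-\{0\}$, compute that singleton explicitly, and then read off strict convexity of $\beta_F$ from Fenchel duality. Since $\al_F$ is $2$-homogeneous, it suffices to prove differentiability on the level set $\partial B^*(F)=\{\al_F=1/2\}$ and extend along rays through the origin by homogeneity; a convex function that is differentiable at every point of an open subset of a finite-dimensional space is automatically $C^1$ there.

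Fix $[\eta]\in\partial B^*(F)$. The Fenchel duality between $(\al_F,\beta_F)$ identifies
\[ \partial\al_F([\eta])=\left\{\rho(\mu) : \mu \text{ attains }\textstyle\sup_{\nu\in\MM(L_F)}[\langle[\eta],\rho(\nu)\rangle-\int L_F\,d\nu]\right\}, \]
and on the level set $\al_F=1/2$ such maximizers are, after ergodic decomposition, precisely the $[\eta]$-minimizing measures of Definition~\ref{def mather set}: indeed, every $h\in\partial\al_F([\eta])$ satisfies the Euler relation $\langle[\eta],h\rangle=2\al_F([\eta])=1$, obtained by applying the subgradient inequality to rescaled arguments $s[\eta]$ and letting $s\to 1^\pm$. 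Hence the problem collapses to showing that all $[\eta]$-minimizing measures share a common rotation vector.

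By the preceding proposition, every minimal geodesic projecting into $\NN_{[\eta]}$ has one and the same asymptotic direction $\xi\in\se$, and by $\M_{[\eta]}\subset\NN_{[\eta]}$ this persists on the supports of all $[\eta]$-minimizing measures. For an ergodic $[\eta]$-minimizing $\mu$ and any constant $1$-form representative $\langle w,\cdot\rangle$ of a class $[\eta']$, Birkhoff's theorem gives for $\mu$-a.e.\ $v$
\[ \langle[\eta'],\rho(\mu)\rangle=\lim_{T\to\infty}\frac{1}{T}\int_0^T\langle w,\dot c_v(t)\rangle\,dt=\lim_{T\to\infty}\frac{\langle w,c_v(T)-c_v(0)\rangle}{T}=\lambda\langle w,\xi\rangle, \]
where $\lambda>0$ is the asymptotic ratio of Euclidean displacement to $F$-arc length along $c_v$; the existence of this limit and the evaluation $\lambda\xi$ use the Morse Lemma, which confines $c_v$ to a fixed tube around the Euclidean line of direction $\xi$. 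So $\rho(\mu)=\lambda\xi$ on each ergodic component, and the Euler relation $\langle[\eta],\rho(\mu)\rangle=1$ fixes $\lambda=1/\langle[\eta],\xi\rangle$ uniformly across all $[\eta]$-minimizing measures. Consequently $\partial\al_F([\eta])=\{\xi/\langle[\eta],\xi\rangle\}$ is a singleton, which yields simultaneously the differentiability of $\al_F$, the direction formula $\nabla\al_F([\eta])/|\nabla\al_F([\eta])|=\xi=c(-\infty)$, and strict convexity of $\beta_F$ on $H_1(\T,\R)-\{0\}$ via the Fenchel correspondence between $C^1$-regularity and strict convexity.

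The main obstacle I anticipate is the Birkhoff-theoretic step: the Morse Lemma supplies only bounded deviation from a Euclidean line, so extracting the limit $\lim_{T\to\infty}(c_v(T)-c_v(0))/T$ and identifying it with $\lambda\xi$ for the \emph{same} scalar $\lambda$ across every ergodic component of every $[\eta]$-minimizing measure requires carefully combining $F$-unit-speed parametrization, the uniqueness of $\xi$ from the preceding proposition, and the Euler relation that ties $\lambda$ to $[\eta]$ rather than to the individual orbit.
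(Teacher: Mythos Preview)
Your argument is correct and follows essentially the route the paper indicates (the paper itself only sketches ``using the above observations one can relate the asymptotic direction to Mather's $\al$-function'' and defers to \cite{paper1}): identify $\partial\al_F([\eta])$ with the set of rotation vectors of $[\eta]$-minimizing measures, use the preceding proposition to force all such rotation vectors onto the ray $\R_{>0}\xi$, and then pin down the scalar via the Euler relation coming from $2$-homogeneity. The self-flagged obstacle is genuine but you handle it correctly: Birkhoff guarantees that $\lim_{T\to\infty}(c_v(T)-c_v(0))/T$ exists $\mu$-a.e.\ and equals $\rho(\mu)$; the Morse Lemma forces this limit onto $\R\xi$; the uniform equivalence of $F$ and $|\cdot|$ gives $|\rho(\mu)|\geq 1/c_F>0$ and the correct sign; and finally $\langle[\eta],\rho(\mu)\rangle=1$ determines $\lambda$ independently of the ergodic component, so no orbit-by-orbit control of $\lambda$ is needed.
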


Equivalently, the asymptotic direction $c(-\infty)$ of (lifts of) $[\eta]$-semi\-static rays $c:\R_-\to \R^2$ is given by the unique vector $\xi\in \se\subset\R^2\cong H_1(\T,\R)$ with the property
\[ \la \xi, [\eta] \ra = \max\{ \la \xi, [\eta'] \ra : [\eta'] \in B^*(F) \}. \]
Given a point $\xi\in \se$, we consider the {\em face of direction $\xi$}
\[ \mathcal{F}_\xi := \left\{ [\eta]\in \partial B^*(F) : \la \xi, [\eta] \ra = \max_{[\eta'] \in B^*(F)} \la \xi, [\eta'] \ra \right\} . \]
It follows from Theorem \ref{thm al C^1 torus}, that for $[\eta]\in \mathcal{F}_\xi$, the asymptotic direction of (lifts of) $[\eta]$-semistatic rays $c$ is $c(-\infty)=\xi$.

Recall that $\eta$-weak KAM solutions $[u]\in C^0(\T)/_\sim$ can be lifted to weak KAM solutions $[\tilde u] \in \W(\R^2,F)$, see Proposition \ref{prop lifting weak KAM}.

\begin{cor}\label{cor u directed}
 If $[u]$ is an $\eta$-weak KAM solution for some $[\eta]\in\partial B^*(F)$, then the lift $[\tilde u]$ is directed and
 \[ \delta_F([\tilde u]) = \frac{\nabla \al([\eta])}{|\nabla \al([\eta])|} . \]
\end{cor}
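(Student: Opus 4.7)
The plan is to reduce the statement about $[\tilde u]$-calibrated rays in the universal cover to a statement about $[\eta]$-semistatic rays on $\T$, so that Theorem \ref{thm al C^1 torus} can be applied directly. The corollary asserts two things at once: that $[\tilde u]\in\W_{dir}(H,F)$, and that the direction $\delta_F([\tilde u])$ coincides with the unit vector $\xi:=\nabla\al([\eta])/|\nabla\al([\eta])|$. Both will follow once every $[\tilde u]$-calibrated ray $\tilde c:\R_-\to H$ is shown to satisfy $\tilde c(-\infty)=\xi$.

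First I would fix any $[\tilde u]$-calibrated ray $\tilde c:\R_-\to H$ and pass to its projection $c=p\circ \tilde c$. By Proposition \ref{prop lifting weak KAM}, applied in the direction ``$[\tilde u]$-calibrated $\iff$ lift of $[u]$-calibrated'', the curve $c$ is $[u]$-calibrated on $\R_-$, so in particular $l_{F-\eta}(c;[a,b]) = u(c(b))-u(c(a))$ for all $a\le b\le 0$. Combined with the $\eta$-dominance inequality $u(y)-u(x)\le d_{F-\eta}(x,y)$ and the trivial inequality $d_{F-\eta}(c(a),c(b))\le l_{F-\eta}(c;[a,b])$, the two sides are forced to agree, so $c$ is $[\eta]$-semistatic on every compact subinterval, hence $[\eta]$-semistatic on $\R_-$ in the sense of Definition \ref{def mather set}. (Alternatively one can cite Proposition \ref{weak KAM charact aubry set}.) Thus $\tilde c$ is a lift of an $[\eta]$-semistatic ray.

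The conclusion now comes from the equivalent reformulation of Theorem \ref{thm al C^1 torus} stated immediately after it: lifts of $[\eta]$-semistatic rays $c:\R_-\to \T$ have asymptotic direction $\xi=\nabla\al([\eta])/|\nabla\al([\eta])|$. Applying this to $\tilde c$ gives $\tilde c(-\infty)=\xi$. Since $\tilde c$ was an arbitrary $[\tilde u]$-calibrated ray, all such rays share the common asymptotic direction $\xi$, which means $[\tilde u]\in\W_{dir}(H,F)$ with $\delta_F([\tilde u])=\xi$. To be sure the construction is non-vacuous, it remains to note that $[u]$-calibrated rays exist by the definition of $\eta$-weak KAM solutions, and their lifts are $[\tilde u]$-calibrated by Proposition \ref{prop lifting weak KAM}. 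The one point requiring care — and the closest thing to an obstacle — is checking that the equivalence ``calibrated in $\T$ with respect to $[u]$'' and ``calibrated in $H$ with respect to $[\tilde u]$'' respects the passage to equivalence classes mod constants; this is automatic because the condition $u(c(b))-u(c(a))=l_{F-\eta}(c;[a,b])$ depends only on $[u]$ and the primitive $h$ of $\tilde\eta$ absorbs the $\eta$-twist when lifting.
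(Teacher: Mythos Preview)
Your proof is correct and is precisely the argument the paper has in mind: the corollary is stated without proof because it follows immediately from Proposition \ref{prop lifting weak KAM} (identifying $[\tilde u]$-calibrated rays with lifts of $[u]$-calibrated, hence $[\eta]$-semistatic, rays) together with the ray reformulation of Theorem \ref{thm al C^1 torus} given just after its statement. Your remark on the well-definedness modulo constants is also accurate and harmless.
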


The following result was originally proved by Bangert in \cite{bangert1}.

\begin{thm}\label{thm mather set irrat}
 If $\xi\in \se-\Pi_\T$ is irrational, then $\card \mathcal{F}_\xi = 1$ and given a closed 1-form $\eta$ on $\T$ with $\mathcal{F}_\xi=\{[\eta]\}$, the $\eta$-weak KAM solution $[u]$ is unique. Moreover, 
 \[ \M_{[\eta]}=Dp(\M^{rec}(\g_\xi)) \quad \subset \quad \A_{[\eta]} = \NN_{[\eta]} = Dp(\M(\g_\xi)) . \]
\end{thm}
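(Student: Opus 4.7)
The plan is to combine the absence of intersecting rays in $\RR_-(\xi)$ from Theorem \ref{thm bangert irrat}\eqref{thm bangert irrat iv} with the lifting of $\eta$-weak KAM solutions from $\T$ to $H$ via Proposition \ref{prop lifting weak KAM}. The foundational observation is that Theorem \ref{thm bangert irrat}\eqref{thm bangert irrat iv} together with Proposition \ref{prop unique horo if no intersection} forces $\delta_F^{-1}(\xi)$ to be a singleton, which I will denote $\{[\tilde u_\xi]\}$; the irrationality of $\xi$ enters only here, and the rigidity of everything else is transferred from $H$ down to $\T$ through the lift.

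To prove $\card\mathcal{F}_\xi = 1$, I would suppose $[\eta_0],[\eta_1]\in\mathcal{F}_\xi$ and deduce $[\eta_0]=[\eta_1]$ as follows. For each $i$, Lemma \ref{lemma critical value}\eqref{lemma critical value iii} supplies an $[\eta_i]$-semistatic orbit, and the corresponding Busemann function yields an $\eta_i$-weak KAM solution $[u_i]$. Corollary \ref{cor u directed} says the lift of $[u_i]$ is directed with asymptotic direction $\xi$, hence equal to $[\tilde u_\xi]$. Choosing primitives $h_i$ of the lifted forms $\tilde\eta_i$, the equality of the lifts $[h_0+u_0\circ p]=[h_1+u_1\circ p]$ forces $h_0-h_1-(u_1-u_0)\circ p$ to be a constant, so $h_0-h_1$ is $\Gam$-invariant and descends to a smooth function $f$ on $\T$ with $df = \eta_0-\eta_1$, giving $[\eta_0]=[\eta_1]$. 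The uniqueness of the $\eta$-weak KAM solution $[u]$ then follows from the same argument applied with a common primitive $h$ of $\tilde\eta$.

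For the three set identities I would argue as follows. By Proposition \ref{weak KAM charact aubry set}, a geodesic is semistatic iff it is calibrated with respect to some $\eta$-weak KAM solution and static iff calibrated with respect to all; uniqueness of $[u]$ collapses these two conditions and gives $\A_{[\eta]}=\NN_{[\eta]}$. The inclusion $\NN_{[\eta]}\subset Dp(\M(\g_\xi))$ is immediate from Theorem \ref{thm al C^1 torus}, and conversely any lift of a minimal geodesic in $\M(\g_\xi)$ lies in $\RR_-(\xi)$ and is therefore $[\tilde u_\xi]$-calibrated by Proposition \ref{prop unique horo if no intersection}, whose projection is $[u]$-calibrated by Proposition \ref{prop lifting weak KAM} and hence $[\eta]$-semistatic. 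Finally, $\M_{[\eta]}\subset Dp(\M^{rec}(\g_\xi))$ follows from Poincar\'e recurrence applied to any Mather measure; conversely, the closed, $\phi_F^t$-invariant, minimal set $\M^{rec}(\g_\xi)$ from Theorem \ref{thm bangert irrat} carries an invariant measure by Krylov-Bogolyubov, and since this measure is supported in $\NN_{[\eta]}$ it is automatically $[\eta]$-minimizing, while $\phi_F^t$-minimality of $\M^{rec}(\g_\xi)$ ensures its support is the whole set.

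The hard part will be the rigidity step $\card\mathcal{F}_\xi = 1$: the remaining assertions are comparatively soft once uniqueness of $[u]$ is in hand. The technical heart lies in passing from the geometric statement ``no intersecting rays in $\RR_-(\xi)$'' through analytic uniqueness of directed weak KAM solutions on $H$ and back to exactness of $\eta_0-\eta_1$ on $\T$; careful bookkeeping of the primitives $h_i$ and verification that equality of the lifts genuinely forces descent to the quotient $\T$ is the subtle point.
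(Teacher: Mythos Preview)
The paper does not prove this theorem: it attributes the result to Bangert \cite{bangert1} and refers to \cite{paper1} as the general reference for proofs in this section. So there is no in-paper proof to compare against. Your argument is a valid assembly of the tools the survey develops, and it is essentially correct; I note two points where the write-up is a bit quick.

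First, in the inclusion $Dp(\M(\g_\xi))\subset\NN_{[\eta]}$ you argue that a lift $c\in\M(\g_\xi)$ lies in $\RR_-(\xi)$ and is therefore $[\tilde u_\xi]$-calibrated. Proposition \ref{prop unique horo if no intersection} only gives calibration on $\R_-$, whereas semistaticity requires calibration on all of $\R$. This is easily repaired: for each $t_0\in\R$ the shifted curve $s\mapsto c(s+t_0)$ restricted to $\R_-$ is again a ray in $\RR_-(\xi)$ (since $c(-\infty)=\xi$), hence $[\tilde u_\xi]$-calibrated, so $c$ is calibrated on $(-\infty,t_0]$ for every $t_0$ and thus on $\R$.

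Second, for $\M_{[\eta]}\subset Dp(\M^{rec}(\g_\xi))$ you invoke Poincar\'e recurrence. Recurrence gives only a full-measure set of bi-recurrent points in $\supp\mu$; to conclude $\supp\mu\subset Dp(\M^{rec}(\g_\xi))$ you need that the latter set is closed, which follows from Theorem \ref{thm bangert irrat} (closedness of $\M^{rec}(\g_\xi)$) together with $\Gam$-invariance. Then density of the recurrent points in $\supp\mu$ finishes the job.

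The step you flag as hard---passing from $\card\delta_F^{-1}(\xi)=1$ to $\card\F_\xi=1$ via the primitives $h_i$---is clean as you wrote it: equality of the lifts forces $h_0-h_1$ to differ from a $\Gam$-periodic function by a constant, hence to be $\Gam$-periodic itself, so $\eta_0-\eta_1$ is exact on $\T$. The smoothness of $h_0-h_1$ is automatic since both are primitives of smooth forms; the a priori merely Lipschitz function $(u_1-u_0)\circ p$ inherits smoothness from the equation.
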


We can also describe the Mather set in rational directions.

\begin{thm}\label{mather set rational torus}
 If $\xi\in \Pi_\T$ is rational, then for $[\eta]\in \mathcal{F}_\xi$ we have
 \[ \M_{[\eta]}=Dp(\M^{per}(\g_\xi)) . \]
\end{thm}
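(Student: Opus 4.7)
The plan is to establish the two inclusions separately. Fix a primitive $\tau\in\Gam\cong\Z^2$ with $\tau/|\tau|=\xi$, so that $\M^{per}(\g_\xi)=\M^{per}(\g_\tau)$ in the sense of Subsection~\ref{section rational dir 2-dim}.

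For the inclusion $Dp(\M^{per}(\g_\xi))\subset \M_{[\eta]}$, I would take $v\in\M^{per}(\g_\xi)$ and observe that by minimality the orbit $c_v$ is $\tau$-periodic of $F$-period $T=\sigma_F(\tau)$. Thus the normalized arc-length measure $\mu$ on $Dp(\dot c_v)([0,T])$ is a $\phi_F^t$-invariant probability in $\MM(F)$ with rotation vector $\rho(\mu)=\tau/\sigma_F(\tau)$, since for any closed $1$-form $\eta'$,
\[ \rho(\mu)([\eta'])=\frac{1}{T}\int_0^T \eta'(\dot c_v)\,dt=\frac{\langle[\eta'],\tau\rangle}{\sigma_F(\tau)}. \]
Polar duality between $B(F)=\{\sigma_F\leq 1\}$ and $B^*(F)$ yields $\max_{[\eta']\in B^*(F)}\langle[\eta'],\xi\rangle=\sigma_F(\xi)$; since $[\eta]\in\mathcal{F}_\xi$ this maximum is attained at $[\eta]$, so $\rho(\mu)([\eta])=|\tau|\sigma_F(\xi)/\sigma_F(\tau)=1$ by homogeneity, showing $\mu$ is $[\eta]$-minimizing and $v\in\M_{[\eta]}$.

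For the reverse inclusion, let $\mu$ be any $[\eta]$-minimizing probability. By Lemma~\ref{lemma critical value}\,(iii) every orbit in $\supp\mu$ is $[\eta]$-semistatic, and by Remark~\ref{remark mane inclusions} it lifts to a minimal geodesic in $H$. Combining Corollary~\ref{cor u directed} with the $C^1$-smoothness of $\al_F$ (Theorem~\ref{thm al C^1 torus}), these lifted minimal geodesics all have a common asymptotic direction $\nabla\al_F([\eta])/|\nabla\al_F([\eta])|$; since $\xi$ is the outward unit normal to $B^*(F)$ at every point of $\mathcal{F}_\xi$, this direction is exactly $\xi$, and hence $\supp\mu\subset Dp(\M(\g_\xi))$. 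By \Poincare recurrence, $\mu$-a.e.\ $v\in\supp\mu$ is $\phi_F^t$-recurrent. However, Theorem~\ref{morse periodic}\,\eqref{morse periodic item 2a} says that every $c\in\M(\g_\xi)\setminus\M^{per}(\g_\xi)$ is heteroclinic between two distinct neighboring $\tau$-periodic minimal geodesics $c_0\neq c_1$, so its projection has $\al$-limit set on $Dp(\dot c_0)$ and $\om$-limit set on $Dp(\dot c_1)$ and cannot be recurrent in $S_F\T$. The recurrent points of $\supp\mu$ therefore lie in $Dp(\M^{per}(\g_\xi))$; since this set is closed in $S_F\T$ (closed geodesics of period $\sigma_F(\tau)$ form a compact family) and recurrent points have full $\mu$-measure, I would conclude $\supp\mu\subset Dp(\M^{per}(\g_\xi))$.

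The delicate step is pinning the asymptotic direction of $[\eta]$-semistatic rays to exactly $\xi$ in the reverse inclusion: a priori one might worry that in the rational case $\mathcal{F}_\xi$ could admit semistatic rays whose directions spread over a whole dual face of $\partial B(F)$. The $C^1$-smoothness of $\al_F$ from Theorem~\ref{thm al C^1 torus} is precisely what rules this out, forcing a unique normalized gradient and hence a unique asymptotic direction. Once this is secured, the Morse--Hedlund dichotomy of Theorem~\ref{morse periodic}\,\eqref{morse periodic item 2a} together with \Poincare recurrence transparently excludes heteroclinic contributions to the Mather set.
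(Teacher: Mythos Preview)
Your argument is correct. The paper does not supply a proof of this statement; it merely cites \cite{paper1} as the general reference for the results of this subsection. Your approach is the standard one: the inclusion $Dp(\M^{per}(\g_\xi))\subset\M_{[\eta]}$ is essentially the computation underlying Lemma~\ref{lemma stable norm is marked length}, and the reverse inclusion is exactly the combination of the $C^1$-regularity of $\al_F$ (pinning the asymptotic direction to $\xi$), the Morse--Hedlund structure theorem (forcing non-periodic orbits in $\M(\g_\xi)$ to be heteroclinic), and \Poincare recurrence (excluding heteroclinics from the support of an invariant probability). One small point worth making explicit in your write-up: when $c_0,c_1$ are neighboring $\tau$-periodic minimals in $H$, they may well project to the \emph{same} closed geodesic in $\T$; your non-recurrence argument still goes through because a heteroclinic $c$ is not itself periodic, so $Dp(\dot c(0))$ cannot lie on the projected closed orbit, hence cannot lie in its own $\om$-limit set.
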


Although we have not proved this, we expect that for rational $\xi$, the Aubry set $\A_{[\eta]}$ equals the \Mane set $\NN_{[\eta]}$ for $[\eta]$ being an endpoint of the segment $\mathcal{F}_\xi \subset \partial B^*(F)$ and equals the Mather set $\M_{[\eta]}$ for $[\eta]$ in the interior of the segment $\mathcal{F}_\xi \subset \partial B^*(F)$. Moreover, if $\mathcal{F}_\xi$ has endpoints $[\eta_0],[\eta_1]$ ordered in the counterclockwise orientation of the closed curve $\partial B^*(F)$, then we expect
\[ \A_{[\eta_i]} = \NN_{[\eta_i]} = Dp\left(\M^{per}(\g_\tau) \cup (\RR_-^i(\xi_\tau)\cap\M)\right) , \qquad i = 0,1 \]
in the notation explained after Theorem \ref{morse periodic}. These sets are the curves calibrated on $\R$ with respect to the weak KAM solutions $[u_i]$ given by Proposition \ref{bounding horofunctions}.

Recall that in Subsection \ref{section av action} we also defined a Mather set $\M^h$ for $h\in \partial B(F)=\{\beta_F=1/2\}$. But using Remark 4.26 (ii) in \cite{sorrentino} and the fact that $\al_F$ is $C^1$, one sees that
\[ \M^{\nabla\al_F([\eta])} = \M_{[\eta]}, \]
while the map $\nabla\al_F : H^1(\T,\R)\to H_1(\T,\R)$ is surjective. Hence, our description of the Mather sets $\M_{[\eta]}$ suffices in order to describe also the Mather sets $\M^h$.

Recall that an $[\eta]$-semistatic curve $c:\R\to\T$ lifts to a minimal geodesic $\tilde c:\R\to\R^2$, for any $[\eta]\in\partial B^*(F)$. In general, however, the set of minimal geodesics will be much larger than the set of lifts of lifts of semistatic curves. In the case of the 2-torus, the converse holds.

\begin{prop}
 \begin{align*}
  Dp(\M) & = \bigcup\{ \NN_{[\eta]} : [\eta]\in\partial B^*(F) \} .
 \end{align*}
 The analogous result holds for (backward and forward) rays.
\end{prop}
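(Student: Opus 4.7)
The plan is to prove the two inclusions separately, reducing the hard direction to the structure theorems of Sections \ref{section rational} and \ref{section irrational}.

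The inclusion $\supseteq$ is Remark \ref{remark mane inclusions} applied at each $[\eta] \in \partial B^*(F)$: an $[\eta]$-semistatic curve in $\T$ lifts to a minimal geodesic in $H = \R^2$, because the pulled-back 1-form $\tilde \eta$ is exact with primitive $h$, so minimization of $l_{F-\eta}$ between endpoints in $\T$ translates into minimization of $l_F$ between endpoints of lifts in $H$ (the $h$-differences being fixed).

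For $\subseteq$, let $v \in Dp(\M)$ with lift $\tilde c_v : \R \to H$. The Morse Lemma assigns $\tilde c_v$ a single asymptotic direction $\xi = \tilde c_v(\pm \infty) \in \se$, both ends coinciding because $g$ is Euclidean and $\tilde c_v$ shadows a straight line. Pick any $[\eta] \in \mathcal{F}_\xi \subset \partial B^*(F)$; I claim $v \in \NN_{[\eta]}$. For irrational $\xi$, Theorem \ref{thm mather set irrat} directly gives $\NN_{[\eta]} = Dp(\M(\g_\xi)) \ni v$. For rational $\xi$ with $\tilde c_v \in \M^{per}(\g_\xi)$, Theorem \ref{mather set rational torus} gives $v \in \M_{[\eta]} \subset \NN_{[\eta]}$. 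Both sub-cases are thus immediate from cited results.

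The remaining case is rational $\xi$ with $\tilde c_v$ a heteroclinic between a neighboring pair $c_0, c_1 \in \M^{per}(\g_\xi)$ (Theorem \ref{morse periodic}). Here I would construct the Busemann function
\[ b(y) := \lim_{t \to \infty} \bigl( d_F(\tilde c_v(-t), y) - t \bigr) \in \Hor(H, F) \subset \W_{dir}(H, F), \]
whose calibration by $\tilde c_v$ on all of $\R$ follows from minimality, as $b(\tilde c_v(s)) = \lim_t \bigl( (s + t) - t \bigr) = s$. The key step is to descend $b$ to $\T$ modulo a $\Gam$-equivariant linear piece. Define the cocycle $\kappa : \Gam \to \R$ by $\kappa(\tau) := b(\tau y) - b(y)$; I would show $y$-independence by noting that $[b \circ \tau^{-1}] \in \delta_F^{-1}(\xi)$ (since $\Gam$ acts trivially on $\se$ in the torus case) and identifying $[b]$ with one of the two boundary elements of $\delta_F^{-1}(\xi)$ provided by Proposition \ref{bounding horofunctions}, forcing $b \circ \tau^{-1}$ and $b$ to differ by a constant. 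Extending $\kappa$ $\R$-linearly gives a class $[\eta_b] \in H^1(\T, \R)$ with primitive $h$ of $\tilde \eta_b$ satisfying $h(\tau y) - h(y) = \kappa(\tau)$, so $b - h$ is $\Gam$-invariant and descends to an $\eta_b$-dominated function $u : \T \to \R$. Since $\tilde c_v$ calibrates $b$ on $\R$, $c_v$ calibrates $u$ on $\R$, making $c_v$ $[\eta_b]$-semistatic; Theorem \ref{thm al C^1 torus} then pins $[\eta_b] \in \mathcal{F}_\xi$ via the asymptotic direction of calibrated rays.

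The analogous statement for backward and forward rays follows by the same scheme, replacing minimal geodesics with rays and the two-sided Busemann function above with the one-sided Busemann functions introduced in Subsection \ref{section horofctns}. The main obstacle throughout is the descent step in the rational-heteroclinic case: proving $y$-independence of $\kappa$ requires controlling the non-uniqueness of Busemann functions in rational directions, for which Proposition \ref{bounding horofunctions} is the essential tool.
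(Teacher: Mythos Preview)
The paper does not prove this proposition in the text; it is attributed to \cite{paper1} along with the other results of Subsection~\ref{section mather torus}. Your argument for $\supseteq$ and for the irrational and rational-periodic sub-cases of $\subseteq$ is correct, relying cleanly on Remark~\ref{remark mane inclusions}, Theorem~\ref{thm mather set irrat}, and Theorem~\ref{mather set rational torus}.

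The gap is in the rational-heteroclinic case. Your identification of the Busemann function $[b]=[b_c]$ with one of the boundary elements $[u_0],[u_1]$ of Proposition~\ref{bounding horofunctions} is not justified: those boundary elements are characterized as limits of directed solutions $[u_n]\in\delta_F^{-1}(\xi_n)$ with $\xi_n\to\xi$, $\xi_n\neq\xi$, whereas $[b_c]=\lim_t i_F(c(-t))$ is a limit from inside the horofunction compactification. For rational $\xi$ the fiber $\delta_F^{-1}(\xi)$ typically contains many elements besides $[u_0],[u_1]$ (one for each independent choice of asymptotic side in each gap between neighboring periodic minimals), and nothing forces the Busemann function of a particular periodic minimal---which is what $b_c$ reduces to, since $c$ is asymptotic to $c_0$ at $-\infty$---to be extreme. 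Concretely, $b_{c_0}(\,\cdot+\tau)-b_{c_0}(\,\cdot\,)=b_{c_0-\tau}(\,\cdot\,)-b_{c_0}(\,\cdot\,)$, and Busemann functions of distinct translates $c_0,c_0-\tau$ need not differ by a constant, so $\kappa(\tau)$ may depend on $y$ and the descent fails.

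A natural repair is to work directly with $[u_i]$, which \emph{is} $\Gam$-invariant (apply $\tau$ to the defining limit in Proposition~\ref{bounding horofunctions} and use that $\Gam$ acts trivially on $\se$ for $\T$, so uniqueness forces $\tau[u_i]=[u_i]$). But then you must show $[u_i]$ calibrates the heteroclinic on all of $\R$, not only on $\R_-$ as the proposition after Theorem~\ref{morse periodic} provides. That forward-calibration step is the real content here, and the paper itself flags the expected identity $\NN_{[\eta_i]}=Dp(\M^{per}(\g_\tau)\cup(\RR_-^i(\xi_\tau)\cap\M))$ as unproved in the text (see the paragraph following Theorem~\ref{mather set rational torus}).
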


We now discuss directed weak KAM solutions (which contain the lifts of $\eta$-weak KAM solutions for any $[\eta]\in \partial B^*(F)$ by Corollary \ref{cor u directed}) and properties of Mather's $\al$- and $\beta$-function. Recall from Subsection \ref{section directed KAM} that $\delta_F:\W_{dir}(\R^2,F)\to \Gro(\R^2,g)\cong \se$ is the map associating to $[u]$ the a\-symp\-totic direction $c(-\infty)$ of the $[u]$-calibrated rays $c$.

\begin{thm}\label{thm horobdry torus}
 Let $\xi \in \se$.
 \begin{enumerate}[(i)]
  \item\label{thm horobdry torus i} If $\xi\in \se-\Pi_\T$, then $\card\delta_F^{-1}(\xi) =1$.
  
  \item\label{thm horobdry torus ii} If $\xi \in \Pi_\T$, then $\card\delta_F^{-1}(\xi) =1$ if and only if $\RR_-(\xi)=\M^{per}(\g_\xi)$, i.e.\ if the torus $\T$ is foliated by shortest closed geodesics in the prime homotopy class in $\R_{>0}\xi \cap \Z^2$.
 \end{enumerate}
\end{thm}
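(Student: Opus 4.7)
Both parts of Theorem \ref{thm horobdry torus} reduce, via Proposition \ref{prop unique horo if no intersection}, to deciding whether there exist intersecting rays in $\RR_-(\xi)$. The plan is therefore to analyze the intersection theory of $\RR_-(\xi)$ case-by-case, applying the earlier structural results of Bangert and Morse--Hedlund.

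For part \eqref{thm horobdry torus i}, I would simply invoke Theorem \ref{thm bangert irrat}\eqref{thm bangert irrat iv}, which asserts that no two rays in $\RR_-(\xi)$ intersect when $\xi$ is irrational. Proposition \ref{prop unique horo if no intersection} then immediately yields $\card \delta_F^{-1}(\xi) = 1$.

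For the forward implication of part \eqref{thm horobdry torus ii}, I fix a prime $\tau\in\Gam$ with $\xi = \xi_\tau$ and assume $\RR_-(\xi) = \M^{per}(\g_\xi)$. Every ray in $\RR_-(\xi)$ is then a $\tau$-periodic minimal geodesic, hence a leaf of the lamination $\M^{per}(\g_\tau)$ described in Subsection \ref{section rational dir 2-dim}. Distinct leaves of a lamination are disjoint in $H$, so two distinct rays in $\RR_-(\xi)$ share no point of $H$, and in particular cannot intersect in the sense of Definition \ref{def transverse intersection}. Proposition \ref{prop unique horo if no intersection} gives $\card \delta_F^{-1}(\xi) = 1$.

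For the converse implication in part \eqref{thm horobdry torus ii}, I argue the contrapositive: suppose $\RR_-(\xi) \neq \M^{per}(\g_\xi)$. By Theorem \ref{morse periodic}\eqref{morse periodic item 3}, any ray in $\RR_-(\xi)$ starting at a point of $\pi(\M^{per}(\g_\tau))$ must coincide with the periodic minimal through that point, so the assumption forces $\pi(\M^{per}(\g_\tau)) \neq H$. Hence there is a connected component $S$ of $H - \pi(\M^{per}(\g_\tau))$ bounded by a pair of neighboring $\tau$-periodic minimal geodesics $c_0, c_1$, with $c_1$ lying above $c_0$ in the natural orientation of $H = \R^2$. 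By Theorem \ref{morse periodic}\eqref{morse periodic item 2a}, $S$ contains heteroclinic minimal geodesics of both possible asymptotic orderings; pick one geodesic $c_a$ running from $c_1$ at $-\infty$ to $c_0$ at $+\infty$ and another $c_b$ running from $c_0$ at $-\infty$ to $c_1$ at $+\infty$. The geometric heart of the argument is to produce a crossing $c_a(s_\ast) = c_b(t_\ast)$. Using that both curves shadow Euclidean lines of direction $\xi$ within the Morse constant (Theorem \ref{morse lemma}) and that they are trapped in the strip $S$, I would parameterize each curve by its projection onto $\R\cdot\xi$ and compare the transverse components; these swap sides between $-\infty$ and $+\infty$, and an intermediate-value argument then supplies a common point. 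Since $c_a$ and $c_b$ are distinct $F$-geodesics, their tangent vectors must differ at any common point, yielding a transverse crossing inside $S$. After shifting parameters so that the crossing occurs at negative time, this produces two rays in $\RR_-(\xi)$ intersecting in the sense of Definition \ref{def transverse intersection}, and Proposition \ref{prop unique horo if no intersection} then delivers $\card \delta_F^{-1}(\xi) \geq 2$.

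The main anticipated obstacle is the intermediate-value step in the converse direction: one must control $c_a$ and $c_b$ globally enough to ensure that their transverse components really switch sides, ruling out pathologies such as one curve accumulating on $c_0$ or $c_1$ without ever reaching the opposite boundary. The Morse Lemma combined with $\tau$-equivariance of $S$ should supply the requisite uniform control, since it forces both curves to propagate properly and monotonically in the $\xi$-direction with transverse deviations bounded by the Morse constant.
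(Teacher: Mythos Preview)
Your proposal is correct and follows the same overall line as the paper: reduce everything to Proposition \ref{prop unique horo if no intersection} and then invoke the structural Theorems \ref{thm bangert irrat}\eqref{thm bangert irrat iv} and \ref{morse periodic}. Part \eqref{thm horobdry torus i} and the forward direction of \eqref{thm horobdry torus ii} are exactly as the paper indicates.

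For the converse of \eqref{thm horobdry torus ii} your heteroclinic-crossing route works, but there is a shorter argument available directly from Theorem \ref{morse periodic}\eqref{morse periodic item 4}, which is presumably what the paper has in mind. Once you have a gap strip $S$, pick $x\in S$ and a ray $c\in\RR_-(\xi)$ with $c(0)=x$ asymptotic to $c_0$; by Theorem \ref{morse periodic}\eqref{morse periodic item 3} this ray cannot touch $c_0$ or $c_1$, so $c(\R_-)\subset S$. Now apply \eqref{morse periodic item 4} again at the point $y=c(-1)\in S$ to obtain a ray $c'\in\RR_-(\xi)$ with $c'(0)=y$ asymptotic to $c_1$. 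Then $c'(0)=c(-1)$ with $\dot c'(0)\neq\dot c(-1)$ (distinct asymptotes force distinct initial vectors), which is an intersection in the sense of Definition \ref{def transverse intersection}. This sidesteps the crossing argument entirely.

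Your heteroclinic argument is also valid --- indeed the paper uses exactly this transverse crossing of $c_-$ and $c_+$ in the proof sketch of Theorem \ref{thm mather-bangert} --- but your proposed justification via ``parameterize by the projection onto $\R\cdot\xi$'' tacitly assumes that this projection is monotone along each heteroclinic, which is true but not an immediate consequence of the Morse Lemma alone. A cleaner way to force the crossing is a Jordan-separation argument: $c_a(\R)$ is a simple proper arc in the open strip $S$, hence separates it into two components, one accumulating on $c_0$ and one on $c_1$; since $c_b$ visits both components it must cross $c_a$.
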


The uniqueness result for irrational $\xi$ follows from Theorem \ref{thm bangert irrat}. The characterization in the rational case follows from Theorem \ref{morse periodic}. It shows in particular, that $\card\delta_F^{-1}(\xi)=1$ for rational $\xi$ implies the existence of a continuous invariant graph $Dp(\M^{per}(\g_\xi))$ in $S_F\T$, which is also a $C^0$-KAM torus in the sense of Definition \ref{def KAM}. By taking limits and recalling a celebrated result due to Hopf \cite{hopf}, one obtains the following result.

\begin{cor}\label{cor horobdry torus}
 The map $\delta_F:\W_{dir}(\R^2,F)\to \se$ is a homeomorphism if and only if $\phi_F^t$ is $C^0$-integrable, i.e.\ $S_F\T$ is foliated by continuous $\phi_F^t$-invariant graphs of the form $Dp(\M(\g_\xi))$. Equivalently, $\delta_F$ is a homeomorphism if and only if $F$ has no conjugate points. If $F$ is Riemannian, then this is the case if and only if the curvature of $F$ vanishes identically.
\end{cor}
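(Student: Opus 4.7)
The strategy is to apply Theorem \ref{thm horobdry torus} to reduce the homeomorphism statement to pointwise injectivity, and then pass from injectivity in rational directions to a continuous foliation of $S_F\T$ by a limit argument. The map $\delta_F : \W_{dir}(\R^2,F) \to \se$ is continuous and surjective by the proposition in Subsection \ref{section directed KAM}, and its domain is closed in the sequentially compact $\Dom(\R^2,F)$, hence compact. Since $\se$ is Hausdorff, $\delta_F$ is a homeomorphism if and only if it is injective, i.e., $\card \delta_F^{-1}(\xi) = 1$ for every $\xi \in \se$. By Theorem \ref{thm horobdry torus}, this condition holds automatically on $\se - \Pi_\T$ and reduces, for $\xi \in \Pi_\T$, to the torus being foliated by shortest closed geodesics in the prime homotopy class associated to $\xi$.

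Next I would upgrade this pointwise picture to a foliation. Assuming $\delta_F$ is a homeomorphism, set $\Sigma_\xi := Dp(\M^{per}(\g_\xi))$ for rational $\xi$; this is a continuous $\phi_F^t$-invariant graph over $\T$, and it coincides with $Dp(\M(\g_\xi))$ because the equality $\RR_-(\xi) = \M^{per}(\g_\xi)$ forces every minimal geodesic of direction $\xi$ to be periodic (otherwise Theorem \ref{morse periodic} would produce heteroclinics, contradicting the graph property). For irrational $\xi$, Theorem \ref{thm bangert irrat}\eqref{thm bangert irrat iv} rules out intersecting rays in $\RR_-(\xi)$, so $Dp(\M(\g_\xi))$ is itself a graph over its projection. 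Fixing $x \in \T$ and varying $\xi$ over $\Pi_\T$, one obtains a family of sections $\Sigma_\xi(x) \in S_F\T_x$ whose $g$-direction equals $\xi$; passing to limits along rational sequences and invoking the closedness of $Dp(\M)$ together with the uniqueness parts of Theorem \ref{thm bangert irrat} and Proposition \ref{prop unique horo if no intersection} extends the family continuously to all $\xi \in \se$. Since the $g$-direction map from $S_F\T_x$ to $\se$ is a homeomorphism, the extended family sweeps out all of $S_F\T_x$, yielding the desired $C^0$-foliation of $S_F\T$ by the graphs $Dp(\M(\g_\xi))$.

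For the remaining equivalences, observe that if $S_F\T$ is foliated by continuous invariant graphs of the form $Dp(\M(\g_\xi))$, then $\RR_-(\xi)$ contains a graph with no intersecting rays, so $\card \delta_F^{-1}(\xi) = 1$ by Proposition \ref{prop unique horo if no intersection}, recovering that $\delta_F$ is a homeomorphism; moreover the foliation forces $S_F\T = Dp(\M)$, so every $F$-geodesic is minimal in $H$, which characterizes $F$ having no conjugate points. The reverse implication, that no conjugate points implies $\delta_F$ is a homeomorphism, is Theorem 12.1 of \cite{morse hedlund} recalled in Subsection \ref{section hadamard horo}, and the Riemannian statement follows from Hopf's theorem \cite{hopf}. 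The main obstacle is the continuous extension in the second paragraph: one must verify that the rational sections $\xi \mapsto \Sigma_\xi(x)$ admit a unique continuous extension to $\se$, independent of the approximating sequence, with each $F$-unit vector at $x$ realized by exactly one $\xi$; this ultimately rests on the uniform shadowing from the Morse Lemma and the closedness and $\Gam$-equivariance of the minimal set $\M$.
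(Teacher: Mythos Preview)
Your proposal is correct and follows essentially the same approach as the paper, which merely says ``by taking limits and recalling a celebrated result due to Hopf'' before stating the corollary; you have fleshed out exactly that limit argument and correctly closed the circle of implications via Proposition \ref{prop unique horo if no intersection}, the characterization $S_FH=\M$ for no conjugate points, Theorem 12.1 of \cite{morse hedlund}, and Hopf's theorem. One small imprecision: in the converse direction you write that $\RR_-(\xi)$ ``contains a graph with no intersecting rays,'' but what you actually need (and what the foliation gives) is that $\RR_-(\xi)$ is \emph{contained in} the lift of the leaf $Dp(\M(\g_\xi))$, since any $v\in Dp(\RR_-(\xi))$ lies in some leaf $Dp(\M(\g_{\xi'}))$ and comparison of asymptotic directions forces $\xi'=\xi$.
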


Analogously, one can use the horofunction compactification $\overline{i_F(\R^2)}$ of $(\R^2,F)$ in Corollary \ref{cor horobdry torus}.

Theorem \ref{thm horobdry torus} is analogous to the following theorem concerning Ma\-ther's $\al$- and $\beta$-functions. The result was first proved in \cite{mather1} for monotone twist maps and by different methods in \cite{bangert1}.

\begin{thm}\label{thm mather-bangert}
 \begin{enumerate}[(i)]
  \item\label{thm mather-bangert i} $\beta_F$ is always differentiable in $h\in H_1(\T,\R)-\{0\}$ with irrational slope.
  
  \item\label{thm mather-bangert ii} $\beta_F$ is differentiable in $h\in H_1(\T,\R)-\{0\}$ with rational slope if and only if the torus $\T$ is foliated by shortest closed geodesics in the prime homotopy class in $\R_{>0}\xi \cap \Z^2$.
 \end{enumerate}
\end{thm}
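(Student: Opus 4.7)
The approach is to identify the differentiability of $\beta_F$ at $h$ with the cardinality of the face $\mathcal{F}_{h/\sqrt{2\beta_F(h)}}$ of $\partial B^*(F)$ via Fenchel duality, and then invoke the structural results of Sections \ref{section irrational} and \ref{section applications}.

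First, since $\al_F$ and $\beta_F$ are mutually Fenchel dual and positively homogeneous of degree $2$, testing the subgradient inequality $\beta_F(sh)\geq \beta_F(h)+(s-1)\la[\eta],h\ra$ as $s\to 1^{\pm}$ yields $\la[\eta],h\ra = 2\beta_F(h)$ for every $[\eta]\in\partial\beta_F(h)$; the Fenchel equality then gives $\al_F([\eta])=\beta_F(h)$. After rescaling $h$ to lie on $\partial B(F)=\{\beta_F=1/2\}$, so that $h=\xi$ with $\beta_F(\xi)=1/2$, one computes $\max_{[\eta']\in B^*(F)} \la\xi,[\eta']\ra = 1$ via the Fenchel inequality $\la[\eta'],\xi\ra \leq \al_F([\eta']) + \beta_F(\xi)$, and concludes that $\partial\beta_F(\xi) = \mathcal{F}_\xi$. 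By homogeneity, $\beta_F$ is thus differentiable at any $h=s\xi$, $s>0$, if and only if $\card\mathcal{F}_\xi=1$, so it suffices to determine when this holds.

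For claim (i), Theorem \ref{thm mather set irrat} already gives $\card\mathcal{F}_\xi=1$ for irrational $\xi$. For the ``if'' direction of (ii), suppose $\T$ is foliated by shortest closed geodesics in the prime homotopy class corresponding to $\xi\in\Pi_\T$. The Legendre dual of this Lipschitz foliation yields a closed (Lipschitz) 1-form $\eta$ on $\T$ with $F^*\circ\eta\equiv 1$, so $[\eta]\in\mathcal{F}_\xi$ by Proposition \ref{prop char al-fctn via weak KAM}. For any $[\eta']\in\mathcal{F}_\xi$, Theorem \ref{mather set rational torus} shows that the Mather set is the entire foliation $Dp(\M^{per}(\g_\xi))$, which projects onto all of $\T$. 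The Mather measures supported on each leaf satisfy $\int(F-\eta')\,d\mu=0$, forcing the Fenchel equality $\eta'(\dot c) = F(\dot c)$ on every leaf, hence $\eta'$ agrees with $\eta$ on the foliation; since the graph is a section over $\T$, this pointwise agreement determines $\eta'$ uniquely, giving $[\eta']=[\eta]$.

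For the ``only if'' direction of (ii), suppose $\T$ is not foliated. By Theorem \ref{morse periodic} there are neighboring $\tau$-periodic minimal geodesics $c_0,c_1$ in $H$ bounding a gap, together with heteroclinic classes $\RR_-^0(\xi)$ and $\RR_-^1(\xi)$; Theorem \ref{thm horobdry torus}(ii) together with Proposition \ref{bounding horofunctions} then produces two distinct directed weak KAM solutions $[u_0]\neq [u_1]$ in $\delta_F^{-1}(\xi)$. Approximate $\xi$ by irrationals $\xi_n^{+},\xi_n^{-}\to\xi$ from opposite sides of $\Gro(H,g)\cong\se$; by (i), each $\xi_n^{\pm}$ carries a unique cohomology class $[\eta_n^{\pm}]\in\mathcal{F}_{\xi_n^{\pm}}$, and by Corollary \ref{cor u directed} the lifted $\eta_n^{\pm}$-weak KAM solutions $[\tilde u_n^{\pm}]$ are the unique elements of $\delta_F^{-1}(\xi_n^{\pm})$. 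Proposition \ref{bounding horofunctions} forces $[\tilde u_n^{+}]\to [u_0]$ and $[\tilde u_n^{-}]\to[u_1]$ with distinct limits. Via the lift formula $[\tilde u_n^{\pm}]=[h_{\eta_n^{\pm}}+u_n^{\pm}\circ p]$ from Proposition \ref{prop lifting weak KAM}, the distinctness of the limits forces the primitives $h_{\eta_n^{\pm}}$ to have distinct limits in $C^0(H)/_{\sim}$, hence $[\eta_n^{+}]$ and $[\eta_n^{-}]$ converge to two different limit classes $[\eta_0]\neq[\eta_1]$ in $\mathcal{F}_\xi$. Therefore $\card\mathcal{F}_\xi\geq 2$, and by Step 1, $\beta_F$ is not differentiable at $\xi$.

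The main obstacle is this last step: translating the topological non-uniqueness of directed weak KAM solutions over $\xi$ into cohomological non-uniqueness in $\mathcal{F}_\xi$. One must control the behavior of the primitives $h_{\eta_n^{\pm}}$ on arbitrarily long pieces of the gap between $c_0(\R)$ and $c_1(\R)$: the presence of heteroclinic minimizers means that calibrated curves of $[u_0]$ and $[u_1]$ accumulate transverse displacement between $c_0$ and $c_1$ in opposite senses, and it is precisely this displacement that survives in the limit to yield $[\eta_0]\neq[\eta_1]$.
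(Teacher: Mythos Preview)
Your reduction via Fenchel duality to the statement ``$\beta_F$ is differentiable at $h$ iff $\card\mathcal{F}_{h/|h|}=1$'' matches the paper exactly, and for (i) both you and the paper simply invoke Theorem~\ref{thm mather set irrat}. The ``if'' half of (ii) is likewise close to the standard argument, though your justification that $\eta'$ is determined pointwise is incomplete: knowing $\eta'(v(x))=1$ on the foliation vector field $v$ fixes only one component of $\eta'_x\in T_x^*\T$. What actually pins down $[\eta']$ is that the Legendre image of the full Lipschitz Mather graph is itself a closed Lipschitz $1$-form whose cohomology class must coincide with every element of $\mathcal{F}_\xi$.

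The real gap is in your ``only if'' argument, and you correctly flag it yourself. The step ``distinct limits $[u_0]\neq[u_1]$ force distinct cohomology limits $[\eta_0]\neq[\eta_1]$'' is unjustified and, as stated, false in general: for a \emph{single} class $[\eta]\in\mathcal{F}_\xi$ there can be several $\eta$-weak KAM solutions on $\T$, each lifting via Proposition~\ref{prop lifting weak KAM} to a different directed weak KAM solution in $\delta_F^{-1}(\xi)$. So $\card\delta_F^{-1}(\xi)>1$ (which is what Theorem~\ref{thm horobdry torus}(ii) gives you) does not by itself imply $\card\mathcal{F}_\xi>1$. In your limit $[\tilde u_n^\pm]=[h_{\eta_n^\pm}+u_n^\pm\circ p]$, the discrepancy between $[u_0]$ and $[u_1]$ could sit entirely in the periodic part $u_n^\pm\circ p$ while $h_{\eta_n^\pm}$ converge to a common primitive. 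Your final paragraph gestures at the missing ingredient---the transverse displacement carried by the heteroclinics---but does not supply it.

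The paper takes a completely different, more concrete route for this direction: it works with the stable norm $\sigma_F=\sqrt{2\beta_F}$ via the marked length spectrum (Lemma~\ref{lemma stable norm is marked length}), builds explicit competitor curves by concatenating pieces of the heteroclinics $c_+,c_-$ with the periodic minimizers, and computes the one-sided derivatives $D^+\sigma_F(e_1)[\pm e_2]=T_\pm$ directly. The non-differentiability then comes from the \emph{transverse intersection} of $c_+$ and $c_-$, which yields a strict shortening inequality and hence $T_++T_->0$. This is precisely the geometric content your argument is missing: the heteroclinics do not just exist, they cross, and that crossing is what produces a genuine kink in $\sigma_F$ rather than merely multiple weak KAM solutions over a single cohomology class.
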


Note that the differentiability of $\beta_F$ in $h\in  H_1(\T,\R)-\{0\}$ is equivalent to $\card \F_{h/|h|}=1$. For irrational $\xi\in \se$ this is always true, as was already stated in Theorem \ref{thm mather set irrat}. Theorem \ref{thm mather-bangert} shows that Corollary \ref{cor horobdry torus} can also be stated in the form that $\beta_F$ is $C^1$ or in the form that $\al_F$ is strictly convex. In this connection we refer also to \cite{massart sorrentino}.

\begin{remark}
 We saw in Theorems \ref{thm horobdry torus} and \ref{thm mather-bangert}, that $\W_{dir}(\R^2,F)$ shares many properties with $B^*(F)$. One should study such links more closely, possibly defining a map $\W_{dir}(\R^2,F)\to \partial B^*(F)$. This could also help to understand $B^*(F)$ in the case of a higher genus surface. Note that in that case, we will have a good understanding of the set $\W_{dir}(\R^2,F)$ in Subsection \ref{section applications higher genus}, while $B^*(F)$ remains a bit mysterious.
\end{remark}

We will sketch parts of Mather's proof of Theorem \ref{thm mather-bangert}, see \cite{mather1} and \cite{mather2}, as the idea is useful for recent results obtained in \cite{stablenorm_paper}. Before this, we recall another way to obtain $\beta_F$. Recall the definition of the stable norm
\[ \sig_F=\sqrt{2\beta_F} : \R^2 \cong H_1(\T,\R) \to \R \]
in Subsection \ref{section av action}. In these standard coordinates, the homotopy classes $z\in\Z^2$ of free loops lie naturally in $\R^2$.

\begin{lemma}\label{lemma stable norm is marked length}
 For $z\in\Z^2$ we have
 \[ \sig_F(z) = \inf\left\{ l_F(c) : \text{$c$ a closed curve with homotopy class $z$} \right\} . \]
\end{lemma}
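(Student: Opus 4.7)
Write $N(z) := \inf\{ l_F(c) : c \text{ a closed curve in } \T \text{ of homotopy class } z\}$. The plan is to establish the two inequalities $\sig_F(z) \leq N(z)$ and $\sig_F(z) \geq N(z)$. The upper bound is immediate from the $1$-cycle characterization of the stable norm recalled in Subsection \ref{section av action}: any closed curve $c$ in class $z$ is itself a Lipschitz real $1$-cycle representing $z$ of mass $l_F(c)$, so $\sig_F(z) \leq l_F(c)$, and taking the infimum yields $\sig_F(z) \leq N(z)$.

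The key step for the lower bound is the iteration identity $N(kz) = k\,N(z)$ for every $k \in \N$ and non-zero $z \in \Z^2$. I would extract this from Step~2 of the torus case of the proof of the Morse Lemma, where it is shown that a shortest closed geodesic in the homotopy class $\tau_0^k$ is the $k$-fold iterate of a shortest closed geodesic in the primitive class $\tau_0$: writing $z = k_0 z_0$ with $z_0$ primitive then gives $N(kz) = k k_0 \, N(z_0) = k\,N(z)$.

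With this in hand, the lower bound $\sig_F(z) \geq N(z)$ proceeds by approximation. Given $\e > 0$, pick a Lipschitz real $1$-cycle $\sum_{i=1}^m r_i c_i$ with $\sum r_i [c_i] = z$ and mass $\sum |r_i| l_F(c_i) < \sig_F(z) + \e$. Because the constraint $\sum r_i [c_i] = z$ is a linear system over $\R$ with integer coefficients $[c_i]\in\Z^2$, one can perturb $(r_i)$ to rational $(p_i/M)$ with common denominator $M$ arbitrarily large, preserving the constraint and with the mass changing by arbitrarily little. The integer $1$-cycle $\sum p_i c_i$ then represents $Mz \in H_1(\T,\Z)$; I would realize it as a \emph{single} closed curve $\bar c$ in class $Mz$ by fixing a basepoint $x_0 \in \T$, connecting $x_0$ by short paths to a point of each $c_i$ (and to auxiliary loops of class $-[c_i]$ when $p_i < 0$), and concatenating at $x_0$ the resulting conjugated loops with the required multiplicities. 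The overhead is bounded by a constant $C$ depending on the chosen connecting paths but not on $M$. Combining with the iteration identity gives
\[
M\,N(z) \;=\; N(Mz) \;\leq\; l_F(\bar c) \;\leq\; \textstyle\sum_i |p_i|\, l_F(c_i) + C \;<\; M(\sig_F(z) + 2\e) + C,
\]
and dividing by $M$ and letting first $M\to\infty$ and then $\e\to 0$ yields $N(z) \leq \sig_F(z)$.

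The main obstacle will be the integer-realization step when the Finsler metric $F$ is non-reversible: naive orientation-reversal of $c_i$ for $p_i < 0$ would produce a loop of different $F$-length. One workaround is to enlarge the reference collection by including, for each $c_i$, an auxiliary closed curve of class $-[c_i]$ whose $F$-length is controlled in terms of $l_F(c_i)$ by the uniform equivalence $\frac{1}{c_F}|\cdot|_g \leq F \leq c_F |\cdot|_g$ with the reversible background metric $g$. The resulting bookkeeping between positive and negative $p_i$'s is technical but requires no new idea.
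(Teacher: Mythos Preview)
Your outline is correct and goes through cleanly in the reversible case, but it follows a genuinely different route from the paper. The paper never touches the $1$-cycle characterization; it works directly with the definition $\sigma_F=\sqrt{2\beta_F}$. It takes a shortest closed arc-length geodesic $c:[0,T]\to\T$ in class $z$, forms the invariant probability $\mu_c$ supported on $\dot c$, computes $\rho(\mu_c)=z/T$ and $\int L_F\,d\mu_c=1/2$, and then uses that $\mu_c$ is a \emph{minimizing} measure --- this is where the structural results of the section enter, via Remark~\ref{remark shorst closed geod are minimal} and Theorem~\ref{mather set rational torus} --- to conclude $\beta_F(z/T)=1/2$, whence $\sigma_F(z)=T=l_F(c)$ by homogeneity. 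No rational approximation and no iteration identity appear. Your approach trades the dependence on Theorem~\ref{mather set rational torus} for the approximation bookkeeping and the identity $N(kz)=kN(z)$ (correctly sourced from Step~2 of the torus Morse Lemma); this makes it more self-contained but longer.

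Your non-reversibility workaround, however, does not close: bounding the $F$-length of an auxiliary loop in class $-[c_i]$ via the uniform equivalence with $g$ only yields $N(z)\le c_F^2\,\sigma_F(z)$, losing a constant. The obstruction is genuine for the $|r_i|$-formula taken literally: a near-optimal representation may choose $r_i<0$ precisely because $l_F(c_i)<l_F(c_i^{-1})$, and switching to the reversed loop would \emph{increase} the mass, not preserve it. The paper's measure-theoretic argument sidesteps this completely. If you want to salvage the $1$-cycle route for non-reversible $F$, the clean fix is to use the formulation with nonnegative coefficients $r_i\ge 0$ from the outset (this is the version that agrees with $\sqrt{2\beta_F}$ for non-symmetric $F$; the $|r_i|$-formula quoted from \cite{massart-diss} is Riemannian): then you may assume $r_i>0$, rational points are dense in the rational affine solution set $\{q:\sum q_i[c_i]=z\}$, the approximating $p_i$ stay positive, and the concatenation step needs no reversed loops at all.
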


The object on the right hand side is also called the {\em marked length spectrum}. Note that the homogeneity of the right hand side follows from the fact that the shortest closed geodesics in the homotopy class $kz$ are the $k$-th iterates of the minimizers in the class $z$, for $k\geq 0$. Observe also that there is an intuitive way to see the strict convexity of $\sig_F$. Namely if $z,w\in \Z^2-\{0\}$ are linearly independent, let $c_z,c_w$ be shortest closed geodesics in the respective homotopy classes. As $z,w$ are linearly independent, the lifts $\tilde c_z,\tilde c_w:\R\to\R^2$ intersect transversely in a point $x\in\R^2$. Hence,
\begin{align*}
 \sig_F(z+w) & \leq d_F(x-z,x+w) \\
 & < d_F(x-z,x) + d_F(x,x+w) = \sig_F(z)+\sig_F(w) .
\end{align*}
The strict inequality follows from the transversality of the intersection.

\begin{proof}[Proof of Lemma \ref{lemma stable norm is marked length}]
 For $c:[0,T]\to \T$ a shortest closed geodesic in the homotopy class $z$, parametrized by arc length, let $\mu_c$ be the measure given by $\int f ~ d\mu = \frac{1}{T}\int_0^T f(\dot c)~ dt$. Then $\mu_c\in\MM(L_F)$ and $\mu_c$ is minimizing. For a lift $\tilde c$ of $c$ to $\R^2$, identifying $H^1(\T,\R)\cong(\R^2)^*$ we find
 \[ \rho(\mu_c)(\la v , . \ra) = \frac{1}{T}\int_0^T \la v , \dot c \ra~ dt = \frac{1}{T} \la v , \tilde c(T)-\tilde c(0) \ra = \la v , \frac{z}{T} \ra . \]
 Hence, $\rho(\mu_c) = z/T$. On the other hand, as $\mu_c$ is minimizing and supported in $S_F\T$, we have
 \[ \beta_F(\rho(\mu_c)) = \int L_F ~d\mu_c = 1/2 , \]
 showing by homogeneity
 \[ \beta_F(z) = T^2 \cdot \beta_F(z/T) = T^2/2 . \]
 The claim follows from $T=l_F(c)$.
\end{proof}

Note that $\sig_F$ as a convex function has one-sided derivatives. We shall write
\[ D^+\sig_F(h)[v] :=  \inf_{t>0} \frac{\sig_F(h+tv)-\sig_F(h)}{t} = \lim_{t\searrow 0} \frac{\sig_F(h+tv)-\sig_F(h)}{t} . \]

\begin{proof}[Sketch of ``$\M(\g_\xi)\neq \M^{per}(\g_\xi)$ $\implies$ $\sig_F$ not differentiable in $\xi$'']
 We assume for simplicity, that $\xi=(1,0)=e_1$ and that $\M^{per}(\g_\xi)$ consists of the $\Z^2$-translates of a single minimal geodesic $c_0$. Write $c_-,c_+$ for a choice of heteroclinic connections between $c_0$ and $c_1:=c_0+e_2$. Using e.g.\ the bounding weak KAM solutions $[u_0],[u_1]$ from Proposition \ref{bounding horofunctions}, one can show after choosing appropriate parameters, that there exist $T_-,T_+\in \R$ and a function $f(t)\searrow 0$, as $t\to 0$ with
 \[ f(t) \geq \begin{cases} d_F(c_0(-t), c_+(-t)) \\ d_F(c_+(T_++t), c_1(t)) \\ d_F(c_1(-t), c_-(-t)) \\ d_F(c_-(T_-+t), c_0(t)) \end{cases} . \]
 
 Let us write $\theta=\sig_F(e_1)$ for the period of $c_0$ and for $n\in\N$ choose $F$-minimizing segments $\delta_n$ connecting $c_0(-n\theta/2)$ to $c_+(-n\theta/2)$ and $\e_n$ connecting $c_+(T_++n\theta/2)$ to $c_1(n\theta/2)$. The concatenation $c_n := \delta_n*c_+|_{[-n\theta/2,n\theta/2+T_+]}*\e_n$ has endpoints congruent modulo $ne_1+e_2$ and has length
 \begin{align}\label{eqn stable-norm paper}
  \sig_F(ne_1+e_2)\leq l_F(c_n) \leq T_+ + n\theta + 2 f(n\theta) .
 \end{align}
 This shows with $n\theta=\sig_F(ne_1)$ and $f(n\theta)\to 0$, that
 \[ D^+\sig_F(e_1)[e_2] = \lim_{n\to\infty} \sig_F(ne_1+e_2) - \sig_F(ne_1) \leq T_+ . \]
 It is not difficult to show also the reverse inequality. The analogous reasoning using $c_-$ shows
 \[ D^+\sig_F(e_1)[\pm e_2] = T_\pm . \]
 Observe now that $c_-,c_+$ have a transverse intersection in a point $x\in c_-(\R)\cap c_+(\R)$. It follows that
 \begin{align*}
  &~ d_F(c_+(-n\theta/2,c_-(T_-+n\theta/2)) \\
  \leq &~ d_F(c_+(-n\theta/2,x) + d_F(x,c_-(T_-+n\theta/2)) - \e , \\
  &~ d_F(c_-(-n\theta/2,c_+(T_++n\theta/2)) \\
  \leq &~ d_F(c_-(-n\theta/2,x) + d_F(x,c_+(T_++n\theta/2)) - \e ,
 \end{align*}
 for some $\e>0$ and $n$ sufficiently large. This shows
 \begin{align*}
  2 n\theta & = d_F(c_0(-n\theta/2),c_0(n\theta/2)) + d_F(c_1(-n\theta/2),c_1(n\theta/2)) \\
  & \leq 2 f(n\theta/2) + d_F(c_+(-n\theta/2,x) + d_F(x,c_-(T_-+n\theta/2)) - \e \\
  & \quad + 2 f(n\theta/2) + d_F(c_-(-n\theta/2,x) + d_F(x,c_+(T_++n\theta/2)) - \e \\
  & = 2 f(n\theta/2) -2\e +2n\theta + T_+ + T_-
 \end{align*}
 and hence,
 \[ D^+\sig_F(e_1)[e_2] + D^+\sig_F(e_1)[- e_2] = T_++T_-\geq 2\e >0 , \]
 showing that $\sig_F$ cannot be differentiable in $e_1$.
\end{proof}

Assuming that the closed geodesic $c_0$ in the above proof is hyperbolic, the heteroclinics $c_-,c_+$ belong to the (un)stable manifold of $c_0$. In this case, the function $f(t)$ is of the form $C\cdot \exp(-\lam t)$ for real numbers $C,\lam>0$. From inequality \eqref{eqn stable-norm paper}, we obtain an immediate corollary observed in \cite{stablenorm_paper}.

\begin{cor}\label{cor stable norm paper rational}
 If $\xi\in \Pi_\T$ and if the set $Dp(\M^{per}(\g_\xi))\subset S_F\T$ is uniformly hyperbolic for the geodesic flow $\phi_F^t$, then there exist constants $C,\lam, \e>0$, so that for all $v\in \R^2$ with Euclidean norm $|v|\leq \e$
 \begin{align*}
  & \sig_F(\xi+v) - \sig_F(\xi) - D^+\sig_F(\xi)[v] \leq |v| \cdot C \cdot \exp\left( -\lam \cdot \frac{1}{|v|} \right) .
 \end{align*}
\end{cor}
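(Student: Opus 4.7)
The plan is to feed the exponential decay coming from hyperbolicity into estimate \eqref{eqn stable-norm paper} derived in the preceding sketch, and then rescale by homogeneity. By a linear change of coordinates on $\T$ (using that $\xi\in\Pi_\T$ is parallel to some primitive $\tau\in\Z^2$) combined with the homogeneity of $\sig_F$, one may assume $\xi=e_1$ and set $\theta=\sig_F(\xi)$. Under uniform hyperbolicity of $Dp(\M^{per}(\g_\xi))$, the heteroclinics $c_\pm$ lie in the intersection of the (un)stable manifolds of the hyperbolic closed orbits $Dp(c_0)$, so the auxiliary function $f$ of the sketch satisfies $f(t)\leq C_0 e^{-\lam_0 t}$ for some $C_0,\lam_0>0$. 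Substituting into \eqref{eqn stable-norm paper} gives
\[ \sig_F(ne_1+e_2)\leq n\theta+T_++2C_0 e^{-\lam_0 n\theta}. \]
Setting $v=e_2/n$ (so $|v|=1/n$) and using $T_+=D^+\sig_F(\xi)[e_2]$ together with the positive homogeneity of both $\sig_F$ and $D^+\sig_F(\xi)$, I obtain
\[ \sig_F(\xi+v)-\sig_F(\xi)-D^+\sig_F(\xi)[v]\leq 2|v|\cdot C_0\cdot e^{-\lam_0\theta/|v|}, \]
which is the claim along the $e_2$-axis with $C=2C_0$ and $\lam=\lam_0\theta$.

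For a general rational $v=(p,q)/n$ with $q>0$ I would adapt the construction: thread the curve through $q$ heteroclinic segments (translates of $c_+$ joining $c_k$ to $c_{k+1}=c_k+e_2$ for $k=0,\ldots,q-1$) separated by equal-length arcs of length $(n+p)\theta/q$ along the periodic geodesics, so that the loop has homotopy class $(n+p)e_1+qe_2$. The $2q$ hyperbolic junctions then contribute a total error $\leq 2qC_0 e^{-\lam_0(n+p)\theta/(2q)}$, hence
\[ \sig_F((n+p)e_1+qe_2)\leq qT_++(n+p)\theta+2qC_0 e^{-\lam_0(n+p)\theta/(2q)}. \]
Dividing by $n$ and invoking the identification $D^+\sig_F(\xi)[v]=v_1\theta+v_2T_+$ for $v_2>0$ --- which follows from the support-function formula $D^+\sig_F(\xi)[v]=\max\{\la v,\eta\ra:\eta\in\mathcal{F}_\xi\}$, the fact that $\mathcal{F}_\xi$ pairs constantly with $e_1$ to $\theta$, and that the maximum pairing with $e_2$ on $\mathcal{F}_\xi$ equals $T_+$ --- yields
\[ \sig_F(\xi+v)-\sig_F(\xi)-D^+\sig_F(\xi)[v]\leq 2v_2 C_0 e^{-\lam_0\theta(1+v_1)/(2v_2)}. \]
For $|v|\leq 1/2$ the bounds $v_2\leq|v|$ and $1+v_1\geq 1/2$ give the right-hand side $\leq 2|v|C_0 e^{-\lam_0\theta/(4|v|)}$, i.e.\ the desired form. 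The case $v_2<0$ is symmetric (using $c_-$ and $T_-$), and for $v_2=0$ the left-hand side vanishes by homogeneity along $\R_{>0}\xi$. To extend from rational $v$ to all $v$ with $|v|\leq\e$, I would invoke continuity of $\sig_F$, of $v\mapsto D^+\sig_F(\xi)[v]$, and of the bounding function $v\mapsto|v|Ce^{-\lam/|v|}$.

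The technical heart of the proof is the multi-heteroclinic construction, where one must coordinate $q$ hyperbolic jumps so that the loop closes in the prescribed homotopy class while the $2q$ junction errors remain controlled by the single exponential $2qC_0 e^{-\lam_0(n+p)\theta/(2q)}$. Distributing the periodic arcs evenly is what preserves the sharp exponent $\lam/|v|$ after rescaling; an uneven distribution would yield a strictly subexponential bound in $1/|v|$. A second subtlety lies in justifying $D^+\sig_F(\xi)[v]=v_1\theta+v_2T_+$ for $v_2>0$, which uses that $\mathcal{F}_\xi$ is a segment in $\partial B^*(F)$ at the rational direction $\xi$ and is not automatic for the one-sided derivative of a general non-smooth convex function.
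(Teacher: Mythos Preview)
Your approach is correct and matches the paper's: both substitute the hyperbolic bound $f(t)\le C_0e^{-\lambda_0 t}$ into inequality \eqref{eqn stable-norm paper}; the paper says nothing further than that the corollary is then ``immediate'' (referring to \cite{stablenorm_paper} for the details you supply).

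One remark: the multi-heteroclinic construction for general rational $v$ works, but is heavier than needed. Once the $e_2$-direction bound is in hand, the positive homogeneity of $\sigma_F$ alone yields the general case. Writing $\xi+v=(1+v_1)\bigl(\xi+w\,e_2\bigr)$ with $w=v_2/(1+v_1)$ and using the identification $D^+\sigma_F(\xi)[v]=v_1\theta+v_2T_+$ for $v_2>0$ (which you justify via the face $\mathcal{F}_\xi$), one gets
\[
\sigma_F(\xi+v)-\sigma_F(\xi)-D^+\sigma_F(\xi)[v]
=(1+v_1)\Bigl[\sigma_F(\xi+we_2)-\theta-wT_+\Bigr]
\le 2v_2\,C_0\,\exp\!\bigl(-\lambda_0\theta(1+v_1)/v_2\bigr)
\]
directly from the single-jump estimate. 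This is presumably what ``immediate'' encodes, and it sidesteps both the even-spacing subtlety and the bookkeeping of $2q$ junctions that you flag as the technical heart. Your construction remains valid and has the virtue of being self-contained at the level of curves; the homogeneity shortcut is just shorter and gives a marginally better exponent (no factor $1/2$).
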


See Figure \ref{fig_stable_rot} for the level set $\{\sig_F=1\}=\partial B(F)$ for the case where $F$ is obtained from the rotational torus. Here, $\M^{per}(\pm e_1)$ consist of the inner, short closed geodesic, which is hyperbolic. The set $\{\sig_F=1\}$ looks like a straight line to both side of the vertices at $\pm e_1$, which resembles the fact that the function $t\mapsto t C\exp(-\lam \frac{1}{t})$ in Corollary \ref{cor stable norm paper rational} vanishes in $t=0$ to infinite order. Drawing the level set $\{\al_F=1/2\}=\partial B^*(F)$, one sees ``corners'' at the ends of the segments $\F_{\pm e_1}$ corresponding to the ``straight parts'' of $\{\sig_F=1\}=\{\beta_F=1/2\}$, even though $\al$ is $C^1$. See Subsection 5.2 of \cite{stablenorm_paper} for the details of this example.

\begin{figure}\centering%[!htb]
 \includegraphics[scale=0.5]{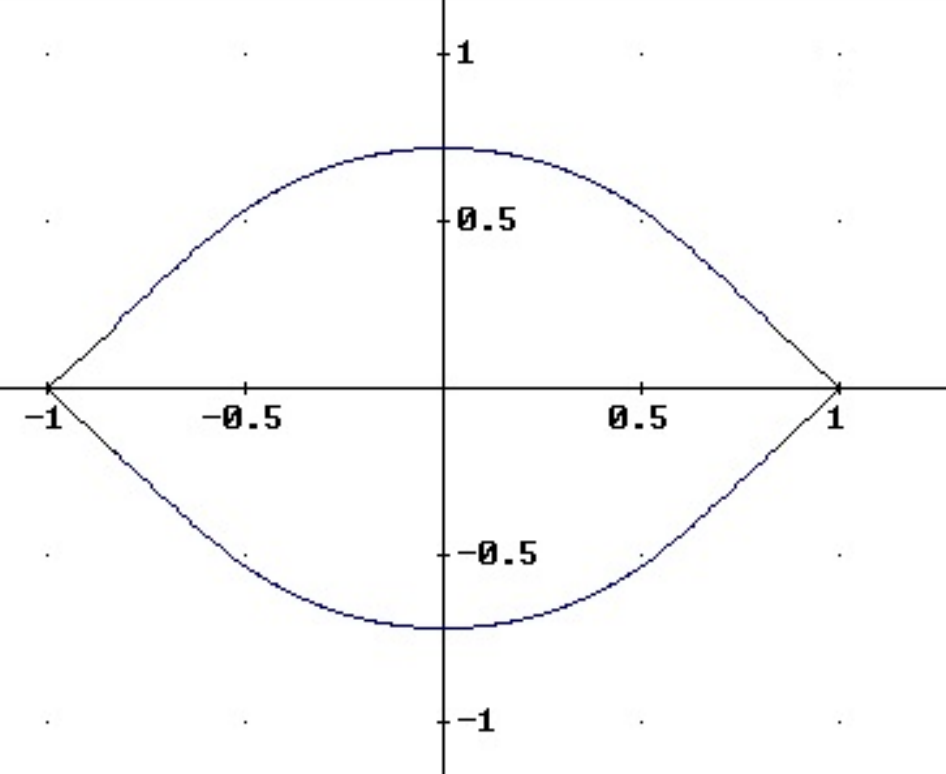}
 \caption{$\{\sig_F=1\}=\partial B(F)\subset\R^2\cong H_1(\T,\R)$ for $F$ obtained from the rotational torus. \label{fig_stable_rot}}
\end{figure} 

Using the ideas in \cite{mather2}, one can move from rational $\xi\in \Pi_\T$ to irrational $\xi\in \se-\Pi_\T$ via the convexity of $\sig_F$. This can be used to prove the differentiability of $\sig_F$ in irrational directions. Using this approach and the stronger estimate obtained in Corollary \ref{cor stable norm paper rational}, one can show the following.

\begin{thm}[\cite{stablenorm_paper}]\label{thm stable norm paper irrational}
 If the Finsler metric $F$ is chosen to be conformally generic (see Definition \ref{def generic}), if $\xi\in \se-\Pi_\T$ is irrational and if the set $Dp(\M(\g_\xi))\subset S_F\T$ is uniformly hyperbolic for the geodesic flow $\phi_F^t$, then there exist constants $C,\lam>0$, such that in all choices of rays $R \subset \R^2$ emanating from the origin there exist sequences $v_n\to 0, v_n\neq 0$, so that
 \begin{align*}
  & \sig_F(\xi+v_n) - \sig_F(\xi) - D\sig_F(\xi)[v_n] \leq |v_n|^{1/4} \cdot C \cdot \exp\left(-\lam \cdot \frac{1}{| v_n |^{1/4}} \right) .
 \end{align*}
\end{thm}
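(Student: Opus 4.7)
The plan is to deduce the estimate at the irrational direction $\xi$ from the rational estimate of Corollary \ref{cor stable norm paper rational} by a rational approximation argument, exploiting the convexity of $\sig_F$. The first step is a persistence-of-hyperbolicity observation: by assumption $Dp(\M(\g_\xi))\subset S_F\T$ is compact, $\phi_F^t$-invariant and uniformly hyperbolic, so structural stability yields a trapping neighborhood $U\subset S_F\T$ on which uniform constants $C,\lam>0$ apply to every closed invariant subset. By Theorem \ref{thm bangert irrat} and the semicontinuity of $\M(\g_{\xi'})$ in $\xi'$ (Proposition \ref{width semi-cont}), for every rational $\xi'\in\Pi_\T$ sufficiently close to $\xi$ the periodic set $Dp(\M^{per}(\g_{\xi'}))$ lies in $U$ and is therefore uniformly hyperbolic with the \emph{same} constants. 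Under conformal genericity of $F$ (Theorem \ref{thm generic loops}), each $\M^{per}(\g_{\xi'})$ consists of a single $\Z^2$-orbit of closed geodesics, so Corollary \ref{cor stable norm paper rational} applies uniformly at every such $\xi'$.

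Next, given the ray $R$, I would build the sequence $v_n$ as follows. Choose integer vectors $z_n\in\Z^2$ with $\xi_n:=z_n/|z_n|\to\xi$ and Dirichlet bound $|\xi_n-\xi|\lesssim |z_n|^{-2}$, and pick $v_n\in R\setminus\{0\}$ with $|v_n|\sim|z_n|^{-4}$. Writing $w_n:=\xi+v_n-\xi_n$, one has $|w_n|\sim|\xi-\xi_n|\sim |v_n|^{1/2}$. Applying the uniform rational estimate at $\xi_n$ gives
\begin{equation*}
\sig_F(\xi+v_n)\;\leq\;\sig_F(\xi_n)+D^+\sig_F(\xi_n)[w_n]+|w_n|\,C\,\exp\!\bigl(-\lam/|w_n|\bigr).
\end{equation*}
Subtracting $\sig_F(\xi)+D\sig_F(\xi)[v_n]$ from both sides (recall $\sig_F$ is differentiable at the irrational $\xi$ by Theorem \ref{thm mather-bangert}) reduces the matter to controlling the ``linear gap''
\begin{equation*}
\Delta_n\;:=\;\bigl[\sig_F(\xi_n)+D^+\sig_F(\xi_n)[w_n]\bigr]-\bigl[\sig_F(\xi)+D\sig_F(\xi)[v_n]\bigr].
\end{equation*}
Convexity of $\sig_F$ forces $\Delta_n\leq 0$ whenever the supporting affine function at $\xi_n$ lies below the supporting affine function at $\xi$ at the evaluation point $\xi+v_n$; this can be arranged by selecting $z_n$ so that $\xi_n$ is (essentially) the radial projection of $\xi+v_n$ onto $\R_+\xi$, and by exploiting the fact that the linear support at $\xi_n$ evaluated on the radial component of $w_n$ reproduces $\sig_F(\xi)-\sig_F(\xi_n)$ by Euler homogeneity.

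Finally, substituting $|w_n|\sim|v_n|^{1/2}$ produces $|v_n|^{1/2}C\exp(-\lam/|v_n|^{1/2})$; a single further application of convexity, interpolating between the two nearest admissible rational directions bracketing the target point along the ray, yields the claimed bound with $|v_n|^{1/4}$ in place of $|v_n|^{1/2}$. Density of suitable rationals along every slope ensures that the $v_n$ can always be placed inside any given ray $R$; note that only infinitely many such $v_n$ are demanded, so no Diophantine hypothesis on $\xi$ is invoked. The main obstacle is Step~2, the quantitative control of $\Delta_n$: the face $\F_{\xi_n}\subset\partial B^*(F)$ collapses to the supporting line of $\partial B(F)$ at $\xi$, and one must balance the (uniform) exponential flatness at $\xi_n$ given by Corollary \ref{cor stable norm paper rational} against the Diophantine rate $|z_n|^{-2}$ governing the tilt of that supporting line. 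The exponent $1/4$ records precisely this balance — a square-root loss from the radial-to-tangential conversion, then a further square-root loss from the convex interpolation between neighboring rationals — and sharpening it would require either finer hyperbolicity data on the stable/unstable bundles over $Dp(\M(\g_\xi))$ or Diophantine restrictions on $\xi$.
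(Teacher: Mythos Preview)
The paper does not prove this theorem; it is quoted from \cite{stablenorm_paper} with only the hint that one ``can move from rational $\xi$ to irrational $\xi$ via the convexity of $\sig_F$'' using the ideas of \cite{mather2} together with Corollary~\ref{cor stable norm paper rational}. Your overall architecture---uniform hyperbolicity in a neighborhood, Theorem~\ref{thm generic loops} to ensure uniqueness of periodic minimals, then the rational estimate at Dirichlet approximants $\xi_n$ transferred by convexity---matches that hint and is the intended route.

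There is, however, a genuine gap at the point you yourself flag. Your claim that convexity forces $\Delta_n\le 0$ is not correct as stated: $D^+\sig_F(\xi_n)[\,\cdot\,]$ is only sublinear, not linear, so you cannot split $D^+\sig_F(\xi_n)[w_n]=D^+\sig_F(\xi_n)[(\xi-\xi_n)+v_n]$ into a radial and a $v_n$-part, and the inequality ``supporting function at $\xi_n$ lies below supporting function at $\xi$'' has no general meaning---both lie below $\sig_F$, but their mutual ordering at $\xi+v_n$ depends on which subgradient of $\partial\sig_F(\xi_n)$ you pick and on the geometry of the face $\F_{\xi_n}$. What one actually needs is a \emph{two-sided} application of the exponential estimate at $\xi_n$ (once towards $\xi$ and once towards $\xi+v_n$) together with the differentiability of $\sig_F$ at $\xi$, so that the tilt $D^+\sig_F(\xi_n)[\,\cdot\,]-D\sig_F(\xi)[\,\cdot\,]$ is itself controlled by an exponential in $|\xi-\xi_n|^{-1}$; only then do the errors combine. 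Your final step, upgrading $|v_n|^{1/2}$ to $|v_n|^{1/4}$ by ``interpolating between two neighboring rationals along the ray'', is likewise not an argument: convex interpolation between two upper bounds gives another upper bound of the same order, not a better one. The exponent $1/4$ in \cite{stablenorm_paper} arises from a specific balancing of $|z_n|$ against the size of the admissible $w$ in Corollary~\ref{cor stable norm paper rational} (the $\e$ there shrinks with $|z_n|$), and that bookkeeping is missing from your sketch.
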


Let us note that the assumption that $Dp(\M(\g_\xi))$ is uniformly hyperbolic is satisfied in many situations, see \cite{stablenorm_paper}.

\begin{prop}\label{prop lecalvez}
 The following property of Finsler metrics on $\T$ is conformally generic: For an open and dense subset $U\subset \se$, the sets $Dp(\M(\g_\xi))$ are uniformly hyperbolic for all $\xi\in U$. The set $U$ strictly contains the set $\Pi_\T$ of rational points.
\end{prop}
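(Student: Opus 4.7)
The plan combines Theorem \ref{thm generic loops}, a Kupka-Smale type perturbation producing hyperbolic closed minimizers, and the persistence of uniform hyperbolicity on compact invariant sets.

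\emph{Genericity at rational directions.} First I would apply Theorem \ref{thm generic loops} to pass to a residual subset $\O_1\subset E$ of the conformal class of a reference Finsler metric $F_0$ on which, for every prime $\tau\in\Z^2$, the homotopy class $\tau$ admits exactly one shortest closed geodesic $\bar c_\tau\subset\T$. Next, I would refine $\O_1$ to a residual subset $\O_2$ on which in addition each $\bar c_\tau$ is a hyperbolic closed orbit of $\phi_F^t$. This is achieved by a Klingenberg-Takens-type perturbation performed inside the conformal class: for a fixed $\tau$, the linearized \Poincare map of $\bar c_\tau$ can be moved to a hyperbolic one by a small conformal perturbation $f$ supported in a tiny transversal ball met by $\bar c_\tau$ only once per period and too small to create new competitors for the minimum in the class $\tau$ (so that uniqueness from $\O_1$ is preserved). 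Hyperbolicity is $C^\infty$-open, so the set of $f\in\O_1$ making $\bar c_\tau$ hyperbolic is open and dense; intersecting over the countable family $\Pi_\T$ gives a residual $\O_2$.

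\emph{Uniform hyperbolicity at rational directions.} Fix a conformally generic $F$ as above and let $\xi=\xi_\tau\in\Pi_\T$ correspond to a prime $\tau\in\Z^2$. By Remark \ref{remark shorst closed geod are minimal} and Theorem \ref{morse periodic}, the unique $\bar c_\tau$ lifts in $\R^2$ to a $\Z$-family of parallel $\tau$-periodic minimizers, and $Dp(\M(\g_\xi))\subset S_F\T$ is the union of the hyperbolic periodic orbit $\gamma_\tau$ corresponding to $\bar c_\tau$ and the homoclinic orbits coming from the heteroclinic connections between neighboring translates in the universal cover. In particular $Dp(\M(\g_\xi))$ is a compact $\phi_F^t$-invariant subset of $\gamma_\tau\cup(W^s(\gamma_\tau)\cap W^u(\gamma_\tau))$, and the hyperbolic splitting on $\gamma_\tau$ extends by standard arguments (the Hartman-Grobman estimates on the invariant manifolds) to a uniformly hyperbolic splitting on this set.

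\emph{Extension to a neighborhood and conclusion.} Upper semi-continuity of the set-valued map $\xi\mapsto Dp(\M(\g_\xi))$, which follows from the Morse Lemma together with Lemma \ref{crossing minimals}, yields for each $\xi\in\Pi_\T$ an open neighborhood $V_\xi\subset\se$ and a neighborhood $W_\xi\subset S_F\T$ of $Dp(\M(\g_\xi))$ with $Dp(\M(\g_{\xi'}))\subset W_\xi$ for all $\xi'\in V_\xi$. After shrinking $W_\xi$, persistence of dominated splittings forces every compact $\phi_F^t$-invariant subset of $W_\xi$ to inherit a uniformly hyperbolic splitting, so $Dp(\M(\g_{\xi'}))$ is uniformly hyperbolic for all $\xi'\in V_\xi$. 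Setting $U:=\bigcup_{\xi\in\Pi_\T}V_\xi$ produces an open subset of $\se$ containing the dense, countable set $\Pi_\T$, hence open and dense in $\se$, with $U\supsetneq\Pi_\T$ by openness.

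The main obstacle is the conformal Kupka-Smale step, since the admissible perturbations have the restricted form $F\mapsto fF$. One must verify that germs of $f$ supported near a \Poincare section transversal to $\bar c_\tau$ produce, to first order, a sufficiently rich family of variations of the linearized return map to cross the elliptic/hyperbolic divide in $\mathrm{SL}(2,\R)$, while not spoiling the length-minimality enforced by Theorem \ref{thm generic loops}. Following the Contreras-Paternain strategy for Riemannian metrics, one parametrizes such germs by jets along the transversal and shows that the induced variation of the Poincare map is surjective onto the tangent space of $\mathrm{SL}(2,\R)$ at the current return map, which makes hyperbolicity $C^\infty$-open and dense along the conformal slice.
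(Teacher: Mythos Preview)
The paper does not prove this proposition; it merely cites \cite{stablenorm_paper}, and the label \texttt{prop lecalvez} indicates that the underlying ingredient is Le~Calvez's genericity theorem for Aubry--Mather sets of monotone twist maps, transported to Finsler geodesic flows on $\T$ via the correspondence in Section~\ref{section twist map}. So there is no in-paper argument to compare against; your outline is in the right spirit and likely close to what \cite{stablenorm_paper} does.

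That said, there is a genuine gap in your ``uniform hyperbolicity at rational directions'' step. From the facts that $\gamma_\tau$ is a hyperbolic periodic orbit and that $Dp(\M(\g_\xi))\subset \gamma_\tau\cup\big(W^s(\gamma_\tau)\cap W^u(\gamma_\tau)\big)$ you \emph{cannot} conclude uniform hyperbolicity by ``Hartman--Grobman estimates on the invariant manifolds'': if $W^s(\gamma_\tau)$ and $W^u(\gamma_\tau)$ were tangent somewhere along a homoclinic orbit, there would be no invariant splitting at that point, and nothing you have written rules this out. What actually does the work is the variational/twist structure: orbits in $\M(\g_\xi)$ are minimal, hence free of conjugate points, so along each of them one has the Green bundles $E^+,E^-$ (the forward and backward limits of the vertical bundle under $D\phi_F^t$). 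These are continuous, $D\phi_F^t$-invariant, and for an orbit bi-asymptotic to the hyperbolic $\gamma_\tau$ they coincide with the stable/unstable directions at the ends, forcing $E^+\neq E^-$ everywhere and giving the uniform splitting on the whole compact set. This is exactly the mechanism in Le~Calvez's twist-map argument (and in the Tonelli setting, Arnaud and Contreras--Iturriaga); you should invoke it explicitly rather than appeal to Hartman--Grobman. Once this is in place, your semi-continuity/persistence step to obtain the open neighbourhood of each rational direction is fine, and the conformal Kupka--Smale step you flag as the main obstacle is indeed the remaining technical point.
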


Proposition \ref{prop lecalvez} shows that the sets $Dp(\M(\g_\xi))$ are often hyperbolic. On the other hand, using the KAM-theorem, the set $Dp(\M(\g_\xi))$ is a smooth KAM-torus, if the Finsler metric $F$ is close to a Finsler metric $F_0$ with an integrable geodesic flow (e.g.\ $F_0$ the Euclidean metric) and if the slope $\xi_2/\xi_1$ is a Diophantine number. In this case, we proved in \cite{stablenorm_paper} the following theorem, relying on ideas from \cite{siburg}.

\begin{thm}\label{thm stable norm paper irrational KAM}
 If $\xi \in \se-\Pi_\T$ is an irrational point and if the set $Dp(\M(\g_\xi))\subset S_F\T$ is a smooth KAM-torus for the geodesic flow $\phi_F^t$, then Mather's $\beta$-function $\beta_F$ is strongly convex near $\xi$, that is, there exists a constant $C>0$ with 
 \begin{align*}
   \beta_F(\xi+v) - \beta_F( \xi) - D\beta_F( \xi)[ v]\geq C \cdot| v|^2 
 \end{align*}
 for all $v\in\R^2$.
\end{thm}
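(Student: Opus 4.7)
The plan is to pass to the Fenchel dual and show a matching $C^{1,1}$ upper bound for $\al_F$ at the point $[\eta_0] := D\beta_F(\rho_0)$, where $\rho_0 \in H_1(\T,\R)$ is the Mather rotation vector of the Lebesgue measure on the KAM torus (a positive scalar multiple of $\xi$; $\beta_F$ is differentiable at $\rho_0$ because $\xi$ is irrational, by Theorem~\ref{thm mather-bangert}). Once one proves
\[ \al_F([\eta_0]+[\omega]) \leq \al_F([\eta_0]) + \rho_0\cdot[\omega] + C_1\,|[\omega]|^2 \]
for all small $[\omega] \in H^1(\T,\R)$, the standard Fenchel estimate ``if $\al(y_0+w)\le\al(y_0)+\langle p,w\rangle+M|w|^2$ then $\al^*(x)-\al^*(p)-\langle y_0,x-p\rangle\ge |x-p|^2/(4M)$'' yields strong convexity of $\beta_F=\al_F^*$ at $\rho_0$ with constant $1/(4C_1)$; by the homogeneity $\beta_F(\lambda\cdot)=\lambda^2\beta_F(\cdot)$ this transfers verbatim to $\xi$ with the same constant.

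I will exploit the smooth KAM torus itself. Its Legendre lift to $T^*\T$ is a smooth Lagrangian graph $\{(x,\eta_0(x))\}$ of a closed 1-form $\eta_0$ on $\T$ satisfying $F^*(\eta_0)\equiv 1$; the cohomology class $[\eta_0]$ coincides with $D\beta_F(\rho_0)$ by uniqueness of the supporting hyperplane at $\rho_0$. The smooth $F$-unit vector field $V(x):=\theta_F(\eta_0(x))$ generates the geodesic flow on the torus and, by the hypothesis on smoothness of the KAM conjugacy, is $C^\infty$-conjugate to a linear flow $\dot\theta=\varpi$ on $\T$ for some Diophantine $\varpi$ (necessarily Diophantine for a smooth conjugacy to exist, cf.\ Herman).

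For $[\omega] \in H^1(\T,\R)\cong(\R^2)^*$ small I apply the variational characterisation of Proposition~\ref{prop char al-fctn via weak KAM} with the trial representative $\eta_0+\omega+df_\omega$, where $\omega$ denotes the unique constant 1-form in $[\omega]$ and $f_\omega\in C^\infty(\T)$ is chosen to kill the first-order $x$-dependence by solving the cohomological transport equation
\[ V(x)\cdot df_\omega(x) \;=\; \rho_0\cdot\omega - V(x)\cdot\omega. \]
The right-hand side is smooth and has mean zero against the unique $V$-invariant probability measure on $\T$; pulling back via the smooth conjugacy reduces it to the scalar equation $\varpi\cdot\partial_\theta\tilde f=\tilde g$ on $\T$, solvable by Fourier series thanks to the Diophantine condition on $\varpi$. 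Linearity of the data in $\omega$ ensures $f_\omega$ depends linearly on $\omega$ with $\|df_\omega\|_\infty=O(|\omega|)$.

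Substituting and Taylor-expanding $\tfrac12(F^*)^2$ in the fibre direction around $\eta_0(x)$ gives
\[ \tfrac12 F^*(\eta_0+\omega+df_\omega)^2(x) = \tfrac12 + V(x)\cdot(\omega+df_\omega) + \tfrac12(\omega+df_\omega)^\top A(x)(\omega+df_\omega) + O(|\omega|^3), \]
with $A(x):=\partial_p^2\bigl[\tfrac12(F^*)^2\bigr](x,\eta_0(x))$ positive definite and uniformly bounded (by the strong convexity of $F^*$ dual to that of $F$). By construction the linear term collapses to the constant $\rho_0\cdot\omega$, and the quadratic term is at most $C_1|\omega|^2$ uniformly in $x$. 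Taking $\max_x$ and the infimum over representatives in Proposition~\ref{prop char al-fctn via weak KAM} produces the upper bound on $\al_F$, and the Fenchel dualisation above concludes. The main technical obstacle is the smooth, linearly $\omega$-controlled solvability of the cohomological transport equation; once that is supplied by the Diophantine smooth conjugacy furnished by the KAM hypothesis, the rest is a clean Taylor expansion plus convex duality.
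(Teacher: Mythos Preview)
Your overall strategy --- prove a quadratic upper bound for $\al_F$ at $[\eta_0]$ via Proposition~\ref{prop char al-fctn via weak KAM} and then Fenchel-dualise to strong convexity of $\beta_F$ --- is exactly the Siburg-style argument the paper invokes. The issue is in your justification of the cohomological step. The parenthetical ``necessarily Diophantine for a smooth conjugacy to exist, cf.\ Herman'' is false: a smooth conjugacy to a linear flow does not force the frequency to be Diophantine (the linear flow itself is a counterexample). So as written, the solvability of $\varpi\cdot\partial_\theta\tilde f=\tilde g$ with smooth $\tilde f$ is not guaranteed by the hypotheses, and your bound $\|df_\omega\|_\infty=O(|\omega|)$ is unsupported.

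The fix is not to add a Diophantine hypothesis but to use the conjugacy more efficiently, which is presumably what the cited references do and why only $C^3$ regularity is needed. Let $\Psi:\T\to\T$ be the diffeomorphism obtained by composing $(\pi|_\TT)^{-1}$ with the KAM conjugacy, so that $\Psi_*V=\varpi$ is constant. Pull the whole picture back: set $\tilde F^*:=(\Psi^{-1})^*F^*$ and $\tilde\eta_0:=(\Psi^{-1})^*\eta_0$, so $\tilde F^*(\tilde\eta_0)\equiv1$ and $\theta_{\tilde F}(\tilde\eta_0)\equiv\varpi$. Now take as trial form the \emph{constant} $1$-form $c$ in the new coordinates. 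Taylor expansion of $\tfrac12(\tilde F^*)^2$ in the fibre gives
\[
\tfrac12\tilde F^*(\tilde\eta_0(y)+c)^2=\tfrac12+\varpi\cdot c+\tfrac12\,c^\top\tilde A(y)c+O(|c|^3),
\]
and the linear term is already constant in $y$ --- no cohomological equation at all. Taking $\max_y$ gives $\tfrac12+\varpi\cdot c+C|c|^2$. Since $\Psi$ is a diffeomorphism of $\T$ it acts on $H^1(\T,\R)$ by some $A\in GL(2,\Z)$; choosing $c=(A^\top)^{-1}[\omega]$ makes $[\Psi^*c]=[\omega]$, and a direct check shows $\varpi\cdot c=\rho_0\cdot[\omega]$ and $|c|\le C'|[\omega]|$. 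This yields the desired upper bound on $\al_F([\eta_0]+[\omega])$ without any arithmetic condition on $\varpi$, using only the $C^3$ regularity needed for the second-order Taylor remainder to be uniform.
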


The regularity of the KAM-torus $Dp(\M(\g_\xi))$ in Theorem \ref{thm stable norm paper irrational KAM} has to be at least $C^3$ for our argument to work.

In Theorem \ref{thm mather-bangert} \eqref{thm mather-bangert ii} we gave for rational $\xi$ a criterion on $\beta_F$ equivalent to $Dp(\M(\g_\xi))=Dp(\M^{per}(\g_\xi))$ being a KAM-torus (of regularity $C^0$). Theorems \ref{thm stable norm paper irrational} and \ref{thm stable norm paper irrational KAM} are a first step in the direction of solving the following problem.

\begin{prob}\label{quest irrat}
 Give a criterion on $\beta_F$ near a given irrational direction $\xi \in \se-\Pi_\T$, which is equivalent to the case where the set $Dp(\M(\g_\xi))$ is a $C^0$-KAM-torus for the geodesic flow $\phi_F^t$.
\end{prob}

\subsection{The higher genus case}\label{section applications higher genus}

Let us now assume that the genus $\mathfrak{g}$ of the closed, orientable surface $M$ is $\geq 2$. Recall the map
\[ \delta_F:\W_{dir}(H,F)\to \Gro(H,g)\cong \se \]
defined in Subsection \ref{section directed KAM}. We have the following result, which is analogous to Theorem \ref{thm horobdry torus}.

\begin{thm}\label{thm horobdry higher gen}
 Let $\xi \in \se$.
 \begin{enumerate}[(i)]
  \item\label{thm horobdry higher gen i} If $\xi\in \se-\Pi_M$, then $\card\delta_F^{-1}(\xi) =1$.
  
  \item\label{thm horobdry higher gen ii} If $\xi \in \Pi_M$, then $\card\delta_F^{-1}(\xi) =1$ if and only if $\pi(\M^{per}(\g_\xi))\subset H$ is connected, i.e.\ if the periodic minimal geodesics with endpoint $c(-\infty)=\xi$ foliate a strip in $H$ (possibly consisting of a single geodesic).
 \end{enumerate}
\end{thm}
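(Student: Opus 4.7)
The plan is to reduce both parts to Proposition \ref{prop unique horo if no intersection}, which states that $\card \delta_F^{-1}(\xi) = 1$ if and only if $\RR_-(\xi)$ admits no transversely intersecting pair of rays in the sense of Definition \ref{def transverse intersection}.

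For part \eqref{thm horobdry higher gen i}, the irrational points $\xi \in \se - \Pi_M$ are, by definition of $\Pi_M$, precisely those not fixed by any hyperbolic element of $\Gam$, while every point of $\se$ is $\al$-compact because $M$ is closed. Theorem \ref{thm rays-paper} then yields $w_-(\xi) = 0$, and Proposition \ref{width semi-cont}(iii) forbids intersecting rays.

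For the ``$\Leftarrow$'' direction of part \eqref{thm horobdry higher gen ii}, I would fix $\xi = \xi_\tau = \g_\tau(-\infty)$ for a hyperbolic $\tau \in \Gam$ and suppose $\pi(\M^{per}(\g_\xi))$ is disconnected. Pick neighbouring $\tau$-periodic minimal geodesics $c_0, c_1$; since both are $\tau$-invariant they share the two fixed points $\xi, \g_\tau(\infty)$ of $\tau$ on $\se$, so the heteroclinic families $\M^\pm(c_0, c_1)$ supplied by Theorem \ref{morse periodic}\eqref{morse periodic item 2a} both lie in $\RR_-(\xi)$. In the simply connected strip $S(c_0, c_1) \subset H$, a curve in $\M^+(c_0, c_1)$ traverses from $c_0$ at $-\infty$ to $c_1$ at $+\infty$, while one in $\M^-(c_0, c_1)$ traverses in the opposite direction; a standard planar topological argument forces them to intersect transversely, which gives $\card \delta_F^{-1}(\xi) \geq 2$.

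The ``$\Rightarrow$'' direction is the heart of the matter. Assume $S := \pi(\M^{per}(\g_\xi))$ is a connected strip with bounding leaves $c_\pm$ and decompose $H - S = S_- \sqcup S_+$ (each possibly empty). Using Theorem \ref{morse periodic}\eqref{morse periodic item 3} to exclude transverse crossings between a ray in $\RR_-(\xi)$ and any leaf of the periodic foliation of $S$, every such ray is either a leaf of the foliation, or has image entirely contained in one component $S_\pm$, in which case Theorem \ref{morse periodic}\eqref{morse periodic item 2} forces backward asymptoticity to the corresponding boundary leaf $c_\pm$. Transverse intersections in the leaf--leaf case are ruled out by the foliation structure, in the leaf--non-leaf case by Theorem \ref{morse periodic}\eqref{morse periodic item 3}, and between rays based in opposite components $S_-$ and $S_+$ by disjointness. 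The remaining case, of two non-periodic rays $c_v, c_w$ lying in, say, $S_-$ and both backward asymptotic to $c_-$, is the main obstacle. Assuming a transverse intersection $c_v(a) = c_w(b)$ with $\dot c_v(a) \neq \dot c_w(b)$, I would reparametrise so both rays start at a common base point $p$ with distinct velocities, producing a corner shortening $d_F(c_v(-\delta), c_w(-\delta)) \leq 2\delta - \e$ for some $\delta, \e > 0$, and then adapt the ``corner shortening plus $\tau$-translation plus minimality'' scheme from the proof of Theorem \ref{thm measures unstable} (itself a refinement of the sketch of Theorem \ref{morse periodic}\eqref{morse periodic item 3}): the $\tau$-periodicity of $c_-$, a $\tau$-recurrent directed weak KAM solution supplied by Corollary \ref{cor exist recurrent horofctns}, and the non-decreasing almost-calibration of Proposition \ref{int dH = 1} together produce, for each large $N$, synchronisation times $S_v^N, S_w^N \to -\infty$ and a translation $\tau^{m_N} \in \Gam$ such that $\tau^{m_N} c_v(S_v^N)$ and $c_w(S_w^N)$ are arbitrarily close in $F$-distance while the discrepancy $|S_w^N| - |S_v^N|$ is controlled; cashing in the corner saving $\e$ along a broken path through $p$ then violates the minimality of $c_w$ on $[S_w^N, 0]$. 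The principal technical difficulty is precisely this quantitative control, ensuring that the saving $\e$ is not swamped by the synchronisation cost along $c_-$, which is the exact analogue of the step in Theorem \ref{thm measures unstable} that forced $m_n^{v_k} = m_n^{w'}$ for large $n$. Once this contradiction is secured, Proposition \ref{prop unique horo if no intersection} delivers $\card \delta_F^{-1}(\xi) = 1$.
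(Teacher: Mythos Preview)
Your overall architecture is correct and matches the paper: both parts are reduced to Proposition~\ref{prop unique horo if no intersection}, with part~\eqref{thm horobdry higher gen i} coming from Theorem~\ref{thm irrat no intersections} (equivalently Theorem~\ref{thm rays-paper} plus Proposition~\ref{width semi-cont}(iii)) and part~\eqref{thm horobdry higher gen ii} from the structure in Theorem~\ref{morse periodic}. Two remarks, however.

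First, a bookkeeping slip: you have the two implications of part~\eqref{thm horobdry higher gen ii} swapped. Showing ``$\pi(\M^{per}(\g_\xi))$ disconnected $\Rightarrow$ intersecting rays exist'' is the contrapositive of the \emph{forward} direction of the biconditional, not the backward one.

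Second, and more substantively, you are working much too hard in the case of two non-periodic rays $c_v,c_w$ lying in the same component $S_-$. Both are backward asymptotic to the single boundary leaf $c_-$ by Theorem~\ref{morse periodic}\eqref{morse periodic item 2}, and from this it follows directly that they are backward asymptotic \emph{to each other}: using any $[u]\in\delta_F^{-1}(\xi)$ (e.g.\ the Busemann function of $c_-$) together with Proposition~\ref{int dH = 1}, one has $u\circ c_v(s)=s+O(1)$ and $u\circ c_w(t)=t+O(1)$, while $u\circ c_-(a)=a+\mathrm{const}$; since $c_v(s),c_w(t)$ are eventually $\e$-close to $c_-$, matching the $u$-values forces $d_g(c_v(s),c_w(t))\to 0$ along suitable pairs $(s,t)\to(-\infty,-\infty)$. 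But this contradicts Lemma~\ref{crossing minimals} directly if $c_v,c_w$ intersect transversely. No recurrent weak KAM solution, no $\tau$-synchronisation, and no adaptation of the Theorem~\ref{thm measures unstable} machinery is needed --- this is precisely why the paper can dispatch the rational case with the phrase ``follows from Theorem~\ref{morse periodic} as in the case of $M=\T$''. Your proposed argument would presumably work, but it obscures the fact that the whole point of Theorem~\ref{morse periodic}\eqref{morse periodic item 2} is to reduce everything to the asymptotic-rays case already handled by Lemma~\ref{crossing minimals}.
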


The uniqueness result for irrational $\xi$ follows from Theorem \ref{thm irrat no intersections}. The characterization in the rational case follows from Theorem \ref{morse periodic} as in the case of $M=\T$. Recall that in the case $M=\T$, the map $\delta_F$ was a homeomorphism if and only if the geodesic flow $\phi_F^t$ was $C^0$-integrable (Corollary \ref{cor horobdry torus}), which is a very special situation. By Theorem \ref{thm generic loops}, this is never the case for conformally generic Finsler metrics $F$ on $\T$. In the higher genus case, quite the opposite holds, using also Theorem \ref{thm generic loops}.

\begin{cor}\label{cor horobdry higher gen}
 If $F$ is conformally generic, then the map
 \[ \delta_F:\W_{dir}(H,F)\to \Gro(H,g) \cong \W_{dir}(H,g) \cong \se \]
 is a homeomorphism.
\end{cor}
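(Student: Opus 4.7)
The plan is to deduce, from Theorem \ref{thm horobdry higher gen}, that under conformal genericity $\delta_F$ is a bijection, and then to promote this continuous bijection to a homeomorphism by a compactness argument, finally composing with the identification $\W_{dir}(H,g)\cong\Gro(H,g)\cong\se$ from Subsection \ref{section hadamard horo}.

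Over every irrational $\xi\in\se-\Pi_M$ injectivity is automatic: $\card\delta_F^{-1}(\xi)=1$ by Theorem \ref{thm horobdry higher gen}\eqref{thm horobdry higher gen i}, with no hypothesis on $F$. The real work is over the rational points: by Theorem \ref{thm horobdry higher gen}\eqref{thm horobdry higher gen ii} it suffices to show, for each $\xi\in\Pi_M$, that $\pi(\M^{per}(\g_\xi))\subset H$ is connected. Fix a primitive hyperbolic $\tau_0\in\Gam_{hyp}$ generating the stabilizer of $\xi$ (this subgroup is infinite cyclic by Preissmann). First I would check that every element of $\M^{per}(\g_\xi)$ projects to a \emph{globally} shortest closed geodesic in its free homotopy class: if $\tilde c\in\M^{per}(\g_\xi)$ projects to a closed $F$-geodesic of length $T>\ell_0$, where $\ell_0$ is the length of a shortest representative $c_{\tau_0}$ of class $\tau_0$, then by minimality $d_F(\tilde c(0),\tau_0^k\tilde c(0))=kT$, while routing through the basepoint of a lift of $c_{\tau_0}$ gives $d_F(\tilde c(0),\tau_0^k\tilde c(0))\leq k\ell_0+C$ for a $k$-independent constant $C$, a contradiction for large $k$. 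Since $\pi_1(M)$ is torsion-free for $\mathfrak{g}\geq 2$, no class $\tau_0^k$ has order two, so Theorem \ref{thm generic loops} supplies, for conformally generic $F$, a unique shortest closed geodesic in each class $\tau_0^k$; by Steps 1--3 of the proof of Theorem \ref{morse lemma} (cf.\ Remark \ref{remark shorst closed geod are minimal}) this shortest is the $k$-fold iterate of $c_{\tau_0}$. Hence all of $\M^{per}(\g_\xi)$ lives on the single line $\tilde c_{\tau_0}(\R)\subset H$, where $\tilde c_{\tau_0}$ is the lift of $c_{\tau_0}$ with $\tilde c_{\tau_0}(-\infty)=\xi$, and $\pi(\M^{per}(\g_\xi))$ is connected.

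With injectivity in place, $\delta_F$ is a bijection; its continuity and surjectivity were noted in Subsection \ref{section directed KAM}. To upgrade to a homeomorphism I would use that $\Dom(H,F)$ is sequentially compact in the metrizable $C_{loc}^0$-topology and that $\W_{dir}(H,F)$ is closed in it, so $\W_{dir}(H,F)$ is compact. A continuous bijection from a compact space to the Hausdorff space $\se$ is automatically a homeomorphism, and composing with the homeomorphism $\delta_g:\W_{dir}(H,g)\to\Gro(H,g)\cong\se$ from Subsection \ref{section hadamard horo} completes the proof.

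The main obstacle is the rational-direction step: bridging the gap between the hypothesis, which controls only \emph{shortest} representatives through Theorem \ref{thm generic loops}, and the conclusion, which demands control over every $\tau_0$-periodic minimal geodesic. The iteration-plus-triangle-inequality argument sketched above is where the higher-genus assumption enters essentially, through the absence of order-two classes without which Theorem \ref{thm generic loops} would not apply to the iterated classes $\tau_0^k$. Everything else is a routine assembly of already-stated structure results and standard point-set topology.
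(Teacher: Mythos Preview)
Your proof is correct and follows the same approach as the paper, which gives no explicit argument beyond noting that the corollary follows from Theorem \ref{thm horobdry higher gen} together with Theorem \ref{thm generic loops}. You have simply filled in the details the paper leaves implicit: that $\tau_0$-periodic minimal geodesics project to shortest closed geodesics (the paper takes this as established in Subsection \ref{section rational dir 2-dim}), that torsion-freeness of $\pi_1(M)$ removes the order-two exclusion in Theorem \ref{thm generic loops}, and the compactness step needed to upgrade bijectivity to a homeomorphism (the paper records this earlier, in Problem \ref{prob when delta_F homeo}, as ``it is enough to show that $\delta_F$ is injective''). Your invocation of all iterated classes $\tau_0^k$ is harmless but unnecessary: since elements of $\M^{per}(\g_\xi)$ are by definition $\tau_0$-invariant, they project to shortest loops in the primitive class $\tau_0$, and applying Theorem \ref{thm generic loops} to that single class already forces $\pi(\M^{per}(\g_\xi))$ to be the one line $\tilde c_{\tau_0}(\R)$.
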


We have the natural compactification of the \Poincare disc $H$ as a subset of $\R^2$ by the unit circle $\se$. Corollary \ref{cor horobdry higher gen} can be rewritten using the horofunction compactification, see Subsection \ref{section horofctns}.

\begin{cor}\label{cor horocomp higher gen}
 If $F$ is conformally generic, then the embedding
 \[ i_F : H \to C^0(H)/_\sim \]
 extends to a homeomorphism from the closed disc $H\cup \se \subset \R^2$ with the Euclidean topology to the horofunction compactification $\overline{i_F(H)}$.
\end{cor}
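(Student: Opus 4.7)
The plan is to extend $i_F$ by sending each boundary point $\xi\in\se$ to the unique directed weak KAM solution of asymptotic direction $\xi$, and then to show that the resulting bijection from a compact space to a Hausdorff space is continuous.

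First I would upgrade Corollary \ref{cor horobdry higher gen} to the identity $\Hor(H,F)=\W_{dir}(H,F)$. The Busemann construction in Subsection \ref{section horofctns} produces for every $\xi\in\se$ a horofunction $[b_c]$ with $\delta_F([b_c])=\xi$ (take any backward $F$-ray $c$ with $c(-\infty)=\xi$), so $\delta_F|_{\Hor(H,F)}:\Hor(H,F)\to\se$ is surjective. Together with the bijectivity of $\delta_F$ on $\W_{dir}(H,F)$ from the previous corollary and the inclusion $\Hor(H,F)\subset\W_{dir}(H,F)$, this forces the two sets to coincide and $\delta_F|_{\Hor(H,F)}$ to be a homeomorphism onto $\se$. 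Now define $\bar i_F:H\cup\se\to\overline{i_F(H)}$ by $\bar i_F|_H:=i_F$ and $\bar i_F|_\se:=\delta_F^{-1}$; this is a bijection since $\overline{i_F(H)}=i_F(H)\sqcup\Hor(H,F)$ and each piece receives a bijection. The closed disc $H\cup\se$ is compact and $\overline{i_F(H)}\subset C^0(H)/_\sim$ is Hausdorff, so it suffices to verify continuity of $\bar i_F$.

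Continuity on $H$ is standard and continuity within $\se$ is recorded above, so the remaining mixed case is the main content: if $x_n\in H$ and $x_n\to\xi\in\se$ Euclideanly, then $i_F(x_n)\to[u_\xi]:=\delta_F^{-1}(\xi)$ in $C_{loc}^0$. To prove this I would normalize $u_n:=d_F(x_n,\cdot)-d_F(x_n,y)$ at a fixed basepoint $y$, note that $d_g(y,x_n)\to\infty$ forces $d_F(y,x_n)\to\infty$, and use sequential compactness of $\Dom(H,F)$ so that any subsequential limit $[u]$ lies in $\Hor(H,F)=\W_{dir}(H,F)$. It then remains to check $\delta_F([u])=\xi$. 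Let $c_n:[-T_n,0]\to H$ with $T_n=d_F(x_n,y)$ be the $F$-arc-length minimizing segment with $c_n(-T_n)=x_n$ and $c_n(0)=y$. A direct computation gives $u_n\circ c_n(t)=t$, so $c_n$ calibrates $[u_n]$. After passing to a subsequence so that $\dot c_n(0)\to v_\infty$, the segments $c_n$ converge locally uniformly to the $F$-geodesic $c_{v_\infty}$, which is a backward ray by minimality on arbitrarily long intervals; passing to the limit in the calibration identity shows that $c_{v_\infty}$ calibrates $[u]$, so $\delta_F([u])=c_{v_\infty}(-\infty)$.

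The main obstacle is the identification $c_{v_\infty}(-\infty)=\xi$. Here I would invoke the Morse Lemma (Theorem \ref{morse lemma}): each $c_n$ stays within the uniform distance $C$ of the $g$-geodesic segment joining $x_n$ to $y$, and in the Euclidean topology of the closed disc these $g$-segments converge on compacta to the unique $g$-ray $\g_\xi:\R_-\to H$ with $\g_\xi(0)=y$ and $\g_\xi(-\infty)=\xi$. Passing to the limit yields that $c_{v_\infty}(\R_-)$ is at Hausdorff distance at most $C$ from $\g_\xi(\R_-)$, so $c_{v_\infty}$ and $\g_\xi$ represent the same point of $\Gro(H,g)$, i.e.\ $c_{v_\infty}(-\infty)=\xi$. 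Hence $[u]=[u_\xi]$, every convergent subsequence of $[u_n]$ has the same limit, and the continuity claim is proved; the corollary now follows from the compact-to-Hausdorff principle.
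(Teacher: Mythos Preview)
Your argument is correct and is precisely the natural way to make rigorous what the paper leaves implicit: the paper does not supply a proof of this corollary but simply remarks that Corollary \ref{cor horobdry higher gen} ``can be rewritten using the horofunction compactification,'' and your proof is the standard unpacking of that remark (identifying $\Hor(H,F)$ with $\W_{dir}(H,F)$ via Busemann functions and injectivity of $\delta_F$, then checking continuity of the extension via the Morse Lemma and the compact--Hausdorff principle).
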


In Subsection \ref{section hadamard horo} we saw that this was the case a priori only for Finsler metrics on $H$ with no conjugate points, meaning $\RR_-=\M=S_FH$.

In Subsection \ref{section mather torus} we saw that the map $\delta_F:\W_{dir}(H,F)\to \se$ shares some features with the convexity properties of the set $B^*(F)=\{\al_F\leq 1/2\}$. Reading Corollary \ref{cor horobdry higher gen} in this light proposes the following problem.

\begin{prob}\label{prob massart}
 Given a conformally generic Finsler metric $F$ on a closed, orientable surface $M$ of genus $\mathfrak{g}\geq 2$, are the convexity properties of the set $B^*(F)$ the same as those of $B^*(g)$ with respect to the metric $g$ constant curvature $-1$?
\end{prob}

In order to discuss an approach to Problem \ref{prob massart}, we consider Mather theory in the higher genus case. We shall follow ideas of Massart from \cite{massart2}. First, observe that there are no continuous $\phi_F^t$-invariant graphs in $S_FM$, so any weak KAM solution has necessarily points of non-differentiability and the Aubry sets $\A_{[\eta]}\subset S_FM$ with $[\eta]\in\partial B^*(F)$ cannot project onto $M$. Instead, the projection $\pi(\A_{[\eta]})$ yields a geodesic lamination.

\begin{defn}
 An {\em $F$-geodesic lamination of $M$} is a closed subset $L\subset M$ consisting of pairwise disjoint images of $F$-geodesics.
\end{defn}

Motivated by the ideas of Massart \cite{massart2} we can assign to a geodesic lamination $L\subset M$ its ``homological complexity''.

\begin{defn}
 Let $L\subset M$ be an $F$-geodesic lamination. A closed, piecewise $C^1$ curve $c:\R/\Z\to M$ is an $(\e,L)$-curve, if the part $c(\R/\Z)-L$ has total length at most $\e$. Then let us write
 \[ V(\e,L) := \Span \{ [c]\in H_1(M,\R) :  \text{$c$ is a closed $(\e,L)$-curve} \} \]
 and consider the vector subspace
 \[ V(L) := \bigcap_{\e>0} V(\e,L) \subset H_1(M,\R). \]
 The {\em homological complexity of the $F$-geodesic lamination $L$} is given by the integer
 \[ \hc(L) := \dim V(L) . \]
\end{defn}

Note that in the definition of $V(L)$ it makes no difference if we take the $F$- or $g$-length. As an example consider the case of the 2-torus $(\T,F)$. The $F$-geodesic lamination $L$ of $\T$ induced by $\M^{rec}(\xi)$ for $\xi$ with irrational slope satisfies $\hc(L)=2$, which follows from the results in Subsection \ref{section irrational torus}. Of course, the same holds of $\M(\xi)\supset\M^{rec}(\xi)$. If $\xi$ has rational slope, then for the lamination $L$ induced by $\M^{per}(\g_\xi)$ with $\g_\xi(t)=x+t\xi$ we have $\hc(L)=2$, if $\T$ is foliated by periodic minimal geodesics in the homotopy class of $\g_\xi$ and $\hc(L)=1$ otherwise. These facts are related to Theorem \ref{thm mather-bangert}, as we shall see. The sets $\M^{per}(\g_\xi)\cup (\M(\g_\xi)\cap \RR_-^i(\xi))$ defined after Theorem \ref{morse periodic} induce laminations $L$ with $\hc(L)=2$ as the heteroclinics can be used to span homology classes linearly independent of $[\g_\xi]$. This is related to the strict convexity of $\al_F$ in the endpoints $[\eta_i]$ of the segment $\mathcal{F}_\xi\subset\partial B^*(F)$.

We recall definitions from \cite{massart1} and \cite{massart2}.

\begin{defn}
 A {\em flat of $B^*(F)=\{\al_F\leq 1/2\}$} is the set $\mathcal{F} = A\cap \partial B^*(F)$, where $A$ is an affine subspace of $H^1(M,\R)$ meeting $B^*(F)$, but not the interior $B^*(F) - \partial B^*(F)$. The dimension $\dim \mathcal{F}$ of a flat $\mathcal{F}$ is the dimension $\dim\Aff(\mathcal{F})$ of the affine subspace $\Aff(\mathcal{F})$ spanned by $\mathcal{F}$. The interior of $\mathcal{F}$ is the interior of $\mathcal{F}$ in $\Aff(\mathcal{F})$. Given $[\eta]\in \partial B^*(F)$, the {\em maximal face $\mathcal{F}_{[\eta]}\subset \partial B^*(F)$ of $[\eta]$} is given by the maximal face (with respect to inclusion) containing $[\eta]$ in its interior.
\end{defn}

Note that, by $\beta_F$ being the convex conjugate of $\al_F$, the face $\mathcal{F}_{[\eta]}$ relates to directions of non-differentiability of $\beta_F$.

The following problem was stated as part of Theorem 1 in \cite{massart2}. While Massart pointed out some gaps in the erratum \cite{massart erratum}, observe that our Theorem \ref{thm rays-paper} could fix the gaps in \cite{massart2}. See also the arguments in the proof of Theorem \ref{thm hom complex}.

\begin{prob}\label{prob massart 1}
 Let $M$ be a closed orientable surface of genus $\mathfrak{g}\geq 1$. Is it true, that
 \[ \hc(\pi(\A_{[\eta]})) = \dim H^1(M,\R) - \dim \mathcal{F}_{[\eta]} ? \]
\end{prob}

This means that the more homology classes the projected Aubry set spans, the more strict convexity should be seen in $\al_F$ or, equivalently, the more differentiable does $\beta_F$ become. Note that by convexity of $\beta_F$ there exist many points of differentiability for $\beta_F$ and hence, many faces $\mathcal{F}_{[\eta]}$ are single points. In Theorem \ref{thm mather set irrat} we observed in this case a uniqueness result for weak KAM solutions for $M=\T$. Hence, one could expect that for many $[\eta]$, the $\eta$-weak KAM solution is unique. Observe also, that Problem \ref{prob massart 1} is related to conjectures due to \Mane in dimension two, see \cite{massart2}.

We wish to relate the homological complexity of the Aubry sets $\A_{[\eta]}$ to the background metric $g$. For $v\in \A_{[\eta]}$, consider any lift $\tilde c_v:\R\to H$ of $c_v:\R\to M$ and let $\g \subset H$ be the unique $g$-geodesic shadowing the minimal geodesic $\tilde c_v$. The totality of all such $g$-geodesics, projected back to $M$ yields a $g$-geodesic lamination
\[ \Lam(\A_{[\eta]})\subset M . \]
The fact that it is a lamination follows from the corresponding property of $\A_{[\eta]}$. Similarly, we obtain a lamination $\Lam(\M_{[\eta]})\subset M$ from the Mather set $\M_{[\eta]}\subset S_FM$ and $\Lam([u])$ from the set of $[u]$-calibrated curves with respect to an $\eta$-weak KAM solution $[u]$. More generally, let $L\subset M$ be an $F$-geodesic lamination consisting of minimal geodesics. Lifting each $F$-geodesic in $L$ to $H$, we can associate to it a unique $g$-geodesic in finite distance. The totality of such $g$-geodesics in $M$ defines a $g$-geodesic lamination $\Lam(L)$. Note that the structure of $g$-geodesic laminations is studied e.g.\ in \cite{casson-bleiler}.

The following theorem points to an affirmative solution to Problem \ref{prob massart} via the formula in Problem \ref{prob massart 1}. 

\begin{thm}\label{thm hom complex}
 If $L\subset M$ is an $F$-geodesic lamination consisting entirely of non-closed of minimal geodesics, then the homological complexity satisfies
 \[ \hc(L) = \hc(\Lam(L)) . \]
\end{thm}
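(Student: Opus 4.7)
The plan is to prove $V(L)=V(\Lam(L))$ as subspaces of $H_1(M,\R)$, which gives $\hc(L)=\hc(\Lam(L))$. My strategy is to construct a self-homeomorphism $\phi\colon M\to M$, homotopic to the identity and $K$-Lipschitz with respect to $d_g$, such that $\phi(L)=\Lam(L)$. Given such a $\phi$, for every $(\e,L)$-curve $c\colon\R/\Z\to M$ the image $c'=\phi\circ c$ is a $(K\e,\Lam(L))$-curve with $[c']=[c]$; hence $V(\e,L)\subset V(K\e,\Lam(L))$, so taking intersections over $\e\searrow 0$ gives $V(L)\subset V(\Lam(L))$, and the same argument applied to $\phi^{-1}$ yields equality.

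The construction of $\phi$ is carried out in the universal cover. The Morse Lemma (Theorem \ref{morse lemma}) combined with Lemma \ref{crossing minimals} gives a $\Gam$-equivariant leaf-bijection between $\tilde L$ and $\tilde \Lam(L)$: each $F$-geodesic leaf $\tilde c\subset\tilde L$ has a unique $g$-geodesic shadow $\tilde\g\subset\tilde\Lam(L)$ within Hausdorff distance $C$, and the non-crossing property of minimal geodesics forbids leaves from swapping shadows. I would define $\tilde\Phi\colon H\to H$ by: on each leaf $\tilde c$, send $x$ to the foot of the $g$-perpendicular from $x$ onto $\tilde\g$; then extend continuously across $\tilde L$ using the local product structure of a one-dimensional lamination in a surface, and interpolate to the identity outside a uniform $g$-neighborhood via a smooth cut-off. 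Equivariance under $\Gam$ is automatic from the equivariance of the shadowing, and $d_g(\tilde\Phi(x),x)\leq C$ for all $x\in H$ gives a canonical homotopy to the identity via the $g$-geodesic segment from $x$ to $\tilde\Phi(x)$ (unique in the Hadamard case, Euclidean in the torus case).

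Descending to $\phi\colon M\to M$ gives a near-identity Lipschitz homeomorphism with $\phi(L)=\Lam(L)$. For an $(\e,L)$-curve $c$, the portion of $c'=\phi\circ c$ lying outside $\Lam(L)$ is exactly the $\phi$-image of the portion of $c$ lying outside $L$, and so has length at most $K\e$. Homology is preserved because $\phi\simeq\id_M$. This proves $V(\e,L)\subset V(K\e,\Lam(L))$, and the theorem follows.

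The main obstacle is constructing $\tilde\Phi$ as a \emph{globally bi-Lipschitz}, $\Gam$-equivariant homeomorphism with a Lipschitz constant $K$ independent of the leaf. This requires Lipschitz control of the shadowing map on transversals, which is finer than the $C^0$ Hausdorff bound provided by the Morse Lemma and must be derived from quantitative non-crossing estimates refining Lemma \ref{crossing minimals}. The hypothesis that every leaf of $L$ is non-closed is crucial at this step: it excludes isolated periodic leaves with nontrivial transverse holonomy (which would obstruct a Lipschitz extension across $\tilde L$) and, in the higher-genus case, rules out the thickened-annulus obstruction noted after Theorem \ref{thm rays-paper}, so that the leaf-bijection has no accumulation of closed leaves with mismatched $F$- and $g$-periods.
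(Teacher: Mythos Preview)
Your approach has a genuine gap that is not merely technical: the global bi-Lipschitz homeomorphism you want almost certainly does not exist. The obstruction lies exactly in the cusps of the complementary regions. Two asymptotic $g$-geodesics in $H$ approach each other at a definite exponential rate, while for the corresponding $F$-leaves Theorem \ref{thm rays-paper} gives only $\liminf_{t\to\infty} d_g(c_v(\R_-),c_w(-t))=0$; the transverse distance between asymptotic $F$-minimal geodesics need not even be monotone (Problem \ref{problem rays asymptotic} is open). Any homeomorphism carrying $L$ to $\Lam(L)$ leafwise must therefore compress and stretch transversally by arbitrarily large factors near the ideal vertices, so no uniform Lipschitz constant $K$ can exist. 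Your own final paragraph identifies this as ``the main obstacle'' and appeals to ``quantitative non-crossing estimates refining Lemma \ref{crossing minimals}'' --- but such estimates are not available, and the paper does not prove them.

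The paper's argument avoids this by working directly with the curves rather than through a global map. It uses the classification of complementary regions of $g$-geodesic laminations (Casson--Bleiler): every component of $M-\Lam(L)$ is an ideal polygon or a subsurface with crowns, so each boundary component is a finite cyclic chain of pairwise asymptotic $g$-geodesics. An $(\e,\Lam)$-curve $\al$ meets each complementary region in short arcs $\al_0$ near one such cyclic chain. After an isotopy carrying the $g$-leaves to their $F$-counterparts, the endpoints of $\al_0$ land on successive $F$-leaves $c_\g$; now Theorem \ref{thm rays-paper} says these $F$-leaves come arbitrarily close in the cusp, so one can \emph{push $\al_0$ further into the cusp along pieces of $L$} until the remaining off-$L$ gap is as short as desired. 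This produces a homotopic $(\e,L)$-curve without any Lipschitz control on the correspondence --- the shortness comes from the asymptotic geometry of the $F$-lamination itself, not from stretching under a map. That is the idea your proposal is missing.
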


\begin{proof}[Sketch of the proof of Theorem \ref{thm hom complex}]
 (see Figure \ref{fig_crown}) We abbreviate $\Lam=\Lam(L)$. Each $g$-geodesic $\g$ in $\Lam$ is the image of an $F$-minimal geodesic $c_\g$ in $L$ under the correspondence $L\mapsto \Lam(L)$ ($c_\g$ need not be unique). Imagine an isotopy $\{\psi_t\}_{t\in[0,1]}$ of $M$ taking each $\g$ in $\Lam$ into $c_\g$ in $L$, not worrying about the existence in this sketch. By Lemma 4.4 in \cite{casson-bleiler}, each component $U$ of $M-\Lam$ is either a finite-sided, ideal polygon with geodesic boundary or consists of a compact subsurface $M_0\subset M$ with finitely many boundary components consisting of closed geodesics, attaching to each boundary geodesic a ``crown''. Here we use that there are no closed geodesics in $L$ and as $L$ is closed, there are no closed geodesics in $\Lam$. In any case, each boundary component of $U$ consists of a finite, cyclic family of successively asymptotic $g$-geodesics in $\Lam$. Let $\al$ be an $(\e,\Lam)$-curve. Any subsegment $\al_0$ of $\al$ contained in the closure of a component $U\subset M-\Lam$ has to lie $\e$-close to a cyclic boundary part of $U$ and for $\e\ll 1$ it has to be disjoint from the other boundary parts. Under the isotopy $\{\psi_t\}$, the segment $\al_0$ becomes a segment $\psi_1(\al_0)$ connecting two points in the image under $\psi_1$ of the cyclic boundary of $U$, lying on geodesics of the form $c_\g=\psi_1(\g)$. By Theorem \ref{thm rays-paper}, successive geodesics in the cyclic boundary part of $\psi_1(U)$ come arbitrarily close to each other in the ends and we push $\psi_1(\al_0)$ further into the ends (using parts of $L$) obtaing a segment $a_0$, so that $a_0-L$ has length at most that of $\al_0-\Lam$. This shows that any $(\e,\Lam)$-curve $\al$ is homotopic to an $(\e,L)$-curve $a$, if $\e\ll 1$. By the same arguments one shows that any $(\e,L)$-curve is homotopic to an $(\e,\Lam)$-curve for small $\e$. The claim follows.
\end{proof}

\begin{figure}\centering%[!htb]
 \includegraphics[scale=0.5]{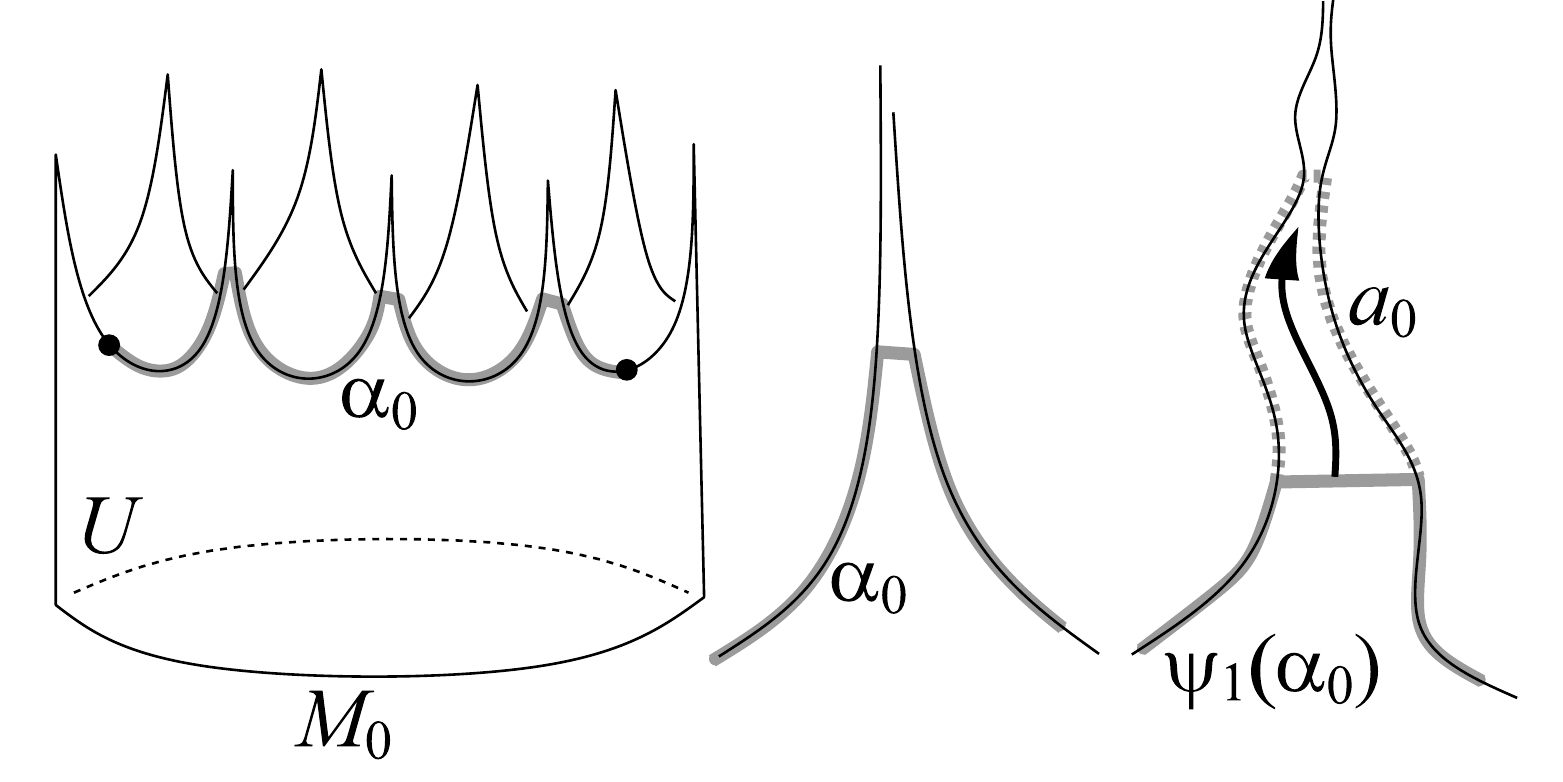}
 \caption{The ``crown'' as part of the component $U$ with the curve segment $\al_0$ and $a_0$ in $\pi_1(U)$. \label{fig_crown}}
\end{figure}

Observe that we used Theorem \ref{thm rays-paper} in order to prove Theorem \ref{thm hom complex}, applied to {\em simple} geodesics. While Theorem \ref{thm rays-paper} has a rather complicated proof in general, the case of simple geodesics is much easier, see Lemma 2.14 in \cite{min_rays} and Proposition 3.16 in \cite{paper1}. We also expect in this case, that it is not hard to prove that $\liminf$ can be replaced by $\lim$ in Theorem \ref{thm rays-paper}.

We expect (a version of) Theorem \ref{thm hom complex} to be true without the assumption that the geodesics in $L$ are not closed, but this assumption simplifies the proof. For the general case involving closed geodesics, one might want to assume that $F$ is conformally generic is in order to have $\M(\g)$ consist of a single minimal geodesic for periodic $g$-geodesics $\g$ (Theorem \ref{thm generic loops}). This shows that $F$-geodesics with rational asymptotic directions are asymptotic (Theorem \ref{morse periodic}). Without the genericity assumption, the number $\hc(L)$ might a priori decrease if there exists an embedded annulus $A\subset M$ bounded by pair of homotopic, periodic $F$-geodesics in $L$ and if in the lamination of $A$ by $L$ not every gap is bridged by a heteroclinic contained in $L$.

Theorem \ref{thm hom complex} in connection with Problems \ref{prob massart} and \ref{prob massart 1} motivates the following question.

\begin{prob}
 Given $[\eta]\in\partial B^*(F)$, does there exist $[\eta_0]\in \partial B^*(g)$ with
 \[ \Lam(\pi(\A_{[\eta]}^F)) = \pi(\A_{[\eta_0]}^g)  \]
 and vice versa, writing $\A_{[\eta]}^F$ for the Aubry set with respect to $F$?
\end{prob}

We have related the structure of laminations by $F$-minimal geodesics to the background metric $g$. Recall that in Subsection \ref{section av action} we defined the Mather set $\M^h$ of a homology class $h\in H_1(M,\R)$. Note that $H_1(M,\Z)$ lies naturally in $H_1(M,\R)$. Theorem \ref{thm hom complex} applies in particular to projected Mather sets $\pi(\M^h)\subset M$. If $h$ is completely irrational, then $\hc(\pi(\M^h))$ can be linked to $g$-geodesic laminations. Let us in this connection state the following result of Mather sets $\M^h$ with $h$ being rational, generalizing Theorem \ref{mather set rational torus}, see \cite{contreras2}. 

\begin{thm}
 If for $h\in \partial B(F)$ there exists $a>0$ with $ah\in H_1(M,\Z)$, then $\M^h$ consists of periodic minimal geodesics.
\end{thm}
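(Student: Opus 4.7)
The plan is to mimic the argument in the proof of Theorem \ref{mather set rational torus} for the torus, adapting the central step---rational rotation implies rational asymptotic direction---to the hyperbolic setting. Fix $v\in\M^h$ and a minimizing measure $\mu$ with $\rho(\mu)=h$ and $v\in\supp\mu$, and pick $[\eta]\in\partial B^*(F)$ Fenchel-dual to $h$, so $\langle[\eta],h\rangle=1$. By Lemma \ref{lemma critical value}\eqref{lemma critical value iii} and the strengthening in the following remark, every orbit in $\supp\mu$ is $[\eta]$-static and hence lifts to a minimal $F$-geodesic $\tilde c_v:\R\to H$. Using that ergodic components of a minimizer are themselves minimizers (at rotation vectors in the face of $h$), we reduce to an ergodic $\mu$ with $\rho(\mu)=h$ and take a $\mu$-generic $v$, so that $\dot c_v$ is Poincar\'e-recurrent and Birkhoff-typical.

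The heart of the argument is to identify the $g$-geodesic $\g$ shadowing $\tilde c_v$ (via the Morse Lemma, Theorem \ref{morse lemma}) with a $\Gam$-translate of a periodic axis. Since $ah=z\in H_1(M,\Z)$, pick $\tau_0\in\Gam$ with $[\tau_0]=z$; by Remark \ref{remark shorst closed geod are minimal} its free-homotopy class contains a shortest closed $F$-geodesic $c_*$ lifting to a $\tau_0$-periodic minimal geodesic with axis $\g_{\tau_0}$, and $\g_{\tau_0}(\pm\infty)\in\Pi_M$. By Poincar\'e recurrence choose $\tau_n\in\Gam$ and $T_n\to\infty$ with $\tau_n^{-1}\tilde c_v(T_n)\to\tilde c_v(0)$; by Birkhoff applied to closed 1-forms, $[\tau_n]/T_n\to h$ in $H_1(M,\R)$. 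Following the template of the proof of Theorem \ref{thm measures unstable}, one passes to the cyclic cover $C_{\tau_0}=H/\langle\tau_0\rangle$ and works with the $d_{F-\eta}$-distance there; a simultaneous-recurrence and triangle-inequality argument---identical in spirit to the one given in Section \ref{section hyperbolic cylinder}---then shows that $\g$ coincides with $\g_{\tau_0}$ up to a $\Gam$-translate, for otherwise a short-cut at the ``vertex'' contradicts the $[\eta]$-staticity of $c_v$. In particular, $\g(\pm\infty)\in\Pi_M$.

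With rational asymptotic endpoints in hand, Theorem \ref{morse periodic}\eqref{morse periodic item 2a} implies that $\tilde c_v$ is either $\tau_0^k$-periodic for some $k\ge 1$ (so $c_v$ is closed in $M$) or heteroclinic between a pair of neighboring $\tau_0$-periodic minimal geodesics. The heteroclinic alternative is ruled out by recurrence: the $\alpha$- and $\omega$-limit sets of $\dot c_v$ in $S_FM$ then consist only of the $\phi_F^t$-orbits of the two bounding closed geodesics, so $\dot c_v(0)$ is not in its own $\omega$-limit, contradicting Poincar\'e recurrence. Hence $c_v$ is closed; by ergodicity of $\mu$, its support is the $\phi_F^t$-orbit of a single closed geodesic, and so $\M^h$ consists entirely of periodic minimal geodesics.

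The main obstacle is the identification step in the second paragraph: that the Morse-shadowing $g$-geodesic $\g$ coincides with a translate of $\g_{\tau_0}$. Unlike the torus, where $C^1$-regularity of Mather's $\alpha$-function (Theorem \ref{thm al C^1 torus}) pins the asymptotic direction down from the rotation vector, here $\pi_1(M)$ is non-abelian and $h\in H_1(M,\R)$ does not uniquely specify a free-homotopy class. One therefore has to exploit the $[\eta]$-static structure and Birkhoff recurrence in concert---effectively running the proof of Theorem \ref{thm measures unstable} with $\tau_0$ playing the role of the given deck transformation---to rule out any ``wrong'' direction sharing the same homological escape rate as $h$.
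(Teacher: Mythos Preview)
The paper does not give its own proof of this theorem; it merely states the result and attributes it to \cite{contreras2}. So there is no ``paper's proof'' to compare against, and your proposal must stand on its own.

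Your plan has a genuine gap at precisely the step you yourself flag as the main obstacle. You want to conclude that the $g$-geodesic $\g$ shadowing $\tilde c_v$ is, up to a $\Gam$-translate, the axis $\g_{\tau_0}$ of some fixed $\tau_0$ with $[\tau_0]=z=ah$. But in genus $\mathfrak g\ge 2$ the homology class $z\in H_1(M,\Z)$ does not determine a conjugacy class in $\Gam\cong\pi_1(M)$, let alone a pair of endpoints in $\Gro(H,g)$: there are infinitely many pairwise non-conjugate $\tau\in\Gam$ with $[\tau]=z$, and there are also irrational points $\xi\in\se$ whose associated $g$-rays project to geodesics whose homological drift equals $h$. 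The recurrence data you extract---a sequence $\tau_n\in\Gam$ with $[\tau_n]/T_n\to h$---says nothing about the \emph{conjugacy classes} of the $\tau_n$, so it cannot pin down an axis. Invoking the argument of Theorem \ref{thm measures unstable} is circular: that proof takes place in the cylinder $C_\tau$ and presupposes that the orbit already lies in $\M(\g_\tau)$, which is exactly what you are trying to establish. Without an independent mechanism to force $\tilde c_v(\pm\infty)\in\Pi_M$, the argument does not close.

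There is a second, smaller issue in your reduction to an ergodic $\mu$ with $\rho(\mu)=h$. Ergodic components of a minimizing measure with rotation vector $h$ are minimizing, but their rotation vectors can lie anywhere in the face of $\partial B(F)$ containing $h$; they need not equal $h$, and the extreme points of that face need not be rational. So showing ``ergodic with rational rotation vector implies periodic'' would not by itself cover all of $\M^h$. The actual argument in \cite{contreras2} does not proceed via asymptotic directions at all; it exploits Mather's graph theorem---$\pi|_{\M_{[\eta]}}$ is injective, so $\pi(\M_{[\eta]})$ is a geodesic lamination of $M$---together with a direct variational/homological argument showing that a non-closed leaf in the support of a minimizing measure with rational rotation vector would allow one to strictly decrease the action, contradicting minimality.
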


\section{Topological entropy}\label{section entropy}

In this section, let $M$ be a closed, orientable surface. We saw that minimal geodesics lie at the heart of integrable behavior in geodesic flows. In this section, we study a number expressing the dynamical complexity of the geodesic flows, namely the topological entropy $\h(\phi_F^t,S_FM)$. Let us refer to \cite{KH} for the definition and more information on the topological entropy $\h(\phi^t,X)=\h(\phi^1,X) \in [0,+\infty]$ of a dynamical system $\phi^t:X\to X$ ($t\in \Z$ or $\R$) in a compact metrizable space $X$. It measures the growth of distinguishable orbits on an exponential scale. By a result of Katok (Corollary 4.3 in \cite{katok1}), $\h(\phi^t,X)>0$ is in low dimensional systems ($\dim X=2, t\in \Z$ or $\dim X=3, t\in\R$) equivalent to the existence of non-trivial, hyperbolic invariant sets (horse-shoes) and the existence of transverse intersections of stable and unstable manifolds. On the other hand, if $\h(\phi^t,X)=0$, one might expect a rather ordered behavior of $\phi^t$ -- the smaller $\h(\phi^t,X)$, the more integrable behavior there can be, in particular for the set of minimal geodesics, if $\phi^t=\phi_F^t$ and $X=S_FM$. This is indeed the case, as we shall see. Moreover, the structure in the set of minimal geodesics can be applied to study the overall geodesic flow.

\subsection{The entropy of minimal geodesics}\label{section GKOS}

We saw that minimal $F$-geodesics behave in much the same way as $g$-geodesics. Hence, one expects that the topological entropy $\h(\phi_F^t,Dp(\M))$ of the restricted geodesic flow $\phi_F^t|_{Dp(\M)}$ resembles that of $\phi_g^t$ in $S_gM$. In \cite{Ma}, Manning introduced the volume entropy (also called volume growth) $h(F)$ of $(H,F)$ defined by
\[ h(F):=\lim _{r \rightarrow \infty }\frac{1}{r}\log \vol_g B_F(x,r), \]
where $x\in H$ and $B_F(x,r)$ denotes the (forward) open $d_F$-ball with center $x$ and radius $r$. Note that, if a Riemannian metric $g'$ is equivalent to $g$, then we can measure $h(F)$ both in $g$ and $g'$ with the same resulting number. Hence, $g$ does not appear in the notation $h(F)$. We have in general the following relation.

\begin{thm}[Theorem 9.6.7 in \cite{KH}]
 \[ \h(\phi_F^t,Dp(\M)) \geq h(F). \]
\end{thm}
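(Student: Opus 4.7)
The plan is to adapt Manning's classical argument for the volume-entropy lower bound (\cite{KH}, Theorem 9.6.7), taking advantage of the fact that every orbit the argument produces arises as the initial vector of a minimizing geodesic segment, so that---after a limiting step---one lands inside the closed invariant set $Dp(\M)$ rather than merely in $S_FM$.

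Fix a basepoint $x \in H$. For any $\delta > 0$, a standard coarea/volume-growth argument (together with the uniform equivalence of $F$ and $g$) produces a sequence $r_n \to \infty$ and $\epsilon$-separated subsets $\{y^{(n)}_1, \ldots, y^{(n)}_{N_n}\}$ of the $F$-distance sphere $\{y \in H : d_F(x,y) = r_n\}$ with $N_n \geq C_\epsilon \, e^{(h(F) - \delta) r_n}$. For each $y^{(n)}_i$ pick a unit-speed minimizing $F$-geodesic $c^{(n)}_i : [0, r_n] \to H$ from $x$ to $y^{(n)}_i$, and set $v^{(n)}_i := \dot c^{(n)}_i(0) \in S_F H$. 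Since $d_F(c_{v^{(n)}_i}(r_n), c_{v^{(n)}_j}(r_n)) = d_F(y^{(n)}_i, y^{(n)}_j) \geq \epsilon$, the vectors $v^{(n)}_i$ are pairwise $(r_n, \epsilon)$-separated in the Bowen metric of $\phi_F^t$, and their projections $Dp(v^{(n)}_i) \in S_FM$ inherit the same separation up to the factor $c_F$.

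The delicate step is that the $Dp(v^{(n)}_i)$ a priori only lie in $Dp(\RR_+)$, since $c^{(n)}_i$ is only minimal on a finite interval. To push them into $Dp(\M)$, replace the one-sided segments by two-sided minimizing segments $\tilde c^{(n)}_i : [-r_n, r_n] \to H$ through $x$, obtained by joining pairs of points at $F$-distance $r_n$ from $x$ on ``opposite sides''. By compactness of $S_FM$ and closedness of $Dp(\M)$, any limit point of the midpoint vectors $Dp(\dot{\tilde c}^{(n)}_i(0))$ lies in $Dp(\M)$, and the Bowen-metric separation on any fixed window $[-T,T]$ is upper semi-continuous under such limits. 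A diagonal argument then yields, for each large $T$, at least $N_n$ points of $Dp(\M)$ that are $(T, \epsilon/(2c_F))$-separated for $n$ sufficiently large; choosing $T = r_n$ and letting $\delta \to 0$ gives $\h(\phi_F^t, Dp(\M)) \geq h(F)$.

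The main obstacle is this final step: ensuring that the limiting procedure does not collapse the $N_n$ separated vectors onto a much smaller set in $Dp(\M)$. Here one invokes Lemma \ref{crossing minimals}, which provides a uniform rigidity: two distinct rays emanating from the same point cannot become arbitrarily close as the parameter grows. Combined with the quantitative $(r_n,\epsilon)$-separation obtained above, this rules out degenerate collapse and preserves the entropy count $N_n$ in the limit, completing the proof.
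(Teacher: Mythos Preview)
The paper does not prove this statement; it is quoted as Theorem~9.6.7 of Katok--Hasselblatt \cite{KH} (Manning's volume-entropy inequality) and no argument is supplied. So there is nothing in the paper to compare your approach against.

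Your adaptation of Manning's argument is in the right spirit, but the closing steps contain a real gap. The difficulty you correctly identify---that Manning's $(T,\epsilon)$-separated vectors are initial conditions of minimizing \emph{segments} of length $T$, not elements of $\M$---is not resolved by your construction. First, a minimizing segment joining two points on opposite $F$-spheres of radius $r_n$ about $x$ has no reason to pass through $x$, so your ``two-sided minimizing segments $\tilde c^{(n)}_i$ through $x$'' are not well-defined. Second, and more seriously, the limiting step collapses the count: that the vectors $w^{(n)}_i$ are $(r_n,\epsilon)$-separated means only that each pair differs by at least $\epsilon$ at \emph{some} time in $[0,r_n]$; all $N_n$ of them may nonetheless lie within $\epsilon/2$ of one another on any fixed window $[0,T]$ and hence share a single accumulation point in $Dp(\M)$. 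No diagonal argument recovers $N_n$ distinct limit points from this, so the asserted $(T,\epsilon/(2c_F))$-separated set in $Dp(\M)$ of cardinality $N_n$ does not follow. Finally, Lemma~\ref{crossing minimals} does not say what you invoke it for: it excludes successive intersections of asymptotic rays, which is unrelated to the claim that distinct minimal segments emanating from a common basepoint remain uniformly separated as their length increases.
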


Note that this theorem holds in any dimension. In the case of non-positive curvature equality holds. Subsequently this was generalized by Freire and \Mane \cite{FM} to metrics without conjugate points, i.e.\ the case where $Dp(\M)=S_FM$. 

The following problem was posed by Knieper.

\begin{prob}
 If $(M,g)$ is a closed hyperbolic manifold and $F$ an arbitrary Finsler metric on $M$, does the equality $\h(\phi_F^t,Dp(\M)) = h(F)$ hold? Is it true that $\h(\phi_F^t, \M(\g))=0$ for all $g$-geodesics $\g$ in $H$?
\end{prob}

It is shown in \cite{GKOS}, that an affirmative answer to the second question entails an affirmative answer to the first one. In the case $\dim M=2$, we proved in \cite{GKOS} the following.

\begin{thm}\label{thm GKOS}
 If $M$ is a closed surface and $F$ is a Finsler metric on $M$, then
 \[ \h(\phi_F^t,Dp(\M)) = h(F). \]
\end{thm}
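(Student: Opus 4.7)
The inequality $\h(\phi_F^t, Dp(\M)) \geq h(F)$ is already provided by the Manning-type theorem cited immediately above, so the task reduces to establishing the reverse bound $\h(\phi_F^t, Dp(\M)) \leq h(F)$. I would work with the separated-set characterisation of topological entropy: writing $N(T,\e)$ for the maximum cardinality of a $(T,\e)$-separated subset of $Dp(\M)$ under $\phi_F^t$, it suffices to show $\limsup_{T\to\infty} \tfrac{1}{T}\log N(T,\e) \leq h(F)$ for every fixed $\e > 0$.

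Fix a relatively compact fundamental domain $K \subset H$ for the $\Gam$-action. Given a $(T,\e)$-separated set $E \subset Dp(\M)$, choose for each $v \in E$ a lift $\tilde v \in \M$ with $\pi\tilde v \in K$, producing $\tilde E \subset \M$ of the same cardinality. Consider the endpoint map
$$ \Phi_T : \tilde E \to H, \qquad \Phi_T(\tilde v) = c_{\tilde v}(T). $$
Since $c_{\tilde v}$ is parametrised with unit $F$-speed, $\Phi_T(\tilde E) \subset B_F(K, T)$, whose $g$-volume grows at exponential rate $h(F)$ by definition. The desired bound then follows from a volume-packing argument, provided one knows that $\Phi_T$ separates distinct elements of $\tilde E$ by a uniform $g$-distance.

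The main obstacle is exactly this rigidity: to exhibit $\delta = \delta(\e) > 0$, independent of $T$, such that any two distinct $\tilde v, \tilde w \in \tilde E$ satisfy $d_g(\Phi_T(\tilde v), \Phi_T(\tilde w)) \geq \delta$. I would argue by contradiction and compactness, assuming sequences $\tilde v_n, \tilde w_n \in \M$ with footpoints in $K$, horizons $T_n \to \infty$, orbits $(T_n,\e)$-separated under $\phi_F^t$, but with $d_g(c_{\tilde v_n}(T_n), c_{\tilde w_n}(T_n)) \to 0$. By Arzel\`a-Ascoli the sequences $c_{\tilde v_n}, c_{\tilde w_n}$ converge on compact subsets to forward rays $c_v, c_w : [0,\infty) \to H$ with $\pi c_v(0), \pi c_w(0) \in \overline{K}$; the Morse Lemma together with the coalescence of endpoints at infinity forces a common asymptotic direction $\xi \in \Gro_+(H,g)$. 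Here the two-dimensional hypothesis enters decisively: the structural results of Section \ref{section irrational} --- the higher-genus non-intersection Theorem \ref{thm irrat no intersections} for irrational $\xi$, its torus analogue Theorem \ref{thm bangert irrat}\eqref{thm bangert irrat iv}, and the asymptotic behaviour of Theorem \ref{morse periodic}\eqref{morse periodic item 2} in the rational case --- combined with Lemma \ref{crossing minimals} excluding successive intersections of minimisers, force $c_v$ and $c_w$ to be uniformly $\e/2$-close along every initial segment, contradicting the $(T_n,\e)$-separation.

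Once the rigidity is in hand, $\Phi_T(\tilde E)$ is a $\delta$-separated subset of $B_F(K, T)$, and standard volume packing (fitting disjoint $g$-balls of radius $\delta/2$ about each image point inside $B_F(K, T+\delta)$) yields
$$ \card(E) \leq \frac{\vol_g(B_F(K, T+\delta))}{v_\delta}, \qquad v_\delta := \inf_{y \in H} \vol_g(B_g(y, \delta/2)) > 0. $$
Since $v_\delta > 0$ is independent of $T$, taking $T\to\infty$ gives $\limsup \tfrac{1}{T}\log N(T,\e) \leq h(F)$, and letting $\e\to 0$ completes the proof.
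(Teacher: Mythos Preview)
Your overall strategy --- lift a $(T,\e)$-separated set to $H$, push it forward by an endpoint map into $B_F(K,T)$, and bound its cardinality by volume --- matches the shape of the argument in \cite{GKOS}, which the paper cites for this theorem. But the rigidity lemma you need is false, and your compactness argument does not establish it. Already for the flat metric on $\T$ with $K = [0,1]^2$ and $\e = 1/2$: for each $T$ set $\theta_T = \arcsin(1/(2T))$ and take the two lines $c_1(t) = (0,1) + t(\cos\theta_T, -\sin\theta_T)$, $c_2(t) = (0,0) + t(\cos\theta_T, \sin\theta_T)$. Both are minimal with footpoint in $K$, and $c_1(T) = c_2(T)$, yet $|c_1(0) - c_2(0)| = 1$, so they are $(T,\e)$-separated while $\Phi_T$ collapses them. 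As $T \to \infty$ the limits are two \emph{parallel} rays a unit apart sharing the asymptotic direction. The structure theorems you invoke guarantee only \emph{non-crossing} of rays with a common direction, not proximity; two such rays issuing from distinct points of $K$ can remain $\diam(K)$ apart forever. Even if the limits $c_v, c_w$ happened to be close, the convergence $c_{\tilde v_n}\to c_v$ is only on compacta, while the time at which the $(T_n,\e)$-separation occurs may escape to infinity, so no contradiction would follow.

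What is actually needed --- and what \cite{GKOS} proves --- is not injectivity of $\Phi_T$ but a \emph{bounded multiplicity} statement for the two-ended map $\tilde v \mapsto (c_{\tilde v}(0), c_{\tilde v}(T))$: for each $\e > 0$ there is a constant $A(\e)$, independent of $T$, such that any family of minimal segments with both endpoints in prescribed $\e$-balls contains at most $A(\e)$ pairwise $(T,\e)$-separated members. The two-dimensional no-crossing property enters here by organising such a family into an ordered lamination inside a Morse tube of bounded width, and the bound on separated members is extracted from that ordered structure rather than from forcing any two members to coincide. With this lemma in hand, covering $K \times B_F(K,T)$ by $O(e^{h(F)T})$ product balls gives $N(T,\e) \le A(\e)\cdot O(e^{h(F)T})$, and your concluding packing argument then goes through.
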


Using Theorems \ref{morse periodic}, \ref{thm bangert irrat} \eqref{thm bangert irrat iv} and \ref{thm irrat no intersections} and the techniques in \cite{GKOS} one can prove the following strengthening of Theorem \ref{thm GKOS}.

\begin{thm}\label{thm shaperning GKOS}
 If $M$ is a closed surface of genus $\mathfrak{g}\geq 1$ and $F$ is a Finsler metric on $M$, then for all $g$-geodesics $\g:\R\to H$ we have
 \[ \h(\phi_F^t,\M(\g)) = 0 . \]
\end{thm}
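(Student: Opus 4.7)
The plan is to split on whether $\xi := \g(-\infty)\in \Gro(H,g)$ is rational or irrational (in the sense of Subsection~\ref{section model geom}) and, in each case, to combine the lamination/strip structure of $\M(\g)$ with the Morse Lemma to bound the $(n,\e)$-separated count in $Dp(\M(\g))\subset S_FM$ polynomially in $n$, which immediately gives $\h(\phi_F^t,\M(\g))=0$.

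For the rational case ($\xi\in\Pi_M$), Theorem~\ref{morse periodic} describes $\M(\g)$ completely: it consists of $\M^{per}(\g)$, a lamination of periodic minimals (each $\tau$-periodic for a fixed $\tau\in\Gam$ stabilizing $\xi$), together with heteroclinic orbits between neighboring pairs in $\M^{per}(\g)$. In the quotient, $Dp(\M(\g))$ is therefore a compact invariant set built from finitely many pieces, each either a foliated strip of periodic orbits of a common period or a heteroclinic family asymptotic to adjacent periodic orbits. The dynamics are Morse--Smale-like along $\g$ with no transverse expansion, so an elementary count shows that any $(n,\e)$-separated set has cardinality $O(n)$, giving $\h=0$.

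For the irrational case, Theorems~\ref{thm bangert irrat}(iv) (when $M=\T$) and~\ref{thm irrat no intersections} (when the genus is at least two) assert that no two distinct geodesics in $\M(\g)\subset\RR_-(\xi)$ meet in $H$; in particular $\pi(\M(\g))\subset H$ is foliated by the pairwise disjoint images of its orbits, and by the Morse Lemma (Theorem~\ref{morse lemma}) it is contained in the $C$-tube $T_\g := \{y\in H : d_g(y,\g)\leq C\}$. The cross-sectional $g$-area of $T_\g$ is uniformly bounded along $\g$ (linear in $\g$-length, with constant depending only on $C$ in the Euclidean case and on $C$ and the curvature in the hyperbolic case). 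One then adapts the counting argument of~\cite{GKOS}: the bound on the number of $(n,\e)$-separated orbits in $Dp(\M)$ there is derived from $\vol_g B_F(x,r)$; when restricted to $\M(\g)$ one replaces $B_F(x,r)$ by its intersection with $T_\g$, whose $g$-volume grows only linearly in $r\sim n$. This produces a polynomial upper bound and hence $\h(\phi_F^t,\M(\g))=0$.

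The main obstacle is the irrational higher-genus case, since even though orbits in $\M(\g)$ are non-intersecting in $H$, in $S_FM$ different orbits can return arbitrarily close to each other because $\Gam$-translates of $T_\g$ recurrently cover parts of $M$. The key step is to organize an $(n,\e)$-separated set in $Dp(\M(\g))$ via its lifts in $\M(\g)$; here uniqueness of lift is ensured by the trivial $\Gam$-stabilizer of $\g$ in the irrational case (no hyperbolic element of $\Gam$ fixes $\xi$, by Preissmann's theorem). One then counts, for an orbit segment of length $n$, the $\Gam$-translates of the fundamental domain that the corresponding lifted tube segment can visit, and applies the transverse area bound in each. Combined with the non-intersection property of Theorem~\ref{thm irrat no intersections}, which prevents orbits within a single tube copy from being $\e$-separated in more than a constant-times-$n$ number of ways, one obtains the polynomial bound and concludes.
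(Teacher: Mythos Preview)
Your proposal is correct and follows precisely the route the paper indicates: the paper gives no detailed proof, only the sentence ``Using Theorems~\ref{morse periodic}, \ref{thm bangert irrat}\,\eqref{thm bangert irrat iv} and \ref{thm irrat no intersections} and the techniques in \cite{GKOS}'', and you invoke exactly these three structure results together with the separated-set/volume counting of \cite{GKOS}, restricted to the tube $T_\g$. Your identification of the main technical point in the irrational higher-genus case (lifting an $(n,\e)$-separated set in $Dp(\M(\g))$ uniquely into $\M(\g)$ via the trivial $\Gam$-stabilizer of $\g$, then counting inside the linearly growing tube) is the right way to make the \cite{GKOS} argument go through here.
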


Hence, the entropy of minimal geodesics in dimension two resembles properties in constant curvature. In particular, for $M=\T$, the entropy $\h(\phi_F^t,Dp(\M))$ vanishes, as the volume of balls $B(x,r)$ grows only quadratically.

\subsection{Geodesic flows with vanishing entropy}\label{section geod flows vanishing entropy}

By a result of Dina\-burg (Corollary 4.2 in \cite{dinaburg}) we find
\[ \h(\phi_F^t,S_FM)>0, \]
if the genus of $M$ is $\mathfrak{g}\geq 2$ (this also follows from Theorem \ref{thm GKOS}). In this subsection we shall assume that $M$ is either the 2-sphere $M=\s$ or the 2-torus $M=\T$. Both surfaces admit whole families of completely integrable geodesic flows \cite{fomenko}. Due to a result of Paternain \cite{paternain}, these completely integrable Hamiltonian systems have vanishing topological entropy. Hence, on $M=\s,\T$ there exists a variety of examples of Finsler metrics $F$ with
\[ \h(\phi_F^t,S_FM) = 0. \]
A natural question is, whether the converse is true, i.e.\ does the vanishing of the entropy of $\phi_F^t$ in $S_FM$ entail some form of integrability?

For integrable geodesic flows, the closure of orbits stays small. We propose the following more general definition.

\begin{defn}\label{def weak integrability}
 A dynamical system $\phi^t:X\to X$ ($t\in T$) in a topological space $X$ is {\em weakly integrable}, if for all $x\in X$, the orbit $\O(x,\phi^t)=\{\phi^t x:t\in T\}$ is nowhere dense in $X$.
\end{defn}

One might ask, whether $\h(\phi_F^t,S_FM) = 0$ implies at least weak integrability. This is not the case. We write $\mu_F$ for the Liouville measure in $S_FM$ (the pullback of the canonical measure in the symplectic manifold $T^*M$ under the Legendre transform associated to $\frac{1}{2}F^2$, restricted to $S_FM$, which is a smooth measure invariant under $\phi_F^t$).

\begin{thm}[Katok \cite{katok example}]\label{thm sphere katok}
There exist reversible Finsler metrics $F$ on $\s$, arbitrarily $C^k$-close to the standard round metric for fixed $2\leq k<\infty$, whose geodesic flow has vanishing topological entropy $\h(\phi_F^t,S_F\s)=0$ and which admits two closed, $\phi_F^t$-invariant sets $A_0,A_1\subset S_F\s$, such that $A_1=\{-v: v\in A_0\}$, $\phi_F^t|_{A_i}$ is ergodic and such that the measure $\mu_F(S_F\s-(A_0\cup A_1))$ is positive but arbitrarily small. In particular, $\phi_F^t|_{A_i}$ is transitive.
\end{thm}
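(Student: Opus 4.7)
The plan is to follow the classical construction of Katok from \cite{katok example}. Start from the round metric $g_0$ on $\s$, whose geodesic flow is totally periodic of period $2\pi$, so the unit tangent bundle $S_{g_0}\s$ fibres as a smooth $\se$-bundle over the space of oriented great circles $\cong\s$. Pick an $\se$-action on $\s$ by rotations about a fixed axis, whose lift commutes with $\phi_{g_0}^t$, and construct $F$ as a reversible time-change of $\phi_{g_0}^t$ determined by a smooth positive function $f$, close to $1$ in $C^k$, invariant under both the rotations and the involution $v\mapsto -v$ on oriented great circles. Concretely, the Hamiltonian $H=f\cdot\tfrac12|\cdot|^2_{g_0^*}$ on $T^*\s$, after renormalisation of the energy, is the Hamiltonian of a reversible Finsler geodesic flow $\phi_F^t$, and $F$ becomes arbitrarily $C^k$-close to $g_0$ as $f\to 1$.

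\textbf{Vanishing entropy.} The $\se$-symmetry is preserved by construction, so the Clairaut momentum remains a first integral of $\phi_F^t$; together with the energy this yields complete Liouville integrability. Paternain's theorem, already invoked in this section, then gives $\h(\phi_F^t,S_F\s)=0$.

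\textbf{Ergodic invariant sets.} Among the level sets of the Clairaut momentum there is an open annular family of regular $2$-tori, parametrised by a Clairaut value $K$, on which $\phi_F^t$ is conjugate to a linear flow with rotation number $\rho(K)$ depending continuously on $K$. I choose $f$ so that $\rho$ is non-constant on this family and irrational on a set of positive Lebesgue measure in $K$; each such torus is uniquely ergodic for $\phi_F^t$. I define $A_0$ as the closure of a maximal family of such tori carrying a single ergodic measure class (coming from the natural transverse invariant measure on the annulus of tori), and set $A_1:=\{-v:v\in A_0\}$. Reversibility of $F$ ensures that $v\mapsto -v$ is a symmetry of $\phi_F^t$, so $A_1$ is closed, $\phi_F^t$-invariant, and ergodic; it is disjoint from $A_0$ since the two sets correspond to opposite orientations of revolution around the rotation axis. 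Tuning $f$ concentrates the Liouville mass of $S_F\s$ onto the annulus, making the complement $S_F\s-(A_0\cup A_1)$ positive but arbitrarily small.

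\textbf{Main obstacle.} The delicate point is combining (a) ergodicity of $\phi_F^t\rvert_{A_i}$, (b) $\h=0$, and (c) a complement of small but positive measure, all within the reversible Finsler class. In a purely Liouville-integrable system distinct Liouville tori are separate ergodic components, so assembling them into a single ergodic set $A_0$ in the elementary sense is impossible; the resolution, which is the technical heart of \cite{katok example}, is to work either with unique ergodicity on the support of a natural transverse measure on the family of tori, or to perturb $f$ in a finely tuned, reversible way that merges neighbouring tori into a single invariant set while preserving the Clairaut integral and the zero-entropy property. Verifying all three of (a), (b), (c) simultaneously, and checking that reversibility survives the construction (which excludes the straightforward Randers--Katok--Ziller deformation), is the principal difficulty.
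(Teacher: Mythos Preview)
The paper does not prove this theorem; it simply quotes Katok's result with a reference to \cite{katok example}. So there is no ``paper's proof'' to compare against. That said, your proposal contains a genuine gap which you yourself flag but do not resolve.

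Your construction keeps the Clairaut momentum as a global first integral: $F$ is obtained from a rotation\-ally invariant Hamiltonian $H=f\cdot\tfrac12|\cdot|_{g_0^*}^2$. In any such completely integrable system the ergodic decomposition of the Liouville measure is exactly the foliation by Liouville tori (plus singular fibres). A set $A_0$ of positive Liouville measure that is a union of tori with varying Clairaut value $K$ is \emph{never} ergodic for $\phi_F^t$, because the Clairaut integral itself is a nonconstant invariant function on $A_0$. Your suggestion to ``merge neighbouring tori into a single invariant set while preserving the Clairaut integral'' is self-contradictory for this reason, and the alternative you offer --- unique ergodicity of each individual torus together with a transverse measure --- yields ergodicity of a measure supported on a \emph{single} torus, which has Liouville measure zero. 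Neither option produces the ergodic sets $A_i$ of positive $\mu_F$-measure that the theorem asserts.

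What Katok actually does in \cite{katok example} is the Anosov--Katok method of fast approximation by conjugation. One builds $F$ as a $C^k$-limit of metrics $F_n$, each of which is genuinely conjugate (via a Hamiltonian diffeomorphism $h_n$ of $T^*\s$) to a rotationally symmetric metric with \emph{rational}, hence periodic, geodesic flow; but the conjugacies $h_n$ become increasingly wild and are chosen so that the limit flow is ergodic on a set $A_0$ of nearly full measure. The limit has \emph{no} surviving Clairaut integral on $A_0$; integrability is destroyed there, which is precisely what allows ergodicity. Zero entropy survives because each $\phi_{F_n}^t$ has zero entropy (being conjugate to a periodic flow) and the convergence is strong enough. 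Reversibility and the symmetry $A_1=-A_0$ are arranged by making each $h_n$ commute with the antipodal involution on fibres. This is an essentially different mechanism from the integrable time-change you describe, and the distinction is exactly the obstacle you identified.
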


Dropping the reversibility assumption $F(-v)=F(v)$ on $F$, Katok \cite{katok example} was able to construct similar examples, where $\phi_F^t$ is ergodic in the whole unit tangent bundle $S_F\s$ (in this example $\phi_F^t$ has only two closed orbits). One question is, whether Theorem \ref{thm sphere katok} is optimal in the sense that transitivity or ergodicity can be obtained in the whole unit tangent bundle in the reversible case. We proved the following theorem in \cite{paper ergodic}.

\begin{thm}[\cite{paper ergodic}]\label{thm FrHa sphere}
 Let $F$ be a reversible Finsler metric on $\s$, such that every closed $F$-geodesic has a pair of conjugate points along itself. If the geodesic flow $\phi_F^t:S_F\s\to S_F\s$ is transitive, i.e.\ if there exists a dense orbit in $S_F\s$, then 
 \[ \h(\phi_F^t,S_F\s)>0. \]
\end{thm}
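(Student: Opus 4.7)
I would argue by contradiction, assuming $\h(\phi_F^t, S_F\s) = 0$ along with the hypotheses of the theorem.

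The first step is to extract linearized information at a closed geodesic from the hypothesis on conjugate points. By Lusternik--Schnirelmann/Birkhoff min-max on the free loop space of $\s$ (using reversibility of $F$), there exists at least one closed $F$-geodesic $c_0$; by assumption $c_0$ carries a pair of conjugate points along itself. Translating via the Morse index / Maslov index correspondence, this prevents the linearized Poincar\'e return map $P$ of $c_0$, acting on a two-dimensional symplectic transversal, from being fully degenerate. Thus $P$ should fall into one of two regimes: either $P$ is \emph{hyperbolic} (real eigenvalues off the unit circle), or $P$ is \emph{elliptic} with a nontrivial Birkhoff twist coefficient.

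The second step treats these two regimes separately, in each case reaching a contradiction with transitivity, Katok, or reversibility. In the hyperbolic regime, $c_0$ has two-dimensional smooth invariant stable and unstable manifolds $W^s, W^u \subset S_F\s$. Transitivity gives a dense orbit that visits every neighborhood of $c_0$ in both forward and backward time, meeting both $W^s$ and $W^u$; an application of the $\lambda$-lemma yields a homoclinic orbit $\gamma \subset W^s \cap W^u$. The anti-symplectic involution $v\mapsto -v$ (reversibility of $F$) exchanges $W^s$ and $W^u$, which I would use to force $\gamma$ to be a \emph{transverse} homoclinic, contradicting Katok's theorem (Corollary 4.3 in \cite{katok1}) applied to the $C^\infty$ three-dimensional flow $\phi_F^t$ of zero entropy. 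In the elliptic regime, a Franks--Handel-type rigidity result for area-preserving $C^\infty$ surface diffeomorphisms of zero entropy at twist-elliptic fixed points produces a family of invariant circles for $P$ accumulating on $c_0$. These correspond to two-dimensional invariant tori for $\phi_F^t$ in the three-manifold $S_F\s$, separating $S_F\s$ into $\phi_F^t$-invariant open pieces of positive measure and contradicting transitivity.

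The main obstacle I foresee is the elliptic sub-case, where one must extract a \emph{twist} condition for $P$ from the merely qualitative hypothesis ``$c_0$ has conjugate points along itself''. Reversibility of $F$ should be essential here: it forces real coefficients in the Birkhoff normal form of $P$ and interacts rigidly with the Morse-index information, which I expect suffices to rule out the formally-rotational (i.e.\ degenerate) case and to deliver the required twist. A secondary difficulty is that Birkhoff's min-max only produces one specific closed geodesic, whereas the above dichotomy must be run \emph{at every} relevant periodic orbit; here transitivity itself is used to spread the argument across $S_F\s$ by noting that periodic orbits arising from Poincar\'e recurrence of the dense orbit inherit the conjugate-point hypothesis.
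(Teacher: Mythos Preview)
Your proposed dichotomy at a closed orbit does not hold, and this is the essential gap. Having conjugate points along $c_0$ gives positive Morse index, but it does \emph{not} force the linearized return map $P$ into either the hyperbolic or the elliptic-with-nonzero-twist regime. The round sphere already shows this: every closed geodesic carries conjugate points, yet the return map is (a rotation by $\pi$ followed by) the identity, with all Birkhoff twist coefficients vanishing. Reversibility does make the normal-form coefficients real, but it cannot manufacture a nonzero twist out of the conjugate-point hypothesis alone. So the elliptic branch of your argument has no mechanism to produce invariant circles. The hyperbolic branch also breaks: the step where reversibility upgrades a homoclinic orbit to a \emph{transverse} one is false in general --- reversible integrable systems routinely have non-transverse separatrix connections, and the symmetry $v\mapsto -v$ merely swaps $W^s$ and $W^u$ without controlling the angle of intersection.

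The role of the conjugate-point hypothesis in the paper's argument is global, not local-linear. One first takes a simple closed geodesic $c_0$ (reversibility is used here to run Lusternik--Schnirelmann or curve shortening) and forms its Birkhoff annulus $N\subset S_F\s$ of unit vectors based on $c_0(\R)$ pointing into one hemisphere. The assumption that \emph{every} closed geodesic has conjugate points is exactly what guarantees that every orbit of $\phi_F^t$ returns to $N$, so $N$ is a genuine global surface of section and the return map $\phi:N\to N$ is an area-preserving diffeomorphism capturing the whole flow; collapsing the two boundary circles yields a map $\psi$ of $\s$. If $\h(\phi_F^t)=0$ then $\h(\psi)=0$, and one now applies the Franks--Handel structure theorem (Theorem~\ref{thm 1.2 Fr/Ha}) to $\psi$ \emph{globally}: the complement of $\Fix(\psi)$ is filled, up to a nowhere-dense set, by invariant open annuli on which the rotation number is a continuous first integral. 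This immediately rules out a dense orbit for $\psi$, hence for $\phi_F^t$. No local normal-form analysis at a single periodic orbit enters.
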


Hence, under the condition on conjugate points (which is fulfilled, e.g., if $F$ has positive flag curvatures), dense geodesics and in particular ergodicity of the geodesic flow in $S_F\s$ imply the existence of a non-trivial, hyperbolic invariant set for $\phi_F^t$, cf. Corollary 4.3 in \cite{katok1}. Assuming conversely, that the topological entropy vanishes there will be a certain amount of structure in the dynamics of $\phi_F^t$: orbits cannot become as large as $S_F\s$.

We can ask the following questions.

\begin{prob}\label{Q1 sphere}
 Does transitivity of the geodesic flow $\phi_F^t$ in $S_F\s$ of a general reversible Finsler metric $F$ on $\s$ imply $\h(\phi_F^t,S_F\s)>0$? Dropping the reversibility assumption, does transitivity together with at least three distinct closed orbits of $\phi_F^t$ imply $\h(\phi_F^t,S_F\s)>0$?
\end{prob}

\begin{prob}\label{Q2 sphere}
 Do there exist Riemannian metrics $g$ on $\s$ with everywhere strictly positive curvature, whose geodesic flow $\phi_g^t$ is ergodic in $S_g\s$?
\end{prob}

Problem \ref{Q1 sphere} was posed by G. Knieper. Our result answers the question affirmatively in the presence of conjugate points, in particular in the case of positive curvature. The general case is a topic for future research. Problem \ref{Q2 sphere} is a long-standing open problem. For instance, by the results of V. J. Donnay \cite{donnay} there exist Riemannian metrics on $\s$, whose geodesic flow is ergodic in $S_F\s$; however, for these metrics there exist large regions of negative curvature and it is the negative curvature that creates ergodicity by means of hyperbolicity. Our result shows that ergodicity and positive curvature would necessarily entail hyperbolicity in the geodesic flow by means of topological entropy.

\abs

Let us move to $M=\T$. Recall that $g$-geodesics are straight lines. We shall write $\g_\xi(t)=\xi t$. The assumption $\h(\phi_F^t,S_F\T)=0$ leads to strong integrable behavior on a large scale. Intuitively, $\h(\phi_F^t,S_F\T)=0$ prevents the transverse intersection of (un)stable manifolds by the result of Katok mentioned earlier, while the gap-condition in Subsection \ref{section multibump} generalizes such a transverse intersection. The following theorem (proved in \cite{paper2}) follows easily from Theorem \ref{thm multibump}, using the oscillating local minimizers to generate entropy. Observe that by Theorem \ref{thm shaperning GKOS}, one cannot use minimal geodesics, i.e.\ global minimizers in $\T$ to generate entropy. A version of the following theorem for reversible Finsler metrics is due to Glasmachers and Knieper \cite{glasm1}, \cite{glasm2}, for monotone twist maps it was given by Angenent \cite{angenent}.

\begin{thm}\label{thm torus htop=0}
 Let $F$ be a Finsler metric on $\T$ with $\h(\phi_F^t,S_F\T)=0$. Then for all $\xi\in \se$ the sets $Dp(\M(\g_\xi))$ project onto $\T$ and consist of one or two continuous, $\phi_F^t$-invariant graphs (always one if $\xi$ is irrational, two if $\xi$ is rational and $\M^{per}(\g_\xi)\neq \M(\g_\xi)$).
\end{thm}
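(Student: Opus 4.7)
The plan is to handle the rational and irrational directions separately, in both cases arguing by contradiction: assuming a non-graph situation in $\M(\g_\xi)$, we would build, via Theorem \ref{thm multibump} or a direct variant of it, uncountably many topologically distinguishable locally minimizing orbits, contradicting $\h(\phi_F^t,S_F\T)=0$. The central input is Theorem \ref{thm multibump} together with the structural Theorems \ref{morse periodic} and \ref{thm bangert irrat}.

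\textbf{Rational case.} Fix $\xi\in\Pi_\T$ and the prime $\tau\in\Z^2\cap\R_{>0}\xi$. First, by Theorem \ref{morse periodic} \eqref{morse periodic item 2}, every ray in $\RR_-(\xi)$ is asymptotic to some $\tau$-periodic minimal geodesic, so $\M(\g_\xi)=\M^{per}(\g_\xi)\cup\bigcup \M^\pm(c_0,c_1)$, the union ranging over neighboring pairs of $\tau$-periodic minimal geodesics. If $\M^{per}(\g_\xi)=\M(\g_\xi)$, i.e.\ the torus is foliated by closed minimal geodesics in the class of $\tau$, then $Dp(\M^{per}(\g_\xi))$ is a single continuous $\phi_F^t$-invariant graph by Theorem \ref{morse periodic} \eqref{morse periodic item 3}. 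Otherwise, there is a pair $c_0,c_1$ of neighboring $\tau$-periodic minimal geodesics bounding a strip $S(c_0,c_1)$, and $\M^\pm(c_0,c_1)\neq\emptyset$ by Theorem \ref{morse periodic} \eqref{morse periodic item 2a}. If the gap condition \eqref{eqn gap-cond} were satisfied, Theorem \ref{thm multibump} would yield, for every finite sequence $\kappa$ and every choice of asymptotic behavior $i=(i_-,i_+)$, locally minimizing geodesics $c_{\kappa,i}$ that pass prescribed translates of a disc $D$ on prescribed sides; the $c_{\kappa,i}$ with fixed $i$ and varying $\kappa$ form an exponentially separated family under $\phi_F^t$ in $S_F\T$, forcing $\h(\phi_F^t,S_F\T)>0$, a contradiction. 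Hence the gap condition fails, giving $\pi(\M^+(c_0,c_1))=S(c_0,c_1)$ or $\pi(\M^-(c_0,c_1))=S(c_0,c_1)$. Repeating the multibump construction with the roles of $\M^+$ and $\M^-$ swapped (equivalently, prescribing the asymptotic behavior that uses the non-filling family), I would show that both equalities must hold: were one of them to fail, the heteroclinics from the missing class provide enough room in $S(c_0,c_1)$ to rerun the argument of Theorem \ref{thm multibump}, again producing a horseshoe-like family. Together with the closedness of each class under $\phi_F^t$ and the non-intersection property from Lemma \ref{crossing minimals}, this yields exactly two continuous $\phi_F^t$-invariant graphs $Dp(\M^{per}\cup\M^+)$ and $Dp(\M^{per}\cup\M^-)$ over $\T$.

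\textbf{Irrational case.} Fix $\xi\in\se-\Pi_\T$. By Theorem \ref{thm bangert irrat} \eqref{thm bangert irrat iv} and Lemma \ref{crossing minimals}, no two rays in $\RR_-(\xi)$ intersect, so $\M(\g_\xi)$ is a $\phi_F^t$-invariant graph over its base projection, and by Theorem \ref{thm bangert irrat} the set $\pi(\M^{rec}(\g_\xi))\subset\R^2$ is either all of $\R^2$ or nowhere dense. In the first case $\pi(\M(\g_\xi))=\R^2$ and we obtain a single continuous $\phi_F^t$-invariant graph over $\T$. In the second case, there exists a $\Z^2$-invariant ``Aubry–Mather Cantor set'' in $\T$, and in each complementary gap one finds heteroclinic minimal geodesics between two neighboring recurrent minimal geodesics (approximating $\xi$ by a sequence of rational directions and passing to the limit in the rational heteroclinics furnished above). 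On these ``instability regions'' one can carry out the same shadowing/multibump argument as in the rational case: arbitrarily long rational approximants $\tau_n\in\Z^2$ of $\xi$ inherit neighboring $\tau_n$-periodic minimizers close to $c_0,c_1$, and the gap condition persists in the limit, producing by Theorem \ref{thm multibump} exponentially many distinguishable locally minimizing orbits, hence $\h(\phi_F^t,S_F\T)>0$. This rules out the Cantor alternative, so $\M(\g_\xi)$ is a single continuous $\phi_F^t$-invariant graph over $\T$.

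\textbf{Main obstacle.} The delicate step is the upgrade from ``one of $\pi(\M^\pm)=S(c_0,c_1)$'' to ``both equal $S(c_0,c_1)$'' in the rational case, and the parallel step in the irrational case of converting a purely nowhere-dense projection into a genuine gap to which Theorem \ref{thm multibump} applies. Both ultimately require a symmetric formulation of the multibump construction that treats $\M^+$ and $\M^-$ on equal footing even when only one of them is dense in the strip; this is where the bulk of the technical work lies, since the Finsler metric $F$ is not assumed reversible and one cannot simply reverse time to exchange the two classes.
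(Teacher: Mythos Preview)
Your proposal follows exactly the route the paper indicates: any failure of the graph structure in $\M(\g_\xi)$ triggers the multibump construction of Theorem~\ref{thm multibump}, producing oscillating local minimizers and hence positive topological entropy. The paper gives no further argument here, deferring the details to \cite{paper2}; the obstacle you single out --- that Theorem~\ref{thm multibump} as stated needs the two-sided gap condition \eqref{eqn gap-cond}, while its failure only yields that \emph{one} of $\M^\pm$ fills each strip --- is precisely the technical refinement handled there. One simplification for your irrational case: once the rational case is done, upper semi-continuity of $\xi\mapsto Dp(\M(\g_\xi))$ together with Theorem~\ref{thm bangert irrat}\,\eqref{thm bangert irrat iv} already forces a single continuous graph for every irrational $\xi$, so the rational-approximation detour can be avoided.
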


Equivalently, the boundary elements $[u_0],[u_1]$ of $\delta_F^{-1}(\xi)$ are $C^1$, see Proposition \ref{bounding horofunctions}. For irrational $\xi$, the unique element $[u]\in\delta_F^{-1}(\xi)$ is $C^1$. See Figure \ref{fig_htop_0} for invariant sets in Theorem \ref{thm torus htop=0}.

\begin{figure}%[!htb]
\centering
\includegraphics[scale=1.0]{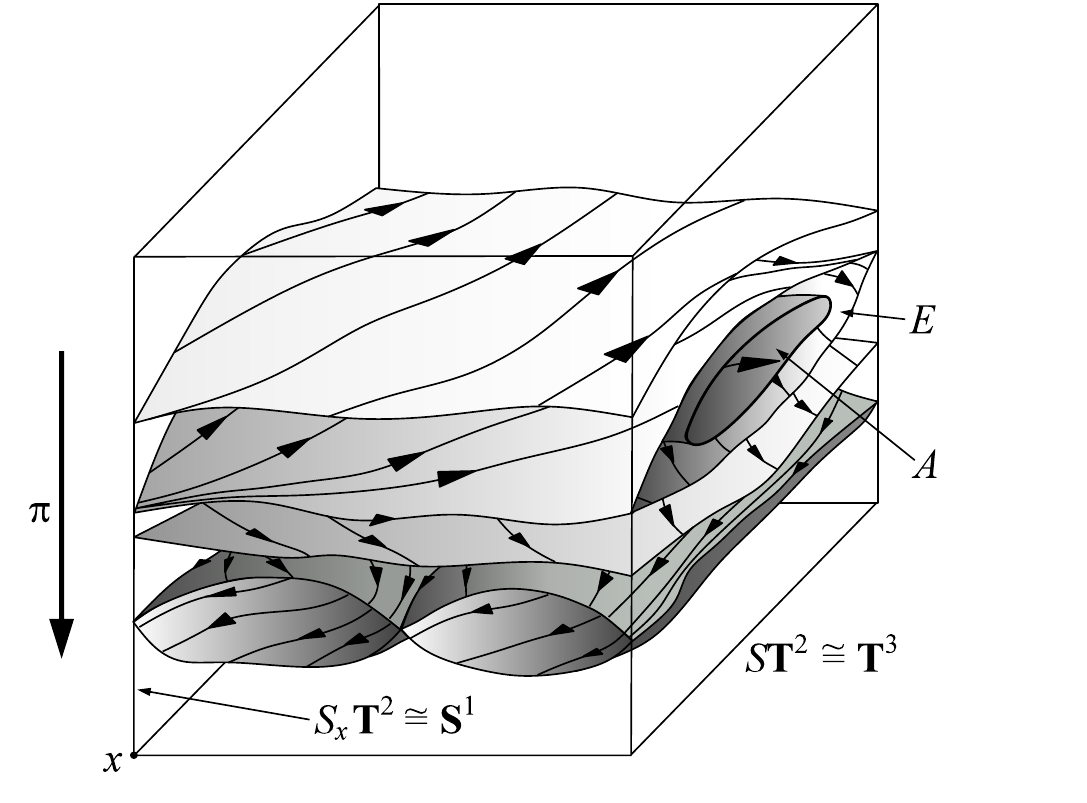}
\caption{The invariant graphs and flow lines of $\phi_F^t$ in the unit tangent bundle $S_F\T\cong \mathbb{T}^3$ occurring in Theorem \ref{thm torus htop=0} in the case of $\h(\phi_F^t)=0$. The horizontal plane can be thought of as the base $\T$. Compare this to the case of the rotational torus or the simple pendulum. One can see an elliptic tube $E$ containing a $\phi_F^t$-invariant sub-tube $A$, where $\phi_F^t|_A$ might be ergodic due to Theorem \ref{thm torus katok}. \label{fig_htop_0}}
\end{figure}

\begin{remark}[Polynomial entropy]
 Another notion of entropy is polynomial or slow entropy, measuring not the exponential growth rate of orbits, but the polynomial one. While for topological entropy transverse intersections of (un)stable manifolds trigger entropy, for the slow entropy one way to generate entropy is by heteroclinic connections. In this way, Bernard and Labrousse \cite{labrousse} were able to characterize the flat torus by minimizing the slow entropy: a metric minimizing the slow entropy cannot have heteroclinics. See also Corollary \ref{cor horobdry torus}. One could also study the links between slow entropy and the set $B(F)=\{\sig_F\leq 1\}$. Note also \cite{burago-ivanov tn} relating the stable norm to volume asymptotics, which are in turn related to entropy by Subsection \ref{section GKOS}.
\end{remark}

What remains open in Theorem \ref{thm torus htop=0} is a description of the dynamics of $\phi_F^t$ in the complement of $Dp(\M)$. The following terminology is motivated by the often found presence of elliptic closed geodesics there, see also the example in Subsection \ref{section rot torus}.

\begin{defn}\label{def elliptic tubes}
 Let $F$ be a Finsler metric on $\T$ with vanishing entropy $\h(\phi_F^t,S_F\T)=0$. An \emph{elliptic tube} is a connected component $E$ of $S_F\T - Dp(\M)$.
\end{defn}

\begin{remark}
 Note that an elliptic tube is surrounded by a single set $\M(\g_\xi)$ for some rational direction $\xi$. Notably, in the Riemannian case the only known examples of geodesic flows with $\h(\phi_F^t,S_F\T)=0$ arise from Liouville metrics of the form
 \[ g_{(x_1,x_2)}(v,w) = (f(x_1)+g(x_2))\cdot \la v,w\ra_{euc} , \]
 which have elliptic tubes only in the directions $-e_1,e_1,-e_2,e_2$. Liouville metrics are a simple generalization of rotational metrics discussed in Subsection \ref{section rot torus}. In the Finsler case, Theorem \ref{thm torus katok} below yields more examples in the Finsler setting. In a different direction, one could use monotone twist maps with many Mather sets of rational homology, that are not invariant curves; cf.\ the proof of Theorem 3.3 in \cite{neumann}. We show in Subsection \ref{section twist map} how such examples yield examples of reversible Finsler metrics on $\T$ with integrable geodesic flows, which have elliptic tubes of several directions. Hence, there exist in the Finsler setting many examples of integrable geodesic flows other than Liouville metrics.
\end{remark}

Our next theorem shows that, even if $\h(\phi_F^t,S_F\T)=0$, elliptic tubes can contain complicated dynamical behavior of $\phi_F^t$. For the proof we apply Katok's construction, which also led to the examples in Theorem \ref{thm sphere katok}, to a particular rotational metric on $\T$.

\begin{thm}[\cite{paper ergodic}]\label{thm torus katok}
 There exist reversible Finsler metrics $F$ on $\T$ with vanishing topological entropy $\h(\phi_F^t,S_F\T)=0$ and an elliptic tube $E\subset S_F\T$ containing a $\phi_F^t$-invariant, closed sub-tube $A\subset E$ with non-empty interior, such that $\phi^t_F|_A$ is ergodic, hence transitive. The measure $\mu_F(E-A)$ is positive, but can be made arbitrarily small.
\end{thm}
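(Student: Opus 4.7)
The plan is to begin from a rotational torus metric as described in Subsection~\ref{section rot torus}, namely $F_0 = \sqrt{f(x_2)}\cdot|.|_{euc}$ on $\T=\R^2/\Z^2$. The geodesic flow of $F_0$ is integrable, reversible and of vanishing topological entropy; its phase portrait in $S_{F_0}\T$ has the pendulum form, exhibiting an elliptic tube $E_0$ surrounding the outer long closed geodesic, foliated by invariant $2$-tori on which $\phi_{F_0}^t$ is conjugate to a linear flow whose rotation vector varies continuously with the transverse action coordinate. Inside $E_0$ there is a one-parameter family of rational invariant tori $T_{p/q}$, each foliated by closed $F_0$-geodesics of a common period.

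Pick one such rational torus $T_{p/q}\subset E_0$ and a closed $F_0$-geodesic $c_0\subset T_{p/q}$. I would now perform Katok's slowing-down construction of~\cite{katok example}, adapted to the Finsler category, by conformally perturbing to $F = \lambda\cdot F_0$, where $\lambda:\T\to (0,\infty)$ is a smooth, reversible bump function equal to $1$ outside a small tubular neighborhood $U$ of the projected geodesic $\pi(c_0)$ and vanishing on $\pi(c_0)$ to an appropriate order. On the support of the perturbation the flow $\phi_F^t$ is, up to Maupertuis-type reparametrization, a time change of $\phi_{F_0}^t$, and Katok's argument then yields a closed $\phi_F^t$-invariant sub-tube $A\subset E_0$ with non-empty interior on which $\phi_F^t$ is ergodic with respect to the Liouville measure.

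The verifications are then threefold. First, since $F\equiv F_0$ outside $U$, the minimal-geodesic structure is unchanged outside $U$; by Theorems~\ref{morse periodic} and \ref{thm bangert irrat}, the minimal geodesics of $F_0$ in $\T$ avoid the elliptic tube $E_0$, so for $U$ sufficiently small the elliptic tube for $F$ contains $E_0$ and $A\subset E$ is genuinely a sub-tube of an elliptic tube. Second, $\h(\phi_F^t,S_F\T)=0$: outside $E$, $\phi_F^t$ is a reparametrization of an integrable flow, while on $E$ Katok's construction yields a flow of zero topological entropy, combining with the variational principle and Abramov's formula as in~\cite{katok example}; the full entropy is then the supremum of both pieces, still zero. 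Third, by choosing $\diam(U)\to 0$ the Liouville volume $\mu_F(E-A)$ can be made arbitrarily small while remaining positive, since $A$ has non-empty interior.

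The main obstacle will be the simultaneous control of three requirements on $\lambda$: the strong convexity condition~(iv) of Definition~\ref{def finsler} must survive the conformal perturbation (automatic, since conformal change preserves Finsler metrics); the perturbation must be reversible, which forces $\lambda$ to be a genuine function of position only, consistent with the reversibility of $F_0$; and the singular locus of $\lambda$ on $\pi(c_0)$ must not spawn new hyperbolic behavior that would destroy the vanishing of $\h(\phi_F^t)$. As in Katok's original construction, this forces working in some finite $C^k$-topology rather than $C^\infty$, and the quantitative bound $\mu_F(E - A) < \varepsilon$ reduces to a straightforward volume estimate for $U$.
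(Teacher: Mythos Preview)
Your starting point (a rotational metric on $\T$ with its elliptic tube) matches the paper's, but the perturbation mechanism you describe is not Katok's construction and does not produce the claimed ergodic sub-tube.

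The central gap is the claim that a conformal change $F=\lambda\cdot F_0$ yields, via a Maupertuis-type argument, a mere time reparametrization of $\phi_{F_0}^t$. This is false: a conformal factor $\lambda(x)$ changes the geodesics themselves, not just their speed, so the invariant tori of $F_0$ are destroyed rather than preserved as sets. (Your description is also internally inconsistent: you declare $\lambda:\T\to(0,\infty)$ and then ask it to vanish on $\pi(c_0)$.) Even if one had a genuine time change of an integrable flow, that alone cannot manufacture ergodicity on a set of positive Liouville measure: a time change of a linear flow on a torus is still uniquely ergodic on that torus and preserves the foliation by invariant tori, so no ``sub-tube with non-empty interior'' becomes ergodic. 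What you are calling ``Katok's slowing-down'' is a different construction (the 1979 non-uniformly hyperbolic examples), unrelated to \cite{katok example}.

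The construction actually used in \cite{katok example}, and alluded to in the paper, exploits the $S^1$-symmetry of the rotational metric in a different way. Inside the elliptic tube the $\phi_{F_0}^t$-flow commutes with the circle action generated by rotation in $x_1$; Katok perturbs the Hamiltonian (equivalently the Finsler metric) within the class of $S^1$-invariant systems so that on a fat family of invariant tori the combined dynamics becomes a reparametrized irrational or Liouvillean linear flow, and then uses an approximation-by-conjugation (Anosov--Katok) scheme to glue these tori into a single ergodic block $A$. This is why the result is only $C^k$ for finite $k$, as already flagged in Theorem~\ref{thm sphere katok}. Reversibility is maintained because the perturbation is even in the velocity (it modifies the Hamiltonian, not by adding a $1$-form), and vanishing entropy follows because on each invariant torus the flow remains a reparametrized linear flow. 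Your conformal bump does none of this; to repair the argument you would need to replace it by an $S^1$-equivariant perturbation of the Hamiltonian and carry out the genuine Anosov--Katok limiting procedure inside the elliptic tube.
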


In the spirit of Theorem \ref{thm FrHa sphere}, our next theorem shows that complicated behavior in all of $E$ is excluded.

\begin{thm}[\cite{paper ergodic}]\label{thm FrHa torus}
 Let $F$ be a reversible Finsler metric on $\T$ with vanishing topological entropy $\h(\phi_F^t,S_F\T)=0$ and suppose that $E\subset S_F\T$ is an elliptic tube for $\phi_F^t$. Then the set
 \[ \left \{ v\in E : \overline{ \{ \phi_F^tv : t\in \R \}} \cap\partial E \neq \emptyset \right \} \]
 is nowhere dense in $E$. In particular, $\phi_F^t|_E$ cannot be transitive, hence not ergodic.
\end{thm}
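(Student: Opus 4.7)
The plan is to combine the rigid structure of $\partial E$ guaranteed by zero entropy with a Poincar\'e-section argument in the spirit of Franks--Handel.

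First, I would use Theorem \ref{thm torus htop=0} together with the remark following Definition \ref{def elliptic tubes} to pin down $\partial E$. Since $E$ is a component of $S_F\T - Dp(\M)$ and $\h(\phi_F^t)=0$, the boundary $\partial E$ is contained in $Dp(\M(\g_\xi))$ for a single rational $\xi\in\Pi_\T$, and moreover $\M(\g_\xi)\neq\M^{per}(\g_\xi)$ (otherwise $Dp(\M(\g_\xi))$ would be a single invariant $C^0$-graph disjoint from $E$). By Theorem \ref{morse periodic}, $\M(\g_\xi)$ decomposes into $\tau$-periodic minimal geodesics and heteroclinic connections between pairs of neighboring ones. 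Thus $\partial E$ is made up of finitely many closed $F$-geodesics together with heteroclinic orbits between them. For any $v\in U:=\{v\in E:\overline{\O(v)}\cap\partial E\neq\emptyset\}$, the limit set $\omega(v)$ (or, using reversibility $F(-v)=F(v)$, the $\alpha$-limit set) is closed and $\phi_F^t$-invariant, hence contains a whole orbit in $\partial E$; whether that orbit is closed or heteroclinic, its $\omega$-asymptote is a closed $F$-geodesic $\gamma_0\subset\partial E\cap\M^{per}(\g_\xi)$ that necessarily lies in $\omega(v)$ as well.

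Next I would reduce to a two-dimensional analysis. Pick a smooth disc $\Sigma\subset S_F\T$ transverse to $\gamma_0$, with $p_0=\gamma_0\cap\Sigma$, and consider the first-return map $P\colon U_\Sigma\to\Sigma$. This map is area-preserving (as $\phi_F^t$ preserves the Liouville measure $\mu_F$), reversibility of $F$ makes $P$ a reversible area-preserving homeomorphism, and Abramov's formula transfers $\h(\phi_F^t,S_F\T)=0$ to $\h(P)=0$. The two open components of $\Sigma\cap E$ at $p_0$ inherit $P$-orbits of points of $U$ accumulating at $p_0$, while the two heteroclinic branches on $\partial E$ appear as local $P$-invariant curves through $p_0$ that bound these components.

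The heart of the argument, and its main obstacle, is step three: invoke a zero-entropy rigidity result for reversible area-preserving surface maps (Katok's theorem \cite{katok1} excludes horseshoes, hence any transverse heteroclinic tangle, at $\h(P)=0$) to conclude that every $P$-orbit in $\Sigma\cap E$ whose closure contains $p_0$ must accumulate along a local stable or unstable branch at $p_0$. These branches form a one-dimensional, hence nowhere dense, subset of each two-dimensional component of $\Sigma\cap E$. Extending along the flow via a saturating flow box, the corresponding collection of $\phi_F^t$-orbits is nowhere dense in $E$; ranging over the finitely many closed geodesics on $\partial E$ that can appear as limits, we obtain $U$ nowhere dense in $E$. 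The hard part will be handling the possibility that $p_0$ is not hyperbolic but only topologically of saddle type: here I would use that the incoming heteroclinics already provide local invariant curves at $p_0$ and that zero entropy forbids the creation of further, transverse such curves, so the accumulation set along them remains one-dimensional.

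Finally, the transitivity consequence follows formally from the main statement. If $\phi_F^t|_E$ admitted a dense orbit $\O(v_0)$, then $\overline{\O(v_0)}\supset\overline{E}\supset\partial E$, so $v_0\in U$; but for every open $W\subset E$ one has $W\cap\O(v_0)\neq\emptyset$, and each point $w=\phi_F^t(v_0)\in W$ inherits $\overline{\O(w)}=\overline{\O(v_0)}$ and hence lies in $U$, so $U$ would be dense in $E$, contradicting the nowhere-density established above. Ergodicity implies transitivity, so ergodicity is likewise excluded.
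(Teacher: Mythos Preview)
Your setup (the description of $\partial E$ via Theorem~\ref{thm torus htop=0} and Theorem~\ref{morse periodic}, and the reduction of the limit set to a closed orbit $\gamma_0\subset\partial E$) is on the right track, and the final paragraph deducing non-transitivity is correct. The gap is in step three.

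The claim ``every $P$-orbit in $\Sigma\cap E$ whose closure contains $p_0$ must accumulate along a local stable or unstable branch'' does not yield what you need. Even granting it, this only describes \emph{where near $p_0$} the orbit visits; it says nothing about the location of the initial condition $v$ in $E$. The set $U$ you want to control is a set of \emph{initial conditions}, not a set of accumulation points, and an orbit can start anywhere in $\Sigma\cap E$ and still wander arbitrarily close to $p_0$. Katok's theorem excludes horseshoes, hence transverse homoclinics, but it gives no direct structural constraint on $\{x:\ p_0\in\overline{\O_P(x)}\}$; already the case of a non-hyperbolic $p_0$ or a homoclinic tangency shows that ``no horseshoe'' is far from ``the accumulating set is one-dimensional''. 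A second, smaller issue: $\pi(\M^{per}(\g_\xi))$ need not be finite, so ``ranging over the finitely many closed geodesics on $\partial E$'' is not justified.

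The paper's route is genuinely different and is what supplies the missing structure. Rather than a local section near $\gamma_0$, one constructs a \emph{global} Poincar\'e section $N\subset S_F\T$ for (essentially all of) $\phi_F^t$, collapses boundary circles to obtain an area-preserving homeomorphism $\psi$ of $\s$ with $\h(\psi)=0$, and then invokes the Franks--Handel theory (Theorem~\ref{thm 1.2 Fr/Ha} and its refinements in \cite{Fr-Ha}): $\s-\Fix(\psi)$ is, up to a nowhere dense set, a disjoint union of maximal invariant open annuli on each of which $\psi$ has a continuous rotation-number first integral. This annulus decomposition traps orbits away from $\partial E$ on an open dense set, which is exactly the nowhere-density statement. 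The local Katok-type argument you propose cannot substitute for this global decomposition.
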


Recall that a nowhere dense set is a set whose closure has empty interior. In particular, the complement of a nowhere dense set contains an open and dense set. Hence, Theorem \ref{thm FrHa torus} shows that topologically almost every $\phi_F^t$-orbit in $E$ is boun\-ded away from the boundary $\partial E$ of the elliptic tube. This indicates that there are large, closed, $\phi_F^t$-invariant sets in the interior of $E$, not touching the boundary $\partial E$. A possible picture would be a sequence $\{A_n\}$ of nested, invariant, closed tubes inside the elliptic tube $E$, such that $E=\cup A_n$. However, the sets
\[ A_n := \big\{ v\in E : \inf_{t\in \R}d( \partial E , \phi_F^tv ) \geq 1/n \big\} \]
might a priori be quite exotic and Theorem \ref{thm FrHa torus} guarantees that $\cup A_n$ equals $E$ only up to a topologically small set.

\abs

Both Theorems \ref{thm FrHa sphere} and \ref{thm FrHa torus} rely on the construction of a \Poincare section $N\subset S_FM$, which can be used to describe (more or less) the whole geodesic flow. This reduces $\phi_F^t:S_FM\to S_FM$ to a diffeomorphism $\phi$ of a surface $N$, which we study in Subsection \ref{section FrHa} below.

\subsection{Zero-entropy surface diffeomorphisms}\label{section FrHa}

We work in the setting of \cite{Fr-Ha}. Let $\mu$ be a measure on the 2-sphere $\s$ topologically conjugate to the Lebesgue measure (i.e.\ there exists a homeomorphism of $\s$ conjugating $\mu$ to the Lebesgue measure). Let $N$ be a surface diffeomorphic to $\s$ with $n\geq 0$ disjoint, smoothly bounded, open discs removed. Collapsing each boundary circle $\partial_iN$ of $N$ into a point $p_i\in \s$ defines a $C^0$-quotient map $\pi_N:N\to \s$, whose restriction $\Int N\to \s-P$ is a $C^\infty$-diffeomorphism, where $P=\{p_1,...,p_n\}$. If $\phi:N\to N$ is an orientation-preserving $C^\infty$-diffeomorphism, leaving each boundary component $\partial_iN$ invariant, we can define a homeomorphism $\psi:\s\to \s$ by $\psi\circ\pi_N=\pi_N \circ \phi$, such that $P\subset \Fix(\psi)$. We denote by
\[ \H=\H(\s,P,\mu)\subset \text{Homeo}_+(\s) \]
the set of all so obtained homeomorphisms $\psi:\s\to \s$, that in addition preserve the measure $\mu$ and have vanishing topological entropy
\[ \h(\psi,\s)=0 . \]
The above setting applies to geodesic flows on surfaces with vanishing entropy, cf.\ Subsection \ref{section geod flows vanishing entropy}.

We reformulate of parts of Theorem 1.2 from \cite{Fr-Ha}.

\begin{thm}[Franks, Handel]\label{thm 1.2 Fr/Ha}
 Let $\psi\in\H$ and consider the set $\A$ of maximal, $\psi$-invariant, open annuli $U\subset \s-\Fix(\psi)$. Then the elements of $\A$ are pairwise disjoint and the union $\bigcup_{U\in\A}U$ is (open and) dense in $\s-\Fix(\psi)$.
\end{thm}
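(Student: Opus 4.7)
The strategy has two pieces: pairwise disjointness of maximal invariant annuli, and density of their union. I would first set up the following local claim, which I expect to be the main obstacle: every recurrent point $x \in \s - \Fix(\psi)$ is contained in some $\psi$-invariant open annulus lying in $\s - \Fix(\psi)$. Density then follows at once, since $\psi$ preserves the measure $\mu$ and hence Poincar\'e recurrent points are dense in $\s - \Fix(\psi)$.

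For disjointness, suppose $U_1, U_2 \in \A$ have $U_1 \cap U_2 \neq \emptyset$. Writing $\s - U_i = K_i^+ \sqcup K_i^-$ for the two closed complementary components, the goal is to show $U_1 \cup U_2$ is itself an open invariant annulus in $\s - \Fix(\psi)$, which contradicts maximality unless $U_1 = U_2$. The combinatorics is simple provided one first rules out the possibility that some component $K_j^\pm$ is contained in the opposite annulus $U_i$: indeed, $K_j^\pm$ is $\psi$-invariant and has positive $\mu$-mass, so containment in $U_i$ (an open annulus on which the prime-end rotation number is well defined) together with Poincar\'e recurrence and zero entropy would force a fixed point inside $K_j^\pm$, contradicting $K_j^\pm \subset \s - \Fix(\psi)$. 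Once this is excluded, each $K_j^\pm$ must meet $\s - U_i$, and a short planar argument shows that $\s - (U_1 \cup U_2)$ has exactly two connected components, so $U_1 \cup U_2$ is an invariant annulus in $\s - \Fix(\psi)$, as required.

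The existence step is where the hypothesis $\psi \in \H$ is essential and where the real work lies. I would invoke the Le Calvez equivariant Brouwer foliation theorem (or, alternatively, the Franks--Handel theory of periodic disc-chains for area-preserving surface homeomorphisms) to produce, around each recurrent $x \in \s - \Fix(\psi)$, a $\psi$-invariant open topological annulus whose boundary lies entirely in $\Fix(\psi)$. Zero topological entropy excludes horseshoes and more exotic boundary configurations, and area preservation, combined with $P \subset \Fix(\psi)$, excludes wandering behaviour and isolated accumulation of non-fixed periodic points. Once this annular neighbourhood is produced for $x$, Zorn's lemma applied to the poset of $\psi$-invariant open annuli through $x$ (ordered by inclusion) yields a maximal element: the union of an increasing chain of such annuli in $\s - \Fix(\psi)$ remains an annulus because the two families of complementary continua stay separated by the chain's interior. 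Combined with the disjointness established in the previous paragraph, the family $\A$ is pairwise disjoint and its union is dense in $\s - \Fix(\psi)$, proving the theorem.
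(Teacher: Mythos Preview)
The paper does not prove this theorem: it is stated explicitly as a reformulation of parts of Theorem~1.2 in \cite{Fr-Ha} and is simply cited, so there is no ``paper's own proof'' to compare your proposal against.

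As for your sketch itself, the disjointness argument is essentially fine in spirit, though the step ruling out $K_j^\pm \subset U_i$ is vaguer than it should be (you assert that zero entropy plus recurrence forces a fixed point inside $K_j^\pm$, but this is not immediate and would itself require a Brouwer-type argument or a piece of the Franks--Handel machinery). The real issue is the existence step: you identify correctly that producing a $\psi$-invariant open annulus in $\s-\Fix(\psi)$ through each recurrent point is the heart of the matter, but then you propose to ``invoke the Le~Calvez equivariant Brouwer foliation theorem or the Franks--Handel theory of periodic disc-chains'' to obtain it. That is circular --- the construction of these invariant annuli \emph{is} the content of the Franks--Handel theorem you are trying to prove, and their argument is substantial (it occupies a large part of \cite{Fr-Ha} and relies on their full structure theory for entropy-zero area-preserving diffeomorphisms, including the analysis of the fixed-point set, normal forms near it, and the behaviour of free disc chains). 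Your outline correctly locates where the difficulty lies but does not supply an argument for that step; it is a reasonable reading guide to \cite{Fr-Ha}, not an independent proof.
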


Franks and Handel prove more on the dynamics of $\psi$ in the annuli $U\in \A$. For instance, each restriction $\psi|_U$ has a continuous integral of motion given by the rotation number. Using these results, we were in \cite{paper ergodic} able to prove Theorems \ref{thm FrHa sphere} and \ref{thm FrHa torus}.

Recall the Definition \ref{def weak integrability} of weak integrability. We already saw that $\h(\psi,\s)=0$ does not imply weak integrability, but one can ask under what conditions can the closure of an orbit contain an open set.

\begin{prob}\label{prob FrHa}
 If $\psi\in \H$ and $x\in \s$, such that the interior
 \[ \Int\overline{\O_\psi(x)}\neq \emptyset , \]
 does there exists $q\geq 1$ and a $\psi^q$-invariant, open annulus
 \[ U\subset \s-\Per(\psi) \]
 with $x\in U$?
\end{prob}

Note that if the annulus $U\subset \s-\Per(\psi)$, then in particular, $\psi^q|_U$ is an irrational pseudo-rotation. Bramham \cite{barney} proves for the closed disc, that an irrational pseudo-rotation can be approximated by a sequence of integrable maps $\psi_n$. If the answer to Problem \ref{prob FrHa} is affirmative and if one could extend the main result of \cite{barney} to $\psi^q:U\to U$ (one can use that $\psi$ comes from a diffeomorphism of a compact surface in order to control the system), then the whole system $\psi:\s\to\s$ would be weakly integrable in some set (the complement of open annuli, where $\psi$ is a pseudo-rotation) and approximated by properly integrable ones in the complement. This would be a step towards answering an old question of Katok.

\begin{prob}[Katok]
 Is every element $\psi\in\H$ a limit in some sense of integrable $\psi_n\in \H$?
\end{prob}

We give some ideas as to Problem \ref{prob FrHa}, but not having checked most of the details. Note in this connection also Theorem 3.2 and its proof in \cite{paper ergodic}.

\begin{proof}[Ideas for the proof of Problem \ref{prob FrHa}]
 For $q\in \N$ let $\A(q)$ be the set of maximal, open, $\psi^q$-invariant annuli in $\s-\Fix(\psi^q)$ as in \cite{Fr-Ha}. For $U\in \A(q)$, the rotation number $\rho_{\psi^q,U}: U \to \R/\Z$ is defined everywhere, continuous and $\psi^q$-invariant. Consider the set
 \[ X := \bigcup_{q\in\N}\bigcup_{U\in\A(q)} \bigcup_{r\in \R/\Z \text{ irrat.}} \Int \rho_{\psi^q,U}^{-1}(r) . \]
 Note that $X$ is a union of open annuli contained in $\s-\Per(\psi)$ (use Theorem 1.4 (3) in \cite{Fr-Ha} to show that each component is either an annulus or an open disc; indeed, any non-contractible loop in a component of $X$ has to be essential in the corresponding annulus; open discs are $\psi^{kq}$-invariant for some $k\geq 1$ by recurrence and $\mu$ being positive on open sets and hence contain periodic points by Brouwer's theorem, contradicting the irrationality of $r$; open annuli have to be $\psi^q$-invariant by Theorem 1.4 (3) in \cite{Fr-Ha}) and $X$ as a set is $\psi$-invariant (the components of $X$ can be permuted).

 We have to show that any point $x\in \s$ with $\Int\overline{\O_\psi(x)}\neq \emptyset$ lies in $X$. For this, suppose the contrary. By Theorem 1.2 (3) in \cite{Fr-Ha} and Baire's category theorem, one can probably show that any point $x$ with $\Int\overline{\O_\psi(x)}\neq \emptyset$ has to belong to $\cup_{U\in\A(q)}U$ for all $q$ (note that the $\om$-limit sets belong to the {\em frontier} of the fixed point sets and cannot enter the interior of the fixed point set; hence the $\om$-limit sets cannot have interior...). It then remains to show that $x$ has irrational rotation number in some $U\in \A(q)$, for some $q$. For this, one could use the techniques in the proofs of Theorems 3.2 and 1.2 in \cite{paper ergodic}: the rationality of the rotation number generates periodic points, which lead to smaller invariant annuli containing the orbit of $x$.

 Another possible approach is to directly consider the components $A$ of the set $\Int\overline{\O_\psi(x)}\neq \emptyset$. By recurrence and $\mu(A)>0$, we find $q\geq 1$ with $\psi^q(A)=A$ and $\Int\overline{\O_\psi(x)} = \cup_{i=0}^{q-1} \psi^i(A)$. If $A$ is simply connected, then $A$ is either the whole $\s$ or an open disc. In the first case, note that $\psi^q\in \text{Homeo}_+(\s)$ has at least two disjoint fixed points $x_0,x_1$ (Theorem 3.10 in \cite{bonino} plus Brouwer's fixed point theorem); here we set $U=A-\{p_0,p_1\}$. In the second case, we find a fixed point $x_0\in A$ of $\psi^q$ and set $U=A-\{x_0\}$. If $A$ is an annulus, there is nothing to prove. If $A$ has $\geq 3$ holes, then one can possibly use the invariant annuli of \cite{Fr-Ha} to further decompose the set $A$ (in each disc-like hole, one could expect to find periodic points, such that boundaries of invariant annuli have to cut through $A$, while by assumption $\psi|_A$ is transitive). More precisely: use Theorem (3.4) in \cite{franks s2} to obtain periodic points in $A$ (the rotation number of some iterate $\psi^q$ should vanish identically).
\end{proof}

\section{Miscellaneous results}\label{section misc}

\subsection{Invariant graphs and contractible geodesic loops}\label{section birkhoffs thm}

A classical theorem of Birk\-hoff states that in a region of instability of a monotone twist map of an annulus, one can connect the boundaries of that region by an orbit of the twist map. There is a well-known relationship between monotone twist maps and geodesic flows on $\T$, see Section \ref{section twist map} below. One of the main differences is (informally) that geodesics in $\T$ might be recurrent in the universal cover $\R^2$. Translated to geodesic flows on $\T$, we propose the following version of Birkhoff's theorem and will sketch a proof for reversible Finsler metrics.

\begin{prob}\label{quest birkhoffs thm}
 Let $F$ be a Finsler metric on $\T$, such that there are no continuous $\phi_F^t$-invariant graphs in $S_F\T$. Does there exist a simple closed $F$-geodesic in the universal cover $\R^2$?
\end{prob}

Conversely, if $c:\R/T\Z\to\R^2$ is a simple closed $F$-geodesic, then the velocity curve $\dot c:\R/T\Z\to S_F\R^2$ lies in one of the prime, non-trivial homotopy classes of $S_F\R^2 \cong \R^2 \times \se$ (this is known as the Umlaufsatz due to Hopf). In particular, the image $\dot c(\R/T\Z)$ intersects any $C^0$-graph in $S_F\R^2$, so that there cannot be a continuous $\phi_F^t$-invariant graph in $S_F\R^2$, in particular not in $S_F\T$.

Hence, if the answer to Problem \ref{quest birkhoffs thm} is affirmative, then the non-existence of invariant $C^0$-graphs for the geodesic flow $\phi_F^t:S_F\T\to S_F\T$ and the existence of simple closed $F$-geodesics in $\R^2$ would be equivalent. Recall that an invariant $C^0$-graph in $S_F\T$ has to be Lipschitz (a classical result due to Birkhoff, see e.g.\ the appendix in \cite{mather connecting}) and by the arguments in Section \ref{section weak KAM} one can show that it has to be contained in some set $\M(\g)$. In particular, an invariant $C^0$-graph would entail $\pi(\M(\g))=\R^2$ for some $g$-geodesic $\g$.

An intuitive situation with a contractible, simple closed geodesic $c:\R/\Z\to\T$ is given in Proposition 9.7 in \cite{bangert}: A large bump in $\T$ induces a simple closed geodesic encircling the bump.

Let us sketch a proof for an affirmative answer to Problem \ref{quest birkhoffs thm}, under the assumption that $F$ is reversible. The first step is the following lemma resembling Birkhoff's result mentioned earlier. It is proved in the setting of monotone twist maps in \cite{mather connecting} (see in particular p.\ 257), working with a variational principle related to the (symmetric) distance $d_F$ in $\R^2$ via the ideas in \cite{bangert}, see also Subsection \ref{section twist map}. Hence, it is believable that the lemma holds in the setting of $(\T,F)$.

\begin{lemma}\label{lemma mather connecting}
 Let $F$ be a reversible Finsler metric on $\T$ and let $I\subset \se$ be an interval not containing antipodal points. Assume that $\pi(\M(\g))\neq \R^2$ for all Euclidean straight lines $\g$ with direction $\g(\infty)=\xi\in I$. Then for any pair $\xi_-, \xi_+$ of irrational points in $I$ there exists an $F$-geodesic $c:\R\to \R^2$ with $c(t)$ tending to infinity as $|t|\to\infty$, with $c(\R_-)$ in finite distance of $\R_- \xi_-$ and $c(\R_+)$ in finite distance of $\R_+ \xi_+$.
\end{lemma}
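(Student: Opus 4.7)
The plan is to construct the heteroclinic as a critical point of a suitable action functional built from renormalized distances. Reversibility of $F$ will be used in two essential ways: it makes $d_F$ a true symmetric metric, and it turns the stable norm $\sig_F$ (see Lemma \ref{lemma stable norm is marked length}) into a genuine norm on $\R^2$. Since $\pi(\M(\g_\xi))=\pi(\M(\g_{-\xi}))$ (reversing velocities is a bijection between the two sets), the hypothesis of the lemma also covers the antipodal direction $-\xi_+$, which we shall use.

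First, I would apply Theorem \ref{thm bangert irrat}(ii) to $\xi_-$ and $\xi_+$ (and to $-\xi_+$): the standing assumption forces each of $\pi(\M^{rec}(\g_{\xi_\pm}))$ to be nowhere dense in $\R^2$, providing the open gaps through which the connecting geodesic may thread. By Theorem \ref{thm horobdry torus}(i), for each irrational $\xi\in\{\xi_-,-\xi_+\}$ the fibre $\delta_F^{-1}(\xi)$ is a single element $[u_\xi]\in\W_{dir}(\R^2,F)$, which may be realized by the Bangert-type renormalized limit
\[
 u_\xi(x) := \liminf_{n\to\infty}\bigl(d_F(x,-q_n)-\sig_F(q_n)\bigr),\qquad q_n\in\Z^2,\ q_n/|q_n|\to\xi,\ |q_n|\to\infty.
\]
Lemma \ref{lemma stable norm is marked length} and the triangle inequality ensure this liminf is finite and yields a Lipschitz dominated function (Definition \ref{def dominated}); its $[u_\xi]$-calibrated backward rays are exactly the elements of $\M(\g_\xi)$ with $c(-\infty)=\xi$, so they escape spatially in the direction $-\xi$.

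Then I would introduce the connecting functional
\[
 \Psi:\R^2\to\R,\qquad \Psi(x):=u_{\xi_-}(x)+u_{-\xi_+}(x),
\]
and argue that its infimum is attained at some $x_0\in\R^2$. Geometrically, $u_{\xi_-}(x)$ is the renormalized cost of arriving at $x$ from infinity along spatial direction $-\xi_-$, while $u_{-\xi_+}(x)$ is the analogous cost along $+\xi_+$; by reversibility, the latter equals the cost of \emph{leaving} $x$ toward infinity in direction $+\xi_+$. Having secured existence of $x_0$, Proposition \ref{lemma weak KAM}(ii) produces a $u_{\xi_-}$-calibrated ray $c_-:\R_-\to\R^2$ with $c_-(0)=x_0$ and, after time reversal, a ``forward half'' $c_+:\R_+\to\R^2$ with $c_+(0)=x_0$ shadowing direction $\xi_+$. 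The minimality of $x_0$ (vanishing of the relevant subdifferential sum) together with Proposition \ref{lemma weak KAM}(i) forces $\dot c_-(0)=\dot c_+(0)$, so that $c:=c_-\cup c_+$ is a $C^1$ $F$-geodesic. Applying the Morse Lemma (Theorem \ref{morse lemma}) separately to the two halves, each is shadowed by a Euclidean line in the corresponding asymptotic direction, which delivers the conclusion $c(\R_\pm)\subset$ finite distance of $\R_\pm\xi_\pm$.

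The main obstacle I expect is the coercivity of $\Psi$, i.e., ruling out minimizing sequences escaping to infinity in $\R^2$. The Busemann-type functions $u_\xi$ are only Lipschitz in general, and what one needs is a sharp one-sided estimate of the form $u_\xi(x)\geq \ell_\xi(x)-O(1)$, where $\ell_\xi$ is a linear functional vanishing on the line through the origin in direction $-\xi$ and growing linearly off it. Such an estimate follows from the Morse Lemma plus the uniform equivalence of $F$ with Euclidean, but it must be made quantitative enough so that, adding the two one-sided bounds, $\Psi(x)\to+\infty$ unless $x/|x|$ accumulates at a point that is simultaneously antipodal to $\xi_-$ and to $-\xi_+$. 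The hypothesis that $I$ contains no antipodal points is precisely what excludes the existence of such a common asymptotic direction (otherwise one would need $-\xi_-=\xi_+$, meaning $\xi_-$ and $\xi_+$ antipodal, which is forbidden), thereby delivering coercivity. A secondary subtlety is handling the variational step at $x_0$ when $u_{\xi_\pm}$ fail to be differentiable there; this is resolved by choosing calibrated rays from both sides and using the minimality condition to match their initial velocities via the subdifferential characterization implicit in Proposition \ref{lemma weak KAM}.
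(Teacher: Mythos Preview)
The paper does not actually give a proof of this lemma; it merely cites Mather's twist-map paper \cite{mather connecting} and Bangert's variational set-up \cite{bangert}, remarking that ``it is believable that the lemma holds in the setting of $(\T,F)$''. So there is no paper proof to compare against, only an indicated strategy: adapt Mather's constrained variational construction of connecting orbits to the distance functional $d_F$ on $\R^2$.

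Your proposed route has a genuine gap at the coercivity step, and it cannot be repaired within the scheme you outline. For irrational $\xi$ the unique directed weak KAM solution is the lift of an $\eta$-weak KAM solution from $\T$ (Proposition~\ref{prop lifting weak KAM}, Corollary~\ref{cor u directed}, Theorem~\ref{thm mather set irrat}), hence has the form $u_\xi(x)=\langle\eta_\xi,x\rangle+v_\xi(p(x))$ with $v_\xi$ continuous on $\T$ and $[\eta_\xi]=\mathcal{F}_\xi\subset\partial B^*(F)$. By reversibility $\eta_{-\xi_+}=-\eta_{\xi_+}$, so
\[
 \Psi(x)=u_{\xi_-}(x)+u_{-\xi_+}(x)=\langle \eta_{\xi_-}-\eta_{\xi_+},\,x\rangle+\text{(bounded)}.
\]
Since $\xi_-\neq\xi_+$ are irrational, $\eta_{\xi_-}\neq\eta_{\xi_+}$ (Theorem~\ref{thm al C^1 torus} gives $\nabla\al_F(\eta_\xi)/|\nabla\al_F(\eta_\xi)|=\xi$, so distinct $\xi$'s force distinct $\eta$'s). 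Therefore $\Psi$ is an affine function plus a bounded perturbation: it is \emph{unbounded below} and attains no infimum. Your one-sided estimate ``$u_\xi(x)\ge\ell_\xi(x)-O(1)$ with $\ell_\xi$ vanishing on $\R\xi$'' is simply false---the linear part $\langle\eta_\xi,\cdot\rangle$ does not vanish on $\R\xi$ (in the Euclidean model $\eta_\xi=\xi$ and $\langle\xi,t\xi\rangle=t$). The no-antipodes hypothesis on $I$ plays no role in rescuing this.

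Mather's actual mechanism is different in kind: one does not minimise a single function on $\R^2$, but rather minimises the action over bi-infinite configurations subject to \emph{constraints} that pin the orbit near the Aubry--Mather set of direction $\xi_-$ for large negative times and near that of $\xi_+$ for large positive times. The gap hypothesis $\pi(\M(\g_\xi))\neq\R^2$ for every $\xi\in I$ is exactly what furnishes, for each pair of adjacent rotation numbers in $I$, a positive Peierls barrier allowing the orbit to ``cross'' from one Aubry--Mather set to the next; the connecting orbit is then assembled inductively from finitely many such crossings between $\xi_-$ and $\xi_+$. The role of reversibility and of the no-antipodes condition in this scheme is to make the Bangert-style configuration variational principle (with generating function $h(s,t)=d_F((0,s),(1,t))$, cf.\ Subsection~\ref{section twist map}) available and well-posed.
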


The assumption that the points $\xi_-,\xi_+$ are not antipodal makes it easier to adapt the techniques from \cite{mather connecting}, while this assumption should not be necessary. Let us now assume that there are no invariant $C^0$-graphs in $S_F\T$. Lemma \ref{lemma mather connecting} applies to any non-antipodal pair $\xi_-,\xi_+$, so that we are able to construct three geodesics $c_1,c_2,c_3$ as in Figure \ref{fig_birkhoff}. As seen in Figure \ref{fig_birkhoff}, one obtains a closed domain $A\subset \R^3$, whose boundary $\partial A$ consists of parts of the $c_i$ and has three vertices of outer angles $<\pi$. Take now a smooth, simple closed curve $c:\R/\Z\to \R^2 - A$ and apply the curve shortening flow. The curve-shortening flow is discussed in \cite{grayson} and has been extended to the reversible Finsler case in \cite{angenent1}. We obtain a variation $c_t:\R/\Z\to \R^2$ of $c=c_0$ for $t\in [0,\infty)$. No curve $c_t$ can intersect the set $A$, as the complement $\R^2-A\supset c_0(\R/\Z)$ is locally convex. In particular, $c_t$ cannot shrink to a point. It is a property of the curve shortening flow that then the limit $c_\infty=\lim_{t\to\infty} c_t$ has to be a simple closed $F$-geodesic in $\R^2-A$. The claim follows.

\begin{figure}\centering%[!htb]
 \includegraphics[scale=0.3]{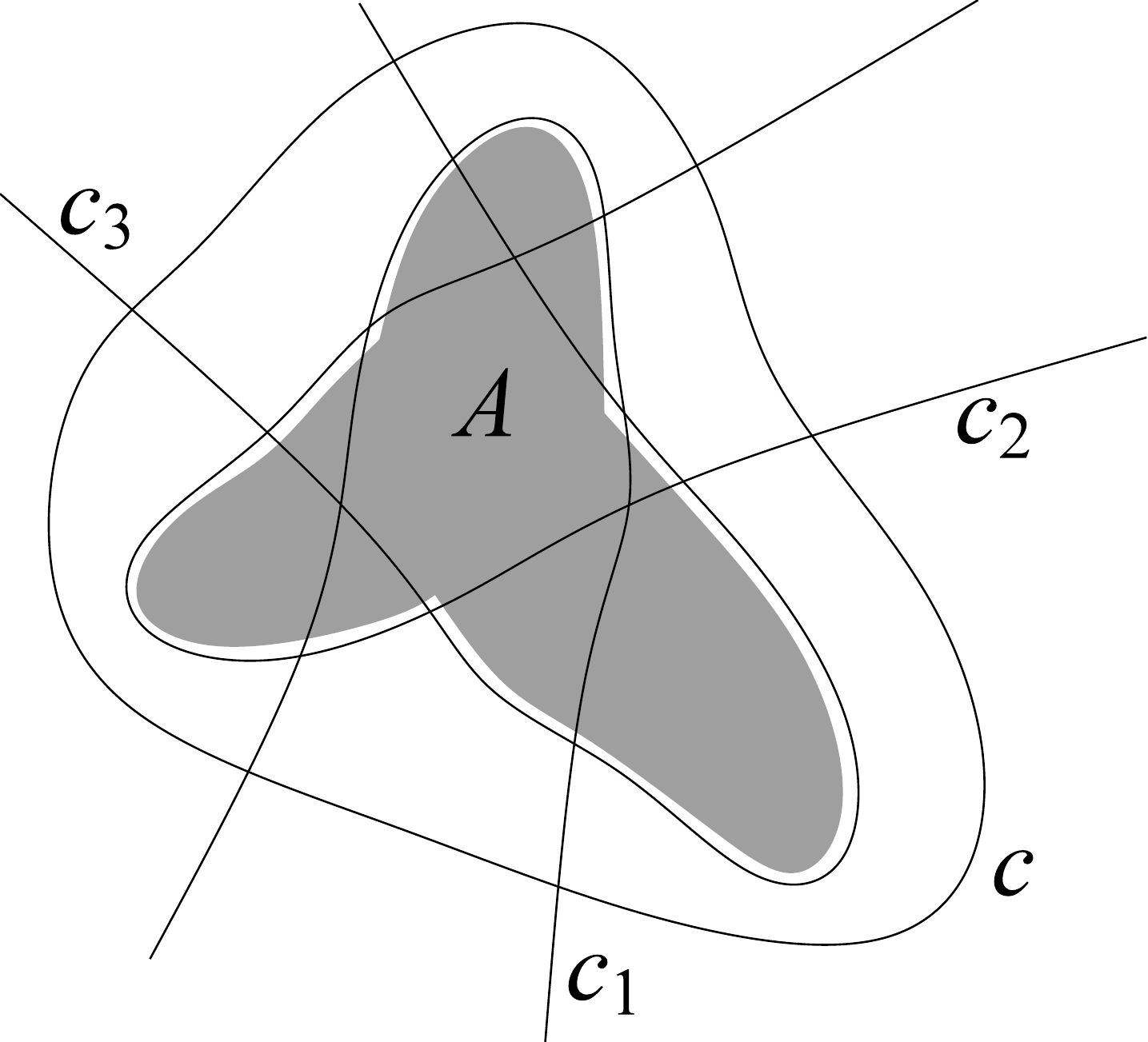}
 \caption{The curves $c_1,c_2,c_2$ in $\R^2$ given by Lemma \ref{lemma mather connecting}, the domain $A$ and the curve $c_0=c$. \label{fig_birkhoff}}
\end{figure}

Possibly, the result has an analogon for higher genus surfaces. Here, one cannot expect invariant graphs in $S_FM$, but in the universal cover we have the candidates $\RR_-(\xi)\subset S_FH$. Gaps in the projections
\[ \pi(\RR_-(\xi))\subset H \]
could be used to construct a geodesic $c:\R\to H$ with a self-intersection. The techniques in \cite{diss eva}, (Section 3 of) \cite{glasm1} and \cite{glasm2} yield a simple closed geodesic in $H$ by the same ideas as above.

\subsection{Surfaces of non-positive curvature}

Recall the flat strip theorem in Subsection \ref{section hadamard}. An application of Theorem \ref{thm rays-paper} yields the following.

\begin{cor}
 If $M=H/\Gam$ is a closed, orientable surface of genus $\mathfrak{g}\geq 2$ and $F$ is Riemannian with non-positive curvature, then flat strips $S\subset H$ are periodic, i.e.\ $S$ is invariant under some $\tau\in \Gam-\{\id\}$ and foliated by $\tau$-periodic $F$-geodesics.
\end{cor}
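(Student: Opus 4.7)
The plan is to combine Theorem \ref{thm rays-paper} with a volume argument and the structure of maximal flat strips. First I would set up the endpoints. Since $F$ has non-positive curvature it has no conjugate points, so every $F$-geodesic is minimal; in particular the boundary geodesics $c_0,c_1:\R\to H$ of $S$ lie in $\M$. They are at finite Hausdorff $d_F$-distance bounded by the strip width $d>0$, hence also at finite Hausdorff $d_g$-distance by the equivalence $(1/c_F)|.|_g\leq F\leq c_F|.|_g$. The Morse Lemma applies since $g$ has curvature $-1<0$ and $F$ is Riemannian, and combined with the Flat Strip Theorem for the strictly negative $g$ the two $g$-geodesics shadowed by $c_0$ and $c_1$ must coincide in a single $g$-geodesic $\gam$. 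Thus $c_0,c_1\in\M(\gam)$ and they share the common asymptotic endpoints $\xi_\pm:=\gam(\pm\infty)\in\Gro(H,g)\cong\se$.

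Next I would invoke Theorem \ref{thm rays-paper}. The positive $F$-width $d$ together with the equivalence of $F$ and $g$ gives $\liminf_{t\to\infty}d_g(c_0(\R_-),c_1(-t))\geq d/c_F>0$, so $w_-(\xi_-)>0$. Since $M$ is compact every boundary point is $\al$-compact, so Theorem \ref{thm rays-paper} forces $\xi_-$ to be fixed by a hyperbolic element of $\Gam$; the symmetric argument, using that a Riemannian $F$ is reversible, gives the same conclusion for $\xi_+$. Hence both endpoints of $\gam$ are fixed points of hyperbolic isometries of $(H,g)$.

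The main obstacle is upgrading this to the statement that $\gam$ is the axis of a \emph{common} hyperbolic element. For this I would use a volume argument. The strip $S$ has infinite $F$-area, whereas $p(S)\subset M$ has area at most $\vol_F(M)<\infty$, so $p|_S$ is infinite-to-one on a set of positive $F$-measure; choosing a small open neighborhood $U\subset S$ of a point with infinitely many preimages in $S$ yields some $\tau\in\Gam-\{\id\}$ with $\tau S\cap S$ of positive $F$-area. Let $\tilde S$ denote the maximal totally geodesic flat 2-region in $H$ containing $S$. Preissmann's theorem applied to the background hyperbolic $g$ rules out any $\Z^2$ subgroup of $\Gam$, and a short argument then excludes any isometric copy of Euclidean $\R^2$ in $H$: such a 2-flat would either project to a flat torus in $M$ (giving $\Z^2\leq\Gam$) or have cyclic stabilizer, in which case the quotient would embed as a flat cylinder into the compact surface $M$, both impossible. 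Therefore $\tilde S$ is $F$-isometric to $\R\times[0,W]$ for some finite $W\geq d$. Since $\tilde S$ and $\tau\tilde S$ are both locally Euclidean and agree on the open overlap $\tilde S\cap\tau\tilde S\supset S\cap\tau S$, their union is again locally Euclidean; the maximality of both forces $\tau\tilde S=\tilde S$.

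Finally, $\tau$ restricts to an orientation-preserving isometry of $\tilde S\cong\R\times[0,W]$ acting freely, and the only such isometries of the Euclidean strip are pure translations $(x,y)\mapsto(x+T,y)$ along the length with $T\neq 0$. Consequently $\tau$ preserves every horizontal geodesic $\R\times\{y\}$; in particular $\tau c_0=c_0$, $\tau c_1=c_1$ and $\tau S=S$, and $S$ is foliated by the $\tau$-periodic $F$-minimal geodesics $\R\times\{y\}$ with $y\in[0,d]$. The decisive step is the third paragraph: Theorem \ref{thm rays-paper} pins down the endpoints $\xi_\pm$ but does not by itself identify a common stabilizer of $\gam$, and it is the volume and maximal-flat argument, combined with the genus $\geq 2$ input through Preissmann, that closes the proof.
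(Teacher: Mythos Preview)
Your application of Theorem~\ref{thm rays-paper} in the second paragraph is correct and is exactly the step the paper has in mind: the flat strip forces $w_-(\xi_-)>0$, hence $\xi_-$ is the fixed point of some hyperbolic $\sigma\in\Gam$. The problem is that you then abandon this $\sigma$ and switch to an unrelated $\tau$ coming from a volume argument, and this is where the proof breaks.

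The gap is the sentence ``the maximality of both forces $\tau\tilde S=\tilde S$.'' Your $\tilde S$ is the ``maximal totally geodesic flat 2-region'' containing $S$, but there are two possible meanings and neither one makes the argument go through. If $\tilde S$ denotes the parallel set $P(c_0)$ (the union of all $F$-geodesics parallel to $c_0$), then it is indeed isometric to $\R\times[0,W]$, but two such parallel sets can overlap without being equal: picture a genus-two surface carrying flat collars around two simple closed geodesics that cross once; the lifts are maximal flat strips meeting transversally in a bounded parallelogram of positive area. Maximality of $P(c_0)$ only says there is no further geodesic \emph{parallel to $c_0$}, which the transverse strip does not provide. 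If instead $\tilde S$ denotes the connected component of $\{K_F=0\}$ containing $S$, then overlapping translates are indeed equal --- but now your assertion that $\tilde S\cong\R\times[0,W]$ is unjustified: ruling out $\R^2$ does not exclude cross-shaped or multiply connected flat regions, so the ``orientation-preserving free isometry of a Euclidean strip is a translation'' step no longer applies.

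The route the paper intends keeps working with the element $\sigma$ you already found. Since $\sigma$ fixes $\xi_-$, Theorem~\ref{morse periodic}\,\eqref{morse periodic item 2} makes the boundary leaf $c_0$ backward asymptotic to some $\sigma$-periodic minimal geodesic $c_0^{*}$; because $K_F\circ c_0\equiv 0$ and $c_0^{*}$ is a limit of $c_0$ under the periodic action, also $K_F\circ c_0^{*}\equiv 0$. If $c_0\neq c_0^{*}$ one is exactly in the situation handled in the proof of Theorem~\ref{thm wu-knieper} just below: the ideal triangle between $c_0$ and $\sigma c_0$ has finite area by a tiling argument, but infinite area by \cite{ruggiero} since $K_F\equiv 0$ along its side. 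Hence $c_0=c_0^{*}$ is $\sigma$-periodic, and since $(H,F)$ is CAT(0) every leaf $c_s$ of $S$, being parallel to $c_0$, shares its endpoints at infinity and is likewise $\sigma$-periodic. So the decisive input is not a volume count but feeding the specific element $\sigma$ from Theorem~\ref{thm rays-paper} back into the non-positive curvature of $F$.
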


This result was obtained earlier by Coudene and Schapira \cite{coudene-schapira}. It is related to the following, long-standing open problem.

\begin{prob}\label{problem ergodic non-positive}
 If $M=H/\Gam$ is a closed, orientable surface of genus $\mathfrak{g}\geq 2$ and $F$ is Riemannian with non-positive curvature, is the geodesic flow $\phi_F^t:S_FM\to S_FM$ ergodic with respect to the Liouville measure?
\end{prob}

It was proposed to the author by Knieper to use the ideas from \cite{min_rays} leading to Theorem \ref{thm rays-paper} in order to show that for any orthogonal Jacobi field $J$ along a non-periodic geodesic we have
\[ \liminf_{t\to\infty}\|J(t)\|=0 . \]
This would be a key step to prove the ergodicity in Problem \ref{problem ergodic non-positive}. In this connection, one can prove the following theorem. It is a generalization (and correction) of Lemma 3.7 in \cite{wu}. The proof below was explained to the author by Knieper.

\begin{thm}\label{thm wu-knieper}
 Let $M$ be a closed orientable surface of genus $\mathfrak{g} \geq 2$ and let $F$ be Riemannian with non-positive curvature $K_F\leq 0$. Consider the set
 \[ A := \{v\in S_FM : K_F\circ c_v \equiv 0 ~ \& ~ c_v \text{ not periodic} \}. \]
 Then $A$ is closed.
\end{thm}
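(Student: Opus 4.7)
I would take an arbitrary sequence $v_n \in A$ with $v_n \to v$ in $S_FM$ and verify that both defining conditions of $A$ persist in the limit. The curvature condition is essentially automatic: the map $(t,w)\mapsto K_F(c_w(t))$ is continuous on $\mathbb{R}\times S_FM$, so for each fixed $t$ one has $K_F(c_v(t))=\lim_n K_F(c_{v_n}(t))=0$, hence $K_F\circ c_v\equiv 0$. The substantive part is showing that $c_v$ is non-periodic, and here I would argue by contradiction.

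So assume $c_v$ is $\tau$-periodic with minimal period $T>0$ for some $\tau\in\Gamma\setminus\{\mathrm{id}\}$. Lift everything to the Hadamard surface $(H,F)$ (noting $K_F\le 0$), picking lifts $\tilde v_n\to\tilde v$ so that $\tilde c_v$ is the axis of $\tau$. Define the distance function $d_n(t):=d_F(c_{\tilde v_n}(t),\tilde c_v(t))$. By the standard CAT(0)-type property of Hadamard manifolds, $d_n$ is convex in $t$, and by construction $d_n(0)\to 0$. I would split according to whether $d_n$ is bounded in $t$. In the bounded case, the Flat Strip Theorem produces a flat strip $\Sigma_n\subset H$ bounded by $\tilde c_v$ and $c_{\tilde v_n}$. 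Because $\tilde c_v$ is $\tau$-invariant, uniqueness of the flat strip on that side of $\tilde c_v$ forces $\tau\Sigma_n=\Sigma_n$, and $\tau$ acts on $\Sigma_n\cong\mathbb{R}\times[0,w_n]$ as the translation $(s,u)\mapsto(s+T,u)$. Since $c_{\tilde v_n}$ is parallel to $\tilde c_v$ inside $\Sigma_n$, one gets $\tau\circ c_{\tilde v_n}(t)=c_{\tilde v_n}(t+T)$, so $c_{v_n}$ is $\tau$-periodic, contradicting $v_n\in A$.

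In the unbounded case, convexity of $d_n$ together with $d_n(0)\to 0$ forces $d_n(t)\to\infty$ in at least one time direction, so $c_{\tilde v_n}$ eventually leaves the maximal flat strip $S$ containing $\tilde c_v$. Along $c_{\tilde v_n}$ the curvature $K_F$ vanishes identically, so every perpendicular Jacobi field $J$ along $c_{\tilde v_n}$ satisfies $\ddot J=0$, i.e.\ grows at most linearly; thus $c_{\tilde v_n}$ itself bounds a flat half-strip on at least one side. For large $n$ the geodesics $c_{\tilde v_n}$ and $\tilde c_v$ coincide to arbitrary accuracy on arbitrarily long intervals $[-L_n,L_n]$, so this half-strip of $c_{\tilde v_n}$ must meet the strip $S$ transversely near $t=0$; pasting the two flat structures together should yield a genuine enlargement of $S$, contradicting its maximality.

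The main obstacle is closing the unbounded case: one has to rigorously rule out $c_{\tilde v_n}$ escaping from $S$ into a \emph{different}, disjoint flat region of $\{K_F=0\}$. This is where the hypotheses bite. In genus $\mathfrak{g}\geq 2$, Gauss--Bonnet forces $\{K_F<0\}$ to be non-empty, and a careful two-dimensional analysis of how connected components of $\{K_F=0\}$ can neighbor one another (using that $c_{\tilde v_n}$, being a geodesic with linear perpendicular Jacobi fields, must transversely cross $\partial S$ at a boundary geodesic of $S$, and that transverse exit from a maximal flat region in a non-positively curved surface necessarily enters $\{K_F<0\}$) should give the required contradiction $K_F(c_{\tilde v_n}(t_*))<0$ for some $t_*$, finishing the proof.
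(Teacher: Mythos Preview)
Your bounded case is correct: a convex function bounded on $\R$ is constant, so $c_{\tilde v_n}$ is parallel to $\tilde c_v$, the Flat Strip Theorem applies, and $\tau$-invariance of the strip forces $c_{v_n}$ to be periodic.

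The unbounded case, however, has a genuine gap. The implication ``$K_F\circ c_{\tilde v_n}\equiv 0$, hence perpendicular Jacobi fields are linear, hence $c_{\tilde v_n}$ bounds a flat half-strip'' is false. The Jacobi equation only sees the curvature \emph{along} the geodesic; linearity of Jacobi fields is an infinitesimal (second-order) statement about the metric on $c_{\tilde v_n}(\R)$ itself and says nothing about flatness in a neighborhood. A geodesic can sit inside $\{K_F=0\}$ without any tubular neighborhood of it being flat. Your fallback---that transverse exit from the maximal flat strip $S$ must hit $\{K_F<0\}$---is likewise unjustified: $S$ is maximal only among strips parallel to $\tilde c_v$, and $\{K_F=0\}$ may well extend beyond $\partial S$ in a way that is not a parallel strip (fingers, strips of other directions, etc.). So you have not ruled out $c_{\tilde v_n}$ diverging from $\tilde c_v$ while remaining entirely inside $\{K_F=0\}$, and no pasting of flat structures is available.

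The paper's proof avoids this trap entirely. Instead of looking for flat strips around $c_{\tilde v_n}$, it uses the $\tau$-translation along the periodic $\tilde c_v$ to pull back the points where $c_{\tilde v_n}$ is at distance~$1$ from $\tilde c_v$ into a compact region, and extracts a \emph{limit} geodesic $c$ which is asymptotic to $\tilde c_v$ (not parallel) and still satisfies $K_F\circ c\equiv 0$. Then it invokes a result of Ruggiero: an ideal triangle one of whose sides lies in $\{K_F=0\}$ has infinite area. On the other hand, the ideal triangle bounded by $c$, $\tau c$ and a transversal $\alpha$ decomposes under $\tau^{-k}$ into pieces that all embed disjointly into a fixed compact quadrilateral, so its area is finite---a contradiction. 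The key input you are missing is this area argument; nothing in your outline produces the bounded-versus-infinite-area tension that actually closes the proof.
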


It would be desirable to know whether $A$ has zero Liouville measure.

\begin{proof}
 We work in $H$, writing $\tilde A=Dp^{-1}(A)$. Consider $v_n\to v$ with $v_n\in \tilde A$. It follows that $K_F\circ c_v\equiv 0$, so we have to show that $c_v$ is not periodic. Suppose $c_v$ is periodic. As $c_{v_n}\neq c_v$, we find $x_n,y_n\in c_v(\R)$, such that both $x_n,y_n$ have distance to $c_{v_n}$ at most 1, while we assume
 \[ d_g(x_n,c_{v_n}(\R)) \to 0, \qquad d_g(y_n,c_{v_n}(\R))=1 . \]
 Such points $x_n,y_n$ exist by the assumed convergence, and moreover we have $d_g(x_n,y_n)\to\infty$. W.l.o.g., $c_v$ is chosen in its homotopy class such that on the side of $c_v$ where $c_{v_n}$ passes $y_n$ in distance $1$, there are no more periodic geodesics with the same homotopy class as $c_v$ (here one uses that $c_{v_n}$ approximate every closed geodesic in the homotopy class of $c_v$, if $c_v$ is part of a flat strip). We can use the translation along the periodic $c_v$ to transport $y_n$ into a compact set and obtain from $c_{v_n}$ in the limit a geodesic $c$ asymptotic to $c_v$ with $K_F\circ c\equiv 0$.
 
 We now argue that there cannot be a pair of asympotic geodesics $c,c'$ in the universal cover of $M$, such that $K_F\circ c=K_F\circ c'=0$ and such that $c$ is invariant under some deck transformation $\tau$. Suppose w.l.o.g., that $\tau c'$ lies between $c,c'$ and let $\al$ be a geodesic segment connecting $c$ to $c'$. Then the area of the ideal triangle $\Delta$ bounded by $c',\tau c'$ and $\al$ is bounded: $\Delta$ is the disjoint union of its parts $D_k$ between $\tau^{-k-1}\al$ and $\tau^{-k}\al$ for $k\geq 0$. Under $\tau^{k}$, the $D_k$ are moved into disjoint subsets of the compact set bounded by $c,c',\al$ and $\tau^{-1}\al$. But the main result in \cite{ruggiero} states that by $K_F\circ c'=0$, the triangle $\Delta$ has infinite area. This is a contradiction.
\end{proof}

The main ingredients in the above proof are
\begin{enumerate}
 \item a certain ``simplicity'' of the limit geodesics (given by periodicity),
 
 \item the main result in \cite{ruggiero}, which holds for arbitrary ideal triangles.
\end{enumerate}
Possibly, one can replace the periodicity in the above proof by other properties of the limit geodesic and argue that the set $A$ is in fact empty. For instance, recall that a set of geodesics $A_0$ is called simple, if the geodesics in $A_0$ do not intersect in the closed surface $M$. If the set $A$ in the above theorem contains a simple closed subset, then we would again reach a contradiction, as we could project the obtained ideal triangles to $M$, where the total area is bounded.

\section{Dynamical systems related to geodesic flows}\label{section rel dynamical systems}

Geodesic flows, especially in the Riemannian case have been studied for a long time. In this section, we discuss three dynamical systems, which are related to Finsler geodesic flows.

\subsection{Monotone twist maps}\label{section twist map}

Let us write $C=\R/\Z\times \R$ for the infinite cylinder. Monotone twist maps $f:C\to C$ we studied first by \Poincare, later by Birkhoff. In the 1980ies the topic became more present again through the development of Aubry-Mather theory. A monotone twist map is a diffeomorphism that leaves invariant both ends of $C$, preserves the standard area and orientation of $C$ and a lift the the universal cover $\tilde f:\R^2\to\R^2$ maps vertical lines $\{x\}\times \R$ to graphs over the first component $\R$. An introduction to the topic is given e.g.\ in \cite{gole book}.

Given a monotone twist map $f$ of the cylinder $C$, one can find a Lagrangian $L:\R/\Z\times C \to \R$, strongly convex in the $\R$-component of $C$, so that the time one map of the Euler-Lagrange flow is $f=\phi^1_L|_{\{0\}\times C}$. See \cite{moser1}. The Lagrangian $L$ defines an action $A_L(c)=\int L(t,c,\dot c)dt$ of curves $c:[a,b]\to\R/\Z$. It is shown in Appendix A of \cite{diss} how to define a reversible Finsler metric $F$ on $\T$, so that the curves $(t,c)$ are $F$-geodesics, where $c$ is a critical curve for $A_L$. In this sense, monotone twist maps are a special case of Finsler geodesic flows on $\T$. 

Conversely, if $F$ is a Finsler metric on $\T$ and $L:\R/\Z\times C\to\R$ is defined by $L(x,r) := F(x,1,r)$, where $x\in\T,r\in\R$, then the $F$ geodesics can be reparametrized into $A_L$-critical curves. This encoding of the geodesic flow $\phi^t_F$ in the Euler-Lagrange $\phi_L^t$ of $L$ is related to constructing Poin\-car\'e surfaces of section for $\phi^t_F$ in $S_F\T$, namely consider
\[ N_t := \{ (x,v)\in S_F\T : x_1=t \} , \quad t\in \R/\Z. \]
Set $\Xi := \{ (x,v) \in S\T : v_1>0 \}$, then in $\Xi$ the geodesic flow of $F$ is transverse to all $N_t$ and $\phi^t_F$ can be reparametrized into $\phi^t_L$. The first return map 
\[ \varphi  : N_0 \to N_0 \]
is the time-1-map $\phi^1_L|_{\{0\}\times C}$ (putting questions as to where $\vf$ is defined aside). One can show that $\vf$ is a monotone twist map, if the geodesics in the domain of definition have no conjugate points before returning to $N_0$. In general, $\vf$ can be thought of as a composition of monotone twist maps.

Monotone twist maps come with a generating function $h:\R\times \R\to \R$. Letting $F$ be the reversible Finsler metric associated to $f$ as above, the function $h$ is related to the distance $d_F$ of $F$ on $\R^2$ via
\[ h(s,t) = d_F((0,s),(1,t)) . \]
This is the approach in \cite{bangert}. In this way, minimal geodesics for $F$ occur as action minimizers for $f$ and vice versa. See also Lemma 4.2.5 in \cite{siburg}: a generating function of the first return map $\vf$ to $N_0$ above is given by the first-return time to \Poincare sections, while in Finsler geodesic flows, time equals length equals distance. 

Using the above links, results on monotone twist maps correspond to results on geodesic flows on $\T$. However, there are differences. For instance, let $c:\R\to \R/\Z$ be $A_L$-critical and $(t,\tilde c(t))$ be the corresponding geodesic of the associated Finsler metric on $\T$, lifted to $\R^2$. Then $(t,\tilde c(t))$ cannot be recurrent in $\R^2$, where as Finsler metrics are allowed to have recurrent geodesics in the universal cover. This difference also appeared in Subsection \ref{section birkhoffs thm}.

\subsection{Tonelli Lagrangians}\label{section tonelli}

Given a Finsler metric $F$ on a closed manifold $M$, we can consider the kinetic energy given by $L_F=\frac{1}{2}F^2$. Apart from smoothness at the zero section, this is a good example of a Tonelli Lagrangian. Other classical examples are of the form
\[ L(v) = \frac{1}{2}|v|_g^2 + \eta(v) + U(\pi(v)) , \]
where $g$ is some Riemannian metric, $\eta$ a 1-form and $U:M\to \R$ some function on $M$.

\begin{defn}
 A {\em Tonelli Lagrangian} is a function $L:TM\to\R$, which is superlinear and strongly convex in the fibers $T_xM$ of $TM$. A {\em Tonelli Hamiltonian} is a function $H:T^*M\to\R$ with the analogous properties.
\end{defn}

Given a Tonelli Lagrangian, its Fenchel transform is a Tonelli Hamiltonian, in much the same way that we associated to a Finsler metric $F$ its dual Finsler metric $F^*$ in Section \ref{section weak KAM}. Basically all of the theory that we developed here has an analogon for Tonelli Lagrangian systems. We refer e.g.\ to \cite{sorrentino} and \cite{fathi} and the referenes therein. Just note for the moment, that $L$ defines an action $A_L(c)=\int L(\dot c)dt$ and an Euler-Lagrange flow $\phi_L^t:TM\to TM$ (for $L=L_F$ this is the geodesic flow). The Euler-Lagrange flow leaves the energy $E_L$ invariant, given by
\[ E_L := \frac{d}{dt}\bigg|_{t=0}L(v+tv) - L(v) . \]
It is thus natural to restrict the flow $\phi_L^t$ to $\{E_L=e\}\subset TM$. For $L=L_F$ and $e=1/2$, this is the unit tangent bundle $S_FM$.

In Subsection \ref{section av action} we already discussed Mather theory for $L=L_F$. The same works in the general case, i.e.\ we have Mather's average actions $\al_L,\beta_L$. A particularly interesting value is given by
\[ c_0(L) := \min \al_L , \]
called {\em \mane's strict critical value}. This minimum exists, as $\al_L$ is convex and superlinear. For energies $e>c_0(L)$, we consider {\em \mane's potential}
\[ \Phi_{L+e}(x,y) := \inf\left\{ A_L(c) ~|~ c:[0,T]\to H, c(0)=x,c(T)=y, T>0 \right\} \]
in the universal cover $H$ of $M$. The generalizes the Finsler distance $d_F$ in $H$ for $L=L_F$. The action minimizers $c:\R\to H$ with respect to $L+e$, called {\em $e$-minimizers} will have energy $e$, so that $\Phi_{L+e}$ highlights the energy level $\{E_L=e\}$. Using \mane's potential as a distance, one can develop all the theory in this paper.

There is a more direct link to Finsler metrics, which we recall now, see \cite{CIPP}.

\begin{thm}\label{thm CIPP}
 For $e>c_0(L)$, there exists a Finsler metric $F$ on $M$, so that the Euler-Lagrange flow $\phi_L^t|_{\{E_L=e\}}$ is conjugated to the geodesic flow $\phi_F^t|_{S_FM}$. \mane's potential in $H$ is then given by $\Phi_{L+e}(x,y)=d_F(x,y)+f(y)-f(x)$ for a fixed function $f:H\to\R$. In particular, $e$-minimizers of $L$ correspond to minimal $F$-geodesics.
\end{thm}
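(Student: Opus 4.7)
The plan is to build $F$ explicitly via a Maupertuis-type duality, using a smooth strict subsolution of the Hamilton-Jacobi inequality $H \leq e$, and then to verify the distance formula and the flow conjugacy from the equality case of Fenchel's inequality.

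First, since $e > c_0(L)$ (and $c_0$ is precisely the infimum over such constants), I would fix a smooth $f:M\to\R$ with $H(x,df_x) < e$ for every $x\in M$ and lift it to $H$; existence of such a strict subsolution for supercritical energies is the standard input from Mañé theory. Writing $L_f(x,v) := L(x,v) - df_x(v)$, the Legendre-dual Hamiltonian becomes $H_f(x,p) = H(x, p + df_x)$, so $\{H_f(x,\cdot) \leq e\}\subset T^*_xM$ is a strictly convex body containing $0$ in its interior. Define
\[ F(x,v) := \max\{\, p(v) : p\in T^*_xM , ~ H_f(x,p) \leq e \,\} , \]
so that the $F^*$-unit ball in $T^*_xM$ is precisely $\{H_f \leq e\}$. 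Strong convexity of $H$ together with the smoothness of its level set translates into the strong convexity condition in Definition \ref{def finsler}, so $F$ is a genuine Finsler metric on $M$.

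Next, I would apply the Fenchel inequality to the pair $(L_f, H_f)$: $L_f(v) + H_f(p) \geq p(v)$ for all $v,p$ in the same fiber, with equality iff $p$ is the Legendre dual of $v$. Taking the maximum over $\{H_f(p)\leq e\}$ and reinserting the definitions yields
\[ L(v) + e - df_x(v) \;\geq\; F(v) \]
pointwise on $TM$, with equality iff $E_L(v) = E_{L_f}(v) = e$. Integrating along a curve $\gamma:[0,T]\to H$ from $x$ to $y$ gives
\[ A_L(\gamma) + eT - [f(y) - f(x)] \;\geq\; \int_0^T F(\dot\gamma)\,dt \;\geq\; d_F(x,y), \]
so taking the infimum over $\gamma$ and $T$ produces one inequality, $\Phi_{L+e}(x,y) \geq d_F(x,y) + f(y) - f(x)$. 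For the reverse inequality, let $c:[0,\ell]\to H$ be an $F$-length-minimizing segment from $x$ to $y$, and reparametrize it by the unique smooth time change making $E_L(\dot c) \equiv e$; such a reparametrization exists because $t \mapsto E_L(tv)$ is strictly increasing on $[0,\infty)$ by strong convexity of $L$ in the fiber and satisfies $E_L(0) = -L(x,0) < e$ (using that $e > c_0(L) \geq -\inf_x L(x,0)$). Along this reparametrization the Fenchel inequality becomes an equality pointwise, and the integrated inequality above is an equality, yielding $\Phi_{L+e}(x,y) \leq d_F(x,y) + f(y) - f(x)$. Thus $\Phi_{L+e}(x,y) = d_F(x,y) + f(y) - f(x)$, and, as an immediate consequence, $e$-minimizers of $L$ correspond bijectively (via reparametrization) to minimal $F$-geodesics in $H$.

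Finally, for the flow conjugacy I would define $\Psi: S_FM \to \{E_L = e\}$ by $\Psi(v) = t(v)\cdot v$, where $t(v)>0$ is the unique solution of $E_L(t(v) v) = e$; the implicit function theorem applied to the strictly monotone map $t\mapsto E_L(tv)$ shows that $\Psi$ is a smooth diffeomorphism onto $\{E_L = e\}$. The $F$-geodesics and the $L$-Euler-Lagrange orbits at energy $e$ project to the same unparametrized curves in $M$ (by the equality case of Fenchel derived above, which forces both parametrizations to be critical for the corresponding length/action functional), and $\Psi$ then intertwines the two flows after the smooth time change $\tau'(s) = 1/t(\Psi(\phi_F^s v))$. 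I expect the main technical obstacle to be the careful bookkeeping in this last step: namely, verifying that the equality case in Fenchel rigidly pins down the reparametrization, and that the resulting smooth orbit equivalence deserves to be called a conjugacy in the sense intended in the statement; the existence of the strict subsolution $f$ for $e > c_0(L)$ and the superlinearity/convexity properties of $L$ needed throughout are standard and can be cited from \cite{CIPP}.
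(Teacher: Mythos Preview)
Your approach is essentially the same as the paper's sketch (which only says: use Proposition \ref{prop char al-fctn via weak KAM} to see that $\{H=e\}$ bounds a fiberwise convex body containing the origin after a shift, define $F^*$ by declaring this level set to be its unit sphere, and conclude conjugacy from the shared level set), but you supply the details the paper omits, in particular the Fenchel argument for the distance formula.

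One technical slip: for $e>c_0(L)=\min\al_L$ you are only guaranteed a closed $1$-form $\eta$ on $M$ with $H(x,\eta_x)<e$, not an exact one; the minimizing cohomology class need not be trivial. You should start with such an $\eta$ on $M$ (so that $L_\eta$ and hence $F$ descend to $M$), and only then take a primitive $f:H\to\R$ of the lift $\tilde\eta$ to write the distance formula in $H$. With that correction your argument goes through.
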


One idea for the proof is to use the associated Tonelli Hamiltonian $H:T^*M\to\R$ and the characterization of $\al_L$ given in Proposition \ref{prop char al-fctn via weak KAM} to show that the energy level $\{H=e\}$ is a sphere bundle. Then take the dual Finsler metric $F^*:T^*M\to\R$ with $F^*|_{\{H=e\}}=1$. Both Hamiltonians $H$ and $\frac{1}{2}(F^*)^2$ share a level set, which means that the flows are conjugated.

\subsection{Symplectic geometry}\label{section symplectic}

In the previous subsection, we generalized from Finslerian geodesic to Euler-Lagrange flows of Tonelli Lagrangians. Theorem \ref{thm CIPP} showed that for sufficiently large energies, the result was again a Finsler geodesic flow of some kind. A strictly more general setting is that of contact manifolds and Reeb flows in symplectic geometry. The understanding of minimal geodesics, being fundamental to the global behavior of geodesic flows, should eventually lead to results on the more general class of Reeb flows; this also concerns the question as to which phenomena are geometric and which are symplectic in nature.

Let us remark in this connection that it was shown by Bernard \cite{bernard}, that Mather, \Mane and Aubry sets (see Section \ref{section mather theory}) behave well under symplectomorphisms. Moreover, observe that Proposition \ref{prop char al-fctn via weak KAM} leads already to a Hamiltonian characterization of the set $B^*(F)=\{\al_F\leq 1/2\}$, generalized by the symplectic shape of energy sublevel sets $\{\frac{1}{2}(F^*)^2\leq 1/2\}$ in Chapter 6 of \cite{siburg}. Note that weak KAM solutions defining exact Lagrangian graphs are also known as generating functions in symplectic geometry. Much of the theory in this paper could be translated into these more general settings, and a lot is still under development.

Let us point out a more concrete question, as to what phenomena could be generalized from the setting of geodesic to Reeb flows. This concerns the results on geodesic flows on $\T$ with vanishing entropy, see Subsection \ref{section FrHa}.

\begin{prob}
 If $\phi^t:\mathbb{T}^3\to\mathbb{T}^3 = \T\times \R/\Z$ is a Reeb flow associated to a tight contact form of degree $k\geq 1$ (see the classification of tight contact structures by Yutaka in \cite{yutaka}: the contact structure is tangent to the fibers $\{x\}\times \R/\Z$ and winds around these fibers $k$ times) with $\h(\phi^t,\mathbb{T}^3)=0$, do there exist for each $\xi\in \se$ $k$ thickened invariant ``graphs'' (i.e.\ homologous to the ``base'' $B := \T\times \{0\}$) $T_{\xi,i}, i=1,...,k$, such that the orbits in $T_{\xi,i}$, when projected to $B$, wind around $B$ with asymptotic direction $\xi$ (obtaining a structure similar to Figure \ref{fig_htop_0})?
\end{prob}

\begin{proof}[Ideas for the proof]
 1. The contact structure looks in some sense like the spherization (for $k=1$), i.e. fillable, such that contact homological techniques could be used to show that for $i=1,...,k$ and each homotopy class $z\in \Z^2$ of $B$ there exists a closed orbit $\g_{z,i}$ with homotopy class $z$ (in fact: two, one minimum and one minimax of the action functional), when projected to $B$. The idea for $k=1$ is that the contact structure looks like the Liouville form in $T^*\T$ restricted to the boundary of a fiberwise starshaped set. The contact homology then ``fibers through the contact homology of two contact homologies of flat Riemannian metrics on $\T$'', cf. \cite{schlenk}. These closed orbits give a first ``frame'' for invariant sets.

 2. Generalize the results of Franks and Handel \cite{Fr-Ha} to this situation (Reeb or more general flows). A key technique might be the notion of LeCalvez' maximally unlinked sets of closed orbits corresponding to the fixed-point sets in \cite{Fr-Ha}. Namely all dynamics wind non-trivially around maximally unlinked sets by definition. Possibly one can then show as in \cite{Fr-Ha} that there exists a continuous integral of the flow given by the homological rotation vector in the base $B$.
\end{proof}

\end{document}